\newtheorem{theorem}{Theorem}[section]
\newtheorem{lema}[theorem]{Lemma}
\newtheorem{lem}[theorem]{Lemma}
\newtheorem{claim}[theorem]{Claim}
\newtheorem{prop}[theorem]{Proposition}
\newtheorem{coro}[theorem]{Corollary}
\theoremstyle{definition}
\newtheorem{definition}[theorem]{Definition}
\newtheorem{defn}[theorem]{Definition}
\theoremstyle{remark}
\newtheorem{remark}[theorem]{Remark}
\numberwithin{equation}{section}
\theoremstyle{plain}
\newtheorem{maintheorem}{Theorem}
\newcommand{\D}{\ensuremath{\mathbb{D}}}
\newcommand{\Q}{\ensuremath{\mathbb{Q}}}
\newcommand{\R}{\ensuremath{\mathbb{R}}}
\newcommand{\Z}{\ensuremath{\mathbb{Z}}}
\newcommand{\C}{\ensuremath{\mathbb{C}}}
\newcommand{\nt}{\ensuremath{\mathbb{N}}}
\DeclareMathOperator{\Id}{Id}
\DeclareMathOperator{\Diff}{Diff}
\DeclareMathOperator{\Hom}{Hom}
\DeclareMathOperator{\Aff}{Aff}
\DeclareMathOperator{\dist}{dist}
\DeclareMathOperator{\Dom}{Dom}
\DeclareMathOperator{\id}{id}
\begin{document}

\title{Rigidity of critical circle maps}

\author{Pablo Guarino}
\address{Instituto de Matem\'atica e Estat\'istica, Universidade Federal Fluminense}
\curraddr{Rua M\'ario Santos Braga S/N, 24020-140, Niter\'oi, Rio de Janeiro, Brazil}
\email{pablo\_\,guarino@id.uff.br}

\author{Marco Martens}
\address{Department of Mathematics, Stony Brook University, Stony Brook NY, USA}
\email{marco@math.sunysb.edu}

\author{Welington de Melo}
\address{IMPA, Rio de Janeiro, Brazil}
\curraddr{Estrada Dona Castorina 110, 22460-320}
\email{demelo@impa.br}

\thanks{The authors are grateful to Edson de Faria for several conversations about the subject. During the preparation of this article, M. M. visited IMPA and P. G. visited Stony Brook. We are grateful to both institutions for their hospitality. P. G. was partially supported by FAPESP Grant 2012/06614-8 and by FAPERJ Grant E-26/102.784/2012. W. M. was partially supported by CNPq grant 300813/2010-4.}

\subjclass[2010]{Primary 37E10; Secondary 37E20.}

\keywords{Critical circle maps, smooth rigidity, renormalization, commuting pairs}

\begin{abstract} We prove that any two $C^4$ critical circle maps with the same irrational rotation number and the same odd criticality are conjugate to each other by a $C^1$ circle diffeomorphism. The conjugacy is $C^{1+\alpha}$ for Lebesgue almost every rotation number.
\end{abstract}

\maketitle

\section{Introduction}

By a \emph{critical circle map} we mean an orientation preserving $C^4$ circle homeomorphism, with exactly one non-flat critical point of odd criticality (see Definition \ref{defccm} below). In 1984 Yoccoz proved that if such a critical circle map has no periodic points, all its orbits are dense \cite{yoccoz}. This implies the following topological rigidity result: if two critical circle maps have the same irrational rotation number, then there exists a unique conjugacy between them that sends the critical point to the critical point.

Numerical observations (\cite{feigetal}, \cite{ostlundetal}, \cite{sh}) suggested in the early eighties that this topological conjugacy could be, in fact, a smooth diffeomorphism, at least for bounded combinatorics. These observations led to the \emph{rigidity conjecture}, posed in several works by Lanford (\cite{lanford1}, \cite{lanford2}), Rand (\cite{rand1}, \cite{rand2} and \cite{rand3}, see also \cite{ostlundetal}) and Shenker (\cite{sh}, see also \cite{feigetal}) among others. Our main result is the following:

\begin{maintheorem}[Rigidity]\label{rigidity} Let $f$ and $g$ be two $C^4$ circle homeomorphisms with the same irrational rotation number and with a unique critical point of the same odd type. Let $h$ be the unique topological conjugacy between $f$ and $g$ that maps the critical point of $f$ to the critical point of $g$. Then:
\begin{enumerate}
\item\label{Aitem1} $h$ is a $C^1$ diffeomorphism.
\item\label{Aitem2} $h$ is $C^{1+\alpha}$ at the critical point of $f$ for a universal $\alpha>0$.
\item\label{Aitem3} For a full Lebesgue measure set of rotation numbers, $h$ is a $C^{1+\alpha}$ diffeomorphism.
\end{enumerate}
\end{maintheorem}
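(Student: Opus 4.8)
The strategy is to derive all three conclusions from \emph{exponential convergence of renormalization}. Write $\mathcal R^n f$ and $\mathcal R^n g$ for the $n$-th renormalizations of $f$ and $g$, realized as $C^r$ commuting pairs for a suitable $r>2$ (renormalization costs some smoothness, but $C^4$ leaves enough) and normalized so that the long branch has unit domain. Two facts underlie everything. First, the \emph{real a priori bounds} of Herman and Swiatek, in their \emph{beau} form: the dynamical partitions $\mathcal P_n(f)$ of the circle by the orbit of the critical point have bounded geometry --- consecutive atoms are comparable and atoms shrink geometrically --- with constants that become universal after finitely many steps, regardless of the (possibly unbounded) combinatorics. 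Second, and this is the analytic heart of the matter, \emph{exponential convergence}: there exist $C>0$ and $0<\lambda<1$, depending through the beau bounds only on the criticality, with $\dist_{C^2}(\mathcal R^n f,\mathcal R^n g)\le C\lambda^{n}$ whenever $f$ and $g$ have the same irrational rotation number and the same odd criticality. I would establish this by combining the real bounds, Yampolsky's complex a priori bounds along the whole orbit of renormalization, hyperbolicity of the renormalization operator in the real-analytic category, and an approximation of the $C^4$ commuting pair by analytic ones in order to transport the contraction down to finite smoothness.

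Granting exponential convergence, I would read off geometric control on the topological conjugacy $h$. Since $h$ carries $\mathcal P_n(f)$ onto $\mathcal P_n(g)$ atom by atom with the same combinatorial labelling, the affine distortion of $h$ restricted to any atom $I\in\mathcal P_n(f)$ --- and the ratios $|h(I')|/|h(I)|$ for consecutive atoms --- are computed from $\mathcal R^{n}f$ and $\mathcal R^{n}g$ up to an error of size $O(\lambda^{n})$; in particular, after rescaling $I$ and $h(I)$ to unit size, $h|_I$ is $O(\lambda^n)$-close in $C^1$ to an affine map. Fix a point $x$ on the circle, let $I_n(x)\in\mathcal P_n(f)$ be an atom with $x\in\overline{I_n(x)}$, and put $\rho_n(x)=|h(I_n(x))|/|I_n(x)|$. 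The beau bounds keep $\rho_n$ bounded away from $0$ and $\infty$, and the nested pair of atoms at levels $n$ and $n+1$ is, after rescaling, a fixed piece of $\mathcal R^n f$ (respectively $\mathcal R^n g$); since these have converged, $\rho_{n+1}(x)/\rho_n(x)=1+O(\lambda^{n})$. Telescoping, $\rho_n(x)$ converges to a positive limit $\phi(x)$, uniformly in $x$, with $|\rho_n(x)-\phi(x)|=O(\lambda^{n})$, the point being that the \emph{errors}, not merely the quantities, are summable.

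The final step is to turn uniform convergence of the scaling ratios $\rho_n$ into $C^1$ regularity of $h$ with $Dh=\phi$. This is a real-variable argument based on the bounded geometry: the difference quotient of $h$ over a small interval $J$ is squeezed, up to the distortion factors already controlled, between the values $\rho_n$ on the atoms meeting $J$ at the relevant level. When the dynamical scales near a point decay at a uniform geometric rate --- which holds at the critical point $c$ for every combinatorics, and everywhere once the combinatorics are not too wild --- the $O(\lambda^n)$ estimate becomes a genuine H\"older modulus and gives $C^{1+\alpha}$; in general, producing even $C^1$ requires handling the levels at which a large partial quotient makes the scales drop abruptly, since there the exponential rate in $n$ does not translate into a power of the Euclidean scale. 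This gives conclusion (\ref{Aitem1}). For (\ref{Aitem2}), near $c$ both $f$ and $g$ factor as a diffeomorphism composed with the \emph{same} odd power $t\mapsto t^{2\ell+1}$ composed with a diffeomorphism, so after the common renormalization scaling the conjugacy between $\mathcal R^n f$ and $\mathcal R^n g$ near the critical point is $O(\lambda^n)$-close in $C^1$ to the identity; combined with the geometric (two-sided) decay of $|I_n(c)|$ coming from the real bounds, the rate $\lambda^n$ in $n$ becomes $|h(x)-h(c)-\phi(c)(x-c)|=O(|x-c|^{1+\alpha})$ with a universal $\alpha>0$. For (\ref{Aitem3}), for Lebesgue-a.e.\ rotation number L\'evy's theorem gives $\tfrac1n\log q_n\to \pi^2/(12\log2)$, so the atoms of $\mathcal P_n$ near any point have diameter comparable to a fixed power of $q_n^{-1}$; the $O(\lambda^n)$ errors then yield a H\"older modulus for $\phi=Dh$ uniformly over the circle, hence $C^{1+\alpha}$.

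The main obstacle is the exponential-convergence input. Iterating renormalization infinitely often costs derivatives, so one has to work with beau real and (via analytic approximation) complex bounds, so that after finitely many renormalizations everything is uniformly controlled; the delicate part is making the contraction rate $\lambda$ independent of the combinatorics --- which may be unbounded --- while keeping enough smoothness ($C^2$ of the renormalized pairs) for the real-variable argument that follows. That argument is the other nontrivial point: upgrading uniform convergence of the scaling ratios to genuine differentiability of $h$, rather than to mere quasisymmetry, precisely when the dynamical scales fail to shrink at a uniform geometric rate.
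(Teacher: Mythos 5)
Your overall architecture agrees with the paper's: everything is reduced to exponential convergence of renormalization in the $C^2$ metric (the paper's Theorem \ref{expconv}), and the passage from that to conclusions (\ref{Aitem1})--(\ref{Aitem3}) via scaling ratios of corresponding atoms of the dynamical partitions is essentially the argument of Khanin--Teplinsky and de Faria--de Melo that the paper simply cites (including the fine-grid refinement needed for (\ref{Aitem3}); your appeal to L\'evy's theorem is only a caricature of the full-measure condition of \cite[Section 4.4]{dfdm1}, but that step is anyway quotable).

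The genuine gap is in the one sentence where you dispose of the exponential-convergence input: ``an approximation of the $C^4$ commuting pair by analytic ones in order to transport the contraction down to finite smoothness.'' This transport is precisely the content of the paper, and it is not a routine combination of real bounds, complex bounds and analytic hyperbolicity. Two obstructions, both discussed in the paper, defeat the naive scheme. First, the analytic approximants must be chosen in a universal $C^{\omega}$-compact set, exponentially close in $C^3$, \emph{and with the same rotation number} as $\mathcal{R}^n(f)$ (Theorem \ref{compacto}); without matching rotation numbers the analytic contraction theorem (Theorem \ref{uniform}) says nothing about them. Second, and more seriously, once you have an analytic shadow at step $m$ you must propagate the approximation error through the remaining $n-m$ renormalizations of the finitely smooth pair, and for that you need a per-step Lipschitz bound for $\mathcal{R}$ with constant independent of the partial quotients. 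On the space of all commuting pairs renormalization is not even H\"older continuous, and along the standard (unstable) families its derivative grows like $a^3$ in the period $a$ (Proposition \ref{geomprop}); so for unbounded combinatorics the error estimate collapses unless one restricts to pairs with the same irrational rotation number and proves that the boundary orbits defining the new domains stay synchronized (the Synchronization Lemma, resting on the order structure of Section \ref{order} and Yoccoz's Lemma), which is what yields the Key Lemma \ref{main} and hence the uniform Lipschitz constant $L(K)$. Your proposal never identifies this mechanism, so as written the claimed $d_{2}(\mathcal{R}^n f,\mathcal{R}^n g)\le C\lambda^n$ for $C^4$ maps with unbounded combinatorics is unsupported; this is exactly why the de Melo--Pinto transfer that works for unimodal maps does not apply here.
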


See \cite[Section 4.4]{dfdm1} for the definition of the full measure set of rotation numbers considered in Conclusion \eqref{Aitem3} of Theorem \ref{rigidity}. Let us point out that, by a result of \'Avila \cite{avila}, there exist two real analytic critical circle maps with the same irrational rotation number and the same criticality that are not $C^{1+\beta}$ conjugate for any $\beta>0$.

Many papers have addressed the rigidity problem: see \cite{edson}, \cite{dfdm1}, \cite{edsonwelington2}, \cite{yampolsky1}, \cite{yampolsky2}, \cite{yampolsky3}, \cite{yampolsky4}, \cite{avila}, \cite{KY}, \cite{khaninteplinsky} and \cite{guamelo}. In particular, Theorem \ref{rigidity} was proven for real analytic critical circle maps by a series of papers by de Faria-de Melo, Khmelev-Yampolsky and Khanin-Teplinsky (\cite{edsonwelington2}, \cite {KY} and \cite{khaninteplinsky}).

Moreover, in the $C^3$ category rigidity holds for bounded combinatorics: any two $C^3$ critical circle maps with the same irrational rotation number of \emph{bounded type} and the same odd criticality are conjugate to each other by a $C^{1+\alpha}$ circle diffeomorphism, for some universal $\alpha>0$ \cite{guamelo}. Let us mention that this was the precise statement of the rigidity conjecture mentioned above.

\begin{remark} We do not know whether Theorem \ref{rigidity} holds for $C^3$ dynamics with unbounded combinatorics, and we also do not know if rigidity holds on less regularity, for instance $C^{2+\alpha}$ smoothness, even for bounded combinatorics. Moreover, we do not know how to deal with critical points of non-integer criticality, not even with fractional criticality (see Definition \ref{defccm} below).
\end{remark}

By the famous rigidity result of Herman, \cite {hem}, improved by Yoccoz \cite{yoccoz0}, any real analytic circle diffeomorphisms whose rotation number satisfies a Diophantine condition is real analytic conjugate to a rotation.  We believe that there exist two real analytic critical circle maps with the same rotation number of bounded type and the same criticality that are not $C^2$ conjugate.

Theorem \ref{rigidity} follows from the following theorem, which is our main result on the dynamics of the renormalization operator acting on the $C^4$ class:

\begin{maintheorem}[Exponencial convergence in the $C^2$-distance]\label{expconv} There exists a universal constant $\lambda \in (0,1)$ such that given two $C^4$ critical circle maps $f$ and $g$ with the same irrational rotation number and the same criticality, there exists $C=C(f,g)>0$ such that for all $n\in\nt$ we have:$$d_2\big(\mathcal{R}^n(f),\mathcal{R}^n(g)\big) \leq C\lambda^n\,,$$where $d_2$ is the $C^2$ distance in the space of $C^2$ critical commuting pairs.
\end{maintheorem}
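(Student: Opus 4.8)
The plan is to establish exponential convergence in the $C^2$-distance by combining three ingredients that are, by now, fairly standard in the renormalization theory of critical circle maps, adapted to the $C^4$ setting. First, I would invoke the existence of a compact, renormalization-invariant (up to finite iteration) space of $C^4$ — or at least $C^{2+\mathrm{Lip}}$ — critical commuting pairs, and the \emph{exponential contraction of the renormalization operator in the $C^0$ (or Hausdorff/$C^1$) sense}. That is, for two pairs $f,g$ with the same rotation number one already knows $d_0(\mathcal{R}^n(f),\mathcal{R}^n(g))\le C\mu^n$ for some universal $\mu\in(0,1)$; this is the content of the real-analytic rigidity theorems cited in the excerpt (de Faria--de Melo, Khmelev--Yampolsky, Khanin--Teplinsky) together with the $C^3$-bounded-type result of Guarino--de Melo, and it should be available here as a previously-established step in the paper. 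The point of Theorem \ref{expconv} is to upgrade $C^0$ exponential convergence to $C^2$ exponential convergence.

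The mechanism for the upgrade is an \emph{a priori smoothness / almost-parabolic control} argument. Deep renormalizations of a $C^4$ critical circle map, after rescaling, are compositions of a controlled number of iterates of the map restricted to very small intervals; on such scales the relevant distortion tools (real bounds, the cross-ratio inequality, and the control of the nonlinearity of the diffeomorphic parts via the $C^4$ — hence $C^{2+\mathrm{Lip}}$ — hypothesis) force the renormalized pairs to lie in a bounded set in the $C^{2+\nu}$ topology for some universal $\nu>0$. Concretely, I would: (i) write $\mathcal{R}^n(f)$ as a rescaled iterate and use the chain rule together with the bounded-geometry (real bounds) to bound the $C^{2+\nu}$ norms of $\mathcal{R}^n(f)$ and $\mathcal{R}^n(g)$ uniformly in $n$; (ii) combine the uniform $C^{2+\nu}$ bound with the exponential $C^0$ convergence and the standard interpolation inequality $\|\varphi\|_{C^2}\lesssim \|\varphi\|_{C^0}^{\theta}\,\|\varphi\|_{C^{2+\nu}}^{1-\theta}$ (for $\theta=\nu/(2+\nu)$) to conclude $d_2(\mathcal{R}^n(f),\mathcal{R}^n(g))\le C(C\mu^n)^{\theta}=C'\lambda^n$ with $\lambda=\mu^{\theta}\in(0,1)$ universal.

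The subtle point is step (i): one must genuinely produce a \emph{universal} $C^{2+\nu}$ bound, not merely a $C^2$ bound, and with a universal $\nu$. This requires controlling the distortion of the nonlinear (diffeomorphic) factors of the high iterates of $f$ and $g$ on the deep scales. Here the $C^4$ hypothesis enters: one decomposes each deep iterate into a "critical" part (a fixed number of passages near the critical point, where the non-flatness and the power-law form give explicit smooth control after rescaling) and a "diffeomorphic" part (the remaining iterates, away from the critical point), and then one shows that the Schwarzian-type / nonlinearity estimates, together with the exponential shrinking of the dynamical partition, make the diffeomorphic part converge to the identity in $C^{2+\nu}$ at a controlled rate — or at least stay bounded — uniformly over $n$ and over all critical circle maps of the given criticality. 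This is where most of the technical work lies, and it is the main obstacle: the na\"ive bounds on the $C^3$ or $C^4$ norm of a composition of $q_n$ maps blow up, so one needs the nonlinearity to be summable along the orbit, which is exactly what the real bounds plus the $C^4$ regularity deliver after the rescaling. Once this uniform $C^{2+\nu}$ bound is in hand, the interpolation step is routine and the theorem follows; Theorem \ref{rigidity} is then deduced by the standard argument that exponentially-$C^2$-converging renormalizations glue to a globally $C^1$ (and $C^{1+\alpha}$ at the critical point, and $C^{1+\alpha}$ globally for a.e.\ rotation number) conjugacy.
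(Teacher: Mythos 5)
Your step (ii) — upgrading $C^0$ exponential convergence to $C^2$ by interpolation against uniform higher-order bounds — is sound in principle, and uniform $C^3$ bounds for deep renormalizations of $C^4$ maps are indeed available (this is the content of Theorem \ref{realB}). The fatal problem is your first ingredient: you assume that $d_0\big(\mathcal{R}^n(f),\mathcal{R}^n(g)\big)\le C\mu^n$ is already known for two $C^4$ maps with the same (arbitrary) irrational rotation number, citing the real-analytic rigidity theorems and the $C^3$ bounded-type result. None of these applies here: Theorem \ref{uniform} concerns pairs that are themselves real analytic, and the Guarino--de Melo result is restricted to rotation numbers of bounded type. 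For finitely smooth maps with unbounded combinatorics there is no previously established exponential convergence in \emph{any} topology; that statement (even in $C^0$) is precisely what Theorem \ref{expconv} must produce, so your argument is circular at its starting point. Nor can one extract it cheaply from the shadowing sequence of Theorem \ref{compacto}: the analytic pairs $f_n$, $g_n$ lie in a common compact set and share the rotation number of $\mathcal{R}^n(f)$, $\mathcal{R}^n(g)$, but at a fixed level $n$ they need not be close to each other — Theorem \ref{uniform} only makes them close after many further renormalizations, and transporting $\mathcal{R}^m(f)$ along with $f_m$ through those further renormalizations requires a Lipschitz estimate for $\mathcal{R}$ along classes of pairs with the same irrational rotation number. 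This is exactly the paper's Key Lemma (Lemma \ref{main}), whose proof (standard families, synchronization, the order argument) occupies most of the paper; it cannot be bypassed, since, as noted in Section \ref{S:sketch}, renormalization is not even H\"older continuous on the space of all commuting pairs, so closeness of $f$ and $g$ does not by itself propagate to closeness of their renormalizations.

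For comparison, the paper's route is: (a) shadow $\mathcal{R}^n(f)$ and $\mathcal{R}^n(g)$ exponentially in $C^3$ by real analytic pairs $f_n,g_n$ from a universal compact set with the same rotation numbers (Theorem \ref{compacto}); (b) for $m=\lfloor\delta n\rfloor$ push the exponentially small error $d_2(\mathcal{R}^m(f),f_m)$ through $n-m$ further renormalizations using the uniform Lipschitz constant $L$ of Lemma \ref{main}, choosing $\delta$ so that $L^{(1-\delta)n}\lambda_0^{\delta n}$ still decays; (c) compare $\mathcal{R}^{n-m}(f_m)$ with $\mathcal{R}^{n-m}(g_m)$ by Theorem \ref{uniform}. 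If you wish to salvage your outline, the interpolation idea could at best replace the complex-analytic H\"older estimate (Proposition \ref{holder}) used inside the proof of Theorem \ref{compacto}; it cannot replace the synchronization/Lipschitz machinery that generates the exponential convergence in the first place.
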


This paper is devoted to prove Theorem \ref{expconv}. The fact that Theorem \ref{expconv} implies Theorem \ref{rigidity} follows from well-known results by de Faria-de Melo \cite[First Main Theorem, page 341]{dfdm1} and Khanin-Teplinsky \cite[Theorem 2, page 198]{khaninteplinsky}, and it will be explained in Section \ref{finalmesmo}.

\subsection{Strategy of the proof of Theorem \ref{expconv}}\label{S:sketch} A $C^4$ critical circle map $f$ with irrational rotation number generates a sequence $\big\{\mathcal{R}^n(f)\big\}_{n\in\nt}$ of commuting pairs of interval maps, each one being the renormalization of the previous one (see Definition \ref{renop}). To prove Theorem \ref{expconv} we need to prove the exponential convergence in the $C^2$ distance of the orbits generated by two $C^4$ critical circle maps with the same irrational combinatorics and the same criticality. Roughly speaking, the proof has three ingredients:
\begin{enumerate}
\item\label{ing1} The existence of a sequence $\{f_n\}_{n\in\nt}$ that belongs to a universal $C^{\omega}$-compact set of real analytic critical commuting pairs, such that $\mathcal{R}^n(f)$ is $C^3$ exponentially close to $f_n$ at a universal rate, and both have the same rotation number (Theorem \ref{compacto}).
\item\label{ing2} The uniform exponential contraction of renormalization when restricted to topological conjugacy classes of real analytic critical commuting pairs (Theorem \ref{uniform}).
\item\label{ing3} The \emph{key lemma} (Lemma \ref{main}): a Lipschitz estimate for the renormalization operator, when restricted to suitable bounded pieces of topological conjugacy classes of infinitely renormalizable $C^3$ critical commuting pairs with negative Sch\-warzian derivative.
\end{enumerate}

The fact that \eqref{ing1}, \eqref{ing2} and \eqref{ing3} imply Theorem \ref{expconv} will be proved in Section \ref{final}. Theorem \ref{compacto} will be obtained in Section \ref{melhoratese}, based on a previous construction developed by two of the authors in \cite{guamelo}. Theorem \ref{uniform} was proved by de Faria and de Melo \cite{edsonwelington2} for rotation numbers of bounded type, and extended by Khmelev and Yampolsky \cite{KY} to cover all irrational rotation numbers.

Our main task in this paper is to prove Lemma \ref{main}, and its proof will occupy Sections \ref{Smonfam} to \ref{lip}. For bounded combinatorics, it is not difficult to prove the key lemma (see \cite[Section 3.5]{guamelo}). Let us explain here the main difficulties in the unbounded case: given an infinitely renormalizable critical commuting pair (see Definition \ref{critpair}) one obtains the domain of its first renormalization by iterating a boundary point of its original domain (see Definition \ref{renop}). The rotation number of the map determines the number of iterates involved to obtain the boundary of the renormalization. The number, denoted by $a$, plays a delicate role. Denote the obtained boundary point by $x_a$.

Under the $K$-controlled and the negative Schwarzian assumptions (see Section \ref{Seccont} for definitions), the geometry of this piece of monotone orbit is precisely described by a result due to J.-C. Yoccoz (see Lemma \ref{doyoc} in Section \ref{Ssap}): for large values of $a$, most of the points are concentrated in a small area of accumulation, with size comparable to $1/\sqrt{a}$.

Now consider two $C^3$ critical commuting pairs which are renormalizable with the same period $a\in\nt$. Suppose their $C^2$-distance is $\varepsilon>0$. Denote the corresponding new boundary points by $x_a^0$ and $x_a^1$. This new boundary points can indeed be arbitrarily separated. A consequence of this is that renormalization is definitely {\it not} even H\"older continuous on the space of \emph{all} commuting pairs. However, we will prove that when one renormalizes two pairs with the same irrational rotation number, the new boundary points become close. Namely, $|x_a^0-x_a^1|=O(\varepsilon)$. The systems are said to be \emph{synchronized}, see Definition \ref{defsync} and the \emph{Synchronization Lemma} in Section \ref{sync}.

If a separation would occur, $|x_a^0-x_a^1|$ not small, then the $a$-values of future renormalization will not be the same. This contradicts the fact that the systems had the same irrational rotation number. 

Observe when the pairs are very close, the $a$-values of arbitrarily many future renormalizations will play a role. From this it seems to be an impossible task, to obtain an estimate for the distance between the new boundary points. A natural notion of order is introduced in Section \ref{order}. One can interpret this order as a cone-field associated with the unstable direction of renormalization. The rotation number is monotone with respect to the order. Using the order, synchronization follows.

Synchronization implies that the pieces of orbit involved in defining the boundaries of the domains of the renormalization of both pairs are everywhere close together (Proposition \ref{lipC2C2amigos}). This allows to control the branches of the renormalizations.  Hence, renormalization is uniformly Lipschitz on classes of \emph{controlled} pairs with the same irrational rotation number (see Section \ref{lip}).

\begin{remark} The renormalization theory for critical circle maps was developed during the late seventies and the eighties (see \cite{feigetal}, \cite{lanford1}, \cite{lanford2}, \cite{ostlundetal}, \cite{rand1}, \cite{rand2}, \cite{rand3} and \cite{sh}), in parallel with the renormalization theory of \emph{unimodal maps} of the interval (see \cite[Chapter VI]{dmvs} and the references therein, see also \cite{mcicm} and \cite{dmicm}). The fact that the exponential convergence of renormalization for real analytic dynamics implies the exponential convergence for finitely smooth unimodal maps, was obtained by the third author and Pinto in the late nineties \cite{demelopinto}. Their methods, however, do not apply for critical circle maps, not even for bounded combinatorics.

One source for this difference is the fact that, for infinitely renormalizable unimodal maps, the sum of the lengths of the (cycle of) intervals related to each level of renormalization goes to zero exponentially fast. This gives a strong control of the non-linearity, even for $C^2$ dynamics. In the circle case, however, the intervals involved at each step of renormalization cover the whole circle (see Section \ref{Sprel}). In this case $C^4$ smoothness is needed in order to have $C^3$-bounded orbits of renormalization (see Theorem \ref{realB}) to be able to control the non-linearity with the help of the Schwarzian derivative.

Another crucial ingredient in \cite{demelopinto} is the \emph{hybrid} lamination, and the fact that its holonomy is quasi-conformal (see \cite{lyubich} and the references therein). To the best of our knowledge, no similar structure has been obtained for critical circle maps.
\end{remark}

This paper is organized as follows: in Section \ref{Sprel} we briefly review some preliminaries, and introduce the renormalization operator acting on critical commuting pairs. In Section \ref{Seccont} we define \emph{controlled} commuting pairs (see Definition \ref{defcont}) and state some of its properties. In Section \ref{Ssap} we state Lemma \ref{main} discussed above (the key lemma), and we also state the Yoccoz's lemma already mentioned (Lemma \ref{doyoc}).

In Section \ref{Smonfam} we construct a suitable one-parameter family around a critical commuting pair, the \emph{standard family}. These families are \emph{transversal} to topological classes, and they may be regarded as \emph{unstable manifolds} for renormalization (see especially Proposition \ref{geomprop}, where we estimate the expansion of renormalization along these families).

In Section \ref{orbdef} we introduce the notion of \emph{synchronization} (Definition \ref{defsync}) and we collect several estimates that hold under synchronization (see especially Propositions \ref{Dxi} and \ref{Di}). In Section \ref{comp} we prove that the key lemma (Lemma \ref{main}) holds under the synchronization assumption (see Proposition \ref{lipC2C2amigos}). In Section \ref{order} we define the already discussed notion of \emph{order}. In Section \ref{sync} we prove the \emph{Synchronization Lemma}, and in Section \ref{lip} we finally prove Lemma \ref{main}.

As we said before, in Section \ref{melhoratese} we prove Theorem \ref{compacto}, and in Section \ref{final} we finally prove that Theorem \ref{expconv} follows from Theorem \ref{compacto}, Theorem \ref{uniform} and Lemma \ref{main}. In Section \ref{finalmesmo} we give precise references of the fact that Theorem \ref{expconv} implies Theorem \ref{rigidity}, and in Appendix \ref{apA} we prove Proposition \ref{holder}, stated and used in Section \ref{melhoratese}.

Let us fix some notation that we will use along this paper: $\nt$, $\Z$, $\Q$, $\R$ and $\C$ denotes respectively the set of natural, integer, rational, real and complex numbers. With $S^1$ we denote the multiplicative group of complex numbers of modulus one, that is, the unit circle $S^1=\big\{z\in\C:|z|=1\big\}$. We will identify $S^1$ with $\R/\Z$ under the universal covering map $\pi:\R \to S^1$ given by $\pi(t)=\exp(2\pi it)$. We denote by $\Hom_{+}(S^1)$ the group (under composition) of orientation preserving circle homeomorphisms, and by $\Diff_{+}^{r}(S^1)$ its subgroup of $C^r$ diffeomorphisms for any $r \geq 1$. The function $\rho:\Hom_{+}(S^1)\to[0,1)$ will denote the rotation number function. The Euclidean length of an interval $I$ will be denoted by $|I|$. Uniform constants will always be denoted by $K$. Their value might change during estimates.

\section{Preliminaries}\label{Sprel}

\begin{definition}\label{defccm} By a \emph{critical circle map} we mean an orientation preserving $C^4$ circle homeomorphism $f$ with exactly one critical point $c$ such that, in a neighbourhood of $c$, the map $f$ can be written as $f(t)=f(c)+\big(\phi(t)\big)^{2d+1}$, where $\phi$ is a $C^4$ local diffeomorphism with $\phi(c)=0$, and $d\in\nt$ with $d \geq 1$. The \emph{criticality} (or \emph{order}, or \emph{type}, or \emph{exponent}) of the critical point $c$ is the odd integer $2d+1$. We also say that the critical point $c$ is \emph{non-flat}.
\end{definition}

As an example, consider the so-called \emph{Arnold's family}, which is the one-parameter family $\widetilde{f}_{\omega}:\R\to\R$ given by:
$$\widetilde{f}_{\omega}(t)=t+\omega-\left(\frac{1}{2\pi}\right)\sin(2\pi t)\quad\mbox{for $\omega\in[0,1)$.}$$

Since each $\widetilde{f}_{\omega}$ commutes with unitary translation, it is the lift, under the universal cover $t \mapsto e^{2\pi it}$, of an orientation preserving real analytic circle homeomorphism, presenting one critical point of cubic type at $1$, the projection of the integers (see also \cite[Section 6]{defgua} for examples of rational maps).

We will assume along this article that the rotation number $\rho(f)=\theta$ in $[0,1)$ is irrational, and let$$\big[a_0,a_1,...,a_n,a_{n+1},...\big]$$be its continued fraction expansion:$$\theta=\lim_{n\to+\infty}\dfrac{1}{a_0+\dfrac{1}{a_1+\dfrac{1}{a_2+\dfrac{1}{\ddots\dfrac{1}{a_n}}}}}\,.$$

We define recursively the \emph{return times} of $\theta$ by:
\begin{center}
$q_0=1,\quad$ $\quad q_1=a_0\quad$ and $\quad q_{n+1}=a_nq_n+q_{n-1}\quad$ for $\quad n \geq 1$.
\end{center}

Recall that the numbers $q_n$ are also obtained as the denominators of the truncated expansion of order $n$ of $\theta$:$$\frac{p_n}{q_n}=[a_0,a_1,a_2,...,a_{n-1}]=\dfrac{1}{a_0+\dfrac{1}{a_1+\dfrac{1}{a_2+\dfrac{1}{\ddots\dfrac{1}{a_{n-1}}}}}}\,\,.$$

\subsection{Real bounds}\label{S:realbounds} Denote by $I_n$ the interval $[c,f^{q_n}(c)]$ and define $\mathcal{P}_n$ as:$$\mathcal{P}_n=\big\{I_n,f(I_n),...,f^{q_{n+1}-1}(I_n)\big\} \bigcup \big\{I_{n+1},f(I_{n+1}),...,f^{q_n-1}(I_{n+1})\big\}$$

A crucial combinatorial fact is that $\mathcal{P}_n$ is a partition (modulo boundary points) of the circle for every $n \in \nt$. We call it the \emph{n-th dynamical partition} of $f$ associated with the point $c$. Note that the partition $\mathcal{P}_n$ is determined by the piece of orbit:$$\{f^{j}(c): 0 \leq j \leq q_n + q_{n+1}-1\}$$

The transitions from $\mathcal{P}_n$ to $\mathcal{P}_{n+1}$ can be described in the following easy way: the interval $I_n=[c,f^{q_n}(c)]$ is subdivided by the points $f^{jq_{n+1}+q_n}(c)$ with $1 \leq j \leq a_{n+1}$ into $a_{n+1}+1$ subintervals. This sub-partition is spreaded by the iterates of $f$ to all the $f^j(I_n)=f^j([c,f^{q_n}(c)])$ with $0 \leq j < q_{n+1}$. The other elements of the partition $\mathcal{P}_n$, which are the $f^j(I_{n+1})$ with $0 \leq j < q_n$, remain unchanged.

As we are working with critical circle maps with a single critical point, our partitions in this article are always determined by the critical orbit.

\begin{theorem}[real bounds]\label{realbounds} There exists $K>1$ such that given a $C^3$ critical circle map $f$ with irrational rotation number there exists $n_0=n_0(f)$ such that for all $n \geq n_0$ and for every pair $I,J$ of adjacent atoms of $\mathcal{P}_n$ we have:$$K^{-1}|I| \leq |J| \leq K|I|.$$
Moreover, if $Df$ denotes the first derivative of $f$, we have:$$\frac{1}{K}\leq\frac{\big|Df^{q_n-1}(x)\big|}{\big|Df^{q_n-1}(y)\big|}\leq K\quad\mbox{for all $x,y \in f(I_{n+1})$ and for all $n \geq n_0$, and:}$$
$$\frac{1}{K}\leq\frac{\big|Df^{q_{n+1}-1}(x)\big|}{\big|Df^{q_{n+1}-1}(y)\big|}\leq K\quad\mbox{for all $x,y \in f(I_{n})$ and for all $n \geq n_0$.}$$
\end{theorem}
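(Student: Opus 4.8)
The plan is to deduce the real bounds from the classical cross-ratio and Koebe distortion technology for $C^3$ maps, exactly as in the construction of \emph{a priori bounds} for critical circle maps (see, e.g., \cite{edson} and \cite{dfdm1}); the genuinely dynamical inputs are the non-flatness of the critical point and Yoccoz's density theorem \cite{yoccoz}. First I would collect the analytic tools. If $T\Subset M$ are intervals, $f^k|_M$ is a diffeomorphism onto its image, and the intervals $M,f(M),\dots,f^{k-1}(M)$ have pairwise disjoint interiors, then the cross-ratio distortion of $f^k|_T$ relative to $M$ is at most $\exp\!\big(C(f)\sum_{j=0}^{k-1}|f^j(M)|^2\big)$; since those intervals are disjoint inside $S^1$ their total length is at most $1$, so this bound is at most $\exp\!\big(C(f)\max_{0\le j<k}|f^j(M)|\big)$. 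If moreover each component of $M\setminus T$ has length at least $\tau\,|T|$, the Koebe principle converts this into a bound on the distortion of $f^k|_T$ depending only on $\tau$ and on the same quantity. Near the critical point I would use the non-flatness in the form $f=f(c)+\phi^{2d+1}$ with $\phi$ a $C^3$ diffeomorphism, so that on an interval $J$ having $c$ as an endpoint one has $f|_J=A\circ p\circ\psi$, where $\psi$ is a diffeomorphism of uniformly bounded distortion, $p(u)=u^{2d+1}$, and $A$ is affine. Finally, by Yoccoz's theorem every orbit of $f$ is dense, hence $\max_{I\in\mathcal{P}_n}|I|\to 0$; this gives $n_0=n_0(f)$ such that, for $n\ge n_0$, every iterated interval appearing below is short enough that the factor $\exp\!\big(C(f)\max_j|f^j(M)|\big)$ is at most $2$. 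This is exactly the point at which $n_0$ is allowed to depend on $f$ while the final constant $K$ stays universal.

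The heart of the matter is the \emph{fundamental real bound}: for $n\ge n_0$ the few atoms of $\mathcal{P}_n$ touching $c$ --- namely $I_n$, $I_{n+1}$ and their immediate neighbours --- are comparable to one another with a universal constant. I would establish this by a bootstrap argument on $n$, and the real content, which is also the main obstacle, is to make the bound uniform in the combinatorics. The delicate case is $a_n$ large: recall from the transition $\mathcal{P}_{n-1}\to\mathcal{P}_n$ that the atom $I_{n-1}$ is subdivided by the monotone orbit segment $\{f^{jq_n+q_{n-1}}(c):1\le j\le a_n\}$ into $a_n+1$ consecutive subintervals, each of size comparable to $|I_n|$ and hence possibly far smaller than $|I_{n-1}|$, and one must show that any two consecutive such subintervals are comparable with a constant \emph{independent of $a_n$} --- i.e., that this long monotone chain does not bunch up. For this I would run the cross-ratio inequality along the chain, exploiting the sign of the distortion (a minimum-principle type argument preventing two consecutive gaps from being disproportionate, so that the bounded errors do not accumulate as the chain lengthens) together with the power-like behaviour of $f$ at $c$, which lets comparability be carried across the critical point. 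This is exactly where the $C^3$ hypothesis is used, through the cross-ratio distortion estimate; for bounded-type combinatorics the argument is considerably softer.

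Granting the fundamental bound, comparability of \emph{every} pair of adjacent atoms of $\mathcal{P}_n$ follows by spreading. An arbitrary atom is $f^j(I_n)$ with $0\le j<q_{n+1}$, or $f^j(I_{n+1})$ with $0\le j<q_n$; the corresponding branch $f^j|_{I_n}$ (resp.\ $f^j|_{I_{n+1}}$) decomposes as $f|_{I_n}$ (resp.\ $f|_{I_{n+1}}$) --- which has the power-like form $A\circ p\circ\psi$ from the first paragraph --- followed by a diffeomorphic branch whose iterated intervals are atoms of the partition, hence pairwise disjoint with total length at most $1$. Applying the Koebe principle to that diffeomorphic branch --- the required Koebe space being supplied once more by the fundamental bound --- gives universally bounded distortion, which transports comparability near $c$ to all adjacent atoms of $\mathcal{P}_n$. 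The two derivative statements are the same argument made explicit: $f^{q_n-1}|_{f(I_{n+1})}$ and $f^{q_{n+1}-1}|_{f(I_n)}$ are diffeomorphisms whose iterated intervals are the translates of $I_{n+1}$ (resp.\ of $I_n$) occurring in $\mathcal{P}_n$, pairwise disjoint with total length at most $1$, and the fundamental bound places $f(I_{n+1})$ (resp.\ $f(I_n)$) with definite Koebe space inside a larger atom; the Koebe principle then yields the asserted two-sided bounds on $|Df^{q_n-1}(x)|/|Df^{q_n-1}(y)|$ and $|Df^{q_{n+1}-1}(x)|/|Df^{q_{n+1}-1}(y)|$ with a universal $K$.
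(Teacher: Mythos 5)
The paper itself contains no proof of Theorem \ref{realbounds}: it is quoted from \'Swi\c{a}tek and Herman, with \cite{herman}, \cite{swiatek}, \cite{graswia} and \cite{dfdm1} cited for the proof, so your proposal can only be measured against that literature. At the level of strategy it does match it: cross-ratio distortion for $C^3$ maps with error controlled by $\sum_j|f^j(M)|^2$, non-flatness at $c$ written as $A\circ p\circ\psi$, the Koebe principle to spread estimates and to obtain the two derivative statements, and Yoccoz's density theorem to make the mesh of $\mathcal{P}_n$ small for $n\ge n_0(f)$ while keeping $K$ universal --- these ingredients play exactly these roles in \cite[Section 3]{dfdm1}.

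As a proof, however, the proposal has a genuine gap precisely at what you yourself call the heart of the matter. The two estimates carrying all the content --- that $|I_n|\asymp|I_{n+1}|$ with a constant independent of the combinatorics, and that consecutive intervals of the monotone chain $f^{jq_n+q_{n-1}}(I_n)$, $0\le j<a_n$, subdividing $I_{n-1}$ are comparable uniformly in $a_n$ --- are only gestured at (``a bootstrap argument on $n$'', ``run the cross-ratio inequality along the chain, exploiting the sign of the distortion, a minimum-principle type argument''). No actual mechanism is exhibited that prevents the chain from bunching; this is exactly the part of \'Swi\c{a}tek's and Herman's work that occupies most of the effort, and in \cite{dfdm1} it rests on carefully chosen cross-ratio configurations together with almost-parabolic (Yoccoz-type) estimates. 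Note also that you cannot shortcut this by quoting Yoccoz's Lemma in the form used in the present paper (Lemma \ref{doyoc}), since that statement presupposes a $K$-controlled pair, i.e.\ the real bounds themselves, so the argument would be circular. A second, smaller soft spot is the spreading step: you invoke Koebe for the diffeomorphic branch carrying a pair of adjacent atoms back to the critical point, with the space ``supplied by the fundamental bound''; but the fundamental bound as you state it only controls the atoms touching $c$, and producing definite space around an arbitrary pair $I\cup J$, with the required disjointness or bounded overlap of its iterates, is again something the cross-ratio machinery (which needs no a priori space), not bare Koebe, is used for in the references. In short, the plan identifies the right tools and correctly locates the difficulty, but the uniform-in-$a_n$ estimate --- which is essentially the theorem --- is assumed rather than proved.
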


Theorem \ref{realbounds} was proved by \'Swi\c{a}tek and Herman (see \cite{herman}, \cite{swiatek}, \cite{graswia} and \cite{dfdm1}). The control on the distortion of the return maps follows from Koebe distortion principle (see \cite[Section 3]{dfdm1}). Note that for a rigid rotation we have $|I_n|=a_{n+1}|I_{n+1}|+|I_{n+2}|$. If $a_{n+1}$ is big, then $I_n$ is much larger than $I_{n+1}$. Thus, even for rigid rotations, real bounds do not hold in general.

\subsection{Critical commuting pairs}\label{subccp}

\begin{definition}\label{critpair} A $C^r$ \emph{critical commuting pair} $\zeta=(\eta,\xi)$ consists of two $C^r$ orientation preserving homeomorphisms $\eta:I_{\eta}\to\eta(I_{\eta})$ and $\xi:I_{\xi}\to\xi(I_{\xi})$ where:
\begin{enumerate}
\item $I_{\eta}=[0,\xi(0)]$ and $I_{\xi}=[\eta(0),0]$ are compact intervals in the real line;
\item\label{comor} $\big(\eta\circ\xi\big)(0)=\big(\xi\circ\eta\big)(0) \neq 0$;
\item\label{la4} $D\eta(x)>0$ for all $x \in I_{\eta}\!\setminus\!\{0\}$ and $D\xi(x)>0$ for all $x \in I_{\xi}\!\setminus\!\{0\}$;
\item\label{pc} The origin is a non-flat critical point for both $\eta$ and $\xi$ with the same odd criticality, that is, there exist a positive integer $d$, an open interval $C$ around the origin and two orientation preserving $C^r$ diffeomorphisms $\phi:C\to\phi(C)$ and $\psi:C\to\psi(C)$ fixing the origin such that $\eta(x)=\big(\phi(x)\big)^{2d+1}\!+\eta(0)$ for all $x \in C \cap I_{\eta}$ and $\xi(x)=\big(\psi(x)\big)^{2d+1}\!+\xi(0)$ for all $x \in C \cap I_{\xi}$;
\item\label{la5} The left-derivatives of the composition $\eta\circ\xi$ at the origin coincide with the corresponding right-derivatives of the composition $\xi\circ\eta$ at the origin, that is, for each $j\in\{1,2,...,r\}$ we have $D_{-}^j\big(\eta\circ\xi\big)(0)=D_{+}^j\big(\xi\circ\eta\big)(0)$.
\end{enumerate}
\end{definition}

\begin{figure}[h]\label{fig1compair}
\begin{center}
\includegraphics[scale=0.8]{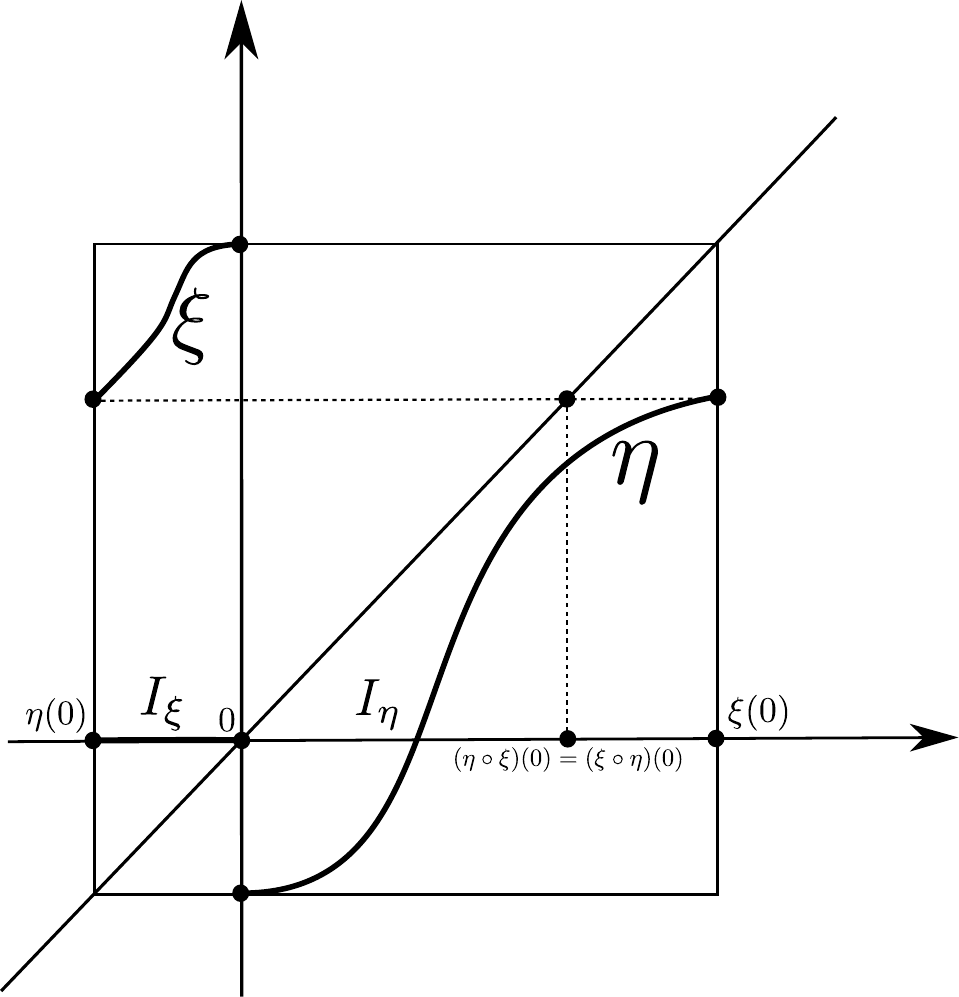}
\caption{A commuting pair.}
\end{center}
\end{figure}

Any critical circle map $f$ with irrational rotation number $\theta$ induces a sequence of critical commuting pairs in a natural way: let $\widehat{f}$ be the lift of $f$ to the real line (for the canonical covering $t \mapsto e^{2\pi it}$) satisfying $D\widehat{f}(0)=0$ and $0<\widehat{f}(0)<1$. For each $n \geq 1$ let $\widehat{I}_n$ be the closed interval in the real line, adjacent to the origin, that projects under $t \mapsto e^{2\pi it}$ to $I_n$. Let $T:\R \to \R$ be the translation $x \mapsto x+1$, and define $\eta:\widehat{I}_n \to \R$ and $\xi:\widehat{I}_{n+1} \to \R$ as:$$\eta=T^{-p_{n+1}}\circ\widehat{f}^{q_{n+1}}\quad\mbox{and}\quad\xi=T^{-p_n}\circ\widehat{f}^{q_n}\,\,.$$

It is not difficult to check that $(\eta|_{\widehat{I}_n}, \xi|_{\widehat{I}_{n+1}})$ is a critical commuting pair, that we denote by $(f^{q_{n+1}}|_{I_n},f^{q_n}|_{I_{n+1}})$ to simplify notation.

\begin{figure}[h]\label{fig2compair}
\begin{center}
\includegraphics[scale=0.5]{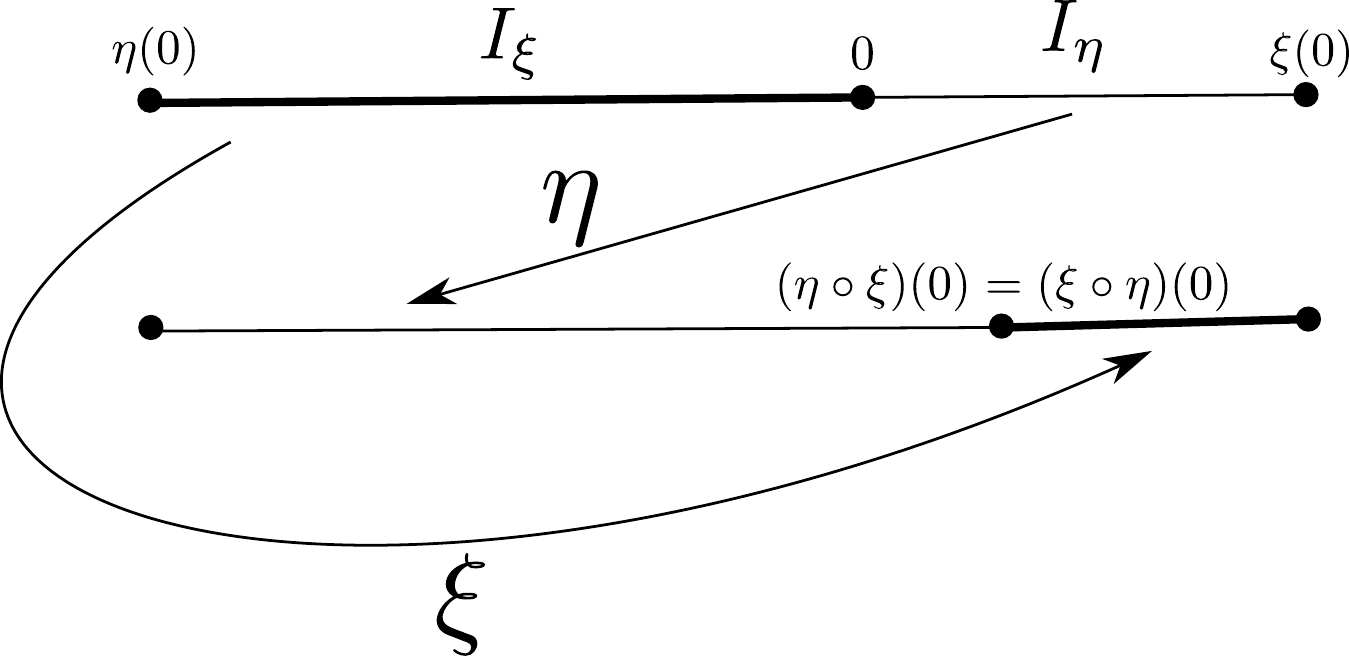}
\caption{Scheme of a commuting pair.}
\end{center}
\end{figure}

As pointed out in \cite[page 344]{dfdm1}, the commuting condition \eqref{comor} in Definition \ref{critpair} actually holds on an open interval around the origin:

\begin{lema}\label{laP4} There exist open intervals $V_{-}\supseteq I_{\xi}$ and $V_{+}\supseteq I_{\eta}$ and $C^r$ homeomorphic extensions $\widehat{\xi}:V_{-}\to\widehat{\xi}(V_{-})\subset\R$ and $\widehat{\eta}:V_{+}\to\widehat{\eta}(V_{+})\subset\R$ of $\xi$ and $\eta$ respectively, satisfying $\big(\widehat{\eta}\circ\widehat{\xi}\big)(x)=\big(\widehat{\xi}\circ\widehat{\eta}\big)(x)$ for all $x$ in the open interval $C$ around the origin given by $C=\big\{x \in V_{-} \cap V_{+}:\widehat{\eta}(x) \in V_{-}\mbox{ and }\widehat{\xi}(x) \in V_{+}\big\}$.
\end{lema}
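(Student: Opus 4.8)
The plan is to extend $\xi$ and $\eta$ separately to slightly larger open intervals, to use the fact that critical circle maps (and hence the branches of a critical commuting pair) come with genuine dynamical extensions, and then to read off the commutation on the overlap from the identity $\big(\eta\circ\xi\big)(0)=\big(\xi\circ\eta\big)(0)$ together with condition \eqref{la5} on matching one-sided derivatives at the origin. Concretely, when $\zeta=(\eta,\xi)$ arises from a critical circle map $f$ as $\big(f^{q_{n+1}}|_{I_n},f^{q_n}|_{I_{n+1}}\big)$, the maps $\eta$ and $\xi$ are restrictions of iterates of the (globally defined) lift $\widehat f$, so they automatically admit $C^r$ homeomorphic extensions $\widehat\eta$ and $\widehat\xi$ to open neighbourhoods $V_+\supseteq I_\eta$ and $V_-\supseteq I_\xi$, simply by allowing the same iterate of $\widehat f$ to act on a slightly larger interval. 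For an abstract critical commuting pair one argues instead that a $C^r$ orientation-preserving homeomorphism of a compact interval whose only critical point is an interior (here, endpoint) non-flat point of the form $\phi(x)^{2d+1}$ can always be extended past that endpoint, and past the other endpoint it is a local $C^r$ diffeomorphism and extends trivially; this gives the required $V_\pm$ and $\widehat\xi$, $\widehat\eta$.

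Next I would define $C=\big\{x\in V_-\cap V_+:\widehat\eta(x)\in V_-\text{ and }\widehat\xi(x)\in V_+\big\}$, which is open (preimage of open sets under continuous maps, intersected with an open set) and, shrinking $V_\pm$ if necessary, contains the origin, since $\widehat\eta(0)=\eta(0)\in I_\xi\subset V_-$ and $\widehat\xi(0)=\xi(0)\in I_\eta\subset V_+$ with strict interior membership after shrinking. On $C$ both compositions $\widehat\eta\circ\widehat\xi$ and $\widehat\xi\circ\widehat\eta$ are defined and $C^r$. In the dynamical case there is nothing more to prove: both compositions equal the same iterate $\widehat f^{q_n+q_{n+1}}$ (up to the same translation $T^{-p_n-p_{n+1}}$), hence they agree identically on $C$. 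For the general abstract case, set $F=\widehat\eta\circ\widehat\xi-\widehat\xi\circ\widehat\eta$ on $C$; this is a $C^r$ function. By hypothesis \eqref{comor}, $F(0)=0$, and by hypothesis \eqref{la5}, $D^j F(0)=0$ for $j=1,\dots,r$ — but this alone does not force $F\equiv 0$.

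So the main obstacle is exactly this: for an abstract pair one cannot conclude $F\equiv0$ on a whole neighbourhood merely from vanishing to order $r$ at a point. The fix — and this is the content of \cite[page 344]{dfdm1} that the statement cites — is to observe that the commuting relation already holds on a set with $0$ as an accumulation point coming from the dynamics of the pair itself: iterating the relation $\eta\circ\xi=\xi\circ\eta$ at $0$ (valid by \eqref{comor}) under the pair produces a sequence $x_k\to 0$ of points at which $\widehat\eta\circ\widehat\xi$ and $\widehat\xi\circ\widehat\eta$ genuinely coincide, because on that orbit the two compositions are built from the same word in $\eta$ and $\xi$. An analytic continuation / identity-type argument (or simply noting that the two $C^r$ maps agree on a closed set whose complement is controlled) then upgrades agreement on this orbit, together with the $r$-jet matching at $0$, to agreement on an open interval $C$ around the origin. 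I would carry out the steps in the order: (1) produce the one-sided extensions $\widehat\eta$, $\widehat\xi$; (2) define $C$ and check $0\in C$ after shrinking; (3) in the dynamical case conclude immediately by identifying both compositions with a single iterate; (4) in the abstract case, build the accumulating orbit where commutation is automatic and conclude. Step (4) is where the real work lies, and it is precisely the point for which the paper defers to \cite{dfdm1}.
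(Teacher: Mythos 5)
Your reduction of the abstract case to showing $F=\widehat{\eta}\circ\widehat{\xi}-\widehat{\xi}\circ\widehat{\eta}\equiv 0$ near the origin, for extensions chosen in advance, is where the argument breaks. With arbitrary $C^r$ extensions the conclusion is false in general: for $x<0$ close to $0$ the value $\big(\widehat{\eta}\circ\widehat{\xi}\big)(x)=\big(\eta\circ\xi\big)(x)$ is canonical, while $\big(\widehat{\xi}\circ\widehat{\eta}\big)(x)$ depends on the chosen extension of $\eta$ to the left of $0$ and of $\xi$ to the left of $\eta(0)$, so nothing forces the two to agree off $x=0$. Your proposed fix does not repair this: there is no identity principle for $C^r$ (or even $C^\infty$) maps, so agreement along a sequence accumulating at $0$ together with the matching of the $r$-jets at $0$ coming from condition \eqref{la5} cannot yield agreement on an open interval; moreover, for an abstract pair as in Definition \ref{critpair}, which need not be even once renormalizable, there is no reason the orbit of $0$ under the pair accumulates at the origin, nor that the two compositions coincide at such points once extension values are involved.

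The paper's proof goes the other way around: the extensions are not chosen first and then checked to commute; one of them is \emph{defined} by the commuting relation. Using the non-flatness condition \eqref{pc}, both $\eta$ and $\xi$ extend to a full neighbourhood $C$ of the origin as $C^r$ homeomorphisms $\widehat{\eta}:C\to A$ and $\widehat{\xi}:C\to B$, and since the criticalities are the same odd integer, $\widehat{\xi}\circ\widehat{\eta}^{-1}:A\to B$ is a $C^r$ diffeomorphism. One then extends $\xi$ to the part of $A$ lying to the left of $\eta(0)$ by setting $\widehat{\xi}:=\eta\circ\xi\circ\widehat{\eta}^{-1}$ there (and symmetrically extends $\eta$ beyond $\xi(0)$ by imposing $\widehat{\eta}\circ\widehat{\xi}=\widehat{\xi}\circ\widehat{\eta}$), so that commutation on $C$ holds by construction. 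The only point to verify is that these glued extensions are $C^r$ across the junction points $\eta(0)$ and $\xi(0)$, and this is exactly what condition \eqref{la5} delivers: the left $r$-jet of $\eta\circ\xi\circ\widehat{\eta}^{-1}$ at $\eta(0)$ matches the right $r$-jet of $\xi$ there. Your observation that in the dynamical case both compositions are restrictions of a single iterate of the lift is correct, but the lemma concerns general pairs, and for those the construction just described, which is the part missing from your proposal, is the substance of the proof.
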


\begin{proof}[Proof of Lemma \ref{laP4}] Since the origin is a non-flat critical point of odd criticality there exists an open interval $C$ around it on which we can extend both $\eta$ and $\xi$ to $C^r$ homeomorphisms $\widehat{\eta}:C \to A$ and $\widehat{\xi}:C \to B$, where $A$ is an open interval around $\eta(0)$ and $B$ is an open interval around $\xi(0)$ (we may suppose that $A$, $B$ and $C$ are pairwise disjoint). Moreover, since the criticality of both $\widehat{\eta}$ and $\widehat{\xi}$ at the origin is the same odd integer, the composition $\widehat{\xi}\circ\widehat{\eta}^{-1}:A \to B$ is actually a $C^r$ diffeomorphism.

Let $V_{-}=A \cup I_{\xi} \cup C$, which is an open interval where $I_{\xi}$ is compactly contained, and in the same way let $V_{+}=C \cup I_{\eta} \cup B$.

Since the composition $\eta\circ\xi$ is already defined at the left part of $C$, the extension of $\eta$ defined above (given by the non-flatness of the critical point) allows us to extend $\xi$ to the left part of $A$ in the following way: for any $y \in A$ there exists a unique $x \in C$ such that $\widehat{\eta}(x)=y$ (since $A=\widehat{\eta}(C)$ and $\widehat{\eta}:C \to A$ is invertible) and then we define $\widehat{\xi}:A\to\R$ as $\widehat{\xi}(y)=\eta\big(\xi(x)\big)=\big(\eta\circ\xi\circ\widehat{\eta}^{-1}\big)(y)$ if $y<\eta(0)$ and $\widehat{\xi}(y)=\xi(y)$ if $y\geq\eta(0)$.

By Condition \eqref{la5} in Definition \ref{critpair}, the left-derivatives of the composition $\eta\circ\xi\circ\widehat{\eta}^{-1}$ at the point $\eta(0)$ coincide with the corresponding right-derivatives of $\xi$ at $\eta(0)$, that is, $\widehat{\xi}$ is of class $C^r$ at the point $\eta(0)$ (and therefore on the whole domain $V_{-}$). Note also that $\widehat{\xi}$ has no critical points on $V_{-}\!\setminus\!\{0\}$ since $\widehat{\xi}\circ\widehat{\eta}^{-1}:A \to B$ is a $C^r$ diffeomorphism and $\eta$ has no critical points in $B \cap I_{\eta}$ by Condition \eqref{la4}.

In the same way, since the composition $\xi\circ\eta$ is already defined at the right part of $C$ and since $\xi$ is also defined on $C$, we extend $\eta$ to the right part of $B$ by imposing the commuting condition $\widehat{\eta}\circ\widehat{\xi}=\widehat{\xi}\circ\widehat{\eta}$ on $C$ as before.
\end{proof}

\subsection*{The M\"obius metric} Given two critical commuting pairs $\zeta_1=(\eta_1,\xi_1)$ and $\zeta_2=(\eta_2,\xi_2)$ let $A_1$ and $A_2$ be the M\"obius transformations such that for $i=1,2$:$$A_i\big(\eta_i(0)\big)=-1,\quad A_i(0)=0\quad\mbox{and}\quad A_i\big(\xi_i(0)\big)=1\,.$$

\begin{definition}\label{Crmetric} For any $0 \leq r < \infty$ define the $C^r$ metric on the space of $C^r$ critical commuting pairs in the following way:$$d_r(\zeta_1,\zeta_2)=\max\left\{\left|\frac{\xi_1(0)}{\eta_1(0)}-\frac{\xi_2(0)}{\eta_2(0)}\right|, \big\|A_1 \circ \zeta_1 \circ A_1^{-1}-A_2 \circ \zeta_2 \circ A_2^{-1}\big\|_r\right\}$$where $\|\cdot\|_r$ is the $C^r$-norm for maps in $[-1,1]$ with one discontinuity at the origin, and $\zeta_i$ is the piecewise map defined by $\eta_i$ and $\xi_i$:$$\zeta_i:I_{\xi_i} \cup I_{\eta_i} \to I_{\xi_i} \cup I_{\eta_i} \quad\mbox{such that}\quad \zeta_i|_{I_{\xi_i}}=\xi_i \quad\mbox{and}\quad \zeta_i|_{I_{\eta_i}}=\eta_i$$
\end{definition}

When we are dealing with real analytic critical commuting pairs, we consider the $C^{\omega}$-topology defined in the usual way: we say that $\big(\eta_n,\xi_n\big)\to\big(\eta,\xi\big)$ if there exist two open sets $U_{\eta} \supset I_{\eta}$ and $U_{\xi} \supset I_{\xi}$ in the complex plane and $n_0\in\nt$ such that $\eta$ and $\eta_n$ for $n \geq n_0$ extend continuously to $\overline{U_{\eta}}$, are holomorphic in $U_{\eta}$ and we have $\big\|\eta_n-\eta\big\|_{C^0(\overline{U_{\eta}})} \to 0$, and such that $\xi$ and $\xi_n$ for $n \geq n_0$ extend continuously to $\overline{U_{\xi}}$, are holomorphic in $U_{\xi}$ and we have $\big\|\xi_n-\xi\big\|_{C^0(\overline{U_{\xi}})} \to 0$. We say that a set $\mathcal{C}$ of real analytic critical commuting pairs is closed if every time we have $\{\zeta_n\}\subset\mathcal{C}$ and $\{\zeta_n\}\to\zeta$, we have $\zeta\in\mathcal{C}$. This defines a Hausdorff topology, stronger than the $C^r$-topology for any $0 \leq r \leq \infty$ (in particular any $C^{\omega}$-compact set of real analytic critical commuting pairs is certainly $C^r$-compact also, for any $0 \leq r \leq \infty$).

\subsection*{The affine metric} Given two critical commuting pairs $\zeta_1=(\eta_1,\xi_1)$ and $\zeta_2=(\eta_2,\xi_2)$ let $L_{\eta_1}:[0,1] \to I_{\eta_1}$ be $L_{\eta_1}(t)=|I_{\eta_1}|t$, and let $L_{\xi_1}:[-1,0] \to I_{\xi_1}$ be $L_{\xi_1}(t)=|I_{\xi_1}|t$. Define in the same way $L_{\eta_2}$ and $L_{\xi_2}$ and consider:
\begin{align*}
&d_{r,\Aff}(\zeta_1,\zeta_2)=\\
&=\max\left\{\left|\frac{\xi_1(0)}{\eta_1(0)}-\frac{\xi_2(0)}{\eta_2(0)}\right|,\big\|\eta_1\circ L_{\eta_1}-\eta_2 \circ L_{\eta_2}\big\|_{C^r([0,1])},\big\|\xi_1\circ L_{\xi_1}-\xi_2 \circ L_{\xi_2}\big\|_{C^r([-1,0])}\right\}.\\
\end{align*}

Both $d_r$ and $d_{r,\Aff}$ are not metrics but \emph{pseudo-metrics}, since they are invariant under conjugacy by homotheties: if $\alpha$ is a positive real number, $H_{\alpha}(t)=\alpha t$ and $\zeta_1=H_{\alpha} \circ \zeta_2 \circ H_{\alpha}^{-1}$, then $d_r(\zeta_1,\zeta_2)=d_{r,\Aff}(\zeta_1,\zeta_2)=0$. In order to have metrics we need to restrict to \emph{normalized} critical commuting pairs (recall that for a commuting pair $\zeta=(\eta,\xi)$ we denote by $\tilde{\zeta}$ the pair $(\tilde{\eta}|_{\tilde{I_{\eta}}}, \tilde{\xi}|_{\tilde{I_{\xi}}})$, where tilde means rescaling by the linear factor $\lambda=\frac{1}{|I_{\xi}|}$).

\subsection{The renormalization operator} As we just said, for a commuting pair $\zeta=(\eta,\xi)$ we denote by $\tilde{\zeta}$ the pair $(\tilde{\eta}|_{\tilde{I_{\eta}}}, \tilde{\xi}|_{\tilde{I_{\xi}}})$, where tilde means rescaling by the linear factor $\lambda=\frac{1}{|I_{\xi}|}$. Note that $|\tilde{I_{\xi}}|=1$ and $\tilde{I_{\eta}}$ has length equal to the ratio between the lengths of $I_{\eta}$ and $I_{\xi}$. Equivalently $\tilde{\eta}(0)=-1$ and $\tilde{\xi}(0)=\frac{|I_{\eta}|}{|I_{\xi}|}=\xi(0)/\big|\eta(0)\big|$.

Let $\zeta=(\eta,\xi)$ be a $C^r$ critical commuting pair according to Definition \ref{critpair}, and recall that $\big(\eta\circ\xi\big)(0)=\big(\xi\circ\eta\big)(0) \neq 0$. Let us suppose that $\big(\xi\circ\eta\big)(0) \in I_{\eta}$ (just as in both Figure 1 and Figure 2 above) and define the \emph{period} $\chi(\zeta)$ of the commuting pair $\zeta=(\eta,\xi)$ as $a\in\nt$ if:$$\eta^{a+1}\big(\xi(0)\big) < 0 \leq \eta^a\big(\xi(0)\big)$$and $\chi(\zeta)=\infty$ if no such $a$ exists (note that in this case the map $\eta|_{I_\eta}$ has a fixed point, so when we are dealing with commuting pairs induced by critical circle maps with irrational rotation number we have finite period). Note also that the period of the pair $(f^{q_{n+1}}|_{I_n},f^{q_n}|_{I_{n+1}})$ induced by a critical circle maps $f$ is exactly $a_{n+1}$, where $\rho(f)=[a_0,a_1,a_2,...,a_n,a_{n+1},...]$ (because the combinatorics of $f$ are the same as for the corresponding rigid rotation). For a pair $\zeta=(\eta,\xi)$ with $\big(\xi\circ\eta\big)(0) \in I_{\eta}$ and $\chi(\zeta)=a<\infty$ note that the pair:$$\big(\eta|_{[0,\eta^a(\xi(0))]}\,,\,\eta^a \circ \xi|_{I_\xi}\big)$$is again a commuting pair, and if $\zeta=(\eta,\xi)$ is induced by a critical circle map:$$\zeta=(\eta,\xi)=\big(f^{q_{n+1}}|_{I_n}\,,\,f^{q_n}|_{I_{n+1}}\big)$$we have that:$$\big(\eta|_{[0,\eta^a(\xi(0))]}\,,\,\eta^a \circ \xi|_{I_\xi}\big)=\big(f^{q_{n+1}}|_{I_{n+2}}\,,\,f^{q_{n+2}}|_{I_{n+1}}\big)$$

This motivates the following definition (the definition in the case $\big(\xi\circ\eta\big)(0) \in I_{\xi}$ is analogous):

\begin{definition}\label{renop} Let $\zeta=(\eta,\xi)$ be a critical commuting pair with $\big(\xi\circ\eta\big)(0) \in I_{\eta}$. We say that $\zeta$ is \emph{renormalizable} if $\chi(\zeta)=a<\infty$. In this case we define the \emph{pre-renormalization} of $\zeta$ as the critical commuting pair:$$p\mathcal{R}(\zeta)=\big(\eta|_{[0,\eta^a(\xi(0))]}\,,\,\eta^a \circ \xi|_{I_\xi}\big),$$and we define the \emph{renormalization} of $\zeta$ as the normalization of $p\mathcal{R}(\zeta)$, that is:$$\mathcal{R}(\zeta)=\tilde{p\mathcal{R}(\zeta)}=\left(\tilde{\eta}|_{\tilde{[0,\eta^a(\xi(0))]}}\,,\,\tilde{\eta^a \circ \xi}|_{\tilde{I_\xi}}\right).$$
\end{definition}

Note in particular that $d_r\big(p\mathcal{R}(\zeta_0),p\mathcal{R}(\zeta_1)\big)=d_r\big(\mathcal{R}(\zeta_0),\mathcal{R}(\zeta_1)\big)$ for any two critical commuting pairs $\zeta_0$ and $\zeta_1$ renormalizable with the same period.

A critical commuting pair is a special case of a \emph{generalized interval exchange map} of two intervals, and the renormalization operator defined above is just the restriction of the \emph{Zorich accelerated version} of the \emph{Rauzy-Veech renormalization} for interval exchange maps. However we will keep in this article the classical terminology for critical commuting pairs.

\begin{figure}[ht!]
\begin{center}
\includegraphics[scale=1.0]{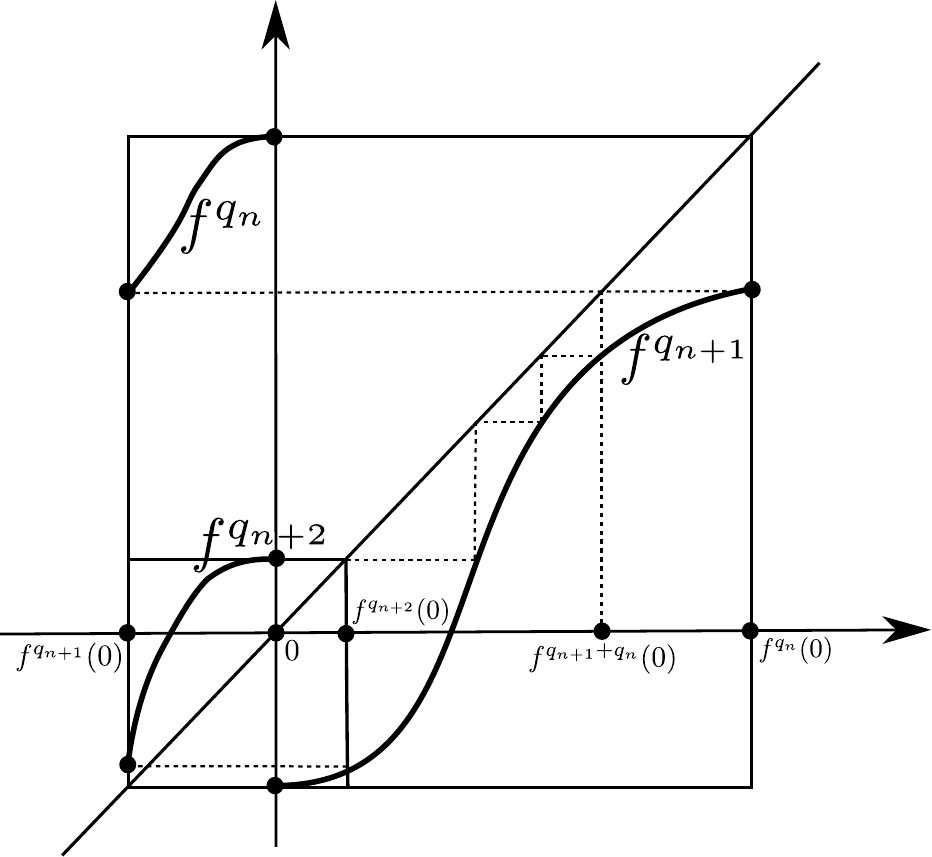}
\caption{Two consecutive renormalizations of $f$, without rescaling (recall that $f^{q_n}$ means $T^{-p_n}\circ\widehat{f}^{q_n}$, see Section \ref{subccp}). In this example $a_{n+1}=4$.}
\end{center}
\end{figure}

\begin{definition}\label{rotnum} Let $\zeta$ be a critical commuting pair. If $\chi(\mathcal{R}^j(\zeta))<\infty$ for $j \in \{0,1,...,n-1\}$ we say that $\zeta$ is \emph{n-times renormalizable}, and if $\chi(\mathcal{R}^j(\zeta))<\infty$ for all $j\in\nt$ we say that $\zeta$ is \emph{infinitely renormalizable}. In this case the irrational number $\theta$ whose continued fraction expansion is equal to:$$\big[\chi\big(\zeta\big), \chi\big(\mathcal{R}(\zeta)\big),...,\chi\big(\mathcal{R}^n(\zeta)\big),\chi\big(\mathcal{R}^{n+1}(\zeta)\big),... \big]$$is called the \emph{rotation number} of the critical commuting pair $\zeta$, and denoted by $\rho(\zeta)=\theta$.
\end{definition}

An immediate remark is that when $\zeta$ is induced by a critical circle map with irrational rotation number, the pair $\zeta$ is automatically infinitely renormalizable and both definitions of rotation number coincide: any $C^r$ critical circle map $f$ with irrational rotation number gives rise to the orbit $\big\{\mathcal{R}^n(f)\big\}_{n \geq 1}$ of infinitely renormalizable $C^r$ critical commuting pairs defined by:$$\mathcal{R}^n(f)=\left(\tilde{f}^{q_n}|_{\tilde{I}_{n-1}},\tilde{f}^{q_{n-1}}|_{\tilde{I}_n}\right)\quad\mbox{for all $n \geq 1$.}$$

For any positive number $\theta$ denote by $\lfloor\theta\rfloor$ the \emph{integer part} of $\theta$, that is, $\lfloor\theta\rfloor\in\nt$ and $\lfloor\theta\rfloor\leq\theta<\lfloor\theta\rfloor+1$. Recall that the \emph{Gauss map} $G:[0,1]\to[0,1]$ is defined by:$$G(\theta)=\frac{1}{\theta}-\left\lfloor\frac{1}{\theta}\right\rfloor\quad\mbox{for}\quad\theta\neq 0\quad\mbox{and}\quad G(0)=0\,,$$and note that $\rho$ \emph{semi-conjugates} the renormalization operator with the Gauss map:$$\rho\big(\mathcal{R}^n(\zeta)\big)=G^n\big(\rho(f)\big)$$for any $\zeta$ at least $n$-times renormalizable. In particular the renormalization operator acts as a \emph{left shift} on the continued fraction expansion of the rotation number: if $\rho(\zeta)=[a_0,a_1,...]$ then $\rho\big(\mathcal{R}^n(\zeta)\big)=[a_n,a_{n+1},...]$.

\subsection{Renormalization of real analytic critical commuting pairs} The main result on the dynamics of the renormalization operator acting on the real analytic category that we will need in this paper is the following:

\begin{theorem}[de Faria-de Melo 2000, Khmelev-Yampolsky 2006]\label{uniform} There exists a universal constant $\lambda$ in $(0,1)$ with the following property: given two real analytic critical commuting pairs $\zeta_1$ and $\zeta_2$ with the same irrational rotation number and the same odd type at the critical point, there exists a constant $C>0$ such that:$$d_{r}\big(\mathcal{R}^n(\zeta_1),\mathcal{R}^n(\zeta_2)\big) \leq C\lambda^n$$for all $n\in\nt$ and for any $0 \leq r < \infty$. Moreover given a $C^{\omega}$-compact set $\mathcal{K}$ of real analytic critical commuting pairs, the constant $C$ can be chosen the same for any $\zeta_1$ and $\zeta_2$ in $\mathcal{K}$.
\end{theorem}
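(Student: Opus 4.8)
The plan is to deduce the statement from the hyperbolicity of the renormalization operator on an invariant compact set of holomorphic commuting pairs, identifying the topological conjugacy classes with the stable manifolds. First I would upgrade the real bounds of Theorem~\ref{realbounds} to \emph{complex bounds}: there is a universal $\mu>0$ and, for each real analytic critical commuting pair $\zeta$, an integer $n_0=n_0(\zeta)$, such that for all $n\geq n_0$ the pair $\mathcal{R}^{n}(\zeta)$ extends to a holomorphic commuting pair whose branches are univalent on topological discs enclosing the real dynamical intervals, with annular moduli bounded below by $\mu$. This is the standard pull-back construction --- start with an annulus around a deep real interval and pull it back along the branches of the return maps, which are univalent by the chain rule since there are no extra critical points, controlling the loss of modulus with the real bounds, the Koebe distortion principle, and a Gr\"otzsch/Schwarz-lemma inequality. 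The immediate payoff is precompactness: the set $\{\mathcal{R}^{n}(\zeta):n\geq n_0\}$ is contained in a universal $C^{\omega}$-compact family $\mathcal{K}_\mu$ of holomorphic commuting pairs, and on any prescribed $C^{\omega}$-compact family $\mathcal{K}$ the integer $n_0$ can be chosen uniformly.

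Next I would study $\mathcal{R}$ as a compact (indeed analytic) operator on holomorphic pairs with definite bounds, and pass to its maximal invariant set $\mathcal{A}$, which attracts the renormalizations of every real analytic critical commuting pair and on which $\mathcal{R}$ is conjugate to the two-sided shift on the combinatorial space $\nt^{\Z}$ (the $j$-th symbol being $\chi(\mathcal{R}^{j}(\zeta))$). The heart of the argument is the claim that $\mathcal{R}$ is \emph{uniformly hyperbolic} on $\mathcal{A}$ with one-dimensional unstable direction: transverse to the topological conjugacy classes the operator strictly contracts, because the complex bounds force each renormalized pair to sit well inside the domain of the pre-images that define it, so that a Schwarz-type inequality in the Banach space of bounded holomorphic maps on a fixed complex neighbourhood produces a uniform contraction factor $\lambda\in(0,1)$ for $D\mathcal{R}$ on the codimension-one subspace tangent to the conjugacy class, while the complementary direction --- the one detected by the rotation-number functional --- is expanded.

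It then remains to identify the local stable manifold of a point $\zeta\in\mathcal{A}$ with a neighbourhood of $\zeta$ inside its topological conjugacy class: two pairs in $\mathcal{K}_\mu$ with the same combinatorics admit a quasiconformal conjugacy of uniformly bounded dilatation, and the stable contraction drives the dilatation of the conjugacy between $\mathcal{R}^{n}(\zeta_1)$ and $\mathcal{R}^{n}(\zeta_2)$ to $1$ geometrically, whence $d_0\big(\mathcal{R}^{n}(\zeta_1),\mathcal{R}^{n}(\zeta_2)\big)\leq C\lambda^{n}$; Cauchy estimates on the fixed complex neighbourhoods then upgrade this to $d_r$ for every finite $r$ with the same $\lambda$. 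Applied to two real analytic pairs with the same irrational rotation number and the same criticality, this works because for $n\geq\max\{n_0(\zeta_1),n_0(\zeta_2)\}$ both $\mathcal{R}^{n}(\zeta_i)$ lie in $\mathcal{K}_\mu$ with the same combinatorial future, hence on a common local stable manifold; the constant $C$ is absorbed when we pass back to all $n\in\nt$, and it depends only on $\mathcal{K}$ once $\zeta_1,\zeta_2$ range over a $C^{\omega}$-compact $\mathcal{K}$, since there the integers $n_0$ and the initial distances are uniformly bounded and the hyperbolicity constants are universal. I expect the main obstacle to be the uniform hyperbolicity in the \emph{unbounded} case: when the periods $\chi$ are large the return maps are long compositions, and one must keep both the complex bounds and the Schwarz-lemma contraction from degenerating as $\chi\to\infty$; this uniformity over the infinite combinatorial alphabet is exactly the extra ingredient that upgrades the bounded-type theorem of de Faria--de Melo to all irrational rotation numbers, following Khmelev--Yampolsky.
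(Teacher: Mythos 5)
The paper does not actually prove Theorem \ref{uniform}: it is imported as an external input, proved by de Faria--de Melo \cite{edsonwelington2} for rotation numbers of bounded type and extended by Khmelev--Yampolsky \cite{KY} to all irrational rotation numbers, and it is only used as a black box in Section \ref{final}. So there is no internal proof to compare your sketch against; what can be judged is whether your outline would reconstruct the cited proofs.

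Your outline has the right general architecture --- complex a priori bounds, compactness of holomorphic commuting pairs, contraction along topological conjugacy classes, then Cauchy-type estimates to upgrade $d_0$-convergence to $d_r$ --- but the decisive step is exactly the one you dispatch in a sentence: the claim that a ``Schwarz-type inequality in the Banach space of bounded holomorphic maps'' yields a uniform contraction factor transverse to the rotation-number direction. No soft Schwarz-lemma argument is known to give this. In de Faria--de Melo the contraction along conjugacy classes rests on McMullen's Teichm\"uller-theoretic rigidity of towers (deep points and dynamic inflexibility), and the passage to unbounded combinatorics required the parabolic/cylinder renormalization machinery of Khmelev--Yampolsky precisely to prevent the estimates from degenerating as the periods $\chi\to\infty$ --- the difficulty you correctly flag at the end but do not resolve. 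Moreover, deducing the theorem from uniform hyperbolicity of the renormalization horseshoe, with stable manifolds identified with conjugacy classes, is close to circular: the horseshoe and hyperbolicity theorems (\cite{yampolsky3}, \cite{yampolsky4}) are themselves built on the exponential convergence statement you are trying to prove and on its tower-rigidity precursors, so they cannot be taken as prior inputs. As a roadmap of the literature your sketch is accurate; as a proof it leaves the central estimate unproved.
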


As we said in Section \ref{S:sketch}, Theorem \ref{uniform} was proved by de Faria and de Melo \cite{edsonwelington2} for rotation numbers of bounded type, and extended by Khmelev and Yampolsky \cite{KY} to cover all irrational rotation numbers.

\subsection{Technical tools} We finish this preliminary section by stating some well-know distortion estimates that we will use along the text. For intervals $M \subset T$ we define the \emph{space} of $M$ inside $T$ to be the smallest of the ratios $|L|/|M|$ and $|R|/|M|$, where $L$ and $R$ are the left and right components of $T\!\setminus\!M$.

\begin{theorem}[Koebe's distortion principle for real maps]\label{koebedistint} Let $f:S^1 \to S^1$ be a $C^3$ map, and let $S>0$ and $\tau>0$ be two positive constants. Then there exists a constant $K=K(S,\tau,f)>1$ with the following property: if $T$ is an interval in the unit circle such that $f^m|_{T}$ is a diffeomorphism and if $\sum_{j=0}^{m-1}\big|f^j(T)\big| \leq S$, then for each interval $M \subset T$ for which the space of $f^m(M)$ inside $f^m(T)$ is at least $\tau$ and for all $x,y \in M$ we have that:$$\frac{1}{K}\leq\frac{\big|Df^m(x)\big|}{\big|Df^m(y)\big|}\leq K\,.$$
\end{theorem}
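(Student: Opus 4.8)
The statement to prove is the classical Koebe distortion principle for real maps (Theorem \ref{koebedistint}), so the plan is to reduce it to the negative-Schwarzian version via the standard trick of inserting an auxiliary M\"obius change of coordinates, and then to bootstrap from $C^3$ smoothness to the required control.

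\smallskip

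The plan is to proceed in three steps. First I would recall the \emph{Minimum Principle} / Koebe lemma for maps with non-positive Schwarzian derivative: if $g$ is a diffeomorphism onto its image with $Sg\le 0$, and $M\subset T$ with the space of $g(M)$ inside $g(T)$ at least $\tau$, then the distortion of $g$ on $M$ is bounded by a function of $\tau$ alone (this is the cross-ratio inequality; see \cite[Chapter IV]{dmvs}). So the core issue is that a general $C^3$ map $f$ need not have negative Schwarzian. Second, I would fix this by choosing, for the given map $f$, a single real number $\sigma>0$ large enough that the M\"obius-type coordinate change — concretely, postcomposing with $t\mapsto t$ and precomposing locally with maps of the form $\phi_\sigma(t)=t/(1+\sigma t)$, or equivalently using the observation that $S(\phi\circ f)$ can be made negative on any fixed compact piece by adding a large multiple of a function with strictly negative Schwarzian — makes the modified map have negative Schwarzian on the relevant domain. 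Since $f$ is $C^3$ on the compact circle, $\|f\|_{C^3}$, $\inf |Df|$ away from critical points, etc., are all bounded, so such a $\sigma=\sigma(f)$ exists; this is exactly why the constant $K$ is allowed to depend on $f$. Third, I would run the negative-Schwarzian Koebe estimate on the iterate $f^m|_T$: the hypothesis $\sum_{j=0}^{m-1}|f^j(T)|\le S$ guarantees that the total ``length budget'' of the orbit of $T$ is controlled, which is what one needs to absorb the error terms coming from the coordinate change accumulated over $m$ steps, and produces a bound $K=K(S,\tau,f)$.

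\smallskip

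More precisely, the key steps in order: (1) state the cross-ratio distortion lemma for a single diffeomorphism with $Sg\le 0$, giving distortion $\le K_0(\tau)$ on $M$ when $g(M)$ has space $\tau$ in $g(T)$; (2) show that, by the chain rule for Schwarzian derivatives, $S(f^m)(x)=\sum_{j=0}^{m-1} Sf\big(f^j(x)\big)\,\big(Df^j(x)\big)^2$, so the ``bad'' part of $S(f^m)$ is controlled by $\|Sf\|_\infty$ times $\sum_j |Df^j(x)|^2$; on $T$, since $f^m|_T$ is a diffeomorphism and $\sum_j|f^j(T)|\le S$, the mean value theorem plus a Denjoy-type summation shows these cumulative derivative sums are bounded in terms of $S$ and the distortion we are trying to estimate — this is the spot where one argues by a standard bootstrap/ continuity argument (first get a crude distortion bound on a slightly smaller interval, feed it back); (3) combine: either quote the standard statement that a $C^3$ map admits, on a fixed compact set, a reparametrization making the Schwarzian negative and transfer the estimate, or directly estimate the cross-ratio distortion of $f^m$ using the sign-controlled main term and the $S$-controlled error. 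Either route yields the two-sided bound $1/K\le |Df^m(x)|/|Df^m(y)|\le K$ with $K=K(S,\tau,f)$.

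\smallskip

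The main obstacle I anticipate is making the dependence on $f$ clean and honest: one must ensure that the non-negativity correction to the Schwarzian (or the reparametrization) can be chosen \emph{uniformly} along the whole orbit segment $T, f(T),\dots, f^{m-1}(T)$ with a single constant depending only on $\|f\|_{C^3}$ and the geometry, not on $m$; this is exactly where the summability hypothesis $\sum_{j=0}^{m-1}|f^j(T)|\le S$ does the work, by bounding the accumulated error of the correction. The bootstrap in step (2) — getting an a priori distortion bound on a subinterval to control the derivative sums, then promoting it — is routine but must be done carefully so as not to introduce a circular dependence on $m$. Once that is handled, the conclusion follows from the classical one-step Koebe lemma with $\tau$ as the only remaining free parameter in $K_0$.
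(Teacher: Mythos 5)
The paper does not actually prove Theorem \ref{koebedistint}: it quotes it from \cite[Section IV.3, Theorem 3.1]{dmvs}, so your sketch has to be measured against that classical argument (a multiplicative cross-ratio inequality along the orbit of $T$, plus a purely real lemma converting bounded cross-ratio distortion and $\tau$-space into bounded derivative distortion). The central device you propose for reducing to the negative-Schwarzian case does not work. The Schwarzian is a projective invariant: for a M\"obius map such as $\phi_\sigma(t)=t/(1+\sigma t)$ one has $S\phi_\sigma\equiv 0$, hence $S(\phi_\sigma\circ f)=Sf$ and $S(f\circ\phi_\sigma)=(Sf\circ\phi_\sigma)\,(D\phi_\sigma)^2$, so no M\"obius pre- or post-composition can create a negative Schwarzian. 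Nor can one choose a single smooth $\phi$ with $S(\phi\circ f)<0$ on the relevant domain in general: if $f$ is a rotation this would require a circle diffeomorphism $\phi$ with everywhere negative Schwarzian, which is impossible since $(D\phi)^{-1/2}$ would then be a positive, periodic, strictly convex function; and in the application made in this paper the intervals $T,f(T),\dots,f^{m-1}(T)$ cover the whole circle, so you cannot retreat to a ``fixed compact piece'' where this obstruction disappears. Using different local corrections along the orbit reintroduces precisely the error-accumulation problem the trick was supposed to remove.

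Your fallback route (step (2)) also has a gap. Bounding $\sum_{j}|Df^j(x)|^2$ by ``mean value theorem plus Denjoy-type summation'' presupposes bounded distortion of the intermediate iterates $f^j$ on all of $T$, which is neither assumed nor true: the space hypothesis is imposed only at time $m$, and $f^j(T)$ may come arbitrarily close to the critical point, so the one-step distortion on $f^j(T)$, hence of $f^j$ on $T$, admits no bound in terms of $S$ and $\tau$. Moreover the asserted conclusion is false as stated: for a rigid rotation $\sum_j|Df^j(x)|^2=m$ is unbounded, and even granting bounded distortion the estimate scales like $S\max_j|f^j(T)|/|T|^2$; only the scale-invariant quantity $|T|^2\sup_T|S(f^m)|$ is controllable, and your sketch never passes to it, so the bootstrap does not close. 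The proof the paper cites avoids both problems by estimating the cross-ratio distortion of each single step on the disjoint-enough intervals $f^j(T)$ (using $C^3$ smoothness together with negativity of $Sf$ near the non-flat critical point), multiplying these one-step bounds so that the hypothesis $\sum_{j}|f^j(T)|\le S$ yields a lower bound of the form $e^{-CS}$ for the total cross-ratio distortion of $f^m|_T$, and then invoking a real-variable argument that turns this bound together with the $\tau$-space of $f^m(M)$ in $f^m(T)$ into the two-sided bound on $|Df^m|$ over $M$. If you want a self-contained proof, that is the route to follow; otherwise it is legitimate to quote \cite{dmvs}, as the paper does.
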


For a proof of Theorem \ref{koebedistint} see \cite[Section IV.3, Theorem 3.1, page 295]{dmvs}. In Section \ref{melhoratese} we will also need the following two classical results from complex analysis:

\begin{theorem}[Koebe's distortion principle for complex maps]\label{koebehol} If $K$ is a compact subset of a domain $\Omega\subset\C$ there exists a constant $M$ (depending on $K$) such that for every univalent function $f$
on $\Omega$ and every pair of points $z,w \in K$ we have
\begin{equation*}
\frac{1}{M}
\leq
\frac{\lvert Df(z) \rvert}{\lvert Df(w) \rvert}
\leq
M\,.
\end{equation*} 
\end{theorem}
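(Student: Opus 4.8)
The plan is to deduce the statement from the classical Koebe distortion theorem by a compactness and chaining argument. First dispose of the trivial case $\Omega=\C$: then every univalent $f$ on $\C$ is affine, so $Df$ is constant and $M=1$ works. Assume henceforth $\Omega\neq\C$, so that $d:=\dist(K,\partial\Omega)>0$.

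\emph{Local estimate.} Set $\delta:=d/4$, so that $\overline{\mathbb{D}(p,3\delta)}\subset\Omega$ for every $p\in K$. Given a univalent $f$ on $\Omega$, recall that $Df$ has no zeros (a critical point would destroy injectivity), so for each $p\in K$ one may form the normalized univalent map $g(z)=\bigl(f(p+3\delta z)-f(p)\bigr)\big/\bigl(3\delta\,Df(p)\bigr)$ on the unit disk, with $g(0)=0$ and $Dg(0)=1$. The classical Koebe distortion theorem bounds $|Dg(z)|$ from above and below by explicit universal constants for $|z|\le 1/3$, hence bounds $|Df(w)|/|Df(p)|$ for $w\in\mathbb{D}(p,\delta)$; thus there is a universal constant $M_0$ (one may take $M_0=16$) such that
\[
\frac{1}{M_0}\le\frac{|Df(z)|}{|Df(w)|}\le M_0\qquad\text{whenever }z,w\in\mathbb{D}(p,\delta),
\]
uniformly over all univalent $f$ on $\Omega$ and all $p\in K$.

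\emph{Globalization.} The remaining task is to chain the local estimate along a number of disks bounded in terms of $K$ and $\Omega$ only. For this I would first enlarge $K$ to a compact \emph{connected} set $\widehat{K}$ with $K\subset\widehat{K}\Subset\Omega$ — e.g.\ cover $K$ by finitely many small disks contained in $\Omega$, join each such disk to a fixed base point by an arc in $\Omega$, and let $\widehat{K}$ be the (compact, connected) union of all these arcs and disks. Put $d':=\dist(\widehat{K},\partial\Omega)>0$, $\delta':=\min\{\delta,d'/4\}$, and cover $\widehat{K}$ by finitely many disks $\mathbb{D}(q_1,\delta'/2),\dots,\mathbb{D}(q_N,\delta'/2)$ with $q_k\in\widehat{K}$ and $N=N(K,\Omega)$. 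Since $\widehat{K}$ is connected and is covered by these disks, their intersection graph is connected; hence any two of the centres $q_k$ are joined by a chain of at most $N$ of these disks, with consecutive centres at distance $<\delta'$. Given $z,w\in K$, choose disks of the cover containing $z$ and $w$, join the corresponding centres by such a chain, and multiply the at most $N+1$ local estimates — each applied on a disk $\mathbb{D}(q_k,\delta')$, which is admissible since $\mathbb{D}(q_k,3\delta')\subset\Omega$. This yields the assertion with $M:=M_0^{\,N+1}$, depending only on $K$ and $\Omega$.

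\emph{Main obstacle.} The only genuinely delicate point is the \emph{uniformity} of the chain length: for a general compact $K$ a path in $\Omega$ between two of its points could be arbitrarily long, so a priori the constant could depend on the pair $z,w$. Passing to the fixed compact connected set $\widehat{K}\Subset\Omega$ and using connectedness of the nerve of a finite cover of $\widehat{K}$ removes this difficulty and fixes $N$, hence $M$, once and for all. Apart from this bookkeeping, the proof is just the textbook Koebe distortion theorem applied on a single disk.
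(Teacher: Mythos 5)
Your argument is correct. Note that the paper does not prove this statement at all: Theorem \ref{koebehol} is quoted as a classical fact from complex analysis (it is only used as a black box in the proof of Proposition \ref{epscomp}), so there is no proof in the paper to compare against. What you give is the standard derivation: the classical Koebe distortion theorem on a disk $\mathbb{D}(p,3\delta)\subset\Omega$, applied to the normalization $g(z)=\bigl(f(p+3\delta z)-f(p)\bigr)/\bigl(3\delta Df(p)\bigr)$, yields a universal bound (your $M_0=16$ at radius $1/3$ is the correct explicit value) on $|Df(z)|/|Df(w)|$ for $z,w$ in the concentric disk of one-third radius, and the global statement follows by chaining finitely many such disks. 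You also handle correctly the two points where a careless write-up could go wrong: the degenerate case $\Omega=\C$ (where $\dist(K,\partial\Omega)$ is meaningless but univalent entire maps are affine), and the uniformity of the chain length, which you secure by replacing $K$ with a fixed compact connected $\widehat K\Subset\Omega$ and using connectedness of the nerve of a finite cover of $\widehat K$, so that the number $N$ of links, hence $M=M_0^{N+1}$, depends only on $K$ and $\Omega$. The only microscopic imprecision is that the relevant connected nerve is that of the sets $\mathbb{D}(q_k,\delta'/2)\cap\widehat K$ (each nonempty because $q_k\in\widehat K$), rather than of the disks themselves; since intersection of the restricted sets implies intersection of the disks, your conclusion that consecutive centres lie within $\delta'$ of each other is unaffected.
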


\begin{theorem}[Koebe's one-quarter theorem]\label{oneq} Let $f:\D\to\C$ be a univalent map from the open unit disk into the complex plane such that the origin is an indifferent fixed point, that is, $f(0)=0$ and $\big|Df(0)\big|=1$. Then the image of $f$ contains the open disk of radius $1/4$ around the origin.
\end{theorem}

\section{Controlled pairs and real bounds}\label{Seccont}

\begin{definition}\label{negschar} We say that a $C^3$ critical commuting pair $\zeta=(\eta,\xi)$ has \emph{negative Schwarzian} if the Schwarzian derivative of both $\eta$ and $\xi$ are negative in $I_{\eta}\!\setminus\!\{0\}$ and $I_{\xi}\!\setminus\!\{0\}$ respectively.
\end{definition}

Recall that the \emph{Schwarzian derivative} of a $C^3$ map $f$ at a regular point $x$ is defined as:$$Sf(x)=\frac{D^3f(x)}{Df(x)}-\frac{3}{2}\left(\frac{D^2f(x)}{Df(x)}\right)^2.$$

\begin{remark}\label{chainsch} Let $\zeta$ be a $C^3$ critical commuting pair with negative Schwarzian. If $\zeta$ is renormalizable, then $\mathcal{R}(\zeta)$ has negative Schwarzian (this follows at once from the chain rule for the Schwarzian derivative). In particular if $\zeta$ has negative Schwarzian and is infinitely renormalizable, then $\mathcal{R}^n(\zeta)$ has negative Schwarzian for all $n\in\nt$.
\end{remark}

\begin{definition}\label{defcont} Let $K>1$ and let $\zeta=(\eta,\xi)$ be a normalized $C^3$ critical commuting pair which is renormalizable with some period $a\in\nt$. We say that $\zeta$ is \emph{$K$-controlled} if the following seven conditions are satisfied:
\begin{itemize}
\item $1/K \leq \xi(0) \leq K$.
\item $\xi(0)-\eta\big(\xi(0)\big) \geq 1/K$.
\item $\eta^{a-1}\big(\xi(0)\big)-\eta^{a}\big(\xi(0)\big) \geq 1/K$.
\item $\eta^{a}\big(\xi(0)\big) \geq 1/K$.
\item $\eta^{a+1}\big(\xi(0)\big) \leq -1/K$.
\item $\|\xi\|_{C^3([-1,0])}\leq K$ and $\|\eta\|_{C^3([0,\xi(0)])}\leq K$.
\item $D\eta(x)\geq 1/K$ for all $x\in\big[\eta^{a}(\xi(0)),\xi(0)\big]$.
\end{itemize}
\end{definition}

Of course if $\zeta$ is $K_0$-controlled and $K_1 \geq K_0$, then $\zeta$ is also $K_1$-controlled.

\begin{definition}\label{defdelconjK}
For $K>1$ let $\mathcal{K}=\mathcal{K}(K)$ be the space of normalized $C^3$ critical commuting pairs which are $K$-controlled. For $K>1$ and $a\in\nt$ let $\mathcal{K}_a(K)$ be the space of normalized $C^3$ critical commuting pairs which are renormalizable with period $a$ and $K$-controlled.
\end{definition}

We can restate the real bounds (Theorem \ref{realbounds}) in the following way:

\begin{theorem}[Real bounds]\label{realB} There exists a universal constant $K_0>1$ with the following property: for any given $C^4$ critical circle map $f$ with irrational rotation number there exists $n_0=n_0(f)\in\nt$ such that the critical commuting pair $\mathcal{R}^n(f)$ is $K_0$-controlled for any $n \geq n_0$.
\end{theorem}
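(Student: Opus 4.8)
The plan is to derive the $K_0$-controlled conclusion directly from the metric real bounds (Theorem \ref{realbounds}) together with the definition of the renormalization operator and the $C^4$ hypothesis. First I would recall that $\mathcal{R}^n(f)=(\tilde f^{q_n}|_{\tilde I_{n-1}},\tilde f^{q_{n-1}}|_{\tilde I_n})$, so that for $\zeta=\mathcal{R}^n(f)=(\eta,\xi)$ the period is $a=a_n$, the domain $I_\xi$ has length $1$ by normalization, and $\xi(0)=|I_{\eta}|/|I_{\xi}|=|I_{n-1}|/|I_n|$ up to rescaling. The first five bullets of Definition \ref{defcont} are then \emph{exactly} statements about the ratios of lengths of adjacent atoms of the dynamical partition $\mathcal{P}_{n-1}$ (respectively $\mathcal{P}_n$): e.g. $\xi(0)$ is the ratio $|I_{n-1}|/|I_n|$ of two adjacent atoms, $\xi(0)-\eta(\xi(0))$ and $\eta^{a-1}(\xi(0))-\eta^a(\xi(0))$ and $\eta^a(\xi(0))$ and $-\eta^{a+1}(\xi(0))$ are (rescaled) lengths of the consecutive atoms $f^{jq_n+q_{n-1}}([c,f^{q_n}(c)])$ into which $I_{n-1}$ is subdivided in passing from $\mathcal P_{n-1}$ to $\mathcal P_n$. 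By Theorem \ref{realbounds} all of these are pinched between $K^{-1}$ and $K$ for a universal $K$ once $n\geq n_0(f)$, which gives the first five conditions with $K_0$ depending only on the universal $K$ from the real bounds.

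Next I would establish the $C^3$-norm bound (sixth bullet). The maps $\eta$ and $\xi$ are, up to affine rescaling by the factor $1/|I_n|$ (which is the source of the difficulty, since $|I_n|\to 0$), iterates $f^{q_n}$ and $f^{q_{n-1}}$ restricted to the atoms $I_{n-1}$ and $I_n$. Away from the critical point these restrictions are compositions of diffeomorphic branches whose total length is bounded (the atoms at level $n-1$ tile the circle with bounded overlap multiplicity), so Koebe's distortion principle (Theorem \ref{koebedistint}) controls $|Df^{q_n}|$ up to a bounded factor on the relevant interval; the normalization by $1/|I_n|$ then makes the rescaled first derivative comparable to $1$. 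To control the rescaled $C^3$-norm one uses the $C^4$ hypothesis: a standard computation (as in \cite{guamelo}) shows that the rescaled higher derivatives of a long composition of uniformly $C^4$ branches, whose images have summable lengths, stay bounded — this is precisely the point where $C^4$ (rather than $C^3$) smoothness of $f$ is needed to get $C^3$-bounded renormalizations, and I would cite the construction in \cite{guamelo} (or reprove it via the Koebe-type estimates and the chain rule for the first three derivatives) for this step. The same argument, restricted to the subinterval $[\eta^a(\xi(0)),\xi(0)]$ which stays uniformly away from $0$ after rescaling (by the first five bullets), together with the lower bound on $|Df^{q_n}|$ coming from bounded distortion and the fact that the image has definite length, yields the seventh bullet $D\eta(x)\geq 1/K$ there.

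The main obstacle is the sixth bullet: transferring a \emph{uniform} $C^3$ bound through the affine rescaling by $1/|I_n|$, since naively each differentiation of the rescaled map picks up a factor $|I_n|$ in the wrong direction for the lower-order terms but the correct combination must cancel. The key is that the correct object to bound is the \emph{nonlinearity} (or, more robustly, the Schwarzian-type data) of the long composition, which is rescaling-covariant in the right way, and that summability of $\sum |f^j(I)|$ over the branches of the composition — a consequence of the combinatorial structure of $\mathcal P_{n-1}$ together with the real bounds — keeps it bounded; I expect to borrow this estimate essentially verbatim from \cite{guamelo}, where it was carried out for $C^4$ critical circle maps. Finally, once all seven conditions hold with constants depending only on the universal $K$ of Theorem \ref{realbounds} and on universal Koebe constants, one sets $K_0$ to be the maximum of these, which is universal, and $n_0(f)$ to be the threshold from Theorem \ref{realbounds}, completing the proof.
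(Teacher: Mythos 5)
Your proposal is correct and matches the paper's treatment: the paper gives no independent proof of Theorem \ref{realB}, but simply refers to de Faria--de Melo \cite[Section 3 and Appendix A]{dfdm1}, where precisely this combination is carried out --- the first five and the last conditions of Definition \ref{defcont} coming from the geometric comparability of the relevant atoms of the dynamical partitions (Theorem \ref{realbounds}), and the $C^3$-bound on the rescaled return maps coming from the $C^4$ smoothness via control of the nonlinearity/Schwarzian of long compositions whose intervals have bounded total length. Your deferral of that analytic core to \cite{guamelo}/\cite{dfdm1} is thus in line with what the paper itself does.
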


The $C^4$ smoothness is needed in order to have that the critical commuting pair $\mathcal{R}^n(f)$ is $C^3$ bounded for $n$ big enough. For a proof of Theorem \ref{realB} see \cite[Section 3 and Appendix A]{dfdm1}. Moreover, we have:

\begin{theorem}\label{realBcomSneg} For any given $K>1$ there exists $n_0=n_0(K)\in\nt$ with the following property: if $\zeta$ is an infinitely renormalizable $C^3$ critical commuting pair with negative Schwarzian which is $K$-controlled, then $\mathcal{R}^n(\zeta)$ is $K_0$-controlled for all $n \geq n_0$, where the universal constant $K_0>1$ is given by Theorem \ref{realB}.
\end{theorem}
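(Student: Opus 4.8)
The plan is to combine the real bounds in their commuting-pair form (Theorem \ref{realB}) with a compactness/approximation argument that lets us transfer those bounds from genuine critical circle maps to infinitely renormalizable $C^3$ commuting pairs. The subtle point is that Theorem \ref{realB} is stated for $\mathcal{R}^n(f)$ with $f$ a $C^4$ \emph{circle map}, whereas here we start from an abstract $C^3$ commuting pair $\zeta$ that need not extend to a circle map and need not be $C^4$. The negative Schwarzian hypothesis is precisely what compensates for the missing fourth derivative: by Remark \ref{chainsch}, negative Schwarzian is preserved under renormalization, and the Schwarzian derivative being negative gives, via the Koebe principle (Theorem \ref{koebedistint}) applied to the monotone branches that make up $\eta$ and $\xi$ and their iterates, the distortion control that the $C^3$-bound on $\mathcal{R}^n(\zeta)$ would otherwise require a $C^4$ bound to produce.

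First I would reconstruct the real-bounds machinery directly at the level of commuting pairs. Starting from $\zeta$ which is $K$-controlled (hence $C^3$-bounded, with $\|\eta\|_{C^3}, \|\xi\|_{C^3}\leq K$ and derivatives bounded below on the relevant intervals) and infinitely renormalizable with negative Schwarzian, I would show that the dynamical partitions $\mathcal{P}_n$ generated by $\zeta$ (equivalently, by iterating the long branch of each renormalization) satisfy the bounded-geometry estimate $K_1^{-1}|I|\leq|J|\leq K_1|I|$ for adjacent atoms, with a constant $K_1$ that is \emph{universal} for $n$ large — independent of $\zeta$ — once $n\geq n_0(K)$. The mechanism is the standard one of \'Swi\c{a}tek–Herman: one runs the cross-ratio / Koebe argument, but now one uses negative Schwarzian (which forces every composition of monotone branches to have negative Schwarzian, so cross-ratios are non-contracted) instead of near-linearity of high iterates coming from a $C^4$ bound. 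The initial $C^3$ bound on $\zeta$ itself, together with finitely many steps, controls the transient behaviour; the a priori (universal) bounds then kick in.

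Once the universal real bounds hold for the dynamical partition of $\zeta$ beyond level $n_0(K)$, I would read off the seven conditions in Definition \ref{defcont} for $\mathcal{R}^n(\zeta)$ one at a time: the ratios $\xi(0)$, $\xi(0)-\eta(\xi(0))$, $\eta^{a-1}(\xi(0))-\eta^a(\xi(0))$, $\eta^a(\xi(0))$, $-\eta^{a+1}(\xi(0))$ are all comparabilities between adjacent (or nearby) atoms of the partition at the appropriate level, hence bounded above and below by a universal constant. The $C^3$-norm bound $\|\xi\|_{C^3},\|\eta\|_{C^3}\leq K_0$ on $\mathcal{R}^n(\zeta)$ comes from the negative Schwarzian plus the universal geometric bounds: once the space around each relevant interval is definite (universal), the Koebe principle for $C^3$ maps with negative Schwarzian bounds the distortion of the composed branch universally, and rescaling to the normalized pair turns this into a universal $C^3$-norm bound; one argues as in \cite[Section 3 and Appendix A]{dfdm1}, noting that the argument there only uses the $C^4$ hypothesis to get to a situation with definite space and controlled Schwarzian, which we are handed here directly. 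Finally, $D\eta(x)\geq 1/K_0$ on $[\eta^a(\xi(0)),\xi(0)]$ follows from the lower bound on derivatives of the branches (again Koebe, using that this interval is uniformly away from the critical point and has definite space) together with the universal comparability of $|I_\eta|$ and $|I_\xi|$. Collecting these, $\mathcal{R}^n(\zeta)$ is $K_0$-controlled for the universal $K_0$ of Theorem \ref{realB}, for all $n\geq n_0(K)$.

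The main obstacle, and the step I would spend the most care on, is making the real-bounds argument genuinely \emph{quantitative and uniform in $\zeta$} at the commuting-pair level without an ambient circle map: one must check that every appeal to Koebe uses only (i) negative Schwarzian, which is $\zeta$-independent in quality, and (ii) a definite amount of space, which for small $n$ is supplied by the $K$-control hypothesis on $\zeta$ and for large $n$ by the a priori bounds themselves — and that the passage from ``$K$-controlled at level $0$'' to ``$K_0$-controlled at level $n_0(K)$'' does not secretly need a bound on a fourth derivative or on the complex-analytic extension. The cleanest route is probably to show first that $\zeta$ (being $C^3$-bounded and negative-Schwarzian) already has real bounds with \emph{some} constant depending on $K$ for all $n$, then invoke the standard fact that such bounds improve to universal ones as $n\to\infty$ (the ``attracting'' nature of the real bounds), which is exactly the content of \cite[Appendix A]{dfdm1} once one checks that appendix only uses $C^3$-boundedness plus negative Schwarzian along the orbit. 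I would cite that appendix for the improvement step and supply the $K$-dependent bound for the transient by a direct Koebe estimate using the explicit constants in Definition \ref{defcont}.
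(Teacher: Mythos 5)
The paper states Theorem \ref{realBcomSneg} without giving a proof, presenting it as a variant of Theorem \ref{realB} and implicitly relying on the real-bounds machinery of \cite[Section 3 and Appendix A]{dfdm1}, with the negative Schwarzian hypothesis standing in for the $C^4$ smoothness that is otherwise needed to get $C^3$-bounded renormalizations. Your sketch follows exactly that intended route — negative Schwarzian is preserved under renormalization (Remark \ref{chainsch}) and supplies the Koebe/cross-ratio control that $C^4$ smoothness would otherwise provide, the $K$-control handles the finitely many transient levels, and the bounds then improve to the universal constant $K_0$ with $n_0$ depending only on $K$ — so it is consistent with the paper's treatment, though it remains a program (in particular the uniformity check on \cite[Appendix A]{dfdm1} you flag yourself) rather than a worked-out proof.
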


We will also need the following fact:

\begin{lema}\label{fromC2toC3} Given $K>1$ there exists $B=B(K)>K$ with the following property: let $\zeta=(\eta,\xi)$ be a $C^3$ critical commuting pair which is $K$-controlled, renormalizable with some period $a\in\nt$ and such that $\mathcal{R}(\zeta)$ is $C^2$-bounded by $K$. Then $\mathcal{R}(\zeta)$ is $C^3$-bounded by $B$.
\end{lema}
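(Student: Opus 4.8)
\textbf{Proof proposal for Lemma \ref{fromC2toC3}.}

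The plan is to promote the $C^2$ control on $\mathcal{R}(\zeta)$ to a $C^3$ control by exploiting the fact that the third derivative of $\mathcal{R}(\zeta)$ is computed from the third derivatives of $\eta$ and $\xi$ (which are bounded by $K$, since $\zeta$ is $K$-controlled) together with lower derivatives that are already under control. Concretely, recall that the non-trivial branch of the pre-renormalization is $\eta^a\circ\xi$ on $I_\xi$, while the other branch is just $\eta$ restricted to a subinterval, so the only branch requiring work is the composition $\eta^a\circ\xi$; the renormalization is obtained from the pre-renormalization by conjugating with the homothety of ratio $\lambda = 1/|\eta^a(\xi(0))|$ (the length of the new $\xi$-interval), and since $\zeta$ is $K$-controlled we have $\eta^a(\xi(0)) \geq 1/K$, so $\lambda \leq K$ is bounded.

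First I would write down the chain rule for the third derivative of a composition $F = g_a \circ g_{a-1} \circ \cdots \circ g_1$ (here each $g_i = \eta$ except the innermost which is $\xi$): $D^3 F$ is a universal polynomial expression in the derivatives $D^j g_i$ ($j \leq 3$) evaluated along the orbit, and in the intermediate derivatives $D^k(g_{i-1}\circ\cdots\circ g_1)$ for $k \leq 3$. The key point is that all the \emph{first} derivatives $D\eta$ along the relevant piece of orbit $[\eta^a(\xi(0)),\xi(0)]$ are bounded below by $1/K$ (seventh bullet of Definition \ref{defcont}) and above by $K$ (the $C^3$ norm bound), so the products $D(g_{i-1}\circ\cdots\circ g_1) = \prod Dg_j$ stay in a bounded range; the second derivatives $D^2\eta, D^2\xi$ are bounded by $K$; and — crucially — we have an a priori bound on $D^2 F$ and hence on the intermediate second derivatives, because $\mathcal{R}(\zeta)$ is assumed $C^2$-bounded by $K$ and the number $a$ of compositions does not enter once we know the \emph{output} second derivative is controlled. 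Indeed, $D^2(\eta^a\circ\xi)$ telescopes in terms of $D\eta$'s and $D^2\eta$'s along the orbit, and the $C^2$-bound on $\mathcal{R}(\zeta)$ (after undoing the homothety, which costs only powers of $\lambda \leq K$) pins down the size of this expression; combined with the uniform lower bound $D\eta \geq 1/K$ this controls the individual intermediate second derivatives.

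Then I would substitute these controlled quantities into the third-derivative chain-rule formula. Each term is a product of: finitely many first derivatives (bounded above and below), at most one or two second derivatives (bounded by $K$, or bounded via the $C^2$ hypothesis on the composite), and at most one third derivative $D^3\eta$ or $D^3\xi$ (bounded by $K$); and although the number of terms grows with $a$, each term after grouping contains a factor $\prod_{\text{some indices}} D\eta$ whose length complements the others so that the total telescopes against $D(\eta^a\circ\xi)$, which is itself bounded because $\eta^a\circ\xi$ maps $I_\xi$ into an interval of controlled size and has no critical points away from $0$. Carrying out this bookkeeping gives $\|\eta^a\circ\xi\|_{C^3} \leq B_0(K)$, and after the bounded rescaling by $\lambda$ we get $\|\mathcal{R}(\zeta)\|_{C^3} \leq B(K)$ with $B(K) > K$ as claimed. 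The main obstacle is precisely this bookkeeping: making sure the $a$-dependence genuinely cancels. The resolution is that the chain rule for $D^2$ and $D^3$ of an iterate, when written correctly, expresses everything in terms of the \emph{nonlinearity} $\eta''/\eta'$ and the Schwarzian $S\eta$ summed along the orbit in a telescoping fashion, and the $C^2$-bound on the composite is exactly a bound on the summed nonlinearity; once that sum is bounded, the summed $|S\eta|\cdot$(derivative factors) controlling $D^3$ is bounded too, uniformly in $a$.
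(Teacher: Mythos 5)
Your high-level plan in the final paragraph --- reduce the third derivative of the long composition to the nonlinearity and the Schwarzian summed along the orbit, and then use the identity $D^3f=Df\cdot Sf+\tfrac{3}{2}(D^2f)^2/Df$ --- is indeed the mechanism of the paper's proof. But two of the justifications you give do not work, and the ingredient that actually makes the estimate uniform in $a$ is absent. First, you claim that since each factor $D\eta$ along the orbit lies in $[1/K,K]$, the products $D\eta^i=\prod D\eta$ ``stay in a bounded range'': this is a non sequitur, since a product of up to $a$ factors each in $[1/K,K]$ is a priori only bounded by $K^{a}$. Second, you claim that the $C^2$-bound on the composite ``pins down'' the intermediate second derivatives (equivalently the summed nonlinearity): the chain rule gives $N(\eta^a)(x)=\sum_i N\eta(\eta^i(x))\,D\eta^i(x)$, a \emph{signed} sum, and a bound on the sum bounds neither the individual summands nor the quantity $\sum_i |S\eta(\eta^i(x))|\,(D\eta^i(x))^2$ that controls $S\eta^a$. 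So the uniformity in $a$ --- which is the whole content of the lemma --- is not established by your argument.

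The missing idea is bounded distortion combined with the disjointness of the fundamental domains $I_i=[\eta^{i}(\xi(0)),\eta^{i-1}(\xi(0))]$. For $x\in I_1=\xi(I_\xi)$, Koebe's distortion principle (Theorem \ref{koebedistint}) together with the $K$-control (which gives $|I_1|\ge 1/K$) yields $|D\eta^i(x)|\le B\,|I_{i+1}|/|I_1|\le B\,|I_{i+1}|$, and since the $K$-control also bounds $|S\eta|$ on $[\eta^{a}(\xi(0)),\xi(0)]$, the Schwarzian chain rule gives
$$\big|S\eta^a(x)\big|\le\sum_{i=0}^{a-1}\big|S\eta(\eta^i(x))\big|\,\big|D\eta^i(x)\big|^2\le B\sum_{i=1}^{a}|I_i|^2\le B\,\Big(\max_i|I_i|\Big)\sum_{i=1}^{a}|I_i|\le B\,\xi(0)^2\le B,$$
uniformly in $a$: it is the \emph{square} of the interval lengths that makes the sum converge, not a telescoping against the output derivative. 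With $|S\eta^a|\le B$, the hypothesis $|D\eta^a|,|D^2\eta^a|\le K$, and the lower bound $|D\eta^a|\ge 1/B$ (bounded distortion again, using that $|I_1|$ and $|I_a|$ are both between $1/K$ and $K$), the identity $D^3\eta^a=D\eta^a\,S\eta^a+\tfrac{3}{2}(D^2\eta^a)^2/D\eta^a$ gives the $C^3$ bound on $\eta^a$; composing with $\xi$ (which is $C^3$-bounded by $K$) and rescaling (bounded thanks to $\eta^a(\xi(0))\ge 1/K$, as you correctly note) finishes. In short: your skeleton is the right one, but without the distortion estimate and the $\sum|I_i|^2$ argument the key uniform-in-$a$ bounds are unjustified, and the substitute arguments you offer are invalid.
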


Note that the constant $B$ depends only on $K$, and not on the period of renormalization $a$.

\begin{proof}[Proof of Lemma \ref{fromC2toC3}] Let $\zeta=(\eta,\xi)$ be a $C^3$ normalized critical commuting pair which is renormalizable with some period $a\in\nt$. For $i\in\{0,...,a\}$ let $x_i=\eta^{i}(\xi(0))$. Note that $x_i\in I_{\eta}=[0,\xi(0)]$ for all $i\in\{0,...,a\}$. Denote by $I_i$, $i\in\{1,...,a\}$, the fundamental domains of $\eta$ given by $I_i=[\eta^{i}(\xi(0)),\eta^{i-1}(\xi(0))]$. By the commuting condition $I_1=\xi(I_{\xi})=\xi ([-1,0])$. As before, the letter $B$ will denote uniform constants, their value might change during estimates. We claim first that:
\begin{equation}\label{claimsch}
\big|S\eta^a(x)\big| \leq B\quad\mbox{for all $x \in I_1$,}
\end{equation}
where as before $S\eta^a$ denotes the Schwarzian derivative of $\eta^a$. Indeed, by the $K$-control we have $\big|S\eta(y)\big| \leq B$ for all $y\in[x_a,x_0]=\big[\eta^a(\xi(0)),\xi(0)\big]$ and then:$$\big|S\eta^a(x)\big|\leq\sum_{i=0}^{a-1}\big|S\eta(\eta^i(x))\big|\!\times\!\big|D\eta^i(x)\big|^2\leq B\sum_{i=0}^{a-1}\big|D\eta^i(x)\big|^2\quad\mbox{for all $x \in I_1$.}$$

By bounded distortion (Theorem \ref{koebedistint}) and the $K$-control we have that:$$\big|D\eta^i(x)\big|\leq \frac{B}{|I_1|}\!\times\!|I_{i+1}|\leq B|I_{i+1}|\quad\mbox{for all $x \in I_1$,}$$and then we obtain that for all $x \in I_1$:
\begin{align*}
\big|S\eta^a(x)\big|&\leq B\sum_{i=0}^{a-1}|I_{i+1}|^2=B\sum_{i=1}^{i=a}|I_{i}|^2\\
&\leq B\times\!\max_{i\in\{1,...,a\}}\!|I_{i}|\times\!\sum_{i=1}^{i=a}|I_{i}|\leq B\big|\xi(0)\big|^2 \leq B
\end{align*}
as was claimed. Note now that by hypothesis we know that:$$\big|D\eta^a(x)\big| \leq K\quad\mbox{and}\quad\big|D^2\eta^a(x)\big|\leq K\quad\mbox{for all $x \in I_1$.}$$

Moreover, again by bounded distortion and the $K$-control we also have $\big|D\eta^a(x)\big| \geq 1/B$ for all $x \in I_1$, since $|I_1|$ and $|I_a|$ are comparable. With this at hand and claim \eqref{claimsch} we obtain for all $x \in I_1$ that:$$\big|D^3\eta^a(x)\big|\leq\big|D\eta^a(x)\big|\!\times\!\big|S\eta^a(x)\big|+\frac{3}{2}\,\frac{\big|D^2\eta^a(x)\big|^2}{\big|D\eta^a(x)\big|}\leq BK+\frac{3}{2}\,BK^2\,.$$

Since $\|\eta^a\|_{C^2(I_1)} \leq K$ by hypothesis, we obtain that $\|\eta^a\|_{C^3(I_1)} \leq B$. Finally, from $p\mathcal{R}(\zeta)=\big(\eta|_{[0,\eta^a(\xi(0))]},\eta^a \circ \xi|_{I_\xi}\big)$ and the fact that $\zeta$ is $K$-controlled we obtain that the critical commuting pair $\mathcal{R}(\zeta)$ is $C^3$-bounded by $B$.
\end{proof}

\begin{remark}\label{rememap} Let $\zeta=(\eta,\xi)$ be a $C^3$ critical commuting pair which is renormalizable with period $a$ and $K_0$-controlled, and denote by $N\eta$ the \emph{non-linearity} of $\eta$ (see Section \ref{comp}). Then $N\eta$ is Lipschitz continuous in $\big[\eta^{a}(\xi(0)),\xi(0)\big]$ with some universal constant $K(K_0)>1$. Indeed, this comes from the identity:$$D(N\eta)=\frac{D^3\eta}{D\eta}-\left(\frac{D^2\eta}{D\eta}\right)^2=S\eta+\frac{1}{2}\big(N\eta\big)^2,$$where as before $S\eta$ denotes the Schwarzian derivative of $\eta$. Note that actually $N\eta$ is $C^1$-bounded in $\big[\eta^{a}(\xi(0)),\xi(0)\big]$ whenever $\zeta\in\mathcal{K}_a$. Moreover, if $\zeta,\tilde{\zeta}\in\mathcal{K}_a$ and $x \in J=\big[\eta^{a}(\xi(0)),\xi(0)\big]\cap\big[\tilde{\eta}^{a}(\tilde{\xi}(0)),\tilde{\xi}(0)\big]$ we have that $\big|N\eta(x)-N\tilde{\eta}(x)\big|\leq K\|\eta-\tilde{\eta}\|_{C^2(J,\R)}$.
\end{remark}

Finally, it is not difficult to prove that given $K_0>1$ there exists $K=K(K_0)>1$ such that both metrics $d_2$ and $d_{2,\Aff}$ are Lipschitz equivalent, with constant $K$, when restricted to normalized $K_0$-controlled $C^3$ critical commuting pairs. This allows us to use both metrics on our estimates.

\section{Lipschitz continuity}\label{Ssap}

Sections \ref{Smonfam} to \ref{lip} of this paper are devoted to prove the following:

\begin{lema}[Key lemma]\label{main} For any given $K>1$ there exist two constants $\varepsilon_0=\varepsilon_0(K)\in(0,1)$ and $L=L(K)>1$ with the following property: let $\zeta_0$ and $\zeta_1$ be two infinitely renormalizable normalized $C^3$ critical commuting pairs which are $K$-controlled, both $\zeta_0$ and $\zeta_1$ have negative Schwarzian, $\rho(\zeta_0)=\rho(\zeta_1)\in[0,1]\!\setminus\!\Q$ and $d_2(\zeta_0,\zeta_1)<\varepsilon_0$. Then we have:$$d_2\big(\mathcal{R}(\zeta_0),\mathcal{R}(\zeta_1)\big) \leq L\,d_2(\zeta_0,\zeta_1),$$where $d_2$ denotes the $C^2$ distance in the space of $C^2$ critical commuting pairs.
\end{lema}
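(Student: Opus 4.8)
The plan is to reduce the $C^2$-distance bound for $\mathcal{R}(\zeta_0)$ and $\mathcal{R}(\zeta_1)$ to a statement about the orbits $x_i^j=\eta_j^{\,i}(\xi_j(0))$, $0\le i\le a$, $j\in\{0,1\}$, that define the domains of the pre-renormalizations $p\mathcal{R}(\zeta_j)=\big(\eta_j|_{[0,\eta_j^a(\xi_j(0))]},\eta_j^a\circ\xi_j|_{I_{\xi_j}}\big)$. By Theorem \ref{realBcomSneg} (applied to $\mathcal{R}^N(\zeta_j)$ for a fixed $N=N(K)$) and the fact that $d_2\big(\mathcal{R}(\zeta_0),\mathcal{R}(\zeta_1)\big)=d_2\big(\mathcal{R}^{N+1}(\zeta_0),\mathcal{R}^{N+1}(\zeta_1)\big)$ up to iterating $N$ more times, it is enough to prove the estimate for $K_0$-controlled pairs. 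The two key branches of $\mathcal{R}(\zeta_j)$ to control are $\tilde\eta_j|_{\widetilde{[0,x_a^j]}}$, which differs from the old $\eta_j$ only by restriction and affine rescaling by $\lambda_j=1/|I_{\xi_j}|$, and $\widetilde{\eta_j^a\circ\xi_j}|_{\widetilde{I_{\xi_j}}}$, whose $C^2$-size is governed by the distortion of $\eta_j^a$ on the fundamental domain $I_1^j=\xi_j(I_{\xi_j})$, uniformly bounded via Theorem \ref{koebedistint} and Lemma \ref{fromC2toC3}. Since $\zeta_0,\zeta_1$ are $K_0$-controlled, the rescaling factors $\lambda_0,\lambda_1$ are comparable and the first branch contributes $O\big(d_2(\zeta_0,\zeta_1)+|x_a^0-x_a^1|\big)$; so everything comes down to (i) $|x_a^0-x_a^1|=O(\varepsilon)$ where $\varepsilon=d_2(\zeta_0,\zeta_1)$, and (ii) a $C^2$-closeness of the composed branch $\eta_0^a\circ\xi_0$ versus $\eta_1^a\circ\xi_1$.

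For (i) — this is the \emph{Synchronization Lemma} foreshadowed in Section \ref{S:sketch} — the naive approach of iterating the $C^2$ bound $a$ times fails, because the Yoccoz geometry (Lemma \ref{doyoc}) makes the orbit contract toward a region of size $\sim 1/\sqrt a$ and $a$ can be arbitrarily large, so accumulated errors are not controlled by $a\varepsilon$. Instead I would use the hypothesis $\rho(\zeta_0)=\rho(\zeta_1)$ decisively: introduce the order relation of Section \ref{order} (a cone-field adapted to the unstable direction of $\mathcal{R}$) with respect to which the rotation number is monotone, observe that if the new boundary points were separated by much more than $\varepsilon$ then some future period $\chi(\mathcal{R}^m(\zeta_0))$ would differ from $\chi(\mathcal{R}^m(\zeta_1))$, contradicting equality of all continued-fraction digits, and conclude $|x_a^0-x_a^1|=O(\varepsilon)$. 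Once synchronization holds, Proposition \ref{lipC2C2amigos} (proved under the synchronization assumption in Section \ref{comp}) says the two pieces of orbit $\{x_i^0\}$ and $\{x_i^1\}$ are pointwise $O(\varepsilon)$-close, which is exactly what controls the composed branch in (ii): writing $\eta_0^a\circ\xi_0 - \eta_1^a\circ\xi_1$ telescopically and using the Lipschitz bound on the non-linearity $N\eta_j$ on $[\eta_j^a(\xi_j(0)),\xi_j(0)]$ from Remark \ref{rememap} together with uniform distortion bounds, one gets $\|\,\widetilde{\eta_0^a\circ\xi_0}-\widetilde{\eta_1^a\circ\xi_1}\,\|_{C^2}=O(\varepsilon)$.

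The main obstacle is unquestionably step (i), the synchronization estimate: it is where the unbounded combinatorics genuinely bite, and where the global hypothesis $\rho(\zeta_0)=\rho(\zeta_1)$ must be converted into a quantitative separation bound for a single renormalization step. Everything else — the $C^2\to C^3$ promotion (Lemma \ref{fromC2toC3}), the eventual $K_0$-control (Theorem \ref{realBcomSneg}), the distortion control of the long branch $\eta^a$ (Koebe, Theorem \ref{koebedistint}), and the passage from pointwise closeness of orbits to $C^2$-closeness of branches (Proposition \ref{lipC2C2amigos})— is comparatively routine once synchronization is in hand. I would therefore organize the write-up so that Lemma \ref{main} is deduced from Proposition \ref{lipC2C2amigos} plus the Synchronization Lemma, deferring the heavy lifting to the order-theoretic argument of Sections \ref{order}--\ref{sync}.
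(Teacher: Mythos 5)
Your proposal follows essentially the same route as the paper: Lemma \ref{main} is deduced from the Synchronization Lemma (whose proof converts the hypothesis $\rho(\zeta_0)=\rho(\zeta_1)$ into the bound $|x_a^0-x_a^1|=O(\varepsilon)$ via the order relation of Section \ref{order}, Proposition \ref{orderandrot} and the standard families) together with Proposition \ref{lipC2C2amigos}, which gives the $C^2$ Lipschitz estimate for prerenormalization of synchronized pairs through the non-linearity and composition estimates of Section \ref{comp}. The only superfluous element is your preliminary reduction to $K_0$-controlled pairs via Theorem \ref{realBcomSneg} (the claimed identity $d_2\big(\mathcal{R}(\zeta_0),\mathcal{R}(\zeta_1)\big)=d_2\big(\mathcal{R}^{N+1}(\zeta_0),\mathcal{R}^{N+1}(\zeta_1)\big)$ does not hold and is not needed, since all the estimates of Sections \ref{Smonfam}--\ref{sync} are stated for an arbitrary fixed control constant and the constants in Lemma \ref{main} may depend on $K$); apart from this, your decomposition is exactly the paper's argument.
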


We remark that the Lipschitz constant $L$ given by Lemma \ref{main} depends only on $K$, it do not depends on the common combinatorics of the critical pairs $\zeta_0$ and $\zeta_1$.

Let $\zeta=(\eta,\xi)$ be a $C^3$ $K$-controlled critical commuting pair which is renormalizable with some period $a\in\nt$. As before (see the proof of Lemma \ref{fromC2toC3} in Section \ref{Seccont}) we will use the following notation: for $i\in\{0,...,a\}$ let $x_i=\eta^{i}(\xi(0))$. Note that $x_i\in I_{\eta}=[0,\xi(0)]$ for all $i\in\{0,...,a\}$. Denote by $I_i$, $i\in\{1,...,a\}$, the fundamental domains of $\eta$ given by $I_i=[\eta^{i}(\xi(0)),\eta^{i-1}(\xi(0))]$. By the commuting condition $I_1=\xi(I_{\xi})=\xi ([-1,0])$. The following result due to J.-C. Yoccoz plays a fundamental role in our analysis:

\begin{lema}[Yoccoz's Lemma]\label{aquele} Assume that $\zeta$ has negative Schwarzian, and let $N\in\{1,...,a\}$ defined by $x_{N+1} \leq p \leq x_N$. Then we have:
\begin{equation}\label{estyoc}
|I_i|\asymp\frac{1}{i^2}\quad\mbox{for $i\in\{1,...,N\}$,}\quad\mbox{and}\quad|I_i|\asymp\frac{1}{(a-i)^2}\quad\mbox{for $i\in\{N,...,a-1\}$.}
\end{equation}

Moreover:
\begin{itemize}
\item $N \asymp a$, that is, there exist two constants $\delta_0=\delta_0(\mathcal{K})$ and $\delta_1=\delta_1(\mathcal{K})$ with $0<\delta_0\leq\delta_1<1$ such that $\delta_0\,a \leq N \leq \delta_1a$.
\item $\big|x_i-p\big|\asymp\sum_{j=i+1}^{j=N}\frac{1}{j^2}\asymp\frac{1}{i+1}$ for all $i\in\big\{1,...,\lfloor\frac{a}{2}\rfloor\big\}$.
\end{itemize}
\end{lema}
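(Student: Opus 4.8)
The plan is to analyze the return map $\eta$ restricted to its fundamental domains by comparing it with a model vector field, exploiting the negative Schwarzian condition together with the $K$-control. The starting point is the observation that on the interval $J=[\eta^a(\xi(0)),\xi(0)]$ the map $\eta$ has bounded nonlinearity (Remark \ref{rememap}) and $D\eta(x)\geq 1/K$, so $\eta$ is uniformly close to the identity in the $C^1$ sense only after rescaling; more precisely, the displacement $\eta(x)-x$ is a small, smooth, positive function away from the point $p$ where $\eta(p)=p$ would hold for the "limiting" map. I would first establish that the orbit $x_0>x_1>\cdots>x_a$ is monotone decreasing through $J$, that $p$ (defined by $x_{N+1}\le p\le x_N$) is well-defined, and that near $p$ the map $\eta$ looks like $x\mapsto x - \beta(x-p)^2 + \text{higher order}$ with $\beta$ bounded above and below (this is where negative Schwarzian is used: it forces the second-order coefficient to have a definite sign and prevents the displacement function from having interior critical points that would spoil the quadratic picture). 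The heart of the argument is then the standard \emph{time-$\tfrac1i$ estimate} for near-parabolic maps: if $x_{i+1}=\eta(x_i)$ with $\eta(x)\approx x-\beta(x-p)^2$, then $|x_i-p|\asymp 1/(\beta i)$ for $i$ on the side where the orbit approaches $p$, and symmetrically $|x_i-p|\asymp 1/(\beta(a-i))$ on the side where it departs. Differencing gives $|I_i|=|x_{i-1}-x_i|\asymp 1/i^2$ for $i\le N$ and $|I_i|\asymp 1/(a-i)^2$ for $i\ge N$.

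Concretely I would carry this out in the following steps. (1) Set up the \emph{Yoccoz linearizing coordinate}: let $\Phi(x)=\int_{x}^{\xi(0)} \frac{dt}{\eta(t)-t}$ (or an equivalent finite-difference telescoping sum), so that $\Phi(x_i)\approx \Phi(x_0)+i$ because each step of $\eta$ advances $\Phi$ by approximately $1$. The bounded-nonlinearity/negative-Schwarzian control gives that $\Phi$ is a bi-Lipschitz change of coordinates on each side of $p$, with the singularity of the integrand at $p$ being exactly of order $(x-p)^{-2}$, hence $\Phi(x)\asymp 1/|x-p|$. Inverting, $|x_i-p|\asymp 1/i$ for $i\le N$ and $|x_i-p|\asymp 1/(a-i)$ for $i\ge N$. (2) Deduce the interval estimates \eqref{estyoc} by taking consecutive differences and using that $\eta(t)-t\asymp (t-p)^2$ uniformly on $J$, so that $|I_i|=|x_{i-1}-x_i|\asymp \eta(x_i)-x_i\asymp (x_i-p)^2\asymp 1/i^2$, and symmetrically on the far side. (3) Prove $N\asymp a$: since $\sum_{i=1}^{a}|I_i|=|J|\asymp 1$ (by $K$-control, as $|J|=|\xi(0)-\eta^a(\xi(0))|$ is between $1/K$ and $2K$), and $\sum_{i=1}^N 1/i^2$ and $\sum_{i=N}^{a}1/(a-i)^2$ are both $O(1)$ sums, the only way the two pieces can glue to a length bounded below is if both $N$ and $a-N$ are comparable to... here one must be slightly careful. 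In fact the length is bounded, not bounded below by something forcing $N$ large; the correct argument is that $x_N$ and $x_{N+1}$ straddle $p$, the displacement there is $\asymp (x_N-p)^2\asymp 1/N^2$, and also $\asymp 1/(a-N)^2$ from the other side's normalization, forcing $N\asymp a-N$, i.e. both $\asymp a$. I would phrase this via matching the two coordinate systems $\Phi$ built from each endpoint at the common point $p$. (4) Finally, $|x_i-p|\asymp \sum_{j=i+1}^N 1/j^2\asymp 1/(i+1)$ for $i\le \lfloor a/2\rfloor$ is immediate from step (1) combined with $N\asymp a$, since for such $i$ the sum $\sum_{j=i+1}^N 1/j^2$ is $\asymp 1/(i+1) - 1/N \asymp 1/(i+1)$.

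The main obstacle, and the place where the hypotheses must be used most carefully, is step (1): controlling the singular integrand $1/(\eta(t)-t)$ near $p$. One needs that $\eta(t)-t$ vanishes to \emph{exactly} order two at $p$ with a coefficient bounded away from $0$ and $\infty$ — no faster (which negative Schwarzian rules out, since $S\eta<0$ prevents the displacement from being too flat) and no slower. Equivalently, writing $g(t)=\eta(t)-t$, one needs $|g(t)|\asymp \dist(t,\{g=0\})^2$ on $J$; this requires knowing $p$ is an \emph{interior} zero in an appropriate sense and that $g$ has bounded $C^2$ norm with $g''$ bounded below near $p$, which in turn follows from the $C^3$-bound on $\eta$ (available by Lemma \ref{fromC2toC3} / the $K$-control) together with negative Schwarzian forcing $g$ to be concave where it matters. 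A secondary technical point is that $p$ need not be a true fixed point of $\eta$ (the pair is renormalizable, so $\eta$ has no fixed point in $I_\eta$), so "$p$" should be defined as the minimizer of $g$ on $J$, and one shows $g(p)$ is tiny — of order $1/a^2$ — which is exactly consistent with the orbit taking $\asymp a$ steps to cross. Handling this "approximate parabolic" nature cleanly, rather than a genuine parabolic fixed point, is the part I expect to require the most care.
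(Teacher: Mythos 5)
First, a point of reference: the paper does not prove this lemma at all — it states it and cites \cite[Appendix B, page 386]{dfdm1}, where the estimates for ``almost parabolic'' maps are obtained by comparison arguments (minimum principle, M\"obius/cross-ratio comparisons) that exploit the negative Schwarzian hypothesis. Your plan — a Fatou-coordinate/vector-field argument built on $\Phi(x)=\int dt/(\eta(t)-t)$ — is a genuinely different and in principle viable route, and your steps (2)--(4) would indeed follow once step (1) is in place.

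However, step (1) contains a genuine gap, and you have located it without closing it. Everything hinges on the \emph{uniform} two-sided quadratic estimate $|g(p)|+c(t-p)^2\le |g(t)|\le |g(p)|+C(t-p)^2$ for $g=\eta-\operatorname{Id}$, with $c,C$ depending only on $\mathcal{K}$ (both inequalities are needed: the upper bound alone only bounds the speed of approach to $p$, the lower bound alone only bounds it from the other side, and $|g(p)|\asymp 1/a^2$ itself comes from combining the two with the fact that the crossing takes exactly $a$ steps). The upper bound is immediate from $Dg(p)=0$ and the $C^2$ bound, but the lower bound is exactly the statement that $D^2\eta$ is bounded away from $0$ from above near $p$, and your justification — ``negative Schwarzian prevents the displacement from being too flat'', ``$g''$ bounded below follows from the $C^3$ bound together with negative Schwarzian forcing $g$ to be concave where it matters'' — is not a proof: $S\eta<0$ is a pointwise sign condition with no quantitative lower bound, so by itself it yields only $D^2\eta(p)<0$, not $D^2\eta(p)\le -1/K(\mathcal{K})$, and no compactness argument is available since the constants must be uniform in the period $a$ and the negative-Schwarzian class is not closed. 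Note also that within this paper the uniform bound $D^2\eta(p)<-1/K$ (Lemma \ref{op}) is explicitly stated to \emph{follow from the proof} of Yoccoz's lemma in \cite{dfdm1}; taking it as an input to your argument would therefore be circular relative to the paper's logic. To complete your plan you would have to supply the quantitative consequence of $S\eta<0$ that \cite{dfdm1} extracts by comparison with M\"obius maps (equivalently, via convexity of $(D\eta)^{-1/2}$), namely a lower bound of the form $|g(t)|\ge |g(p)|+c(t-p)^2$ on the whole interval $\big[\eta^a(\xi(0)),\xi(0)\big]$ with $c=c(\mathcal{K})$; as it stands, this key inequality is asserted rather than proved.
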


Since $N \asymp a$ we can restate \eqref{estyoc} in the following way (see \cite[Section 4.1, page 354]{dfdm1}):

\begin{lem}[Yoccoz's Lemma]\label{doyoc} Assume that $\zeta$ has negative Schwarzian. There exists $K=K(\mathcal{K})>1$ such that for all $i\in\{1,...,a\}$ we have:$$\frac{1}{K}\frac{1}{\min\{i,a-i\}^2}\leq|I_i| \leq K\frac{1}{\min\{i,a-i\}^2}\,.$$
\end{lem}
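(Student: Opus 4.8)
The plan is to obtain Lemma \ref{doyoc} as an immediate corollary of the more precise Lemma \ref{aquele}, using only the estimate $N\asymp a$ to merge its two regimes into one. Concretely, I would fix the constants $\delta_0,\delta_1$ with $0<\delta_0\le\delta_1<1$ and $\delta_0 a\le N\le\delta_1 a$ supplied by Lemma \ref{aquele}, together with its conclusion that $|I_i|\asymp 1/i^2$ for $1\le i\le N$ and $|I_i|\asymp 1/(a-i)^2$ for $N\le i\le a-1$, all implied constants depending only on $\mathcal{K}$. The target estimate is phrased in terms of $\min\{i,a-i\}$, which equals $i$ for $i\le a/2$ and $a-i$ for $i\ge a/2$; so the natural split is at $i=\lfloor a/2\rfloor$, whereas Lemma \ref{aquele} splits at $i=N$. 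Where the two splits agree the bound is immediate, and the whole content of the argument is confined to the band of indices lying strictly between $N$ and $\lfloor a/2\rfloor$, where the two descriptions disagree.

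The case analysis I would carry out is as follows. If $i\le a/2$ and $i\le N$, then $\min\{i,a-i\}=i$ and $|I_i|\asymp 1/i^2$ by Lemma \ref{aquele}; symmetrically, if $i\ge a/2$ and $i\ge N$, then $\min\{i,a-i\}=a-i$ and $|I_i|\asymp 1/(a-i)^2$. The two remaining, mutually exclusive, crossover cases are handled by the same computation. If $N<i\le a/2$, then $\delta_0 a\le N<i\le a/2$ gives $i\asymp a$, and $a/2\le a-i<(1-\delta_0)a$ gives $a-i\asymp a$, so $i\asymp a-i$; since $N\le i\le a-1$, Lemma \ref{aquele} yields $|I_i|\asymp 1/(a-i)^2\asymp 1/i^2=1/\min\{i,a-i\}^2$. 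The symmetric case $a/2\le i<N$ is identical after exchanging $i$ and $a-i$: here $a/2\le i\le\delta_1 a$ gives $i\asymp a\asymp a-i$ and $1\le i\le N$ gives $|I_i|\asymp 1/i^2\asymp 1/(a-i)^2=1/\min\{i,a-i\}^2$. Every implied constant appearing above depends only on $\delta_0$, $\delta_1$ and the constants of Lemma \ref{aquele}, hence only on $\mathcal{K}$, so taking $K$ larger than all of them finishes the proof. The boundary indices not covered by Lemma \ref{aquele} --- namely $i=a$, and also $i=1$ when $a=1$ --- correspond to outermost fundamental domains, for which $|I_i|\asymp 1$ follows directly from the $K$-control inequalities in Definition \ref{defcont}, in agreement with the right-hand side under the convention that $\min\{i,a-i\}$ is read as at least $1$ there.

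I do not expect any genuine obstacle: Lemma \ref{doyoc} is a bookkeeping reformulation of \eqref{estyoc}. The one point that needs care --- and the only place the hypothesis is really used --- is that one may not simply intersect the index ranges $\{1,\dots,N\}$ and $\{N,\dots,a-1\}$ with $\{i\le a/2\}$ and $\{i\ge a/2\}$; the gap between $N$ and $\lfloor a/2\rfloor$ has to be bridged, and $N\asymp a$ does exactly that, since throughout that gap both $i$ and $a-i$ are comparable to $a$.
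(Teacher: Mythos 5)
Your proposal is correct and follows exactly the route the paper takes: the paper introduces Lemma \ref{doyoc} as an immediate restatement of \eqref{estyoc} in Lemma \ref{aquele} using $N\asymp a$ (deferring the proof of Yoccoz's lemma itself to de Faria--de Melo), and your case analysis just spells out the bookkeeping that the paper treats as obvious. Your remark on the boundary index $i=a$ (where the paper's formula would need the convention $\min\{i,a-i\}\geq 1$, justified by the $K$-control) is a sensible reading of a minor imprecision in the statement rather than a departure from the paper's argument.
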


For a proof of Yoccoz's Lemma see \cite[Appendix B, page 386]{dfdm1}.

\section{Standard families}\label{Smonfam}

Fix $K_0>1$ and let $\mathcal{K}$ be the space of normalized $C^3$ critical commuting pairs which are $K_0$-controlled (see Definition \ref{defcont} in Section \ref{Seccont}). We will consider in this section a $C^3$ critical commuting pair $\zeta=(\eta,\xi)$ with negative Schwarzian that belongs to $\mathcal{K}$ which is renormalizable with period $a\in\nt$. For such a pair we will construct/define the corresponding \emph{standard family}.

\subsection{Glueing procedure and translations} In the notation of the proof of Lemma \ref{laP4} in Section \ref{Sprel} we have:

\begin{lema}\label{folga} There exists $s_0=s_0(\mathcal{K})>0$ such that for any $\zeta=(\eta,\xi)\in\mathcal{K}$ both components of $A\setminus\big\{\eta(0)\big\}$ and both components of $B\setminus\big\{\xi(0)\big\}$ have Euclidean length greater than or equal to $s_0$.
\end{lema}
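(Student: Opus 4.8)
The plan is to unwind the construction of the sets $A$ and $B$ from the proof of Lemma~\ref{laP4} and show that their sizes are controlled by the $K_0$-control hypothesis. Recall that in that proof $A$ is an open interval around $\eta(0)$ on which $\widehat{\eta}$ is defined, and $B$ is an open interval around $\xi(0)$ on which $\widehat{\xi}$ is defined, both arising from the non-flatness of the critical point at the origin. The point of the present lemma is that, since $\zeta$ ranges over the $C^3$-compact family $\mathcal{K}$, one can make these neighbourhoods of \emph{uniform} size $s_0=s_0(\mathcal{K})$, so that there is a definite amount of ``room'' (Portuguese \emph{folga}) on either side of $\eta(0)$ and $\xi(0)$ available for the glueing procedure.

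First I would recall that $\eta(x)=(\phi(x))^{2d+1}+\eta(0)$ near the origin, where $\phi$ is a $C^3$ diffeomorphism fixing $0$, and the $K_0$-control gives a uniform bound on $\|\eta\|_{C^3}$ and a uniform lower bound on $D\eta$ away from the critical point (indeed $D\eta(x)\geq 1/K_0$ on $[\eta^a(\xi(0)),\xi(0)]$, and $\xi(0)\geq 1/K_0$). From these bounds one extracts, by a standard compactness/normal-families argument over $\mathcal{K}$ (using that $\mathcal{K}$ is $C^3$-compact, which follows from the definition of $K_0$-control and Arzel\`a--Ascoli), a uniform radius $\delta_0=\delta_0(\mathcal{K})>0$ such that every $\eta$ with $\zeta\in\mathcal{K}$ extends as a $C^3$ diffeomorphism onto its image on the interval $(-\delta_0,\delta_0)$ about the origin, with $|D\eta|$ uniformly bounded below there (away from $0$) and $|D\eta|$ uniformly bounded above. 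Consequently $\widehat{\eta}\big((-\delta_0,\delta_0)\big)=A$ is an interval containing $\eta(0)$ in its interior, and the two components of $A\setminus\{\eta(0)\}$ have length $\asymp$ the values $\eta(0)-\eta(-\delta_0/2)$ and $\eta(\delta_0/2)-\eta(0)$ (say), each of which is bounded below by a uniform constant $s_0=s_0(\mathcal{K})>0$ because $\eta$ is a genuine homeomorphism on $(-\delta_0,\delta_0)$ with uniformly controlled (one-sided) expansion. The same argument applied to $\xi$, using the symmetric clauses of the $K_0$-control, produces the corresponding bound for the two components of $B\setminus\{\xi(0)\}$; taking the smaller of the two constants yields the desired $s_0$.

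The one subtlety I expect is that the extension interval $C$ (hence $A$ and $B$) in Lemma~\ref{laP4} was produced abstractly from non-flatness and is \emph{a priori} not uniform across $\mathcal{K}$ --- $\mathcal{K}$ consists of pairs that are only $C^3$ on their (closed) domains, so the existence of a common extension radius $\delta_0$ is not completely automatic. I would address this by noting that $K_0$-control bounds $\eta$ in $C^3$ on a neighbourhood of its whole domain (this is implicit in how $\mathcal{K}$ was defined, via $\|\eta\|_{C^3([0,\xi(0)])}\leq K_0$ together with the non-flat normal form near $0$), so the germ of $\eta$ at $0$ lies in a $C^3$-compact family of non-flat germs of fixed criticality $2d+1$; for such a family the inverse-function-theorem radius and the modulus of continuity of the inverse are uniform, which is exactly what gives uniform $\delta_0$ and hence uniform $s_0$. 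The rest is the elementary estimate that a homeomorphism with derivative bounded below by $1/K_0$ on an interval of length $\delta_0/2$ moves the endpoint by at least $\delta_0/(2K_0)$, so $s_0:=\delta_0/(2K_0)$ works. This is the only place where compactness of $\mathcal{K}$ is genuinely used; everything else is bookkeeping.
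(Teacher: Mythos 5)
Your overall strategy---extract a uniform normal-form neighbourhood $(-\delta_0,\delta_0)$ of the origin, uniform over $\mathcal{K}$, and then bound from below how far $\eta$ and $\xi$ displace its endpoints---is essentially the paper's. But your final quantitative step is incorrect: you conclude with the ``elementary estimate that a homeomorphism with derivative bounded below by $1/K_0$ on an interval of length $\delta_0/2$ moves the endpoint by at least $\delta_0/(2K_0)$'' and set $s_0=\delta_0/(2K_0)$. The interval in question is a neighbourhood of the origin, which is the critical point of both $\eta$ and $\xi$: there $D\eta$ vanishes to order $2d$, the $K_0$-control of Definition \ref{defcont} gives $D\eta\geq 1/K_0$ only on $\big[\eta^a(\xi(0)),\xi(0)\big]$, whose left endpoint is $\geq 1/K_0$ and hence far from $0$ when $\delta_0$ is small, and it gives nothing at all on the extended part $(-\delta_0,0)$, which lies outside the domain $I_\eta$. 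So no linear lower bound $\delta_0/(2K_0)$ is available; the content of the lemma is precisely that the components of $A\setminus\{\eta(0)\}$ are only of the order of the $(2d+1)$-st power of the size of the normal-form interval, not comparable to it.

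The correct estimate, and the one the paper uses, goes through the normal form you already invoked: on $C$ one has $\eta(x)-\eta(0)=\big(\phi(x)\big)^{2d+1}$ with $\inf_{C}D\phi\geq\rho$ uniformly over $\mathcal{K}$, so if both components of $C\setminus\{0\}$ have length at least $\delta$, then each component of $A\setminus\{\eta(0)\}=\widehat{\eta}(C)\setminus\{\eta(0)\}$ has length at least $(\rho\delta)^{2d+1}$, and similarly for $B$ using $\psi$; one then takes $s_0<(\delta\rho)^{2d+1}$. Since you have the normal form and the uniformity of the family of germs in hand, the repair is a one-line substitution, but as written the stated constant and its justification fail exactly at the critical point, which is where the lemma has content. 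A minor further caution: a uniform $C^3$ bound gives precompactness in $C^2$, not $C^3$-compactness of $\mathcal{K}$; what is really needed (and what both you and the paper assume at this point) is a uniform lower bound $\rho$ for $D\phi$, $D\psi$ and a uniform size $\delta$ for $C$, i.e.\ uniform non-degeneracy of the criticality-$(2d+1)$ germ over $\mathcal{K}$, rather than compactness per se.
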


\begin{proof}[Proof of Lemma \ref{folga}] There exist positive constants $\delta$ and $\rho$ (depending only on $K_0$) such that both components of $C\!\setminus\!\{0\}$ have Euclidean length greater than or equal to $\delta$, $\inf_{C}\{D\phi\}>\rho$ and $\inf_{C}\{D\psi\}>\rho$. Then it is enough to take $0<s_0<(\delta\rho)^{2d+1}$, where the integer $2d+1$ is the criticality of $\eta$ and $\xi$ at the origin (See Condition \eqref{pc} in Definition \ref{critpair}).
\end{proof}

Still in the notation of the proof of Lemma \ref{laP4} let $M=V_{-} \cup V_{+}/\sim$ where $x \sim y$ if $x \in A$, $y \in B$ and $\widehat{\xi}(x)=\widehat{\eta}(y)$. Note that $\eta(0)\sim\xi(0)$ by the commuting condition \eqref{comor} in Definition \ref{critpair}. Let $p:V_{-} \cup V_{+} \to M$ be the canonical projection for the identification $\sim$, and note that $M$ is a compact boundaryless one-dimensional $C^3$ manifold since the map $\widehat{\eta}^{-1}\circ\widehat{\xi}:A \to B$ is a $C^3$ diffeomorphism (it can be proved that $p$ is the restriction of a $C^3$ covering map from the real line to $M$, but this fact will not be needed in this paper).

\begin{lema}\label{MFcontrolP} There exists a $C^3$ diffeomorphism $\psi:M \to S^1$ such that defining $P:V_{-} \cup V_{+} \to S^1$ as $P=\psi \circ p$ we have that for all $x,y \in A \cap I_{\xi}$, for all $x,y \in B \cap I_{\eta}$ and for all $x,y \in (I_{\xi}\cup I_{\eta})\!\setminus\!(A \cup B)$:$$\frac{|x-y|}{K}\leq d\big(P(x),P(y)\big)\leq K|x-y|$$for some universal constant $K=K(\mathcal{K})>1$, where $d$ denotes the Euclidean distance in the unit circle.
\end{lema}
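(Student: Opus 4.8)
The plan is to build the map $\psi$ from the manifold $M$ to $S^1$ by producing a $C^3$ \emph{arc-length-type} parametrization of $M$ and then normalizing it to have total length $1$. First I would fix the constant $s_0=s_0(\mathcal{K})$ from Lemma~\ref{folga} and observe that $M$ is a closed one-dimensional $C^3$ manifold, hence $C^3$-diffeomorphic to $S^1$; the only content of the statement is to get the bi-Lipschitz control of the diffeomorphism on the three specified regions with a constant $K$ depending only on $\mathcal{K}$. The natural candidate is the following: equip $M$ with a $C^3$ Riemannian metric $\mu$ that, when pulled back by the projection $p$, agrees with the Euclidean metric on $I_\xi \cup I_\eta$ (away from the overlaps $A$, $B$ where the gluing happens). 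More precisely, fix once and for all a $C^3$ bump-type partition of unity on $V_-\cup V_+$ subordinate to the cover $\{A, B, (I_\xi\cup I_\eta)\setminus(A\cup B), \dots\}$, whose derivatives up to order $3$ are bounded by a constant depending only on $s_0$ (this is where $K_0$-control enters: the sizes of $A$, $B$, $C$ and the derivatives of $\phi,\psi$ are uniformly controlled by Lemma~\ref{folga} and Definition~\ref{defcont}), and use it to define a $C^3$ metric on $M$ that is uniformly comparable to Euclidean length on each of the three target regions.

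Next I would let $\ell = \ell(\zeta)$ be the total $\mu$-length of $M$ and define $\psi:M\to\R/\ell\Z \cong S^1$ by integrating arc length from a fixed base point (say the class of $\eta(0)\sim\xi(0)$); composing with the homothety $t\mapsto t/\ell$ gives a map into $\R/\Z=S^1$. By construction $\psi$ is a $C^3$ diffeomorphism, and $d(P(x),P(y))$ is, up to the factor $1/\ell$, the $\mu$-distance along $M$ between $p(x)$ and $p(y)$; on each of the three regions in the statement this $\mu$-distance is comparable to $|x-y|$ by the choice of $\mu$. The last ingredient is a two-sided bound on $\ell$: since $M$ contains the disjoint (mod overlap) images of $I_\xi$ and $I_\eta$, and $|I_\xi|=1$, $|I_\eta|=\xi(0)/|\eta(0)|\in[1/K_0,K_0]$, and since the extensions $\widehat\eta$, $\widehat\xi$ have uniformly bounded $C^3$-norms and uniformly bounded inverses on the controlled domains, $\ell$ is bounded above and below by constants depending only on $\mathcal{K}$; absorbing $1/\ell$ into $K$ finishes the estimate. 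One subtlety to address is that distance in $S^1$ is the \emph{minimum} over the two arcs, so for pairs $x,y$ far apart the naive arc-length bound could fail — but this is harmless: for $x,y$ in one of the three regions the relevant Euclidean distance $|x-y|$ is itself bounded (the regions sit inside $[-1,\xi(0)]$, of uniformly bounded length), so after enlarging $K$ we may assume the shorter arc is the one through the region, and the inequality persists.

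The main obstacle, and the step that needs genuine care rather than soft topology, is the construction of the metric $\mu$ (equivalently, of $\psi$) with \emph{uniform} $C^3$ control near the gluing: the identification $\sim$ is given by $\widehat\eta^{-1}\circ\widehat\xi:A\to B$, and to splice the Euclidean parametrizations of the two sides across $A$ and $B$ into a single $C^3$ metric one must interpolate using the transition map, whose $C^3$-norm and whose derivative lower bounds must be controlled uniformly over $\zeta\in\mathcal{K}$. This is exactly what Lemma~\ref{folga} (uniform size of $A$, $B$ below $\eta(0)$, $\xi(0)$) together with the $K_0$-control of $\|\eta\|_{C^3}$, $\|\xi\|_{C^3}$ and $D\eta\ge 1/K_0$ on $[\eta^a(\xi(0)),\xi(0)]$ provide; tracing through the construction in the proof of Lemma~\ref{laP4} shows $\widehat\eta^{-1}\circ\widehat\xi$ has $C^3$-norm and distortion bounded in terms of $\mathcal{K}$ only, so the interpolated metric — and hence $\psi$ and the constant $K$ — depends only on $\mathcal{K}$, as required.
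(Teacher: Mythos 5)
The paper states Lemma \ref{MFcontrolP} without proof (it is asserted right after the gluing construction of $M$), so there is no written argument of the authors to compare yours against; judged on its own, your arc-length construction is a correct and natural way to prove it. The two essential points --- that the transition map $\widehat{\xi}\circ\widehat{\eta}^{-1}:A\to B$ of the gluing has $C^3$ norm and derivative bounded above and below uniformly on $\mathcal{K}$ (this is exactly what the constants $\delta,\rho$ and $s_0$ in the proof of Lemma \ref{folga} provide, via the uniform non-degeneracy of $\phi,\psi$ on $C$), and that the total length $\ell$ of $M$ is uniformly bounded above and below --- are correctly identified, and interpolating the two Euclidean charts by a partition of unity with uniformly bounded derivatives does yield a metric uniformly comparable to $|dx|$ on each of the three regions, hence the two-sided estimate after normalizing by $\ell$. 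The one place where your justification is not literally valid as written is the ``minimum over the two arcs'' issue: saying that $|x-y|$ is bounded and that ``we may assume the shorter arc is the one through the region'' does not by itself give the lower bound $d\big(P(x),P(y)\big)\geq |x-y|/K$ when the complementary arc happens to be the shorter one. What you actually need, and what is true, is a uniform positive lower bound on the length of the complementary arc: each of the three regions omits a definite-length piece of $M$ (for the middle region, the components of $A\cap I_{\xi}$ and $B\cap I_{\eta}$, of length $\geq s_0$ by Lemma \ref{folga}; for $A\cap I_{\xi}$ or $B\cap I_{\eta}$, the central part containing $C\cap(I_{\xi}\cup I_{\eta})$, whose components have length $\geq\delta$), so the complementary arc has $\mu$-length bounded below by a constant of $\mathcal{K}$, while $|x-y|\leq 1+K_0$; enlarging $K$ then covers that case. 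With that one sentence added, your proof is complete and uniform in $\mathcal{K}$, as required.
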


From now on let $P:V_{-} \cup V_{+} \to S^1$ be the $C^3$ map defined in Lemma \ref{MFcontrolP}. Given $t\in\R$ we define the \emph{translation} by $t$ on $I_{\xi} \cup I_{\eta}$ to be the $C^3$ map $T:I_{\xi} \cup I_{\eta}\times\R \to I_{\xi} \cup I_{\eta}$ given by:$$\big(P \circ T_t\big)(x)=e^{2\pi it}P(x)\,,$$that is, $T(x,t)=T_t(x)=P^{-1}\big(e^{2\pi it}P(x)\big)$, whenever is clear which preimage under $P$ we choose for points in $P(A)$. In particular $T_0$ is the identity on $I_{\xi} \cup I_{\eta}$. Note also that:$$\frac{\partial T}{\partial t}(x,t)=\frac{1}{DP\big(T_t(x)\big)}\quad\mbox{and}\quad\frac{\partial T}{\partial x}(x,t)=\frac{DP(x)}{DP\big(T_t(x)\big)}\,,$$and from Lemma \ref{MFcontrolP} we get that $\frac{1}{K}\leq\frac{\partial T}{\partial t}(x,t) \leq K$ for all $x\in I_{\xi} \cup I_{\eta}$.

\subsection{Standard families of commuting pairs}\label{subMF} By Condition \eqref{la5} in Definition \ref{critpair} the discontinuous piecewise smooth map $\tilde{f}_{\zeta}:I_{\xi} \cup I_{\eta} \to I_{\xi} \cup I_{\eta}$ given by:
$$\tilde{f}_{\zeta}(x)=\left\{\begin{array}{ll}
\xi(x)&\mbox{for } x \in I_{\xi}\\
\eta(x)&\mbox{for } x \in I_{\eta}\\
\end{array}\right.$$projects under $p$ to a $C^3$ homeomorphism of the quotient manifold $M$, and then it projects under $P$ to a $C^3$ critical circle map $f_{\zeta}$ in $S^1$.

By Lemma \ref{folga} and Lemma \ref{MFcontrolP} above, the Euclidean length of both components of $P(A)\!\setminus\!\big\{f_{\zeta}\big(P(0)\big)\big\}$ in $S^1$ is bounded from below by some positive constant $l_0$, universal in $\mathcal{K}$. For $t \in W=(-l_0,l_0)$ let $f_{t}:S^1 \to S^1$ be the $C^3$ critical circle map given by $f_{t}(z)=e^{2\pi it}f_{\zeta}(z)$, and note that $f_0=f_{\zeta}$. Since the critical value of $f_{t}$ (which is $e^{2\pi it}f_{\zeta}\big(P(0)\big)$) belongs to $P(A)$ we can lift each $f_{t}$ up to a $C^3$ critical commuting pair $\zeta_{t}=(\eta_t,\xi_t)$ with:$$\xi_t(x)=\big(T_t\circ\xi_0\big)(x)=T\big(\xi_0(x),t\big)\quad\mbox{and}\quad\eta_t(x)=\big(T_t\circ\eta_0\big)(x)=T\big(\eta_0(x),t\big).$$

Note that:$$\frac{\partial\xi_t}{\partial t}(x)=\frac{1}{DP\big(\xi_t(x)\big)}\quad\mbox{and}\quad\frac{\partial\eta_t}{\partial t}(x)=\frac{1}{DP\big(\eta_t(x)\big)}\,.$$

\begin{lema}\label{t0mon} There exists $K(\mathcal{K})>1$ such that $|t|/K \leq d_2(\zeta_0,\zeta_t) \leq K|t|$ for all $t \in W$.
\end{lema}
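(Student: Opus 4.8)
The plan is to compare the family $\zeta_t=(\eta_t,\xi_t)$ with $\zeta_0=(\eta_0,\xi_0)$ directly in the affine $C^2$ metric $d_{2,\Aff}$, using the explicit formula $\eta_t=T_t\circ\eta_0$, $\xi_t=T_t\circ\xi_0$ and the already-recorded derivative bounds $\tfrac1K\le \tfrac{\partial T}{\partial t}\le K$ and $\tfrac1K\le\tfrac{\partial T}{\partial x}\le K$ coming from Lemma \ref{MFcontrolP}. Since we are inside the $K_0$-controlled class $\mathcal{K}$, the $C^3$ bounds on $\eta_0,\xi_0$ are uniform, and the remark at the end of Section \ref{Seccont} lets us pass freely between $d_2$ and $d_{2,\Aff}$ at the cost of a universal factor; so it suffices to prove $|t|/K\le d_{2,\Aff}(\zeta_0,\zeta_t)\le K|t|$.

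For the \emph{upper bound}, I would integrate: $\xi_t(x)-\xi_0(x)=\int_0^t \tfrac{\partial}{\partial s}\xi_s(x)\,ds=\int_0^t \tfrac{1}{DP(\xi_s(x))}\,ds$, and similarly for $\eta$. Differentiating this identity once and twice in $x$ and using that $P$ is a fixed $C^3$ diffeomorphism with $DP$ bounded away from $0$ on the relevant compact set, together with the uniform $C^3$ bounds on $\eta_0,\xi_0$ in $\mathcal{K}$, one gets $\|\xi_t\circ L_{\xi_t}-\xi_0\circ L_{\xi_0}\|_{C^2}\le K|t|$ and the analogous estimate for $\eta$ — the length ratios $|I_{\eta_t}|/|I_{\eta_0}|$ etc. are themselves $1+O(t)$, which is where one uses that the translation moves endpoints at a bounded rate. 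The ratio term $|\xi_t(0)/\eta_t(0)-\xi_0(0)/\eta_0(0)|$ is handled the same way since $\xi_0(0),|\eta_0(0)|\in[1/K,K]$. Hence $d_{2,\Aff}(\zeta_0,\zeta_t)\le K|t|$.

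For the \emph{lower bound}, the point is that the $C^0$ part of $d_{2,\Aff}$ already sees the motion: $|\xi_t(x)-\xi_0(x)|=\big|\int_0^t \tfrac{ds}{DP(\xi_s(x))}\big|\ge |t|/K$ for every $x$, because $DP$ is bounded above on the relevant set; evaluating at a single point (say $x$ where $\xi_0$ is defined, after composing with $L_{\xi_0}$, noting $|I_{\xi_0}|$ and $|I_{\xi_t}|$ are comparable to $1$) already forces $\|\xi_t\circ L_{\xi_t}-\xi_0\circ L_{\xi_0}\|_{C^2}\ge \|\cdot\|_{C^0}\ge |t|/K$, giving $d_{2,\Aff}(\zeta_0,\zeta_t)\ge |t|/K$. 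One small subtlety: for $t$ near $\pm l_0$ the translation could in principle bring two evaluation points close to a common value; but $W=(-l_0,l_0)$ was chosen precisely so that the critical value stays in $P(A)$ and the endpoints move monotonically along disjoint arcs, so this degeneration does not occur and the estimate $|\xi_t(x)-\xi_0(x)|\ge|t|/K$ is valid throughout $W$.

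The main obstacle I anticipate is bookkeeping rather than a conceptual difficulty: one must check that reparametrizing $\eta_t$ by $L_{\eta_t}$ (whose scaling factor $|I_{\eta_t}|$ varies with $t$) does not spoil the linear-in-$t$ comparison at the level of second derivatives. This reduces to showing $t\mapsto |I_{\eta_t}|$ and $t\mapsto |I_{\xi_t}|$ are $C^1$ with derivatives bounded by a universal constant, which again follows from $\tfrac{\partial T}{\partial t}$ being bounded; once that is in hand, the chain rule gives the claimed $C^2$ estimates with a purely $\mathcal{K}$-universal constant $K$, independent of the period $a$.
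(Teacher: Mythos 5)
The upper-bound half of your argument is essentially correct (routine chain-rule bookkeeping, granting that the construction of $P$ gives uniform $C^2$ bounds on $DP$ over $\mathcal{K}$, not just the bi-Lipschitz estimate quoted in Lemma \ref{MFcontrolP}, and that one first rescales $\zeta_t$ by $1/|I_{\xi_t}|=1+O(t)$ since $\zeta_t$ is not normalized). The genuine gap is in the lower bound. The distance $d_2$ is insensitive to conjugation by homotheties, while the quantity you bound from below is not: the estimate $|\xi_t(x)-\xi_0(x)|\ge |t|/K$ (which, incidentally, can only be matched with the reparametrized sup norm at $s=0$, because $L_{\xi_t}(s)\neq L_{\xi_0}(s)$ for $s\neq 0$) controls the \emph{un-normalized} $d_{2,\Aff}(\zeta_0,\zeta_t)$, but the Lipschitz equivalence of $d_2$ and $d_{2,\Aff}$ you invoke is stated for normalized pairs, and normalizing $\zeta_t$ means rescaling by $1/|I_{\xi_t}|$, a correction of exactly the same order $1+O(t)$ as the effect you computed. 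Concretely, your reasoning applied verbatim to the family $H_{1+ct}\circ\zeta_0\circ H_{1+ct}^{-1}$ of homothety conjugates produces the same ``lower bound'' $|t|/K$ on the endpoint displacement and on the pointwise differences, yet for that family $d_2(\zeta_0,\cdot)\equiv 0$. What is missing is the verification that the standard family is transverse to the homothety direction, i.e.\ that no cancellation occurs after normalization.

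This is easy to repair using the first, homothety-invariant entry of the metric in Definition \ref{Crmetric}. Since $\frac{\partial T}{\partial t}\ge 1/K>0$, for $t>0$ both $\xi_t(0)=T_t\big(\xi_0(0)\big)$ and $\eta_t(0)=T_t(-1)$ increase, so $\xi_t(0)$ grows while $\big|\eta_t(0)\big|$ shrinks, and the two effects add rather than cancel:
\begin{equation*}
\frac{d}{dt}\left(\frac{\xi_t(0)}{\big|\eta_t(0)\big|}\right)=\frac{1}{DP\big(\xi_t(0)\big)\,\big|\eta_t(0)\big|}+\frac{\xi_t(0)}{DP\big(\eta_t(0)\big)\,\big|\eta_t(0)\big|^{2}}\;\ge\;\frac{1}{K}\,,
\end{equation*}
where the bounds on $DP$ come from Lemma \ref{MFcontrolP} and the upper and lower bounds on $\xi_t(0)$ and $\big|\eta_t(0)\big|$ for $t\in W$ come from the $K_0$-control together with Lemma \ref{folga} (the critical values stay in $B$ and $A$, which are disjoint from the neighbourhood $C$ of the origin). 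Integrating in $t$ gives
$d_2(\zeta_0,\zeta_t)\ge\big|\tfrac{\xi_t(0)}{\eta_t(0)}-\tfrac{\xi_0(0)}{\eta_0(0)}\big|\ge |t|/K$
directly, with no normalization issue since this ratio is itself invariant under homotheties; the case $t<0$ is symmetric. With this substitute for your lower bound, and the upper bound as you outlined it, the lemma follows.
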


Now let $W_a \subset W$ be the set of all $t \in W$ such that $\zeta_t$ is renormalizable with period $a$, that is:$$W_a=\left\{t \in W:\left\lfloor\frac{1}{\rho(\zeta_t)}\right\rfloor=\left\lfloor\frac{1}{\rho(\zeta_0)}\right\rfloor=a\right\}.$$

\begin{lema}\label{Wa} There exists $a_0(\mathcal{K})\in\nt$ such that if $a \geq a_0$ we have that $\overline{W_a} \subset W$. If we denote the boundary points of $W_a$ by $-w_{-}^{a}$ and $w_{+}^{a}$, that is, $W_a=[-w_{-}^{a},w_{+}^{a}]$, we have that:$$\eta_{-w_{-}^{a}}^{a+1}\big(\xi_{-w_{-}^{a}}(0)\big)=0\quad\mbox{and}\quad\eta_{w_{+}^{a}}^{a}\big(\xi_{w_{+}^{a}}(0)\big)=0\,.$$
\end{lema}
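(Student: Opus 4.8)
The plan is to exploit the fact that the renormalization period $\chi(\zeta_t)=a$ is locally constant on the interior of $W_a$, and that the boundary of $W_a$ consists precisely of those parameters at which one of the two defining inequalities in the definition of $\chi$ degenerates into an equality. Recall that $\chi(\zeta_t)=a$ means $\eta_t^{a+1}\big(\xi_t(0)\big)<0\leq\eta_t^a\big(\xi_t(0)\big)$. First I would verify that, for every $t\in W$, the orbit point $\eta_t^a\big(\xi_t(0)\big)$ is well-defined: since $\zeta_0=(\eta,\xi)$ has period $a$ and is $K_0$-controlled, the points $x_i=\eta^i(\xi(0))$ for $i\in\{0,\dots,a\}$ all lie in $I_\eta=[0,\xi(0)]$ with the gap estimates of Yoccoz's Lemma (Lemma \ref{doyoc}); by Lemma \ref{t0mon} the family $t\mapsto\zeta_t$ moves in the $C^2$-metric at speed comparable to $1$, so on the compact parameter interval $\overline W$ the maps $\eta_t$ stay uniformly $C^2$-close to $\eta_0$ and the finitely many iterates $\eta_t^i\big(\xi_t(0)\big)$, $i\le a+1$, depend continuously (indeed smoothly) on $t$. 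This is the point at which the hypothesis $a\ge a_0(\mathcal K)$ enters: by Yoccoz's Lemma the innermost fundamental domain near the turning point has length $\asymp 1/\min\{i,a-i\}^2$, and $|I_\eta|$ itself is bounded, so if $a$ is large the whole orbit $\{x_i\}$ stays uniformly inside $I_\eta$ with definite room at the endpoints $0$ and $\xi(0)$; a perturbation of size $O(l_0)$ in the parameter cannot push any $x_i$ out of $I_\eta$, which is what is needed to keep the composition $\eta_t^a$ defined and to keep the iterates away from the collapse that would occur at $|t|=l_0$. Hence $\overline{W_a}\subset W$.

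Next I would identify the two boundary points. Consider the two continuous functions $t\mapsto \eta_t^{a}\big(\xi_t(0)\big)$ and $t\mapsto \eta_t^{a+1}\big(\xi_t(0)\big)$ on $\overline W$. At $t=0$ the first is $\ge 1/K_0$ (by the fourth controlled condition, $x_a=\eta^a(\xi(0))\ge 1/K_0$) and the second is $\le -1/K_0$ (by the fifth condition). As $t$ increases, translating by $e^{2\pi it}$ pushes the critical value, hence the entire forward orbit, in one direction; I would check (using the monotonicity of the translation flow $T_t$ and the fact that $D\eta_t\ge 1/K$ along $[\eta_t^a(\xi_t(0)),\xi_t(0)]$, the seventh controlled condition) that $t\mapsto \eta_t^{a}\big(\xi_t(0)\big)$ is strictly monotone, so it has a unique zero, which is the point where $\chi$ jumps from $a$ to $a-1$; call it $w_+^a$. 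Likewise, decreasing $t$, the function $t\mapsto \eta_t^{a+1}\big(\xi_t(0)\big)$ increases to $0$ at a unique point, which is where $\chi$ jumps from $a$ to $a+1$; call it $-w_-^a$. By definition of $W_a$ via the floor function, $W_a$ is exactly the closed interval $[-w_-^a,w_+^a]$ between these two zeros, and at the endpoints the displayed equations $\eta_{-w_-^a}^{a+1}\big(\xi_{-w_-^a}(0)\big)=0$ and $\eta_{w_+^a}^{a}\big(\xi_{w_+^a}(0)\big)=0$ hold by construction.

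The main obstacle, I expect, is the uniform control needed to guarantee $\overline{W_a}\subset W$ independently of $a$: one has to rule out the possibility that as $a\to\infty$ the boundary parameters $w_\pm^a$ creep out toward $\pm l_0$, where $f_{\zeta}$ ceases to be conjugate to a commuting pair. This is where Yoccoz's Lemma does the real work — it shows that although the fundamental domains $I_i$ degenerate in size, they do so in a summable, self-similar way, so that the orbit $\{x_i\}_{i=0}^{a}$ occupies $I_\eta$ with a geometry that is uniformly controlled in $\mathcal K$; combined with the uniform $C^2$-transversality of the standard family from Lemma \ref{t0mon}, the parameter window $W_a$ where the combinatorics is constant has definite size bounded away from $0$ and is contained well inside $W$. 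The monotonicity claims in the previous paragraph are routine once this uniform nondegeneracy is in hand, since they only involve finitely many $C^3$ maps depending smoothly on $t$ with derivatives bounded below.
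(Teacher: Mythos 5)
Your second paragraph (identification of the two boundary points) is fine and agrees with what the paper does: all terms in the $t$-derivative of $\eta_t^{a}\big(\xi_t(0)\big)$ and $\eta_t^{a+1}\big(\xi_t(0)\big)$ are positive because $\partial T/\partial t>0$ and $D\eta_t>0$, so these functions are strictly monotone in $t$, and the period changes exactly when one of them crosses $0$; the paper compresses this into ``follows by combinatorics''. The genuine gap is in the first and main assertion, $\overline{W_a}\subset W$. That inclusion means the period must actually change (to $a-1$ on one side of $0$, to $a+1$ on the other) \emph{strictly before} $|t|$ reaches $l_0$; i.e.\ you need an \emph{upper} bound on $w_\pm^a$. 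Your argument --- that for $a\ge a_0$ the orbit $\{x_i\}$ has ``definite room'' inside $I_\eta$, so a perturbation of size $O(l_0)$ cannot push any $x_i$ out of $I_\eta$ --- addresses a different question and, read literally, argues that the combinatorics does \emph{not} change for $|t|<l_0$, i.e.\ that $W_a$ fills up $W$; that is the opposite of what must be shown. Note also that the ``room at the endpoints'' is $\ge 1/K_0$ by the controlled conditions regardless of $a$, so the hypothesis $a\ge a_0$ cannot be playing the role you assign to it.

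What is actually needed, and what the paper's proof supplies, is a quantitative statement that a \emph{small} parameter displacement already forces the period to change, with a bound that improves as $a$ grows. Since $DP\le K$ on $V_-\cup V_+$ (Lemma \ref{MFcontrolP}), each inserted translation moves points by at least $|t|/K$, and by Yoccoz's Lemma (Lemma \ref{doyoc}) the orbit must pass through fundamental domains of length $\asymp 1/a^2$ near the almost-parabolic point; equivalently, the $t$-derivative of $\eta_t^{a}\big(\xi_t(0)\big)$ is bounded below by a positive power of $a$ (this is the same computation as in Proposition \ref{geomprop}, whose derivative formula does not depend on $t$ lying in $V$). Hence the transition parameters satisfy $w_\pm^a\lesssim K/a^2$, and taking $a_0\gtrsim\big(K/|W|\big)^{1/2}$ gives $\overline{W_a}\subset W$. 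Your appeal to ``derivatives bounded below'' by a constant is not sufficient: the displacement required to bring $\eta_t^{a}\big(\xi_t(0)\big)\ge 1/K_0$ down to $0$ at unit speed is of order $K/K_0$, a constant with no relation to $l_0$, so without the $a$-dependent amplification the crossing could well fail to occur inside $W$ --- which is exactly why the lemma carries the hypothesis $a\ge a_0(\mathcal{K})$.
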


\begin{proof}[Proof of Lemma \ref{Wa}] By Lemma \ref{MFcontrolP} there exists a universal upper bound $K>0$ for the first derivative of $P$ in $V_{-} \cup V_{+}$. By Yoccoz's Lemma (Lemma \ref{doyoc}) it is enough to take $a_0\gtrsim\big(K/|W|\big)^{1/2}$ in order to have $|W| \gtrsim K/a_0^2$. The assertion about the boundary of $W_a$ follows by combinatorics.
\end{proof}

\begin{coro}\label{corodelWa} Let $a_0=a_0(\mathcal{K})\in\nt$ given by Lemma \ref{Wa}. Let $\zeta$ be a normalized $C^3$ critical commuting pairs that belongs to $\mathcal{K}$ which is renormalizable with period $a \geq a_0$. Given $x\in\left[0,\eta^a\big(\xi(0)\big)\right]$ there exists $t_x \leq 0$ in $W_a(\zeta)$ such that $\eta_{t_x}^{a}\big(\xi_{t_x}(0)\big)=x$.
\end{coro}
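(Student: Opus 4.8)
The plan is to obtain $t_x$ as a solution of the equation $\eta_t^{a}\bigl(\xi_t(0)\bigr)=x$ by an application of the intermediate value theorem along the standard family $\{\zeta_t\}_{t\in W}$ constructed above.

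Concretely, consider $\Phi\colon\overline{W_a}\to\R$ given by $\Phi(t)=\eta_t^{a}\bigl(\xi_t(0)\bigr)$. The first step is to see that $\Phi$ is well defined and continuous. Since $a\ge a_0$, Lemma \ref{Wa} gives that $\overline{W_a}=[-w_-^{a},w_+^{a}]$ is a compact subinterval of $W$, and for every $t\in\overline{W_a}$ the pair $\zeta_t=(\eta_t,\xi_t)$ is renormalizable with period $a$; hence the points $\xi_t(0),\,\eta_t(\xi_t(0)),\,\dots,\,\eta_t^{a}(\xi_t(0))$ all belong to $I_{\eta_t}=[0,\xi_t(0)]$, the domain of $\eta_t$, so $\Phi(t)$ makes sense and $\Phi(t)\ge 0$ on all of $\overline{W_a}$. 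As $\xi_t=T_t\circ\xi_0$ and $\eta_t=T_t\circ\eta_0$ with $T$ a $C^3$ map on $(I_\xi\cup I_\eta)\times\R$, the point $\eta_t^{j}(\xi_t(0))$ depends continuously on $t$ for each $0\le j\le a$ (by induction on $j$, using at each step that $\eta_t^{j-1}(\xi_t(0))$ stays inside the domain of $\eta_t$, as just noted); in particular $\Phi$ is continuous on $\overline{W_a}$.

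Next, I would evaluate $\Phi$ at $t=0$ and at the relevant endpoint. Since $\zeta_0=\zeta$ we have $\Phi(0)=\eta^{a}(\xi(0))$, which is positive (in fact $\ge 1/K_0$) by the $K_0$-control. By Lemma \ref{Wa}, at one of the two endpoints $t_0\in\partial W_a$ one has $\eta_{t_0}^{a}\bigl(\xi_{t_0}(0)\bigr)=0$, that is, $\Phi(t_0)=0$; by the orientation convention of the standard family this endpoint satisfies $t_0\le 0$. Applying the intermediate value theorem to the continuous function $\Phi$ on the closed interval with endpoints $t_0$ and $0$ — which is contained in $W_a$ — we obtain, for every $x\in[0,\eta^{a}(\xi(0))]$, a parameter $t_x$ in that interval with $\eta_{t_x}^{a}\bigl(\xi_{t_x}(0)\bigr)=x$. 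In particular $t_x\le 0$ and $t_x\in W_a(\zeta)$, which is the assertion.

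The only point that deserves attention is the first step: the continuity of $\Phi$ up to the endpoints of $\overline{W_a}$, and the fact that the finite orbit $\{\eta_t^{j}(\xi_t(0))\}_{0\le j\le a}$ does not leave the domain of $\eta_t$ as $t$ ranges over $\overline{W_a}$. Both are consequences of Lemma \ref{Wa}, which places $\partial W_a$ inside $W$ and describes it combinatorially. Beyond that there is no genuine obstacle — the argument is one-dimensional — and, if desired, the uniqueness of $t_x$ follows from the strict monotonicity of $\Phi$ along the standard family furnished by the transversality estimates of Proposition \ref{geomprop}.
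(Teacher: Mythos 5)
Your overall scheme — continuity of $\Phi(t)=\eta_t^{a}\big(\xi_t(0)\big)$ on $\overline{W_a}$, the value $\Phi(0)=\eta^{a}\big(\xi(0)\big)>0$, vanishing of $\Phi$ at an endpoint of $W_a$ supplied by Lemma \ref{Wa}, and the intermediate value theorem on the interval between that endpoint and $0$ — is precisely the argument the paper has in mind: the corollary is stated without proof as an immediate consequence of Lemma \ref{Wa}, and your treatment of the continuity and of the orbit staying inside the domains is fine.

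The one step you do not actually prove is the sign $t_x\le 0$, and this is the only content of the corollary beyond the IVT. You dispose of it ``by the orientation convention of the standard family'', but observe that Lemma \ref{Wa}, read literally, places the vanishing of $\eta_t^{a}\big(\xi_t(0)\big)$ at the \emph{right} endpoint $w_+^{a}\ge 0$ (and the vanishing of $\eta_t^{a+1}\big(\xi_t(0)\big)$ at $-w_-^{a}$); with that reading your IVT argument would deliver $t_x\ge 0$, the opposite of what is claimed. The clean way to settle the sign is the monotonicity of the standard family: by Lemma \ref{MFcontrolP} one has $\frac{\partial T}{\partial t}=1/DP>0$, so $\eta_t=T_t\circ\eta_0$ and $\xi_t=T_t\circ\xi_0$ increase pointwise with $t$, and an induction identical to the one in Lemma \ref{orbitorder} shows that $t\mapsto\eta_t^{j}\big(\xi_t(0)\big)$ is increasing for every $j\le a$ as long as the orbit stays in the domain (this positivity is also what every term of \eqref{derGt}, hence Proposition \ref{geomprop}, expresses). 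Consequently a value $x\le\eta_0^{a}\big(\xi_0(0)\big)$ can only be attained at a parameter $t\le 0$, and the endpoint of $W_a$ at which $\eta_t^{a}\big(\xi_t(0)\big)=0$ is the non-positive one — i.e.\ the two boundary identities in Lemma \ref{Wa} must be read with this orientation, which is also how they are used later (Lemma \ref{temamigo}, the Synchronization Lemma). Replacing the appeal to ``convention'' by this monotonicity argument closes the gap, and then your proof, including the closing remark on uniqueness, is complete.
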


Finally, let $V=[-v_-,v_+] \subset W_a$ defined by:$$\eta_{-v_-}^{a+1}\big(\xi_{-v_-}(0)\big)=-1/K_0^2\quad\mbox{and}\quad\eta_{v_+}^{a}\big(\xi_{v_+}(0)\big)=1/K_0^2\,.$$

\begin{lema}\label{BDinV} For any $t \in V$ and any $k\in\{1,...,a-1\}$ the $C^3$ diffeomorphism $\eta_t^{a-k}:I_k(t) \to I_a(t)$ has universally bounded distortion.
\end{lema}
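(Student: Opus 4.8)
The plan is to estimate the distortion of $\eta_t^{a-k}:I_k(t)\to I_a(t)$ by comparing its behaviour with that of the return map $\eta_t^{a-1}$ and by invoking the real-analytic-style machinery already available in $\mathcal{K}$, namely Koebe's distortion principle (Theorem \ref{koebedistint}) together with Yoccoz's Lemma (Lemma \ref{doyoc}). First I would observe that for every $t\in V$ the commuting pair $\zeta_t$ still has negative Schwarzian (it is obtained from $\zeta$ by a $C^3$ translation along $P$, which preserves the sign of the Schwarzian, or more simply one checks directly that negativity of $S\eta_t$ follows from negativity of $S\eta$ and the chain rule since $T_t$ is a Möbius-like reparametrization on the gluing circle — here one needs the explicit form of $P$ and that $SP$ is controlled, so one may prefer to argue via Koebe directly instead of the Schwarzian). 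The key quantitative input is that $V\subset W_a$ was defined precisely so that $\eta_t^{a+1}(\xi_t(0))\le -1/K_0^2$ and $\eta_t^a(\xi_t(0))\ge 1/K_0^2$ uniformly in $t\in V$; combined with Lemma \ref{t0mon} and the uniform bounds on $\partial T/\partial t$ coming from Lemma \ref{MFcontrolP}, this shows that each $\zeta_t$ with $t\in V$ is $K_1$-controlled for a constant $K_1=K_1(\mathcal{K})$ independent of $t$ and of $a$.

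Next I would set up the Koebe estimate. The map $\eta_t^{a-k}$ is a composition of the single branch $\eta_t$ restricted successively to the fundamental domains $I_k(t),I_{k+1}(t),\dots,I_{a-1}(t)$, and these intervals are pairwise disjoint inside $I_{\eta_t}=[0,\xi_t(0)]$, so $\sum_{j=k}^{a-1}|\eta_t^{\,j-k}(I_k(t))|=\sum_{j=k}^{a-1}|I_j(t)|\le|\xi_t(0)|\le K_1$, giving the summability hypothesis of Theorem \ref{koebedistint} with $S=K_1$. It remains to produce definite space: I would show that $\eta_t^{a-k}(I_k(t))=I_a(t)$ sits with bounded space inside a slightly larger interval on which $\eta_t^{a-k}$ extends diffeomorphically. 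The natural candidate is to extend one more step to the left, i.e. consider $\eta_t^{a-k}$ on $[\,x_{k}(t),\,\xi_t(0)\,]$ or use the interval $[\eta_t^{a}(\xi_t(0)),\xi_t(0)]$ on which $D\eta_t\ge 1/K_1$ by the last $K$-control condition; the image of this enlarged interval under $\eta_t^{a-k}$ then contains $I_a(t)$ together with a neighbourhood whose size is comparable to $|I_a(t)|$ by the real bounds applied to $\zeta_t$. Here Yoccoz's Lemma (Lemma \ref{doyoc}) enters decisively: it gives $|I_j(t)|\asymp \min\{j,a-j\}^{-2}$ uniformly in $t\in V$, so adjacent fundamental domains near the $a$-end have comparable lengths, which yields the required $\tau=\tau(\mathcal{K})>0$ amount of space for $\eta_t^{a-k}$ regardless of how large $a$ is. Applying Theorem \ref{koebedistint} with these uniform $S$ and $\tau$ produces the distortion bound $K=K(\mathcal{K})$ claimed, uniformly in $t\in V$ and $k\in\{1,\dots,a-1\}$.

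The main obstacle I anticipate is the uniformity of the space $\tau$ as $a\to\infty$: one must rule out the possibility that $I_a(t)$ is a tiny interval squeezed against the endpoint $x_a(t)=\eta_t^a(\xi_t(0))$ with no room on one side, which is exactly the regime where naive estimates degenerate. The definition of $V$ (rather than $W_a$) is tailored to this: on $W_a$ the point $x_a(t)$ can reach $0$, killing the space, but on $V=[-v_-,v_+]$ one has $x_a(t)\ge 1/K_0^2$ and $x_{a+1}(t)\le -1/K_0^2$, so $I_a(t)$ and the adjacent $I_{a-1}(t)$ are uniformly non-degenerate by Yoccoz; this is precisely why the lemma is stated for $t\in V$ and not for $t$ in the larger set $W_a$. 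A secondary technical point is justifying negative Schwarzian (or directly, bounded Schwarzian à la Remark \ref{rememap}) for the translated pairs $\zeta_t$; if $SP$ is not controlled one bypasses this by noting that $\eta_t=T_t\circ\eta_0$ and estimating the distortion of $\eta_t^{a-k}$ by composing the Koebe estimate for $\eta_0^{a-k}$ with the uniformly bounded distortion of the finitely-many-times-iterated translation, using $\tfrac1K\le \partial T/\partial t\le K$ and the $C^3$ bound on $P$ from Lemma \ref{MFcontrolP}.
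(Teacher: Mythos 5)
Your argument is essentially the paper's own proof, which is stated in one line: combine Koebe's distortion principle (Theorem \ref{koebedistint}) with the $K$-control; your write-up simply makes explicit the points the paper leaves implicit (uniform control of the pairs $\zeta_t$ for $t\in V$, summability of $\sum_j|I_j(t)|$, and definite space near $I_a(t)$ coming from the definition of $V$ and Yoccoz's Lemma). So the proposal is correct and follows the same route, only in greater detail.
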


Here $I_i(t)=\big[x_i(t),x_{i-1}(t)\big]$ for all $i\in\{1,...,a\}$.

\begin{proof}[Proof of Lemma \ref{BDinV}] Combine Koebe's distortion principle (Theorem \ref{koebedistint}) with the $K$-control.
\end{proof}

\begin{lema}\label{temamigo} Let $a_0=a_0(\mathcal{K})\in\nt$ given by Lemma \ref{Wa}. Let $\zeta_0=(\eta_0,\xi_0)$ and $\zeta_1=(\eta_1,\xi_1)$ be two normalized $C^3$ critical commuting pairs that belong to $\mathcal{K}$ which are renormalizable with the same period $a \geq a_0$. Then there exists $t_0 \in V(\zeta_0) \subset W_a(\zeta_0)$ such that:$$\eta_{t_0}^a\big(\xi_{t_0}(0)\big)=\eta_1^a\big(\xi_1(0)\big)\quad\mbox{and}\quad d_2(\zeta_0,\zeta_{t_0}) \leq Kd_2(\zeta_0,\zeta_1),$$where the constant $K=K(\mathcal{K})>1$ is given by Lemma \ref{t0mon}.
\end{lema}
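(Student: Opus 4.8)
The plan is to realize $t_0$ as $\phi^{-1}\big(\eta_1^a(\xi_1(0))\big)$, where $\phi(t):=\eta_t^a(\xi_t(0))$ is the function attached to the standard family $\{\zeta_t\}$ of $\zeta_0$ constructed above, and then to control $|t_0|$ by a lower bound on the ``expansion'' $|\phi'|$ together with an upper bound on the displacement $\big|\eta_1^a(\xi_1(0))-\eta_0^a(\xi_0(0))\big|$ of the new boundary point. First one checks that $\phi$ is $C^1$ and strictly monotone: writing $x_i(t)=\eta_t^i(\xi_t(0))$ and differentiating the relations $\partial_t\xi_t(0)=1/DP\big(\xi_t(0)\big)$ and $\partial_t\eta_t(x)=1/DP\big(\eta_t(x)\big)$ along the orbit, the chain rule gives
$$\phi'(t)=\sum_{i=0}^{a-1}\Big(\prod_{j=i+1}^{a-1}D\eta_t\big(x_j(t)\big)\Big)\frac{1}{DP\big(x_{i+1}(t)\big)}+\Big(\prod_{j=0}^{a-1}D\eta_t\big(x_j(t)\big)\Big)\frac{1}{DP\big(\xi_t(0)\big)},$$
a sum of terms which, by Lemma \ref{MFcontrolP}, all have the same sign. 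Moreover $\prod_{j=i+1}^{a-1}D\eta_t\big(x_j(t)\big)=D\eta_t^{\,a-1-i}\big(x_{i+1}(t)\big)$ has universally bounded distortion on $I_{i+1}(t)$ by Lemma \ref{BDinV}, hence is $\asymp|I_a(t)|/|I_{i+1}(t)|$; combining this with $|I_a(t)|\asymp 1$ and Yoccoz's estimate $|I_i(t)|\asymp\min\{i,a-i\}^{-2}$ (Lemma \ref{doyoc}) yields
$$\big|\phi'(t)\big|\;\asymp\;\sum_{i=1}^{a}\min\{i,a-i\}^{2}\;\asymp\;a^{3}$$
uniformly for $t\in V$; and of course $\phi(0)=\eta_0^a(\xi_0(0))$, with $0\in V$ by the $K_0$-control of $\zeta_0$.

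Granting for the moment the \emph{comparison estimate}
$$\big|\eta_1^a(\xi_1(0))-\eta_0^a(\xi_0(0))\big|\;\le\;C(\mathcal{K})\,a^{3}\,d_2(\zeta_0,\zeta_1)\,,$$
the lemma follows. By the $K_0$-control of both pairs and the choice of the constants defining $V$ — which gives $\eta_1^a(\xi_1(0))\ge 1/K_0>1/K_0^2=\phi(v_+)$ and, using $\eta_1^{a+1}(\xi_1(0))\le -1/K_0<-1/K_0^2$ together with the $C^3$-closeness of $\zeta_{-v_-}$ to $\zeta_0$ (recall $|v_-|\le|W_a|\to 0$ as $a\to\infty$, and the gap between $-1/K_0$ and $-1/K_0^2$ absorbs this error), $\eta_1^a(\xi_1(0))\le\phi(-v_-)$ — the value $\eta_1^a(\xi_1(0))$ lies in $\phi(V)$, so $t_0:=\phi^{-1}\big(\eta_1^a(\xi_1(0))\big)$ is a well-defined point of $V$ realizing the first conclusion. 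Since $\phi$ is a $C^1$ monotone diffeomorphism of $V$ and $\phi(0)=\eta_0^a(\xi_0(0))$, the lower bound on $|\phi'|$ and the comparison estimate give
$$|t_0|\;\le\;\frac{\big|\eta_1^a(\xi_1(0))-\eta_0^a(\xi_0(0))\big|}{\inf_{V}|\phi'|}\;\lesssim\;d_2(\zeta_0,\zeta_1)\,,$$
whence, by Lemma \ref{t0mon}, $d_2(\zeta_0,\zeta_{t_0})\le K|t_0|\lesssim d_2(\zeta_0,\zeta_1)$, which is the second conclusion.

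I expect the comparison estimate to be the main obstacle. Writing $x_i^j=\eta_j^i(\xi_j(0))$ and telescoping $\eta_1^a(\xi_1(0))-\eta_0^a(\xi_0(0))$ along the $\eta_0$-iterates, the $i$-th increment is controlled by the transfer coefficient $D\eta_0^{\,a-1-i}\asymp|I_a^0|/|I_{i+1}^0|\asymp\min\{i,a-i\}^{2}$ times $\|\eta_0-\eta_1\|_{C^0}\lesssim d_2(\zeta_0,\zeta_1)$, and $\sum_i\min\{i,a-i\}^{2}\asymp a^{3}$. The difficulty is that the two orbits $\{x_i^0\}$ and $\{x_i^1\}$ need not stay close — indeed they may separate by an $O(1)$ amount (this is precisely why renormalization is not even H\"older on the space of all commuting pairs) — so a naive Gronwall loop over the $a$ iterates only produces a bound exponential in $a$. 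The polynomial bound must instead be extracted by exploiting that along the near-parabolic passage the derivatives of $\eta_0$ are close to $1$, as quantified by Yoccoz's Lemma for $\zeta_0$, and that $\zeta_1$ — having the same period $a$ and lying in the same controlled class — has a compatible dynamical partition, so that the intermediate points remain in regions where the Koebe estimates for $\eta_0$ hold with $a$-independent constants even though the orbits themselves diverge. The fact that the resulting power $a^{3}$ exactly cancels the $a^{3}$ in $\inf_{V}|\phi'|$, leaving $|t_0|$ genuinely linear in $d_2(\zeta_0,\zeta_1)$ with no surviving power of $a$, is the crux of the argument.
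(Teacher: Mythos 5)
Your reduction has a genuine gap at exactly the point you flag yourself: the ``comparison estimate'' $\big|\eta_1^a(\xi_1(0))-\eta_0^a(\xi_0(0))\big|\le C(\mathcal{K})\,a^{3}\,d_2(\zeta_0,\zeta_1)$ is never proved. Your own sketch (telescoping along the $\eta_0$-orbit with transfer coefficients, or a Gronwall loop) gives, as you admit, only a constant that is exponential in $a$ --- this is precisely the content of Lemma \ref{bruteforce} --- and the proposed repair (``exploit that the derivatives are close to $1$ along the near-parabolic passage and that $\zeta_1$ has a compatible partition'') is not an argument: the paper emphasizes in Section \ref{S:sketch} that two pairs with the same period $a$ and small $d_2$-distance can have their orbits, and in particular their new boundary points, separated by an amount of order one, so any proof of a polynomial-in-$a$ bound must use more than the Koebe/Yoccoz geometry of $\zeta_0$ alone. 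Since your bound $|t_0|\lesssim d_2(\zeta_0,\zeta_1)$ rests entirely on dividing this unproved displacement bound by $\inf_V|\phi'|\asymp a^3$, the second conclusion of the lemma is not established.

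The paper's proof shows that no such quantitative estimate (and no cancellation of powers of $a$) is needed. Existence of $t_0$ is exactly Corollary \ref{corodelWa} (your monotone $\phi$), and the bound on $|t_0|$ is obtained by a soft ordering argument: if $|t_0|>K\,d_2(\zeta_0,\zeta_1)$, with $K$ the constant of Lemma \ref{t0mon} controlling the pointwise displacement of the standard family in terms of $t$, then $\zeta_{t_0}$ lies strictly on one side of $\zeta_1$ on the common domain ($\xi_{t_0}<\xi_1$ and $\eta_{t_0}<\eta_1$, say); since the maps are increasing, this order propagates through all iterates, forcing $\eta_{t_0}^a\big(\xi_{t_0}(0)\big)<\eta_1^a\big(\xi_1(0)\big)$ and contradicting the defining equality of $t_0$. (This is the same comparison principle formalized later in Lemma \ref{orbitorder}.) If you do want your comparison estimate, the natural proof is again this sandwich: for $|t|\le K\varepsilon$ inside $W_a$ the pairs $\zeta_{\pm K\varepsilon}$ bracket $\zeta_1$, and Proposition \ref{geomprop} converts the parameter interval into an $O(a^3\varepsilon)$ displacement of the boundary point --- at which point the inverse-function detour through $\phi^{-1}$ is redundant.
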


\begin{figure}[h]
\begin{center}
\includegraphics[scale=1.1]{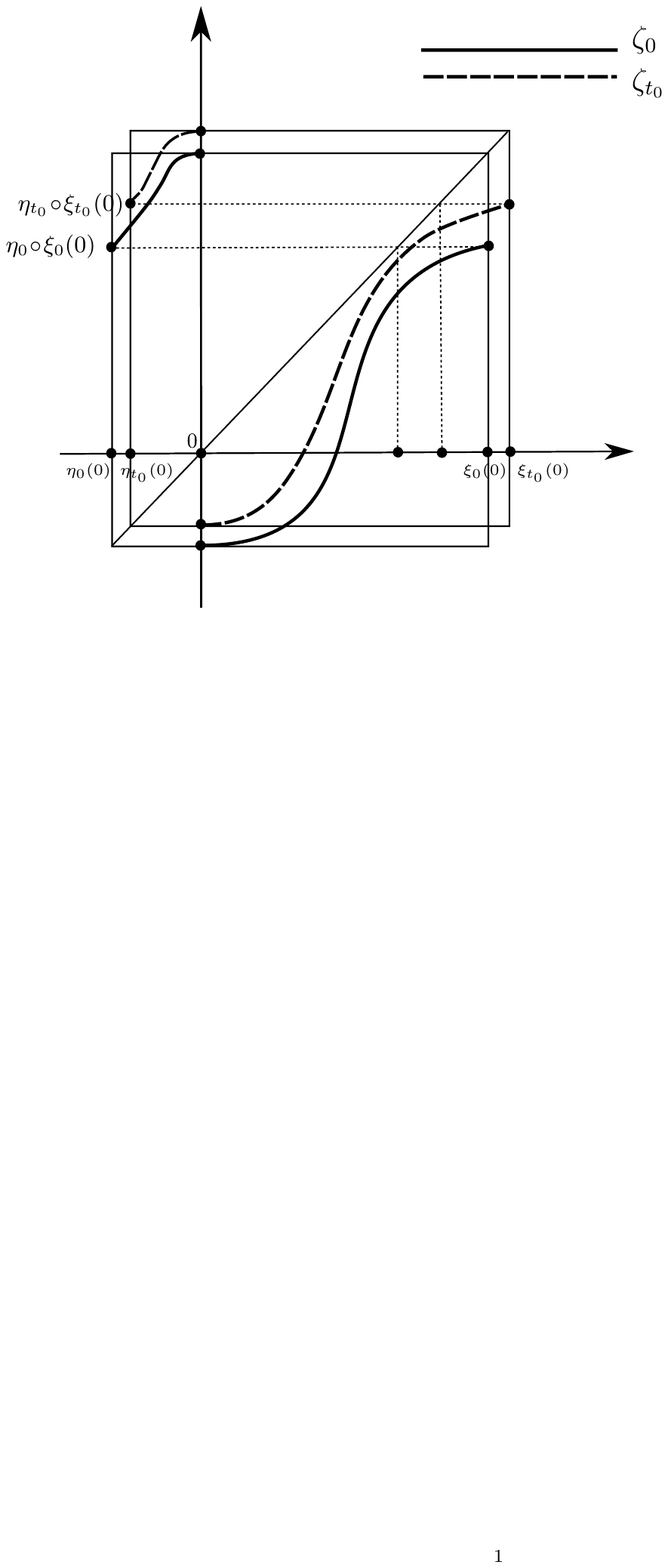}
\caption{Standard families of critical commuting pairs (in this figure, the period of $\zeta_0$ is equal to $3$, while the period of $\zeta_{t_0}$ is $8$).}
\end{center}
\end{figure}

\begin{proof}[Proof of Lemma \ref{temamigo}] We may suppose that $\eta_{0}^{a}\big(\xi_0(0)\big)\geq\eta_{1}^{a}\big(\xi_1(0)\big)$, that is, $\eta_{1}^{a}\big(\xi_1(0)\big)$ belongs to the interval $\big[1/K_0,\eta_{0}^{a}\big(\xi_0(0)\big)\big]\subset[1/K_0,K_0]$. By Corollary \ref{corodelWa} there exists $t_0<0$ in $V(\zeta_0)$ such that $\eta_{t_0}^a\big(\xi_{t_0}(0)\big)=\eta_1^a\big(\xi_1(0)\big)$. Note that $\eta_{t_0}^{a+1}\big(\xi_{t_0}(0)\big)\leq\eta_0^{a+1}\big(\xi_0(0)\big)\leq-1/K_0<-1/K_0^2$. Now let $K=K(\mathcal{K})>1$ given by Lemma \ref{t0mon}. We claim that $|t_0| \leq Kd_2(\zeta_0,\zeta_1)$. Indeed, if $|t_0|>Kd_2(\zeta_0,\zeta_1)$ we would have $\xi_{t_0}<\xi_1$ and $\eta_{t_0}<\eta_1$ in the corresponding intersections of domains, but this implies that $\eta_{t_0}^a\big(\xi_{t_0}(0)\big)<\eta_1^a\big(\xi_1(0)\big)$ which is a contradiction. Then $|t_0| \leq Kd_2(\zeta_0,\zeta_1)$ and we are done.
\end{proof}

\subsection{Renormalization of standard families} As before, fix $K_0>1$ and let $\mathcal{K}$ be the space of normalized $C^3$ critical commuting pairs which are $K_0$-controlled (see Definition \ref{defcont} in Section \ref{Seccont}). Again we consider in this section a normalized $C^3$ critical commuting pair $\zeta=(\eta,\xi)$ in $\mathcal{K}$ with negative Schwarzian, which is renormalizable with some period $a\in\nt$. Let $V(\zeta)$ be the parameter interval for the standard family around $\zeta$ constructed in Section \ref{subMF}, and consider the one-parameter family of $C^3$ critical commuting pairs given by  $G_t=p\mathcal{R}(\zeta_t)$ for each $t \in V$, that is, $G_t$ is the pre-renormalization of $\zeta_t$ (see Definition \ref{renop} in Section \ref{Sprel}).

\begin{prop}\label{geomprop} There exists $K=K(\mathcal{K})>1$ such that for all $t \in V$ and for all $x$ in the domain of $G_t$ we have:$$\frac{\partial G_t}{\partial t}(x) \asymp a^3\quad\mbox{if $x<0$, and}\quad\frac{\partial G_t}{\partial t}(x) \asymp 1\quad\mbox{if $x>0$.}$$
\end{prop}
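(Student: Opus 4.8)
The plan is to differentiate the pre-renormalized family $G_t = p\mathcal{R}(\zeta_t)$ with respect to the parameter $t$ and express the result via the chain rule in terms of the ``velocities'' $\partial\xi_t/\partial t$ and $\partial\eta_t/\partial t$, which are both comparable to $1$ by Lemma \ref{MFcontrolP} (indeed $\partial\xi_t/\partial t = 1/DP(\xi_t(x))$ and $\partial\eta_t/\partial t = 1/DP(\eta_t(x))$, and $DP$ is bounded above and below on the relevant intervals). Recall $p\mathcal{R}(\zeta_t) = \bigl(\eta_t|_{[0,\eta_t^a(\xi_t(0))]},\, \eta_t^a\circ\xi_t|_{I_{\xi}}\bigr)$. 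For a point $x>0$ in the domain, $G_t$ is simply $\eta_t$, so $\partial G_t/\partial t (x) = \partial \eta_t/\partial t(x) \asymp 1$, which is the easy half of the statement; one only needs to note that the domain endpoint moves too, but at a point $x>0$ strictly inside, the expression for $G_t$ does not involve $a$ at all.

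The substantive half is the estimate for $x<0$, i.e.\ for the branch $\eta_t^a\circ\xi_t$ on $I_{\xi}$. Here I would write, for fixed $x\in I_{\xi}$,
$$\frac{\partial}{\partial t}\bigl(\eta_t^a(\xi_t(x))\bigr) = \sum_{j=0}^{a-1} D\eta_t^{a-1-j}\bigl(\eta_t^{j+1}(\xi_t(x))\bigr)\cdot\frac{\partial\eta_t}{\partial t}\bigl(\eta_t^{j}(\xi_t(x))\bigr) \;+\; D\eta_t^a\bigl(\xi_t(x)\bigr)\cdot\frac{\partial\xi_t}{\partial t}(x),$$
using the standard formula for the $t$-derivative of an iterated family. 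Since each $\partial\eta_t/\partial t$ and $\partial\xi_t/\partial t$ is $\asymp 1$, the whole sum is comparable to $\sum_{j=0}^{a} \bigl|D\eta_t^{a-j}\bigl(\eta_t^{j}(\xi_t(x))\bigr)\bigr|$ (all terms positive by Condition \eqref{la4}, so no cancellation). Now $\eta_t^{j}(\xi_t(x))$ ranges over the fundamental domains $I_{j+1}(t)$, and by Lemma \ref{BDinV} the map $\eta_t^{a-j}: I_{j+1}(t)\to I_a(t)\cup\{\text{adjacent}\}$ has universally bounded distortion, so $\bigl|D\eta_t^{a-j}\bigr|$ on that domain is comparable to $|I_{a}(t)|/|I_{j+1}(t)|$ (more precisely to the ratio of the image interval to $I_{j+1}(t)$, which by the real bounds is comparable to $|I_a(t)|/|I_{j+1}(t)|$). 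Thus the sum is $\asymp |I_a(t)|\sum_{j} 1/|I_{j+1}(t)|$. By Yoccoz's Lemma (Lemma \ref{doyoc}) applied to $\zeta_t$ (which has negative Schwarzian and lies in $\mathcal{K}$ for $t\in V$, by the glueing construction and $K_0$-control), $|I_i(t)|\asymp 1/\min\{i,a-i\}^2$; in particular $|I_a(t)| = |I_a(t)|$ with $a-i$ small near $i=a$ — careful, $|I_a(t)|\asymp 1/\min\{a,0\}^2$ is the degenerate endpoint, so I would instead use the neighbouring domain: the image of $\eta_t^{a-j}$ lands with definite space around $\eta_t^a(\xi_t(0))$, whose distance to $0$ is $\asymp 1$ by the definition of $V$ (the endpoints are pinned at $\pm 1/K_0^2$). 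Hence $|D\eta_t^{a-j}|$ on $I_{j+1}(t)$ is $\asymp 1/|I_{j+1}(t)| \asymp \min\{j+1,a-j-1\}^2$, and
$$\frac{\partial G_t}{\partial t}(x) \asymp \sum_{j=0}^{a-1}\min\{j+1,\,a-j-1\}^2 \asymp 2\sum_{k=1}^{\lfloor a/2\rfloor} k^2 \asymp a^3,$$
which is exactly the claimed bound.

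The main obstacle I anticipate is bookkeeping at the two ``ends'' of the orbit segment: near $j\approx 0$ the term should be $\asymp 1$ (the factor $D\xi_t$ contributes $\asymp 1$ and $D\eta_t^a$ is $\asymp$ the length of the full image over $|I_1(t)|\asymp 1$), and near $j\approx a-1$ one must be sure the image interval $I_a(t)$ (or rather the interval from $\eta_t^a(\xi_t(0))$ out to the moving boundary) has length bounded below — this is where the definition of $V$ via $\eta_{v_+}^a(\xi_{v_+}(0)) = 1/K_0^2$ and $\eta_{-v_-}^{a+1}(\xi_{-v_-}(0)) = -1/K_0^2$ is used, guaranteeing the relevant image has definite size and definite space inside a larger interval so Koebe applies with uniform constants. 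Once one checks that the distortion constants and the Yoccoz constants are uniform over $t\in V$ and over $\zeta\in\mathcal{K}$ with negative Schwarzian, the asymptotic $\sum_k k^2 \asymp a^3$ finishes the proof; the case $x>0$ needs no work beyond observing $G_t|_{x>0}=\eta_t$.
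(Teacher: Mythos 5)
Your proposal is correct and follows essentially the same route as the paper's proof: differentiate the orbit in $t$ (the paper derives the same identity inductively from $\eta_t=T_t\circ\eta_0$), use that $\partial\eta_t/\partial t,\partial\xi_t/\partial t\asymp 1$ via the bounds on $DP$ and that all terms are positive, then apply the bounded distortion of Lemma \ref{BDinV} together with Yoccoz's Lemma to get $\sum_k\min\{k,a-k\}^2\asymp a^3$, with the $x>0$ branch being just $\eta_t$. The only deviations are cosmetic bookkeeping (the relevant image interval is $I_{a+1}(t)$ rather than $I_a(t)$, and the boundary term involves $\partial\xi_t/\partial t$, not $D\xi_t$), and your use of the pinning of $V$ at $\pm 1/K_0^2$ to give that interval definite length is exactly the role it plays in the paper.
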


\begin{figure}[h]
\begin{center}
\includegraphics[scale=1.3]{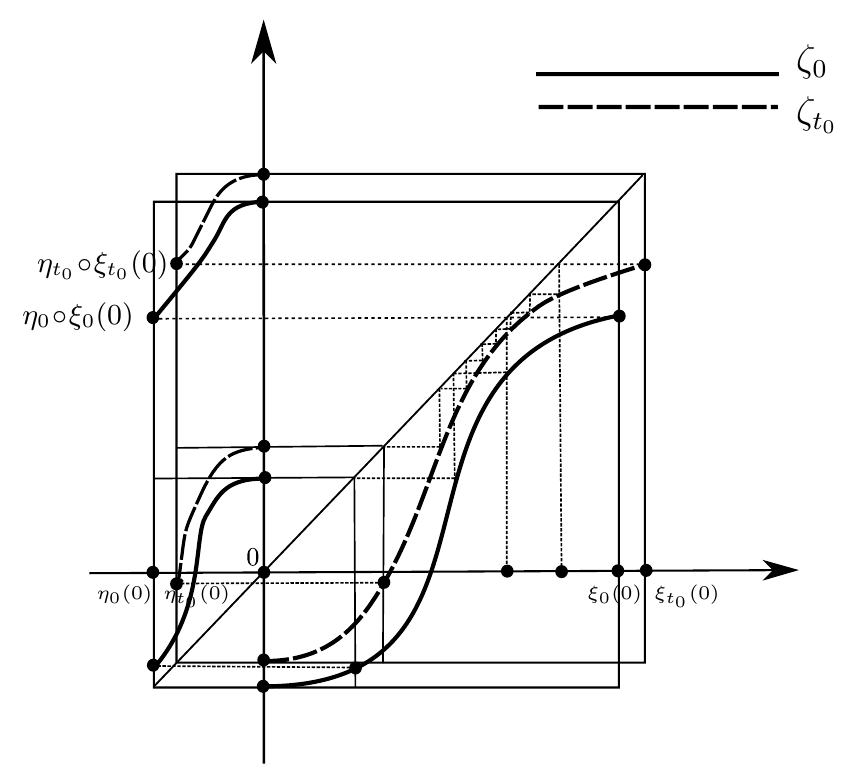}
\caption{Both critical commuting pairs of Figure 4, and their renormalizations.}
\end{center}
\end{figure}

\begin{proof}[Proof of Proposition \ref{geomprop}] Note first that for $t \in V$ and $x \in I_{\xi_t}$ we have the identity:
\begin{equation}\label{derGt}
\frac{\partial G_t}{\partial t}(x)=\frac{\partial\xi_t}{\partial t}(x)\,D\eta_t^{a}\big(\xi_t(x)\big)+\sum_{k=1}^{k=a}\frac{\partial T}{\partial t}\big(\eta_0\big(\eta_t^{k-1}\big(\xi_t(x)\big)\big),t\big)D\eta_t^{a-k}\big(\eta_t^{k}\big(\xi_t(x)\big)\big).
\end{equation}

Indeed, fix $x \in I_{\xi_t}$ and for each $j\in\{0,1,...,a\}$ let $y_j(t)=\eta_t^j\big(\xi_t(x)\big)$. Note that $y_0(t)=\xi_t(x)$ and $y_a(t)=G_t(x)$ for $x<0$. Since $y_{j+1}(t)=\eta_t\big(y_j(t)\big)=T\big(\eta_0\big(y_j(t)\big),t\big)$ for all $j\in\{0,1,...,a-1\}$ we see that:
\begin{align}\label{MGumpaso}
y'_{j+1}(t)&=y'_j(t)\frac{\partial T}{\partial x}\big(\eta_0\big(y_j(t)\big),t\big)D\eta_0\big(y_j(t)\big)+\frac{\partial T}{\partial t}\big(\eta_0\big(y_j(t)\big),t\big)\\
&=y'_j(t)D\eta_t\big(y_j(t)\big)+\frac{\partial T}{\partial t}\big(\eta_0\big(y_j(t)\big),t\big),\notag
\end{align}
since from $\eta_t(x)=T\big(\eta_0(x),t\big)$ we get $D\eta_t(x)=\frac{\partial T}{\partial x}\big(\eta_0(x),t\big)D\eta_0(x)$. By induction on \eqref{MGumpaso} we obtain that for all $j\in\{1,...,a\}$:
\begin{align*}
y'_j(t)&=y_0'(t)\prod_{l=0}^{j-1}D\eta_t\big(y_l(t)\big)+\sum_{k=1}^{j-1}\frac{\partial T}{\partial t}\big(\eta_0\big(y_{k-1}(t)\big),t\big)\prod_{l=k}^{j-1}D\eta_t\big(y_l(t)\big)\\
&+\frac{\partial T}{\partial t}\big(\eta_0\big(y_{j-1}(t)\big),t\big)\\
&=y_0'(t)\,D\eta_t^{j}\big(y_0(t)\big)+\sum_{k=1}^{j-1}\frac{\partial T}{\partial t}\big(\eta_0\big(y_{k-1}(t)\big),t\big)D\eta_t^{j-k}\big(y_k(t)\big)\\
&+\frac{\partial T}{\partial t}\big(\eta_0\big(y_{j-1}(t)\big),t\big)\\
&=y_0'(t)\,D\eta_t^{j}\big(y_0(t)\big)+\sum_{k=1}^{k=j}\frac{\partial T}{\partial t}\big(\eta_0\big(y_{k-1}(t)\big),t\big)D\eta_t^{j-k}\big(y_k(t)\big).
\end{align*}

In particular:$$\frac{\partial G_t}{\partial t}(x)=y'_a(t)=y_0'(t)\,D\eta_t^{a}\big(y_0(t)\big)+\sum_{k=1}^{k=a}\frac{\partial T}{\partial t}\big(\eta_0\big(y_{k-1}(t)\big),t\big)D\eta_t^{a-k}\big(y_k(t)\big),$$and then we obtain for all $t \in V$ and all $x \in I_{\xi_t}$ the desired identity \eqref{derGt}. Now by Lemma \ref{MFcontrolP}, the $K_0$-control and Lemma \ref{BDinV} we have:
\begin{align*}
0\leq\frac{\partial\xi_t}{\partial t}(x)D\eta_t^{a}\big(\xi_t(x)\big)&=\left(\frac{D\eta_0\big(\eta_t^{a-1}(\xi_t(x))\big)DP\big(\eta_0\big(\eta_t^{a-1}(\xi_t(x))\big)\big)}{DP\big(\xi_t(x)\big)DP\big(\eta_t^{a}\big(\xi_t(x)\big)\big)}\right)D\eta_t^{a-1}\big(\xi_t(x)\big)\\
&\leq KD\eta_0\big(\eta_t^{a-1}(\xi_t(x))\big)D\eta_t^{a-1}\big(\xi_t(x)\big)\leq K\frac{\big|I_a(t)\big|}{\big|I_1(t)\big|}\leq K.
\end{align*}

On the other hand, for all $k\in\{1,...,a\}$ we have:$$\frac{\partial T}{\partial t}\big(\eta_0\big(\eta_t^{k-1}\big(\xi_t(x)\big)\big),t\big)=\frac{1}{DP\big(\eta_t^{k}\big(\xi_t(x)\big)\big)}\in\left[\frac{1}{K},K\right]$$again by Lemma \ref{MFcontrolP}. Therefore, it follows from \eqref{derGt} that for any $x<0$ we have:$$\frac{\partial G_t}{\partial t}(x)\asymp\sum_{k=1}^{a-1}\,D\eta_t^{a-k}\big(\eta_t^k\big(\xi_t(x)\big)\big)\quad\mbox{whenever $a>1$.}$$

Again by Lemma \ref{BDinV} (bounded distortion) and the $K_0$-control we have that:$$\frac{\partial G_t}{\partial t}(x)\asymp\sum_{k=1}^{a-1}\frac{|I_a(t)|}{|I_k(t)|}\asymp\sum_{k=1}^{a-1}\frac{1}{|I_k(t)|}\,.$$

Therefore, by Yoccoz's lemma (Lemma \ref{doyoc}) we obtain:$$\frac{\partial G_t}{\partial t}(x)\asymp\sum_{k=1}^{a-1}\min\{k,a-k\}^2\asymp a^3\quad\mbox{for any $x<0$.}$$

Finally, recall that for $x\in\big[0,\eta_t^a\big(\xi_t(0)\big)\big]$ we have $G_t(x)=\eta_t(x)$ and then:$$\frac{\partial G_t}{\partial t}(x)=\frac{\partial\eta_t}{\partial t}(x)=\frac{1}{DP\big(\eta_t(x)\big)}\in\left[\frac{1}{K},K\right]$$by Lemma \ref{MFcontrolP}.
\end{proof}

With Proposition \ref{geomprop} at hand we obtain:

\begin{coro}\label{MFprimcor} There exists $K(\mathcal{K})>1$ such that for all $t \in V$ and $x,y \in I_{\xi_t}$ we have:$$\frac{\left|\frac{\partial G_t}{\partial t}(x)\right|}{\left|\frac{\partial G_t}{\partial t}(y)\right|} \leq K\,.$$

In particular:$$\frac{\big|G_t(x)-G_0(x)\big|}{\big|G_t(y)-G_0(y)\big|}=\frac{\big|\eta_t^a\big(\xi_t(x)\big)-\eta_0^a\big(\xi_0(x)\big)\big|}{\big|\eta_t^a\big(\xi_t(y)\big)-\eta_0^a\big(\xi_0(y)\big)\big|} \leq K$$for all $t \in V\!\setminus\!\{0\}$ and $x,y\in I_{\xi_t} \cap I_{\xi_0}=\big[\!\max\{\eta_0(0),\eta_t(0)\},0\big]$.
\end{coro}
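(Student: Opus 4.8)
The plan is to deduce the corollary directly from Proposition \ref{geomprop}, which already gives the two-sided asymptotics $\frac{\partial G_t}{\partial t}(x) \asymp a^3$ for $x<0$ and $\frac{\partial G_t}{\partial t}(x) \asymp 1$ for $x>0$, with constants depending only on $\mathcal{K}$. First I would observe that the domain $I_{\xi_t}$ of the $\xi$-branch of $G_t$ lies entirely in the negative half-line: indeed $I_{\xi_t}=[\eta_t(0),0]$ and, after pre-renormalization, the relevant branch $G_t|_{I_{\xi_t}} = \eta_t^a\circ\xi_t$ has image with interior points, so for $x,y\in I_{\xi_t}$ we are always in the regime $x<0$ of Proposition \ref{geomprop}. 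Hence there is a single universal constant $K=K(\mathcal{K})>1$ with $\frac{1}{K}a^3 \leq \frac{\partial G_t}{\partial t}(x) \leq K a^3$ for every $t\in V$ and every $x\in I_{\xi_t}$. Taking the quotient of this inequality at $x$ and at $y$, the factors $a^3$ cancel and we obtain $\bigl|\frac{\partial G_t}{\partial t}(x)\bigr| / \bigl|\frac{\partial G_t}{\partial t}(y)\bigr| \leq K^2$, which is the first assertion (after renaming $K^2$ as $K$).

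For the second assertion I would apply the mean value theorem in the $t$-variable. Fix $x\in I_{\xi_t}\cap I_{\xi_0}$ and $t\in V\setminus\{0\}$. The map $s\mapsto G_s(x)=\eta_s^a(\xi_s(x))$ is $C^1$ on the parameter interval $[0,t]$ (or $[t,0]$), so there exists $s_x$ between $0$ and $t$ with
\[
G_t(x)-G_0(x) = t\,\frac{\partial G_s}{\partial s}(x)\Big|_{s=s_x}.
\]
Doing the same at $y$ with an intermediate parameter $s_y$, and dividing, the common factor $|t|$ cancels and we get
\[
\frac{|G_t(x)-G_0(x)|}{|G_t(y)-G_0(y)|}
= \frac{\bigl|\frac{\partial G_{s_x}}{\partial t}(x)\bigr|}{\bigl|\frac{\partial G_{s_y}}{\partial t}(y)\bigr|}.
\]
Now both $s_x$ and $s_y$ lie in $V$, so by the uniform bound above the numerator is at most $Ka^3$ and the denominator is at least $\frac{1}{K}a^3$; hence the ratio is at most $K^2$. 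Renaming constants gives the stated bound. The identification $I_{\xi_t}\cap I_{\xi_0}=[\max\{\eta_0(0),\eta_t(0)\},0]$ is immediate from $I_{\xi_s}=[\eta_s(0),0]$.

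I do not anticipate a genuine obstacle here: the corollary is a formal consequence of Proposition \ref{geomprop} together with the mean value theorem, and the only point requiring a moment's care is checking that the intermediate parameters produced by the mean value theorem still lie in $V$ (so that Proposition \ref{geomprop} applies to them) and that $V$ has been chosen so that $\zeta_s$ remains $K_0$-controlled with negative Schwarzian throughout — but this is exactly how $V$ was defined in Section \ref{subMF}. If one wanted to avoid the mean value theorem entirely, the same conclusion follows by writing $G_t(x)-G_0(x)=\int_0^t \frac{\partial G_s}{\partial t}(x)\,ds$ and using that the integrand has constant sign (nonnegative, as recorded in the proof of Proposition \ref{geomprop}) together with the pointwise bounds, so that both $|G_t(x)-G_0(x)|$ and $|G_t(y)-G_0(y)|$ are comparable to $|t|\,a^3$ with universal constants.
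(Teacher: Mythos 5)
Your proposal is correct and is essentially the argument the paper intends: Corollary \ref{MFprimcor} is stated as an immediate consequence of Proposition \ref{geomprop}, with the first assertion following from the uniform comparison $\frac{\partial G_t}{\partial t}(x)\asymp a^3$ on the negative branch and the second from the mean value theorem (or integration) in the parameter, exactly as you write. The only point worth making explicit is that for $x\in I_{\xi_t}\cap I_{\xi_0}$ the map $s\mapsto G_s(x)$ is defined for all intermediate $s$, which follows since $\frac{\partial\eta_s}{\partial s}(0)=1/DP\big(\eta_s(0)\big)>0$ makes $s\mapsto\eta_s(0)$ monotone, so $I_{\xi_s}\supseteq\big[\max\{\eta_0(0),\eta_t(0)\},0\big]$ for $s$ between $0$ and $t$.
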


\section{Orbit Deformations}\label{orbdef}

We start this section with the following observation:

\begin{lema}\label{op} Given $K_0>1$ there exist $a_0=a_0(K_0)\in\nt$ and $K=K(K_0)>1$ with the following property: let $\zeta=(\eta,\xi)$ be a normalized $C^3$ critical commuting pair with negative Schwarzian which is $K_0$-controlled and renormalizable with some period $a \geq a_0$. Then there exists a unique $p$ in $I_{\eta}$ such that $\big|\eta(p)-p\big|\leq\big|\eta(x)-x\big|$ for all $x \in I_{\eta}$. Moreover, the point $p$ belongs to the interior of $I_{\eta}$, $D\eta(p)=1$ and $D^2\eta(p)<-1/K<0$.
\end{lema}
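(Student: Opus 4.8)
The plan is to analyze the function $g(x) = \eta(x) - x$ on $I_{\eta} = [0,\xi(0)]$ and locate its minimum of absolute value. First I would observe that, by the very definition of renormalizability with period $a$ (Definition \ref{renop}), the point $\big(\xi\circ\eta\big)(0) \in I_{\eta}$ and the orbit $x_0 = \xi(0), x_1 = \eta(\xi(0)), \dots, x_a = \eta^a(\xi(0))$ stays in $I_{\eta}$, while $x_{a+1} = \eta^{a+1}(\xi(0)) < 0$. Since $\eta$ has no fixed point in $I_{\eta}$ when $a < \infty$ (as noted after Definition \ref{renop}), the continuous function $g$ does not vanish; but along the monotone orbit the successive increments $x_{i} - x_{i+1} = -g(x_i)$ are all positive and, by Yoccoz's Lemma (Lemma \ref{doyoc}), the fundamental domain lengths $|I_i| \asymp \min\{i,a-i\}^{-2}$ are smallest near $i \asymp a/2$; in particular $-g$ takes values as small as $\asymp 1/a^2$ near the middle of the orbit, so for $a \geq a_0$ large the minimum of $|g|$ on $I_\eta$ is attained at an interior point $p$. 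At such an interior minimum of the $C^3$ function $g$ we have $g'(p) = 0$, i.e. $D\eta(p) = 1$, and $g''(p) \geq 0$, i.e. $D^2\eta(p) \geq 0$; but since $Sg = S\eta < 0$ and $\eta$ has negative Schwarzian while we have already shown $g<0$ throughout, a convexity/Schwarzian argument will upgrade $g''(p) \geq 0$ to a strict quantitative bound $D^2\eta(p) < -1/K$ — here the sign convention must be handled with care, and I expect this quantitative second-derivative estimate to be the main obstacle.

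For the strict inequality $D^2\eta(p) < -1/K$, I would argue as follows. We know $g = \eta - \id$ satisfies $g < 0$ on $I_\eta$, $g(p) = \max_{I_\eta} g < 0$ (maximum of the negative function $g$ corresponds to minimum of $|g|$), $g'(p) = 0$. Because $\eta$ is $K_0$-controlled we have $D\eta(p) = 1 \in [1/K_0, K_0]$ trivially, and the $C^3$ norm of $\eta$ is bounded by $K_0$; the key point is that $D^2\eta(p) = g''(p)$ cannot be too close to $0$. Suppose for contradiction $|D^2\eta(p)|$ is tiny. Then near $p$ the map $\eta$ is very close to the identity in the $C^2$ sense on a definite-size neighborhood (using that $D^3\eta$ is bounded to control the Taylor remainder), which would force $|g|$ to be extremely small on a whole interval of definite length around $p$; integrating, this contradicts the real-bounds/$K_0$-control estimate $\eta^{a-1}(\xi(0)) - \eta^a(\xi(0)) \geq 1/K_0$ and $\eta^a(\xi(0)) - \eta^{a+1}(\xi(0)) \asymp 1/K_0$, since the total displacement produced by iterating $\eta$ across the region near $p$ would be too small to accumulate the required gaps. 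More cleanly: by the negative Schwarzian condition, $g'' = D^2\eta$ has at most the sign behavior dictated by $Sg<0$; combined with $g<0$, $g'(p)=0$, and the boundary behavior of $g$ (it is negative but bounded away from $0$ near the endpoints by $K_0$-control, e.g.\ $g(\xi(0)) = \eta(\xi(0)) - \xi(0) = x_1 - x_0 = -|I_1| \leq -1/K_0$), one deduces that $g$ has a genuine non-degenerate interior maximum, so $g''(p)<0$, and quantitatively $|g''(p)| \gtrsim$ (a constant depending only on $K_0$) by a compactness-free estimate using only the $C^3$ bound and the definite drop of $|g|$ between $p$ and the endpoints.

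I would organize the write-up in three steps: (1) \emph{Existence and interiority of $p$}: use Yoccoz's Lemma to show $\min_{I_\eta}|\eta - \id| \asymp 1/a^2$ is achieved in the interior provided $a \geq a_0$, and that the minimum point is unique because $g = \eta-\id$ is strictly concave near its maximum (a consequence of negative Schwarzian, which prevents two local maxima of a negative function). (2) \emph{First-order condition}: $g'(p) = 0$ gives $D\eta(p) = 1$. (3) \emph{Second-order estimate}: combine $g'(p)=0$, $g(p) < 0$, the definite values $g(0), g(\xi(0)) \leq -1/K_0$ from $K_0$-control, and the $C^3$ bound $\|\eta\|_{C^3} \leq K_0$ via a Taylor-expansion argument to get $D^2\eta(p) = g''(p) \leq -1/K$ for a constant $K = K(K_0) > 1$. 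The main obstacle, as I noted, is step (3): turning the soft inequality $g''(p) \leq 0$ into the quantitative $g''(p) \leq -1/K$, which requires carefully exploiting both the negative Schwarzian (to rule out a flat maximum) and the real bounds (to give a definite scale against which flatness is measured).
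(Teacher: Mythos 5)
Your steps (1) and (2) are essentially sound (the paper gets interiority more cheaply, by pigeonhole: if $p$ were a boundary point then $|\eta(x)-x|\geq 1/K_0$ on all of $I_\eta$, forcing $|I_\eta|>a/K_0$, which contradicts $|I_\eta|\leq K_0$ once $a>K_0^2$; your route through Yoccoz's Lemma also works). But two of your ingredients have genuine gaps. First, uniqueness: your justification rests on the identity $Sg=S\eta$ for $g=\eta-\id$, which is false — the Schwarzian is invariant under post-composition with M\"obius maps, not under subtracting the identity ($S(\eta-\id)$ has $D\eta-1$ in its denominators) — and ``negative Schwarzian prevents two local maxima of a negative function'' is not a statement you can quote. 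The correct (and the paper's) argument applies the Minimum Principle to $D\eta$: if $p_1<p_2$ were two interior minimizers, then $D\eta(p_1)=D\eta(p_2)=1$ and $S\eta<0$ give $D\eta>1$ on $(p_1,p_2)$, so $\eta-\id$ strictly increases there, contradicting $|\eta(p_1)-p_1|=|\eta(p_2)-p_2|$ (recall $\eta-\id<0$).

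The more serious gap is your step (3), the quantitative bound $D^2\eta(p)<-1/K$. Your contradiction argument does not close: if $|D^2\eta(p)|=\epsilon$ is tiny, Taylor with $\|D^3\eta\|\leq K_0$ only gives $|g(x)|\leq |g(p)|+\tfrac{\epsilon}{2}(x-p)^2+\tfrac{K_0}{6}|x-p|^3$, so $|g|$ is forced to be small only on an interval of length comparable to $|g(p)|^{1/3}$, not ``of definite length''. Feeding this into a crossing-time estimate yields (with $v=|g(p)|$) a crossing time $\gtrsim v^{-2/3}$, which must be at most $a$; but the only soft upper bound available is $a\leq K_0/v$, and $v^{-2/3}\lesssim a\lesssim v^{-1}$ is perfectly consistent — no contradiction. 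Concretely, a commuting pair whose almost-parabolic point is cubically flat, $g(x)\approx -v-c|x-p|^3$, satisfies every estimate you invoke yet has $D^2\eta(p)$ arbitrarily small; what excludes it is the negative Schwarzian used quantitatively (such flatness would give $|I_i|\asymp i^{-3/2}$ rather than the $i^{-2}$ scaling), i.e.\ precisely the content of the proof of Yoccoz's Lemma. Note also that $S\eta(p)<0$ with $D\eta(p)=1$ only gives $D^3\eta(p)<\tfrac32\big(D^2\eta(p)\big)^2$, which by itself does not prevent near-flatness. The paper handles this point differently: strict negativity $D^2\eta(p)<0$ is obtained by a soft local argument (if $D^2\eta(p)=0$ then $D^3\eta(p)=S\eta(p)<0$, so $D\eta<1$ just to the left of $p$ and $|\eta-\id|$ would be smaller there than at $p$), while the uniform bound $D^2\eta(p)<-1/K(K_0)$ is deduced from the proof of Yoccoz's Lemma in de Faria--de Melo \cite[Appendix B]{dfdm1}. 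To complete your step (3) you must either reproduce that almost-parabolic (cross-ratio/M\"obius comparison) analysis or cite it; the Taylor-plus-accumulated-gaps argument you sketch is not sufficient. (Minor: the sign claim $g''(p)\geq 0$ in your first paragraph is backwards, though you correct it later.)
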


\begin{proof}[Proof of Lemma \ref{op}] Since $\zeta$ is renormalizable we know that $x>\eta(x)$ for all $x \in I_{\eta}$. From the continuity of $\eta$ and the compactness of its domain $I_{\eta}$, we obtain the existence of a point $p$ such that $0<\big|\eta(p)-p\big|\leq\big|\eta(x)-x\big|$ for all $x \in I_{\eta}$.

We claim first that if $a_0>K_0^2$ and $a \geq a_0$, then $p$ belongs to the interior of $I_{\eta}$. Indeed, note first that the (positive) difference $\Id-\eta$ equals $|I_{\xi}|$ at the origin, and equals $\big|\xi(I_{\xi})\big|$ at the point $\xi(0)$. In both cases it is greater than $1/K_0$, by the $K_0$-control hypothesis. If $p$ is one of the boundary points of $I_{\eta}$, we would have $\big|\eta(x)-x\big|\geq 1/K_0$ for all $x \in I_{\eta}$, and since the period of $\zeta$ is $a$, we would have $a/K_0<|I_{\eta}|$. On the other hand, again by the $K_0$-control hypothesis, we have $a_0>K_0^2>K_0|I_{\eta}|$ and then $|I_{\eta}|<a_0/K_0$, which gives the desired contradiction.

With the claim at hand we clearly have $D\eta(p)=1$ and $D^2\eta(p) \leq 0$.

Uniqueness of $p$ follows at once from the Minimum Principle for maps with negative Schwarzian derivative (see \cite[Section II.6, Lemma 6.1]{dmvs} for its statement).

Now we claim that $D^2\eta(p)$ is strictly negative. Indeed, if $D^2\eta(p)=0$ we would have $D^3\eta(p)=S\eta(p)<0$, and then it would exist $\delta_0>0$ such that $D^2\eta(x)>0$ for all $x\in(p-\delta_0,p)$. But then it would exist $0<\delta_1\leq\delta_0$ such that $\big|\eta(x)-x\big|<\big|\eta(p)-p\big|$ for all $x\in(p-\delta_1,p)$, which gives the desired contradiction.

Finally, the fact that $D^2\eta(p)$ is uniformly bounded away from zero (by a constant depending only on $K_0$) follows from (the proof of) Yoccoz's lemma (Lemma \ref{doyoc}), see \cite[Appendix B, pages 386-389]{dfdm1}.
\end{proof}

Throughout this section fix $K_0>1$ and let $\mathcal{K}$ be the space of normalized $C^3$ critical commuting pairs which are $K_0$-controlled (see Definition \ref{defcont} in Section \ref{Seccont}). Let $\zeta=(\eta,\xi)$ and $\tilde{\zeta}=(\tilde{\eta},\tilde{\xi})$ be two $C^3$ critical commuting pairs with negative Schwarzian that belong to $\mathcal{K}$ which are renormalizable with the same period $a\in\nt$. Denote by $\varepsilon>0$ the $C^2$ distance between $\zeta$ and $\tilde{\zeta}$, that is, $\varepsilon=d_2(\zeta,\tilde{\zeta})$. We will assume that $\varepsilon<\varepsilon_0$, where $\varepsilon_0>0$ will be fixed later in this section. Moreover, we will only consider in this section the special situation when:
\begin{itemize}
\item[1)] $I_\eta=I_{\tilde{\eta}}$ and $I_\xi=I_{\tilde{\xi}}=[-1,0]$,
\item[2)] $p=\tilde{p}$ where $D\eta(p)=D\tilde{\eta}(\tilde{p})=1$ (see Lemma \ref{op} above).
\end{itemize}

Let $H:I_\eta\to[-\varepsilon,\varepsilon]\subset\R$ be defined by
$
H(x)=\eta(x)-\tilde{\eta}(x)
$
and let$$h=H(p).$$

Observe that for every $x\in I_{\eta}$ we have:
\begin{equation}\label{Hh}
\big|H(x)\big|\leq|h|+\varepsilon(x-p)^2,
\end{equation}
and
\begin{equation}\label{DH}
\big|DH(x)\big|\leq\varepsilon|x-p|.
\end{equation}

Indeed, given $x \in I_{\eta}$ there exists $y \in I_{\eta}$ such that $DH(x)=D^2H(y)(x-p)$ and then $\big|DH(x)\big|=\big|D^2H(y)\big||x-p|\leq\varepsilon|x-p|$, and there exists also $z\in[p,x]\subset I_{\eta}$ such that $H(x)=h+DH(z)(x-p)$ and then $\big|H(x)\big|\leq|h|+\big|DH(z)\big||x-p|\leq|h|+\varepsilon(x-p)^2$.

As before we will use the following notation: for $i\in\{0,...,a\}$ let $x_i=\eta^{i}(\xi(0))$. Note that $x_i\in I_{\eta}=[0,\xi(0)]$ for all $i\in\{0,...,a\}$. Define $\tilde{x}_i=\tilde{\eta}^{i}(\tilde{\xi}(0))$ similarly. Denote by $I_i$, $i\in\{1,...,a\}$, the fundamental domains of $\eta$ given by $I_i=[\eta^{i}(\xi(0)),\eta^{i-1}(\xi(0))]$. By the commuting condition $I_1=\xi(I_{\xi})=\xi ([-1,0])$. Define $\tilde{I}_i$ similarly. Let us state some consequences of Yoccoz's Lemma (Lemma \ref{doyoc}):

\begin{lema}\label{aquelevelho} There exists $K_0=K_0(\mathcal{K})>1$ such that for any $\zeta\in\mathcal{K}$ renormalizable with period $a\in\nt$, and for any $b < \left\lfloor\frac{a}{2}\right\rfloor$ we have:$$|x_{b}-x_{a-b}|\leq\frac{K_0}{b}\,.$$
\end{lema}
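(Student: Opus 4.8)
The plan is to read the estimate directly off Yoccoz's Lemma (Lemma \ref{doyoc}, or more precisely the refined version Lemma \ref{aquele}). Recall that Lemma \ref{aquele} asserts, for a commuting pair $\zeta\in\mathcal{K}$ with negative Schwarzian and period $a$, that if $N\in\{1,\dots,a\}$ is determined by $x_{N+1}\leq p\leq x_N$, then $N\asymp a$ and, for every $i\in\{1,\dots,\lfloor a/2\rfloor\}$,
$$\big|x_i-p\big|\asymp\sum_{j=i+1}^{N}\frac{1}{j^2}\asymp\frac{1}{i+1}\,,$$
with the implied constants depending only on $\mathcal{K}$. The difference $x_b-x_{a-b}$ is a sum of consecutive fundamental domains, $x_b-x_{a-b}=\sum_{i=b+1}^{a-b}|I_i|$, so the claim is really a statement about the total mass of the "middle" fundamental domains, and the natural route is to compare this mass with $|x_b-p|$.

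First I would fix $b<\lfloor a/2\rfloor$. Observe that $x_b$ and $x_{a-b}$ lie on the same side relative to nothing in particular, but in any case the point $p$ satisfies $x_{a-b}\le x_{a-N}\le\cdots$; more simply, since the $x_i$ are monotone decreasing in $i$ and $p$ lies between $x_{N+1}$ and $x_N$ with $N\asymp a$, for $b$ small enough (i.e. $b\le\delta_0 a/2$, say) we have $b<N<a-b$, hence $x_b>p>x_{a-b}$ and therefore
$$|x_b-x_{a-b}|=|x_b-p|+|p-x_{a-b}|\,.$$
The first term is $\asymp 1/(b+1)\le K_0/b$ directly by the second bullet of Lemma \ref{aquele}. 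For the second term I would use the symmetric half of Yoccoz's Lemma: applying \eqref{estyoc} (the estimate $|I_i|\asymp (a-i)^{-2}$ for $i\in\{N,\dots,a-1\}$) gives
$$|p-x_{a-b}|=\sum_{i=a-b+1}^{a}|I_i|+\big(p-x_{N}\big)^{+}\cdots\asymp\sum_{i=a-b+1}^{a}\frac{1}{(a-i)^2}+\text{(bounded tail)}\asymp\frac{1}{b}\,,$$
exactly as in the estimate for $|x_b-p|$ but with $i$ replaced by $a-i$. Adding the two contributions yields $|x_b-x_{a-b}|\le K_0/b$ for a constant $K_0=K_0(\mathcal{K})$, after absorbing the case of small $a$ or the finitely many remaining values of $b$ (for which the circle has bounded length $\xi(0)\le K_0$, so the bound is trivial once $K_0$ is enlarged).

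The only mild obstacle is bookkeeping near the transition index $N$: one must make sure that the "middle" indices $b+1,\dots,a-b$ are split cleanly into a block $\{b+1,\dots,N\}$ governed by $|I_i|\asymp i^{-2}$ and a block $\{N,\dots,a-b\}$ governed by $|I_i|\asymp(a-i)^{-2}$, and that the overlap at $i=N$ contributes only a universally bounded amount (indeed $|I_N|\asymp 1$ since $\min\{N,a-N\}\asymp a$ is not what we want — rather $|I_N|$ is the largest fundamental domain, of size $\asymp 1$). Since $N\asymp a$, for $b\le\delta_0 a/2$ both blocks are non-degenerate and the geometric-type sums $\sum_{j>b}j^{-2}\asymp 1/b$ close the argument; for the finitely many $b$ with $\delta_0 a/2<b<\lfloor a/2\rfloor$ one just uses $|x_b-x_{a-b}|\le|I_\eta|\le K_0\le K_0'/b\cdot b\le K_0'$, enlarging the constant. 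This is routine once Lemma \ref{aquele} is in hand, so I expect no genuine difficulty beyond the indexing.
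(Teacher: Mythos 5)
There is a genuine gap in your treatment of the range of $b$ comparable to $a$. The lemma asserts the bound $K_0/b$ for \emph{every} $b<\lfloor a/2\rfloor$; when $b\asymp a$ this is a smallness statement of order $1/a$, so the fallback you propose for $\delta_0 a/2<b<\lfloor a/2\rfloor$ --- bounding $|x_b-x_{a-b}|$ by $|I_\eta|\le K_0$ and ``enlarging the constant'' --- proves only a constant bound, not $K_0'/b$; the chain $K_0\le K_0'/b\cdot b\le K_0'$ is circular. Moreover these are not ``finitely many'' values of $b$ in any uniform sense: their number grows linearly in $a$. To close this range you still need Yoccoz's estimate, e.g.\ for $b\ge\delta_0 a/2$ every $i\in\{b+1,\dots,a-b\}$ has $\min\{i,a-i\}\ge b$, so $|x_b-x_{a-b}|=\sum_{i=b+1}^{a-b}|I_i|\le K(a-2b)/b^2\le Ka/b^2\le K'/b$. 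Two smaller slips: the threshold $b\le\delta_0 a/2$ guarantees $b<N$ but not $a-b>N$ (you also need $b\lesssim(1-\delta_1)a$, so take $b\le\min\{\delta_0,1-\delta_1\}a/2$), and your display for $|p-x_{a-b}|$ sums the wrong block: $\sum_{i=a-b+1}^{a}|I_i|$ equals $x_{a-b}-x_a$, and $\sum_{i=a-b+1}^{a}(a-i)^{-2}$ is $\asymp 1$, not $1/b$; the correct quantity is $p-x_{a-b}\le (p-x_N)+\sum_{i=N+1}^{a-b}|I_i|\lesssim\sum_{j\ge b}j^{-2}\asymp 1/b$, which is what your remark ``with $i$ replaced by $a-i$'' intends.

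For comparison, the paper never splits at $p$ and so needs no case analysis: it writes $|x_b-x_{a-b}|=\sum_{i=b+1}^{a-b}|I_i|$, applies Lemma \ref{doyoc} directly to get the bound $K\sum_{i=b+1}^{a-b}\min\{i,a-i\}^{-2}$, and estimates this sum by $2/b$ using symmetry and $\sum_{i>b}i^{-2}\le\int_b^\infty t^{-2}\,dt=1/b$. That single computation covers every $b<\lfloor a/2\rfloor$ uniformly; once you patch your middle range as indicated, your argument becomes correct, but it is essentially this computation carried out in pieces.
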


The constant $K_0$ does not depend on the period $a$.

\begin{proof}[Proof of Lemma \ref{aquelevelho}] By Yoccoz's Lemma (Lemma \ref{doyoc}) we have:$$|x_{b}-x_{a-b}|=\sum_{i=b+1}^{i=a-b}|I_i| \leq K\left(\sum_{i=b+1}^{i=a-b}\frac{1}{\min\{i,a-i\}^2}\right).$$

To finish, note that:$$\sum_{i=b+1}^{i=a-b}\frac{1}{\min\{i,a-i\}^2}\leq\frac{2}{b}\,.$$

Indeed, by symmetry it is enough to prove that:$$\sum_{i=b+1}^{i=\lfloor a/2 \rfloor}\frac{1}{\min\{i,a-i\}^2}\leq\frac{1}{b}\,,$$and this follows from elementary calculus:$$\sum_{i=b+1}^{i=\lfloor a/2 \rfloor}\frac{1}{\min\{i,a-i\}^2}=\sum_{i=b+1}^{i=\lfloor a/2 \rfloor}\frac{1}{i^2}\leq\int_{b}^{\lfloor a/2 \rfloor}\frac{dt}{t^2}\leq\int_{b}^{+\infty}\frac{dt}{t^2}=\frac{1}{b}\,.$$
\end{proof}

Another consequence of Yoccoz's lemma is the following:

\begin{lema}\label{elb} There exists $b=b(\mathcal{K})\in\nt$ such that $\tilde{x}_{\tilde{N}-b} \geq x_{N-1}$ and $\tilde{x}_{\tilde{N}+b} \leq x_{N+2}$.
\end{lema}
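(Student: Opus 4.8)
The statement asserts the existence of a \emph{universal} number $b=b(\mathcal{K})$ (independent of the period $a$ and of which pairs $\zeta,\tilde\zeta\in\mathcal{K}$ we picked) such that the critical orbit of $\tilde\zeta$ overshoots and undershoots the ``turning region'' of $\zeta$ by a bounded number of steps: $\tilde x_{\tilde N-b}\ge x_{N-1}$ and $\tilde x_{\tilde N+b}\le x_{N+2}$. Here $N,\tilde N$ are the indices from Yoccoz's Lemma (Lemma \ref{aquele}) for $\zeta$ and $\tilde\zeta$, i.e. the iterates at which the orbits pass closest to the respective fixed-point-like points $p=\tilde p$. Since we are in the special normalized situation $I_\eta=I_{\tilde\eta}$, $I_\xi=I_{\tilde\xi}=[-1,0]$ and $p=\tilde p$, the two orbits $\{x_i\}$ and $\{\tilde x_i\}$ are monotone decreasing sequences in the same interval $[0,\xi(0)]=[0,\tilde\xi(0)]$ starting from the same left endpoint behaviour, and both accumulate near the common point $p$.

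\textbf{Key steps.} First I would quantify, using Yoccoz's Lemma (Lemma \ref{doyoc}) together with Lemma \ref{aquelevelho}, how fast each orbit approaches $p$ from above: for indices $i\le N$ one has $|x_i-p|\asymp \sum_{j=i+1}^{N}|I_j|\asymp 1/(i+1)$ by the last bullet of Lemma \ref{aquele}, and similarly $|\tilde x_i-\tilde p|\asymp 1/(i+1)$ with \emph{the same} implied constants, since both $\zeta,\tilde\zeta\in\mathcal{K}$ and the constants in Yoccoz's Lemma depend only on $\mathcal{K}$. Symmetrically, for $i\ge N$ one has $x_i-p\asymp -1/(a-i+1)$ (now $x_i<p$), and the same for $\tilde\zeta$. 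Second, I would translate ``$\tilde x_{\tilde N-b}\ge x_{N-1}$'' into a statement about these distances-to-$p$: since $\tilde x_{\tilde N-b}>p$ and $x_{N-1}$ is either just above or just below $p$, it suffices to show $\tilde x_{\tilde N-b}-p\ge x_{N-1}-p$, i.e. (in the regime where both are positive) $|\tilde x_{\tilde N-b}-p|\ge |x_{N-1}-p|$. By the two-sided estimates this reduces to $\frac{c_1}{\tilde N-b+1}\ge \frac{c_2}{N}$ for universal $c_1,c_2$, which holds as soon as $\tilde N-b+1\le \frac{c_1}{c_2}N$; since $N\asymp a\asymp\tilde N$ with universal constants, this is achieved by taking $b$ a sufficiently large universal multiple of the gap between $\tilde N$ and $(c_1/c_2)N$, which is itself $O(a)$ — and here is the subtlety, so one must be a bit more careful, see below. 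Third, the inequality $\tilde x_{\tilde N+b}\le x_{N+2}$ is handled by the mirror-image argument, using the estimates valid for indices past $N$ (distance to $p$ comparable to $1/(a-i)$), reducing again to comparing $\frac{c_1'}{a-\tilde N-b}$ with $\frac{c_2'}{a-N-2}$.

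\textbf{The main obstacle.} The delicate point is that, a priori, $N$ and $\tilde N$ need only satisfy $N\asymp a\asymp\tilde N$, so their difference $|N-\tilde N|$ could be of order $a$, which is unbounded — and then no \emph{fixed} $b$ could work by a naive comparison of the indices. The resolution must use that $\zeta$ and $\tilde\zeta$ are \emph{not} being compared at matching indices but rather that we are free to shift by $b$ on the $\tilde\zeta$ side and compare against $N\pm 1, N\pm 2$ on the $\zeta$ side: the point is that the \emph{distances to $p$} at the turning indices $N$ and $\tilde N$ are both comparable to the size of the smallest fundamental domain, hence to each other (universally), regardless of how far apart $N$ and $\tilde N$ are as integers. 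Concretely: $|x_{N}-p|$ and $|x_{N+1}-p|$ are both $\asymp |I_{N+1}|\asymp 1/a^{2}$, and likewise $|\tilde x_{\tilde N}-\tilde p|\asymp 1/a^{2}$ — with uniform constants — so the orbit of $\tilde\zeta$ enters an $O(1/a^2)$-neighbourhood of $p$ at index $\tilde N$, while the orbit of $\zeta$ is at distance $\asymp 1/(N-b)\gg 1/a^2$ from $p$ at index $N-b$ only if $b$ is bounded; thus the monotonicity of $x_i-p$ in $i$, combined with $\tilde x_{\tilde N}$ being already inside the tiny turning neighbourhood, forces $\tilde x_{\tilde N-b}$ (which is further from $p$, on the positive side) to have overtaken $x_{N-1}$ once $b$ is a large enough universal constant. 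The same comparison of ``how many steps before/after the turning index the orbit is at a given comparable distance from $p$'' gives the second inequality. I would write this out by fixing the universal constants from Lemmas \ref{aquele}, \ref{doyoc}, \ref{aquelevelho}, setting $b$ explicitly in terms of their ratio, and verifying the two inequalities by the elementary estimates above; the only real content is the observation that the turning-region scale $1/a^2$ is universal, so that ``$b$ steps away from the turning index'' corresponds to a universal distance ratio.
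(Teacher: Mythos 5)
The paper does not actually prove Lemma \ref{elb}: it is stated as ``another consequence of Yoccoz's lemma,'' so there is no written argument to compare against. Your proposal supplies essentially the intended derivation, and its core mechanism is correct: work with distances to the common point $p=\tilde p$ rather than with the indices $N,\tilde N$ themselves. By Lemma \ref{doyoc}, since $\min\{i,a-i\}\le a/2$, \emph{every} fundamental domain satisfies $|I_i|,|\tilde I_i|\ge c(\mathcal K)/a^2$, while $N\asymp a\asymp\tilde N$ gives $|I_N|,|I_{N+1}|,|I_{N+2}|\le C(\mathcal K)/a^2$. Hence $x_{N-1}-p\le |I_N|+|I_{N+1}|\le 2C/a^2$ whereas $\tilde x_{\tilde N-b}-p\ge\sum_{j=\tilde N-b+1}^{\tilde N}|\tilde I_j|\ge bc/a^2$, so any universal $b>2C/c$ gives the first inequality; the mirror computation ($p-x_{N+2}\le|I_{N+1}|+|I_{N+2}|$ versus $p-\tilde x_{\tilde N+b}\ge\sum_{j=\tilde N+2}^{\tilde N+b}|\tilde I_j|$) gives the second. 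This is exactly the ``universal $1/a^2$ turning-region scale'' observation you single out at the end, and it is all that is needed.

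Two repairs are needed in the write-up, though. First, several intermediate assertions are false as stated: $|x_N-p|$ and $|x_{N+1}-p|$ are \emph{not} both $\asymp|I_{N+1}|$ (the point $p$ may coincide with, or be arbitrarily close to, $x_N$ or $x_{N+1}$; only the upper bounds $\le|I_{N+1}|$ hold, and they are all you use), and the claim that $|x_{N-b}-p|\asymp 1/(N-b)$ is wrong: near the turning index the correct scale is $\asymp b/a^2$, because the asymptotic $|x_i-p|\asymp 1/(i+1)$ of Lemma \ref{aquele} is only valid when $i$ is smaller than $N$ by a definite factor. The same over-extension underlies the ``Key steps'' reduction to $c_1/(\tilde N-b+1)\ge c_2/N$, which should simply be discarded in favour of the distance argument above. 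Second, you should state the implicit standing hypothesis $a\ge a_0(\mathcal K)$ (with $a_0$ depending on $b$ and on the constants $\delta_0,\delta_1$ of Lemma \ref{aquele}) so that $1\le\tilde N-b$, $\tilde N+b\le a-1$ and $a-N-2\asymp a$; for $a$ below this universal bound the indices $\tilde N\pm b$ may leave $\{0,\dots,a\}$ and the statement must be read with the obvious convention (or is vacuous in its applications, where $b$ only enters through $\min\{\lfloor a/2\rfloor,N-b,\tilde N-b\}$ as in Lemma \ref{entrada}). With these corrections your argument is complete and yields a constant $b$ depending only on $\mathcal K$, as required.
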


The distance between corresponding critical iterates of $\zeta$ and $\tilde{\zeta}$ will be denoted by $\Delta x_i$, that is:$$\Delta x_i=\tilde{x}_i-x_i=\tilde{\eta}^{i}\big(\tilde{\xi}(0)\big)-\eta^{i}\big(\xi(0)\big)\quad\mbox{for all $i\in\{0,1,...,a\}$.}$$

\begin{lem}\label{entrada} There exists $K=K(\mathcal{K})>0$ such that for $i \leq \min\big\{\lfloor a/2 \rfloor,N-b,\tilde{N}-b\big\}$ we have:$$|\Delta x_i|\le K\left(|h|\cdot i+\frac{\varepsilon}{i}\right).$$
\end{lem}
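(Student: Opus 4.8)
The plan is to estimate $\Delta x_i = \tilde x_i - x_i$ by setting up a recursion in $i$ and iterating it. Write $\tilde x_{i+1} - x_{i+1} = \tilde\eta(\tilde x_i) - \eta(x_i)$ and split this as $\big(\tilde\eta(\tilde x_i) - \tilde\eta(x_i)\big) + \big(\tilde\eta(x_i) - \eta(x_i)\big) = \tilde\eta(\tilde x_i)-\tilde\eta(x_i) - H(x_i)$. By the mean value theorem the first term is $D\tilde\eta(\theta_i)\,\Delta x_i$ for some $\theta_i$ between $x_i$ and $\tilde x_i$, so $\Delta x_{i+1} = D\tilde\eta(\theta_i)\,\Delta x_i - H(x_i)$. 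Iterating from $\Delta x_0 = 0$ gives $\Delta x_i = -\sum_{k=0}^{i-1} H(x_k)\,\prod_{j=k+1}^{i-1} D\tilde\eta(\theta_j)$, and then $|\Delta x_i| \le \sum_{k=0}^{i-1} |H(x_k)|\cdot |D\tilde\eta^{\,i-1-k}(\text{something near } x_{k+1})|$. The point is that the derivative products are comparable, up to the universal distortion constant of Lemma \ref{BDinV} (equivalently Koebe plus $K_0$-control), to the ratio of fundamental-domain lengths $|I_i|/|I_{k+1}|$; this is exactly where we use that $i \le \lfloor a/2\rfloor$ so that $\min\{i,a-i\} = i$ and Yoccoz's Lemma (Lemma \ref{doyoc}) applies cleanly, giving $|I_m| \asymp 1/m^2$ for $m \le a/2$. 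The restriction $i \le \min\{N-b,\tilde N-b\}$ (via Lemma \ref{elb}) guarantees that all the points $x_k$, $\tilde x_k$ involved stay on the side of $p$ where the geometry behaves like the left half, so no folding occurs and the distortion bounds are legitimate.

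Next I would insert the pointwise bound \eqref{Hh}, $|H(x_k)| \le |h| + \varepsilon(x_k - p)^2$. This produces two sums. For the $|h|$ part we get $|h|\sum_{k=0}^{i-1} |D\tilde\eta^{\,i-1-k}|$, and by bounded distortion this is $\asymp |h| \sum_{k=1}^{i} |I_i|/|I_k| \asymp |h|\,|I_i| \sum_{k=1}^{i} k^2 \asymp |h|\,\frac{1}{i^2}\cdot i^3 = |h|\cdot i$, using $\sum_{k=1}^i k^2 \asymp i^3$ and $|I_i| \asymp 1/i^2$. For the $\varepsilon$ part we need $\sum_{k=0}^{i-1} (x_k - p)^2 |D\tilde\eta^{\,i-1-k}|$; by Lemma \ref{aquele} (or the displayed consequence $|x_k - p| \asymp 1/(k+1)$ for $k \le a/2$) this is $\asymp \varepsilon \sum_{k=1}^{i} \frac{1}{k^2}\cdot \frac{|I_i|}{|I_k|} \asymp \varepsilon\, |I_i| \sum_{k=1}^{i} 1 \asymp \varepsilon \cdot \frac{1}{i^2} \cdot i = \varepsilon/i$. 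Adding the two contributions yields $|\Delta x_i| \le K\big(|h|\cdot i + \varepsilon/i\big)$, as claimed, with $K$ depending only on $\mathcal{K}$ since the distortion constant and the Yoccoz constants are universal in $\mathcal{K}$.

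The main obstacle I anticipate is bookkeeping the derivative products $\prod_j D\tilde\eta(\theta_j)$ rigorously: the $\theta_j$ are intermediate points, not the $x_j$ or $\tilde x_j$ themselves, so I must first argue that replacing $D\tilde\eta^{\,i-1-k}$ evaluated along this mixed orbit by $D\tilde\eta^{\,i-1-k}$ evaluated along the genuine orbit of $\tilde\eta$ (or of $\eta$) only costs the universal distortion factor — this needs that $\varepsilon$ is small enough that $\theta_j$ lies in a bounded-space neighbourhood of $\tilde x_j$ inside a fundamental domain, which is where $\varepsilon < \varepsilon_0$ and the condition $i \le N-b$ get used together. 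A secondary point is checking the comparison $|D\tilde\eta^{\,i-1-k}| \asymp |I_i|/|I_k|$ uniformly: this is Koebe applied to the diffeomorphism $\tilde\eta^{\,i-1-k}$ restricted to a domain with definite space, and Lemma \ref{BDinV} already packages exactly this, so once the orbit-substitution issue is handled the rest is the summation calculus sketched above.
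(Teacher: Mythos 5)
Your overall scheme is the same as the paper's: telescope the difference $\Delta x_i$ along the orbit, insert the pointwise bound $|H(x)|\le |h|+\varepsilon(x-p)^2$, use $|x_k-p|\asymp 1/(k+1)$ from Lemma \ref{aquele}, compare the block derivatives to ratios of fundamental domains, and sum; your final summation calculus ($|h|\sum D_k\lesssim |h|\,i$ and $\varepsilon\sum D_k/(k+1)^2\lesssim \varepsilon/i$) is exactly what the paper's bounds yield. The gap is at the step you dismiss as ``secondary'': the claim that the block derivatives are $\asymp |I_i|/|I_{k+1}|$ ``up to the universal distortion constant of Lemma \ref{BDinV}''. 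Two concrete problems. First, your recursion produces $\prod_{j=k+1}^{i-1} D\tilde\eta(\theta_j)$ where each $\theta_j$ is an intermediate point between $x_j$ and $\tilde x_j$ chosen separately; these points are not an orbit of $\tilde\eta$, so the product is not $D\tilde\eta^{\,i-1-k}$ evaluated at a single point, and bounded distortion does not apply to it directly. (The paper avoids this by telescoping at the level of the iterate: $\eta^{i-k-1}\big(\eta(\tilde\eta^{k}x_0)\big)-\eta^{i-k-1}\big(\tilde\eta^{k+1}x_0\big)=D\eta^{i-k-1}(y_k)\,H(\tilde\eta^{k}x_0)$, which produces one genuine mean-value point $y_k$ for the whole block.)

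Second, and more importantly, even with a genuine block derivative $D\eta^{i-k-1}(y_k)$, bounded distortion only gives $D\eta^{i-k-1}(y_k)\asymp |I_{m+i-k-1}|/|I_m|$, where $I_m$ is the fundamental domain actually containing $y_k$. Since $y_k$ is only known to satisfy $|y_k-p|\asymp 1/k$, one gets $m\asymp k$ up to a multiplicative constant $C$ (and even this localization is not free: it uses $i\le\min\{N-b,\tilde N-b\}$ so that $p$ does not separate the compared points, the estimate $|h|\le K/a^2$ coming from Yoccoz's lemma near $p$, and the smallness $\varepsilon<\varepsilon_0$ for the first few $k$ --- this is Claim \ref{lacompdelmyk}). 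Because $m\ne k+1$, the ratio $|I_{m+i-k-1}|/|I_m|$ is \emph{not} automatically comparable to $|I_i|/|I_{k+1}|$: when $k$ is comparable to $i$ the index $m+i-k-1$ can land past the middle of the orbit, and one needs the observation $m\le a/2+1$ plus a two-case Yoccoz index analysis to get even the upper bound $D_k\le K$ (Claim \ref{limsempre}), and a separate argument to get $D_k\le K\,k^2/i^2$ in the range $k\lesssim i/(4(C-1))$ (Claim \ref{afwel}); the paper then splits the $\varepsilon$-sum at $k\approx i/(4(C-1))$ accordingly. These derivative bounds are the mathematical core of the lemma and do not follow from Lemma \ref{BDinV} alone, so as written your plan assumes the hard part.
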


\begin{proof}[Proof of Lemma \ref{entrada}] Let $x_0=\xi(0)=\tilde{\xi}(0)$ be the common critical value of $\xi$ and $\tilde{\xi}$, which is the right boundary point of $I_\eta=I_{\tilde{\eta}}$. Recall that, by definition, $x_i=\eta^i(x_0)$ and $\tilde{x}_i=\tilde{\eta}^i(x_0)$ for all $i\in\big\{1,...,a\big\}$. We will consider the case $x_{\lfloor a/2 \rfloor} \leq \tilde{x}_{\lfloor a/2 \rfloor}$. Note that for any $i\in\big\{1,...,\lfloor a/2 \rfloor\big\}$ and any $k\in\{0,...,i-1\}$ we have by combinatorics:$$x_{a-i+k+1} \leq x_{\lfloor a/2 \rfloor+1} < x_{\lfloor a/2 \rfloor} \leq \tilde{x}_{\lfloor a/2 \rfloor} \leq \tilde{x}_{k+1}<\tilde{x}_{k}\,.$$

Therefore $x_{\lfloor a/2 \rfloor+1}<\eta(\tilde{x}_{k})$ and then $x_{a-i+k+1}<\eta(\tilde{x}_{k})$, that is, both points $\eta\big(\tilde{\eta}^k(x_0)\big)$ and $\tilde{\eta}^{k+1}(x_0)$ lie to the right of the point $x_{a-i+k+1}$. In particular the iterate $\eta^{i-k-1}$ is well defined in the interval with boundary points $\eta\big(\tilde{\eta}^k(x_0)\big)$ and $\tilde{\eta}^{k+1}(x_0)$. This allows us to use a simple telescopic trick and the mean-value theorem in order to write for any $i\in\big\{1,...,\lfloor a/2 \rfloor\big\}$:
\begin{align}\label{edwel}
|\Delta x_i|&=\left|\sum_{k=0}^{i-1}\big(\eta^{i-k-1}\big(\eta\big(\tilde{\eta}^{k}(x_0)\big)\big)-\eta^{i-k-1}\big(\tilde{\eta}^{k+1}(x_0)\big)\big)\right|\\
&\leq\sum_{k=0}^{i-1}\big|D\eta^{i-k-1}(y_k)\big|\big|H\big(\tilde{\eta}^{k}(x_0)\big)\big|,\notag
\end{align}
where for each $k\in\{0,...,i-1\}$ the point $y_k$ lies between $\eta\big(\tilde{\eta}^k(x_0)\big)$ and $\tilde{\eta}^{k+1}(x_0)$ (the points $y_0,y_1,...,y_{i-1}$ depends also on each fixed $i$, but we will denote them just by $y_k$ to simplify the notation). From \eqref{Hh} and Lemma \ref{aquele} we get that:
\begin{equation}\label{estemH}
\big|H\big(\tilde{\eta}^{k}(x)\big)\big| \leq |h|+\frac{K\varepsilon}{(k+1)^2}\,.
\end{equation}

For each $k\in\{0,...,i-1\}$ let us denote $D_k=\big|D\eta^{i-k-1}(y_k)\big|$. Our goal is, therefore, to estimate the sum:
\begin{equation}\label{lasuma}
\big|\eta^i(x)-\tilde{\eta}^i(x)\big|\leq\sum_{k=0}^{i-1}D_k\left(|h|+\frac{K\varepsilon}{(k+1)^2}\right).
\end{equation}

For each $k\in\{0,...,i-1\}$ let $m=m(k)\in\{1,...,a\}$ be such that $y_k \in I_m(\eta)$, where $I_m(\eta)=\big[\eta^{m}(x),\eta^{m-1}(x)\big]$ as before. Since we are assuming $x_{\lfloor a/2 \rfloor} \leq \tilde{x}_{\lfloor a/2 \rfloor}$ we have that $m \leq a/2+1$. We claim that $m(k) \asymp k$ for all $k\in\{0,...,i-1\}$, more precisely:

\begin{claim}\label{lacompdelmyk} There exists $C=C(\mathcal{K})>1$ such that $\frac{k}{C}<m<Ck$ for all $k\in\{0,...,i-1\}$ and for all $i\in\big\{1,...,\lfloor a/2 \rfloor\big\}$.
\end{claim}

\begin{proof}[Proof of Claim \ref{lacompdelmyk}] From Lemma \ref{aquele} we know that $|y_k-p|\asymp\frac{1}{m}$, and then it is enough to prove that $|y_k-p|\asymp\frac{1}{k}$. Recall that $d_2(\zeta,\tilde{\zeta})<\varepsilon_0$, where $\varepsilon_0>0$ will be fixed later in the proof. On one hand $|y_k-p|\leq\big|\tilde{\eta}^{k}(x_0)-p\big|\asymp\frac{1}{k}$. On the other hand, since $i \leq \min\big\{N-b,\tilde{N}-b\big\}$, the point $p$ does not belong to the interval with boundary points $\eta\big(\tilde{\eta}^k(x_0)\big)$ and $\tilde{\eta}^{k+1}(x_0)$, and then:
\begin{align*}
|y_k-p|&\geq\min\big\{\big|\tilde{\eta}^{k+1}(x_0)-p\big|,\big|\eta\big(\tilde{\eta}^k(x_0)\big)-p\big|\big\}\\
&=\big|\tilde{\eta}^{k+1}(x_0)-p\big|-\big|\eta\big(\tilde{\eta}^k(x_0)\big)-\tilde{\eta}^{k+1}(x_0)\big|\\
&=\big|\tilde{\eta}^{k+1}(x_0)-p\big|-\big|H\big(\tilde{\eta}^k(x_0)\big)\big|.
\end{align*}

From \eqref{estemH} we get $\big|H\big(\tilde{\eta}^{k}(x_0)\big)\big|\leq K\big(|h|+\frac{\varepsilon}{(k+1)^2}\big)\leq \frac{K}{(k+1)^2}$ since $|h| \leq K/a^2$ by Yoccoz's lemma (indeed, by Lemma \ref{doyoc}, the length of the fundamental domain $\big(\eta(p),p\big)$ is bounded by $1/a^2$, up to a multiplicative constant. That is, both $p-\eta(p)$ and $p-\tilde{\eta}(p)$ are bounded by $1/a^2$ up to a multiplicative constant, and then $|h| \leq K/a^2$). Therefore:
$$|y_k-p|\geq \frac{1}{K}\left(\frac{1}{k+1}-\frac{K^2}{(k+1)^2}\right)=\frac{1}{K}\left( 1-\frac{K^2}{k+1}\right)\frac{k}{k+1}\frac{1}{k}\geq \frac{1}{4k}$$ if 
$k\geq 2K^2+1$ and then $|y_k-p|\asymp\frac{1}{k}$ in this case. We choose $\varepsilon_0>0$ in order to have that if $k\leq 2K^2+1$  then both $\tilde{\eta}^{k+1}(x)$ and 
$\eta (\tilde{\eta}^{k}(x))$ belong to the interval $\big[\tilde{\eta}^{k+2}(x), \tilde{\eta}^{k}(x)\big]$ and  again $|y_k-p|\asymp\frac{1}{k}$ as we wanted to prove.
\end{proof}

We have two claims regarding the values of $D_k$:

\begin{claim}\label{limsempre} There exists $K=K(\mathcal{K})>0$ such that for all $k\in\{0,...,i-1\}$ and $i\in\{1,...,a/2\}$ we have $D_k \leq K$.
\end{claim}

\begin{proof}[Proof of Claim \ref{limsempre}] By bounded distortion and Yoccoz's lemma we know that:$$\big|D\eta^{i-k-1}(y_k)\big|\asymp\frac{\big|I_{m+i-k-1}(\eta)\big|}{\big|I_{m}(\eta)\big|}\asymp m^2\big|I_{m+i-k-1}(\eta)\big|,$$and then it is enough to prove that $\big|I_{m+i-k-1}(\eta)\big|\leq\frac{K}{m^2}$. To prove this we have two cases to consider: if $\eta^{i-k-1}(y_k) \geq p$ then $\big|I_{m+i-k-1}(\eta)\big|\asymp\frac{1}{(m+i-k-1)^2}$ by Yoccoz's lemma, and since $i-k-1 \geq 0$ we are done. If $\eta^{i-k-1}(y_k)<p$ then $\big|I_{m+i-k-1}(\eta)\big|\asymp\frac{1}{(a-m-i+k+1)^2}$, and since $a-m-i \geq 0$ we obtain $\big|I_{m+i-k-1}(\eta)\big|\leq\frac{K}{(k+1)^2}$. Since $m \asymp k$ by Claim \ref{lacompdelmyk}, we obtain Claim \ref{limsempre}.
\end{proof}

\begin{claim}\label{afwel} There exists $K=K(\mathcal{K})>0$ such that if $k<\frac{i}{4(C-1)}$ then $D_k \leq K\frac{k^2}{i^2}$.
\end{claim}

\begin{proof}[Proof of Claim \ref{afwel}] Write $m=\lfloor\theta k\rfloor$ with $\frac{1}{C}<\theta<C$ (see Claim \ref{lacompdelmyk}). If $m<k$ we have that $\theta<1$ and $i+m-k-1= \theta i + (1-\theta)i- (1-\theta)k-1=\theta i +(1-\theta)(i-k)-1\geq \theta i -1\geq \frac {1}{C} i$. Since $i+m-k-1\leq i \leq \frac {a}{2}$ we have that $D_k \leq K\frac{C^2k^2}{(\frac{i}{C})^2} \leq K\frac {k^2}{i^2}$. On the other hand, if $m>k$ (that is, $\theta >1$), we have $m+i-k-1 \leq i+ (\theta-1)k-1 \leq i+(C-1)k-1 \leq i  + \frac {1}{4} i-1 \leq \frac {3}{2} a$. Then $\big|I_{m+i-k-1}(\eta)\big|\asymp\frac{1}{(m+i-k-1)^2}\leq\frac{1}{(i-1)^2}$, and so we also have $D_k \leq K\frac{k^2}{i^2}$ in this case, since $\frac {1}{3} a< j<\frac {2}{3} a$ implies $\frac{1}{a-j}> \frac {1}{2} \frac{1}{j}> \frac {1}{4} \frac{1}{a-j}$.
\end{proof}

With Claim \ref{limsempre} and Claim \ref{afwel} at hand we are ready to estimate the sum \eqref{lasuma}:
\begin{align*}
\sum_{k=0}^{i-1}D_k\left(|h|+\frac{K\varepsilon}{(k+1)^2}\right)&=|h|\left(\sum_{k=0}^{i-1}D_k\right)+K\varepsilon\left(\sum_{k=0}^{\big\lfloor\frac{i}{4(C-1)}\big\rfloor}\frac{D_k}{(k+1)^2}\right)\\
&+K\varepsilon\left(\sum_{k=\big\lfloor\frac{i}{4(C-1)}\big\rfloor+1}^{i-1}\frac{D_k}{(k+1)^2}\right)\\
&\leq K|h|i+K\frac{\varepsilon}{i^2}\left(\sum_{k=0}^{\big\lfloor\frac{i}{4(C-1)}\big\rfloor}\left(\frac{k}{k+1}\right)^2\right)\\
&+K\varepsilon\left(\sum_{k=\big\lfloor\frac{i}{4(C-1)}\big\rfloor+1}^{i-1}\frac{1}{(k+1)^2}\right)\\
&\leq K|h|i+K\frac{\varepsilon}{i}+K\frac{\varepsilon}{i}\,.
\end{align*}

For the last inequality we have used that both sequences$$\frac{1}{i}\left(\sum_{k=0}^{\big\lfloor\frac{i}{4(C-1)}\big\rfloor}\left(\frac{k}{k+1}\right)^2\right) \quad\mbox{and}\quad i\left(\sum_{k=\big\lfloor\frac{i}{4(C-1)}\big\rfloor+1}^{i-1}\frac{1}{(k+1)^2}\right)$$remain bounded when $i$ goes to infinity, with constants depending only on $C$. We have proved Lemma \ref{entrada}.
\end{proof}

\begin{lem}\label{bruteforce} For every $a\ge 1$ there exists $K_a>0$ such that
$$
|\Delta x_a|\le K_a \varepsilon.
$$
\end{lem}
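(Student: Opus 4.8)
The content of this lemma is that, for a \emph{fixed} period $a$, the return point $x_a=\eta^a(\xi(0))$ depends Lipschitz-continuously on the pair $\zeta$ in the $C^2$ metric, with a constant $K_a$ that is allowed to deteriorate (in fact exponentially) as $a$ grows. Unlike in Lemma \ref{entrada}, no cancellation is required here: a straightforward telescopic comparison does the job. This is precisely why the estimate deserves the name ``brute force'': it is useless for unbounded combinatorics but entirely adequate on bounded pieces of a conjugacy class.

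The plan is to bound $|\Delta x_k|$ by induction on $k\in\{0,1,\dots,a\}$. Since in the normalized setting of this section $I_\eta=I_{\tilde\eta}=[0,\xi(0)]$ with $\xi(0)=\tilde\xi(0)$, we have $\Delta x_0=\tilde\xi(0)-\xi(0)=0$. For the inductive step, given $k\in\{0,\dots,a-1\}$ I would write
$$\Delta x_{k+1}=\tilde\eta(\tilde x_k)-\eta(x_k)=\big(\tilde\eta(\tilde x_k)-\tilde\eta(x_k)\big)+\big(\tilde\eta(x_k)-\eta(x_k)\big)=D\tilde\eta(y_k)\,\Delta x_k-H(x_k),$$
where, by the mean value theorem, $y_k$ lies between $x_k$ and $\tilde x_k$ and hence in $I_\eta$ (recall that $x_i,\tilde x_i\in I_\eta$ for every $i\in\{0,\dots,a\}$, precisely because both $\zeta$ and $\tilde\zeta$ are renormalizable with period $a$). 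By the $K_0$-control we have $\big|D\tilde\eta(y_k)\big|\le K_0$, while $\big|H(x_k)\big|\le\varepsilon$ by the definition of $H$. Hence $|\Delta x_{k+1}|\le K_0\,|\Delta x_k|+\varepsilon$.

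Unrolling this recursion from $\Delta x_0=0$ gives $|\Delta x_k|\le\varepsilon\sum_{j=0}^{k-1}K_0^{\,j}$ for every $k\in\{0,\dots,a\}$, and in particular $|\Delta x_a|\le\varepsilon\,(K_0^{\,a}-1)/(K_0-1)=:K_a\,\varepsilon$, as desired. There is no genuine obstacle in this argument; the only thing to watch is that all the iterates $x_k,\tilde x_k$ and the intermediate points $y_k$ stay inside the common domain $I_\eta=I_{\tilde\eta}$, which is exactly what the equality of the two periods guarantees. Note also that the resulting constant $K_a$ depends only on $a$ and on the universal constant $K_0$ of the class $\mathcal{K}$, so the estimate in fact holds without any smallness assumption on $\varepsilon$.
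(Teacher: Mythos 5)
Your proof is correct and is essentially the paper's argument: both establish the one-step recursion $|\Delta x_{k+1}|\le D\,|\Delta x_k|+\varepsilon$ (you via the mean value theorem applied to $\tilde\eta$, the paper via the increment of $\eta$ plus $H(\tilde x_k)$, with $D$ a bound on the first derivative coming from the $K_0$-control) and then unroll it, yielding a constant that grows like $D^a$. The only cosmetic difference is that you use $\Delta x_0=0$ from the normalization, whereas the paper simply sums the geometric series from $k=0$; both give the stated bound.
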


\begin{proof}[Proof of Lemma \ref{bruteforce}] Observe,
$$
\begin{aligned}
|\Delta_{i+1}|&=|\tilde{\eta}(\tilde{x}_i)-\eta(x_i)|\\
&=|\eta(\tilde{x}_i)-\eta(x_i)+H(\tilde{x}_i)|\\
&\le D |\Delta x_i| +\varepsilon,
\end{aligned}
$$
where $D=\max \{D\eta\}$. So
$$
|\Delta x_a|\le \varepsilon \cdot \sum_{k=0}^{a} D^{a-k}.
$$
The Lemma follows.
\end{proof}

The following definition is given for general commuting pairs which are contained in the previously discussed set $\mathcal{K}$ of $K_0$-controlled commuting pairs.

\begin{defn}\label{defsync} Given $L>1$ we say that the commuting pairs $\zeta_0=(\xi_0,\eta_0)$ and $\zeta_1=(\xi_1,\eta_1)$, with $a_{\zeta_0}=a_{\zeta_1}=a$, 
are $L$-synchronized if$$|\Delta x_a|\le  L\cdot  d_2(\zeta_0,\zeta_1).$$ 
\end{defn}

By working just as in the proof of Lemma \ref{entrada} but with backwards iterations we obtain:

\begin{lema}\label{backward} Given $L>0$ there exists $K=K(\mathcal{K},L)>0$ such that if $\zeta,\tilde{\zeta}\in\mathcal{K}$ are $L$-synchronized with $a_{\zeta}=a_{\tilde{\zeta}}=a$, then we have:$$|\Delta x_i|\le K\left(|h|(a-i)+\frac{\varepsilon}{a-i}\right)$$for all $i\in\nt$ such that $\max\big\{\lfloor a/2 \rfloor,N+b,\tilde{N}+b\big\}\leq i \leq a$.
\end{lema}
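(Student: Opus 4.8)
The plan is to mirror the proof of Lemma \ref{entrada}, but replacing the telescopic estimate that propagates the (null) deformation \emph{forward} from the common critical value $x_0=\xi(0)=\tilde{\xi}(0)$ by one that propagates the deformation \emph{backwards} from the common end of the fundamental orbit, using the $L$-synchronization hypothesis as the seed $|\Delta x_a|\le L\varepsilon$. Throughout I would set $j=a-i$, so the range of indices becomes $1\le j\le a-\max\{\lfloor a/2\rfloor,N+b,\tilde{N}+b\}$; the point is that for all these $i$ both $x_i$ and $\tilde{x}_i$ lie on the ``$a$-side'' of the turning point (i.e. $x_i\le p$ and $\tilde{x}_i\le\tilde{p}=p$, since $i\ge N+b$ and $i\ge\tilde{N}+b$ with $b\ge 1$), so the orbit segments $x_i,x_{i+1},\dots,x_a$ and $\tilde{x}_i,\dots,\tilde{x}_a$ are monotone decreasing down to $x_a\ge 1/K_0>0$ and never return near $0$ or near $p$. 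Consequently, applying Yoccoz's Lemma (Lemma \ref{doyoc}) to $\zeta$ and to $\tilde{\zeta}$ \emph{separately} gives $|I_\ell(\eta)|\asymp|I_\ell(\tilde{\eta})|\asymp 1/(a-\ell)^2$ and $|x_\ell-p|\asymp|\tilde{x}_\ell-p|\asymp 1/(a-\ell)$ uniformly for $\ell$ in this range, together with $|h|=|H(p)|\le K/a^2$.

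First I would split, using $x_a=\eta^{\,j}(x_{a-j})$ and $\tilde{x}_a=\tilde{\eta}^{\,j}(\tilde{x}_{a-j})$,
\begin{equation*}
\Delta x_a=\underbrace{\big(\tilde{\eta}^{\,j}(\tilde{x}_{a-j})-\eta^{\,j}(\tilde{x}_{a-j})\big)}_{E}+\underbrace{\big(\eta^{\,j}(\tilde{x}_{a-j})-\eta^{\,j}(x_{a-j})\big)}_{F},
\end{equation*}
so that $F=D\eta^{\,j}(\zeta_*)\,\Delta x_{a-j}$ for some $\zeta_*$ between $x_{a-j}$ and $\tilde{x}_{a-j}$, whence $\Delta x_{a-j}=(\Delta x_a-E)/D\eta^{\,j}(\zeta_*)$. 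Since both $x_{a-j}$ and $\tilde{x}_{a-j}$ sit at distance $\asymp 1/j$ from $p$ on the same side, $\zeta_*$ lies in a fundamental domain $I_\mu(\eta)$ with $\mu\asymp a-j$, and by Koebe's distortion principle (Theorem \ref{koebedistint}, using the negative Schwarzian and the $K_0$-control) together with Lemma \ref{doyoc} one gets $D\eta^{\,j}(\zeta_*)\asymp|I_a(\eta)|/|I_\mu(\eta)|\asymp j^2$. For $E$ I would telescope over the composition exactly as in \eqref{edwel}: writing $Q_m=\eta^{\,j-m}(\tilde{x}_{a-j+m})$ one has $E=Q_j-Q_0=\sum_{m=0}^{j-1}\big(Q_{m+1}-Q_m\big)$ and, since $\tilde{x}_{a-j+m+1}=\tilde{\eta}(\tilde{x}_{a-j+m})$ and $\tilde{\eta}-\eta=-H$, each difference equals $-D\eta^{\,j-m-1}(y_m)\,H(\tilde{x}_{a-j+m})$ with $y_m$ between $\tilde{x}_{a-j+m+1}$ and $\eta(\tilde{x}_{a-j+m})$; thus $|E|\le\sum_{m=0}^{j-1}|D\eta^{\,j-m-1}(y_m)|\cdot|H(\tilde{x}_{a-j+m})|$.

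The two factors are estimated by the analogues of the claims in the proof of Lemma \ref{entrada}. By \eqref{Hh} and Lemma \ref{doyoc} applied to $\tilde{\zeta}$, one has $|H(\tilde{x}_{a-j+m})|\le|h|+\varepsilon(\tilde{x}_{a-j+m}-p)^2\le|h|+K\varepsilon/(j-m)^2$. Moreover, since $|H(\tilde{x}_{a-j+m})|\lesssim 1/(j-m)^2\asymp|I_{a-j+m+1}(\tilde{\eta})|$ (automatically for $j-m$ large, and for $j-m=O(1)$ after choosing a uniform $\varepsilon_0=\varepsilon_0(\mathcal K)$, exactly as in Claim \ref{lacompdelmyk}), the point $y_m$ lies within a comparable fundamental domain of $\tilde{x}_{a-j+m+1}$, hence at distance $\asymp 1/(j-m)$ from $p$; then $\eta^{\,j-m-1}$ carries a neighbourhood of $y_m$ onto a neighbourhood of $x_a$ with bounded distortion, and Lemma \ref{doyoc} gives $|D\eta^{\,j-m-1}(y_m)|\asymp|I_a(\eta)|/|I_{a-j+m+1}(\eta)|\lesssim (j-m)^2$. (No further case analysis, as in Claims \ref{limsempre}--\ref{afwel}, is needed here, precisely because the whole orbit segment stays on the $a$-side, away from $0$ and from $p$.) Plugging in, $|E|\le\sum_{m=0}^{j-1}K(j-m)^2\big(|h|+K\varepsilon/(j-m)^2\big)\le K\big(|h|\,j^3+\varepsilon\,j\big)$, and combining with $|\Delta x_a|\le L\varepsilon$ and $D\eta^{\,j}(\zeta_*)\asymp j^2\ge1$,
\begin{equation*}
|\Delta x_{a-j}|=\frac{|\Delta x_a-E|}{D\eta^{\,j}(\zeta_*)}\le K\cdot\frac{L\varepsilon+|h|\,j^3+\varepsilon\,j}{j^2}\le K(\mathcal K,L)\Big(|h|\,j+\frac{\varepsilon}{j}\Big),
\end{equation*}
which is the assertion (with $j=a-i$). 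As in Lemma \ref{entrada}, the step requiring care is the localisation of the intermediate points $y_m$ (and of $\zeta_*$) in fundamental domains of the correct index, so that bounded distortion applies; this is exactly where the hypotheses $i\ge N+b$ and $i\ge\tilde{N}+b$ (forcing everything onto one side of $p$), the uniform choice of $\varepsilon_0$, and Yoccoz's Lemma applied separately to $\zeta$ and $\tilde{\zeta}$ enter. A minor a-priori bound $|\Delta x_\ell|\le K_\ell\,\varepsilon$ (as in Lemma \ref{bruteforce}), or a downward induction on $j$, guarantees that $\tilde{x}_{a-j+m}$ is close enough to $x_{a-j+m}$ for these distortion estimates to be legitimate.
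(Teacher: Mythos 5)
Your overall strategy is the one the paper intends (its proof of Lemma \ref{backward} is literally ``work as in Lemma \ref{entrada} but backwards'', seeded by $|\Delta x_a|\le L\varepsilon$), and your final bookkeeping $\big(L\varepsilon+|h|j^3+\varepsilon j\big)/j^2\le K\big(|h|j+\varepsilon/j\big)$ is exactly the right shape. However, as written there is a genuine gap in the two steps you declare harmless. You apply \emph{forward} iterates of $\eta$ to points of the $\tilde\zeta$--orbit ($\eta^{j}(\tilde x_{a-j})$, and $\eta^{j-m-1}$ at the points $y_m$) and then divide by the derivative. On the $a$-side this is dangerous: once a point drops below $x_a$, one more application of $\eta$ sends it below $x_{a+1}\le -1/K_0$, out of $I_\eta$. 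Yoccoz's Lemma only locates $\tilde x_\ell$ at distance $\asymp 1/(a-\ell)$ from $p$ with multiplicative constants of the class, so $\tilde x_{a-j}$ may lag below $x_{a-j}$ by many $\zeta$-fundamental domains (indeed the bound you are proving, $|\Delta x_{a-j}|\lesssim \varepsilon/j$, is itself of order $\varepsilon j\cdot|I_{a-j}|$), and then $\eta^{j}(\tilde x_{a-j})$ is simply undefined, so the decomposition $\Delta x_a=E+F$ is not available. The same lack of additive (rather than multiplicative) localisation undermines the distortion estimates: if $[x_{a-j},\tilde x_{a-j}]$ spans a definite fraction of the distance to $p$, Koebe does not apply across it and the mean-value factor satisfies only $D\eta^{j}(\zeta_*)\gtrsim j$ rather than $\asymp j^2$, which after division yields only $|h|j^2+\varepsilon$, not the assertion. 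So the parenthetical claim that no case analysis in the spirit of Claims \ref{limsempre}--\ref{afwel} is needed ``because the whole orbit stays on the $a$-side'' is precisely where the difficulty sits: in Lemma \ref{entrada} the choice $x_{\lfloor a/2\rfloor}\le\tilde x_{\lfloor a/2\rfloor}$ plus $i\le a/2$ guarantees the forward iterates stay inside the domain, and here there is no analogous safety margin.

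The repairs you allude to do not close this. Lemma \ref{bruteforce} gives $|\Delta x_\ell|\le K_\ell\varepsilon$ with $K_\ell$ growing exponentially in $\ell$, so it yields nothing uniform for the relevant indices; and a downward induction on $j$ whose hypothesis is the statement itself only controls $|\Delta x_{a-j'}|$ up to $\asymp\varepsilon/j'$, i.e.\ still up to $\sim\varepsilon j'$ fundamental domains, which is not the couple-of-domains control needed to keep the mixed orbit inside $I_\eta$ and to justify $a-\mu_m\ge j-m-2$. The clean way to ``work backwards'' is to avoid forward mixed iterates altogether: compare the orbits through the \emph{inverse} branches, e.g.\ via the one-step recursion obtained from $\tilde x_{\ell-1}-x_{\ell-1}=\big[\tilde\eta^{-1}(\tilde x_\ell)-\eta^{-1}(\tilde x_\ell)\big]+\big[\eta^{-1}(\tilde x_\ell)-\eta^{-1}(x_\ell)\big]$, where the first bracket is $O\big(|h|+\varepsilon/(a-\ell)^2\big)$ by \eqref{Hh} and the second contracts (on the $a$-side $\eta$ expands away from $p$, and the composed inverse branches $\eta^{-(j-s)}:I_{a-s}\to I_{a-j}$ have bounded distortion with derivative $\asymp (s/j)^2$); iterating from the synchronization seed $|\Delta x_a|\le L\varepsilon$ gives $|\Delta x_{a-j}|\lesssim L\varepsilon/j^2+|h|j+\varepsilon/j$, which is the lemma, with all compositions defined on their natural domains. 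Either rewrite your argument in this inverse form, or supply an explicit bootstrap (carrying along the statement that the mixed orbit has not yet passed $x_{a-1}$, and treating the bounded-index cases with the smallness of $\varepsilon_0$ as in Claim \ref{lacompdelmyk}) before invoking Koebe.
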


\begin{prop}\label{h} For every $L>0$ there exists $K=K(\mathcal{K},L)>1$ such that the following holds. If $\zeta,\tilde{\zeta}\in\mathcal{K}$ are $L$-synchronized with $a_{\zeta}=a_{\tilde{\zeta}}=a$, then we have:$$|h|\le K\frac{\varepsilon}{a^2}\,.$$
\end{prop}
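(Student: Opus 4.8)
The plan is to extract the estimate $|h| \le K\varepsilon/a^2$ by exploiting the synchronization hypothesis together with the two telescopic bounds already established: the forward estimate of Lemma \ref{entrada}, which controls $|\Delta x_i|$ for small $i$, and the backward estimate of Lemma \ref{backward}, which controls $|\Delta x_i|$ for $i$ close to $a$. The key observation is that these two families of bounds must be \emph{compatible in the middle}, and forcing compatibility near $i \asymp a/2$ is exactly what pins down the size of $h$. Recall that $|h| \le K/a^2$ holds unconditionally by Yoccoz's Lemma (as already noted in the proof of Claim \ref{lacompdelmyk}); the point here is the sharper comparison $|h| \le K\varepsilon/a^2$, which we only expect when the pairs are synchronized.

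First I would apply Lemma \ref{entrada} at an index $i_0$ comparable to $a/2$, say $i_0 = \lfloor a/4 \rfloor$ or $i_0 = \lfloor a/2 \rfloor$ (whichever stays within the allowed range $i \le \min\{\lfloor a/2\rfloor, N-b, \tilde N - b\}$, which is legitimate since $N \asymp a$ and $\tilde N \asymp a$ by Yoccoz's Lemma). This gives
$$
|\Delta x_{i_0}| \le K\Big(|h|\cdot i_0 + \frac{\varepsilon}{i_0}\Big) \le K\Big(|h|\cdot a + \frac{\varepsilon}{a}\Big).
$$
Symmetrically, Lemma \ref{backward} applied at an index $i_1$ comparable to $a/2$ (on the other side of $N$, $\tilde N$) gives a bound of the same shape, $|\Delta x_{i_1}| \le K(|h|\cdot a + \varepsilon/a)$. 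These alone do not yet isolate $h$; the extra input must come from the fact that $\Delta x_i$, viewed as the defect between two genuine orbits, cannot behave arbitrarily --- in particular, I would track the \emph{linear-in-$h$} contribution through the region near the fixed-point-like value $p$, where $D\eta(p)=1$ and $D^2\eta(p) < -1/K < 0$ (Lemma \ref{op}). The telescopic sum defining $\Delta x_i$ near $p$ accumulates a term proportional to $h$ times the number of iterates spent in the slow region, which by Yoccoz's Lemma is $\asymp a$ iterates occupying an interval of size $\asymp 1/a$ around $p$; this is precisely where a $+|h|\cdot a$ term in $|\Delta x_{i_0}|$ is produced, and matching it against the synchronization bound $|\Delta x_a| \le L\varepsilon$ propagated backward (via Lemma \ref{backward}, which yields $|\Delta x_i| \le K(|h|(a-i) + \varepsilon/(a-i))$ throughout the upper half) forces $|h|\cdot a \lesssim \varepsilon/a$, i.e. $|h| \lesssim \varepsilon/a^2$.

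Concretely, I would combine the forward and backward estimates at a common index $i^\ast \asymp a/2$: from Lemma \ref{entrada}, $|\Delta x_{i^\ast}| \le K(|h|\,a + \varepsilon/a)$, while iterating $\Delta x$ from $i^\ast$ \emph{forward} to $a$ using the bounded-distortion control of $D\eta^{a-k}$ (as in the proof of Lemma \ref{entrada}, now on the second half of the orbit) and the synchronization hypothesis $|\Delta x_a| \le L\varepsilon$, one obtains a reverse inequality forcing $|h|\,a - K\varepsilon/a \le K\varepsilon/a$, hence $|h| \le K\varepsilon/a^2$ with $K = K(\mathcal{K}, L)$. The main obstacle, and the step requiring genuine care, is controlling the sign and size of the cumulative linear-in-$h$ term across the turning region around $p$: one must show that the contributions from the $\asymp a$ slow iterates near $p$ genuinely add up (rather than cancel) to something of order $|h|\cdot a$, which relies on the uniform concavity $D^2\eta(p) \le -1/K$ and on Yoccoz's precise description (Lemma \ref{doyoc}) of how $|y_k - p| \asymp 1/\min\{k, a-k\}$. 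This is where the negative Schwarzian assumption and the $K_0$-control are indispensable, and it is the only place where mere $C^2$-closeness of $\zeta$ and $\tilde\zeta$ is not enough without the synchronization input.
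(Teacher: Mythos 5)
Your overall mechanism --- a per-iterate drift of order $h$ in the slow region around $p$, accumulated over $\asymp a$ iterates and played against the entry estimate of Lemma \ref{entrada} and the synchronization-propagated exit estimate of Lemma \ref{backward} --- is exactly the mechanism of the paper's proof. But as written the proposal has a genuine gap, located at the two points you leave open. First, the quantitative matching at an index $i^\ast\asymp a/2$ does not close: at such an index both the forward bound of Lemma \ref{entrada} and the backward bound of Lemma \ref{backward} carry a term $K|h|(a-i^\ast)\asymp K|h|\,a$ with a constant $K=K(\mathcal{K},L)$ of unknown size, so the accumulated drift (of order $c\,|h|\,a$ with $c$ another constant of $\mathcal{K}$) need not dominate it, and the claimed inequality $|h|\,a-K\varepsilon/a\le K\varepsilon/a$ does not follow. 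The paper instead compares at the entry index $M=\theta a$ and the exit index $a-M$, with $\theta=\theta(K)$ chosen smaller than $1/\big(K(4K+1)\big)$, so that the $h$-terms in \eqref{no413} and \eqref{no414} are only of size $K\theta|h|a$ and are beaten by the drift $\tfrac h2\big[a-M-(1-\tfrac1K)a\big]$ of \eqref{ultdel410}; this is precisely Claim \ref{4.10.3}, and without a small parameter of this kind no contradiction can be extracted.

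Second, your acknowledged ``main obstacle'' --- that the $h$-contributions add up rather than cancel --- is the heart of the proof, not a technical check, and concavity at $p$ alone does not give it: on the entry side $x>p$ one has $D\eta<1$, the sign of $\Delta x_i$ is a priori unknown, and an adverse gap is simultaneously contracted and pushed by the drift. The paper resolves this with an order-flip argument: if $C=a^2h/\varepsilon$ is large, then $\tilde\eta\ge\eta+h/2$ on the whole slow interval $T$ (Claim \ref{4.10.1}); the entry estimate at $i=M$ shows any adverse gap is at most $(\tfrac aK-M)\tfrac h2$, so the gap must change sign before index $a/K$ (Claim \ref{4.10.2}, where the sign of $D^2\eta$ enters through the mean-value expansion); and after the flip, monotonicity of $\eta$ together with $D\eta\ge1$ to the left of $p$ makes the gap grow by at least $h/2$ per step up to $i=a-M$, giving \eqref{ultdel410} and contradicting the synchronization bound \eqref{no414}. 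So you have named the right ingredients (Lemma \ref{op}, Lemma \ref{doyoc}, Lemmas \ref{entrada} and \ref{backward}), but the decisive steps --- the small parameter $\theta$, the sign-flip argument, and the comparison at the exit time rather than at $i^\ast\asymp a/2$ --- are missing, and the concrete comparison you propose would fail as stated.
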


\begin{proof}[Proof of Proposition \ref{h}] Let us suppose that $\tilde{\eta}(p)=\eta(p)+h$ with $h>0$. We want to prove that, under the synchronization assumption, the ratio $C=\frac{a^2h}{\varepsilon}$ is uniformly bounded in $\mathcal{K}$.

Let $N\in\{1,...,a\}$ defined by $p\in[x_{N+1},x_{N}]$. By Yoccoz's lemma (see in particular \cite[Lemma B.1, page 387]{dfdm1}) there exists $K_0=K_0(\mathcal{K})>1$ such that $N=\nu a$ with $1/K_0 \leq \nu \leq 1-\frac{1}{K_0}$. In the same way let $\tilde{N}=\tilde{\nu}a$ defined by $p\in[\tilde{x}_{\tilde{N}+1},\tilde{x}_{\tilde{N}}]$ with $1/K_0 \leq \tilde{\nu} \leq 1-\frac{1}{K_0}$.

By Lemma \ref{aquelevelho} there exists $K_1=K_1(\mathcal{K})>1$ such that $(x_j,\tilde{x}_j)\subset\big(p-K_1/M,p\big)$ when $(1-\frac{1}{K_0})a \leq j \leq a-M$, and $(x_j,\tilde{x}_j)\subset\big(p,p+K_1/M\big)$ when $M \leq j \leq a/K_0$ for any $M\in\big\{1,...,\lfloor a/K_0 \rfloor\big\}$.

Let $K_2=K_2(\mathcal{K})>1$ be the constant given by Lemma \ref{entrada}. By Lemma \ref{backward} we have:
\begin{equation}\label{no414}
|\Delta x_{a-M}|\leq K_3\left(hM+\frac{\varepsilon}{M}\right)
\end{equation}
for some universal constant $K_3(L,\mathcal{K})>1$. Let $K=\max\{K_0,K_1,K_2,K_3\}$ and let us suppose that $a>K(4K+1)$ (otherwise we are done since $|h|\leq\varepsilon$). Fix $M\in\big\{1,...,\lfloor a/2 \rfloor\big\}$ small enough in order to have:$$0<\theta=\frac{M}{a}<\frac{1}{K(4K+1)}<1\,.$$

Let $T=\big[p-K/M,p+K/M\big]$ and recall that $(x_j,\tilde{x}_j) \subset T$ for all $j\in\{M,...,a-M\}$.

The next three claims will show that if $C$ is big enough, in terms of $K$ and $\theta(K)$, the pairs $\zeta$ and $\tilde{\zeta}$ cannot be $L$-synchronized.

\begin{claim}\label{4.10.1} If $C \geq 2\left(\frac{K}{\theta}\right)^2$, then $\tilde{\eta}(x)\geq\eta(x)+\frac{h}{2}$ for all $x \in T$.
\end{claim}

\begin{proof}[Proof of Claim \ref{4.10.1}] As before:$$\tilde{\eta}(x)-\eta(x)\geq h-\varepsilon(x-p)^2 \geq h-\varepsilon\left(\frac{K}{M}\right)^2=h-\frac{\varepsilon}{a^2}\left(\frac{K}{\theta}\right)^2\geq h-\frac{h}{2}=\frac{h}{2}\,.$$

In the last inequality we have used that $\frac{\varepsilon}{a^2}\leq\frac{h}{2}\left(\frac{\theta}{K}\right)^2$ since $\frac{a^2h}{\varepsilon} \geq 2\left(\frac{K}{\theta}\right)^2$.
\end{proof}

Note that $0<\theta<\frac{1}{K(4K+1)}$ implies $1-2\theta K^2-\theta K \in (0,1)$.

\begin{claim}\label{4.10.2} If $C>\frac{1}{\theta}\left(\frac{2K^2}{1-2\theta K^2-\theta K}\right)$ there exists $i_0\in\{M,...,a/K\}$ such that $x_{i_0}\leq\tilde{x}_{i_0}$.
\end{claim}

\begin{proof}[Proof of Claim \ref{4.10.2}] We will prove first that:
\begin{equation}\label{noclaim4.10.2}
\left(\frac{a}{K}-M\right)\frac{h}{2} \geq K\left(hM+\frac{\varepsilon}{M}\right).
\end{equation}

Indeed, since $1-2\theta K^2-\theta K>\frac{2K^2}{C\theta}$ we have:$$\frac{1-2\theta K^2-\theta K}{2\theta K}>\frac{K}{C\theta^2}$$and then:$$hM\left(\frac{1-2\theta K^2-\theta K}{2\theta K}\right)>K\frac{\varepsilon}{M}$$since $\varepsilon/M=hM/C\theta^2$. From:$$\frac{1-2\theta K^2-\theta K}{2\theta K}=\frac{1}{2}\left(\frac{1}{\theta K}-1-2K\right)$$we obtain:$$\frac{h}{2}\left(\frac{a}{K}-M\right)-KhM>K\frac{\varepsilon}{M}$$which implies the desired estimate \eqref{noclaim4.10.2}. Now estimate \eqref{noclaim4.10.2} combined with Lemma \ref{entrada} gives us:
\begin{equation}\label{no413}
\big|x_M-\tilde{x}_M\big|\leq\left(\frac{a}{K}-M\right)\frac{h}{2}\,.
\end{equation}

With estimate \eqref{no413} at hand we are ready to prove Claim \ref{4.10.2}. Indeed, let $i\in\{M,...,a/K\}$ be such that $p\leq\tilde{x}_i<x_i \leq p+K/M$ (if no such $i$ exists we are done). From Claim \ref{4.10.1} we have:
\begin{align*}
\tilde{x}_{i+1}-x_{i+1}&=\tilde{\eta}(\tilde{x}_i)-\eta(x_i)\geq h/2+\eta(\tilde{x}_i)-\eta(x_i)=h/2+D\eta(y_i)(\tilde{x}_{i}-x_{i})\\
&=h/2+\tilde{x}_{i}-x_{i}+D^2\eta(z_i)(y_i-p)(\tilde{x}_{i}-x_{i}),
\end{align*}
where $y_i\in[\tilde{x}_{i},x_{i}]$ and $z_i\in[p,y_i]$ are given by the mean-value theorem. Since $D^2\eta(z_i)<0$, $y_i-p>0$ and $\tilde{x}_{i}-x_{i}<0$ we obtain:$$\tilde{x}_{i+1}-x_{i+1} \geq h/2+\tilde{x}_{i}-x_{i}\,,\quad\mbox{that is:}\quad\Delta x_{i+1}\geq h/2+\Delta x_i\,.$$

Therefore if the difference $\tilde{x}_{i+1}-x_{i+1}$ is still negative, it will be at least $h/2$ closer to zero than the previous difference $\tilde{x}_{i}-x_{i}$. What estimate \eqref{no413} tells us is that we have enough time inside the interval $(p,p+K/M)$ in order to interchange the positions of the critical iterates. With this we have proved Claim \ref{4.10.2}.
\end{proof}

Claim \ref{4.10.2} implies that $x_{i}\leq\tilde{x}_{i}$ for all $i\in\{i_0,...,a-M\}$, since $D\eta>0$ and $h>0$. Therefore, by Claim \ref{4.10.1} we have:
\begin{equation}\label{ultdel410}
|\Delta x_{a-M}|\geq\frac{h}{2}\left[a-M-\left(1-\frac{1}{K}\right)a\right].
\end{equation}

Our third and last claim tells us that \eqref{ultdel410} contradicts the synchronization assumption. Note that $0<\theta<\frac{1}{K(4K+1)}$ implies $1-\theta K(4K+1)\in(0,1)$.

\begin{claim}\label{4.10.3}$$\mbox{If}\quad C \geq \frac{1}{\theta}\left[\frac{4K^2}{1-\theta K(4K+1)}\right],\quad\mbox{then}\quad 2K\left(hM+\frac{\varepsilon}{M}\right)\leq\frac{h}{2}\left[a-M-\left(1-\frac{1}{K}\right)a\right].$$
\end{claim}

\begin{proof}[Proof of Claim \ref{4.10.3}] Note first that:$$2K\left(hM+\frac{\varepsilon}{M}\right)=\frac{\varepsilon}{a}\left[2K\left(C\theta+\frac{1}{\theta}\right)\right]$$and$$\frac{h}{2}\left[a-M-\left(1-\frac{1}{K}\right)a\right]=\frac{\varepsilon}{a}\left[\frac{C}{2}\left(\frac{1}{K}-\theta\right)\right].$$

A straightforward computation shows that both conditions:$$C\geq\frac{1}{\theta}\left[\frac{4K^2}{1-\theta K(4K+1)}\right]\quad\mbox{and}\quad 2K\left(C\theta+\frac{1}{\theta}\right)\leq\frac{C}{2}\left(\frac{1}{K}-\theta\right)$$are actually equivalent.
\end{proof}

We are ready to finish the proof of Proposition \ref{h}. Indeed, by combining estimates \eqref{no414} and \eqref{ultdel410} we have:$$\frac{h}{2}\left[a-M-\left(1-\frac{1}{K}\right)a\right]\leq\big|x_{a-M}-\tilde{x}_{a-M}\big|\leq K\left(hM+\frac{\varepsilon}{M}\right)<2K\left(hM+\frac{\varepsilon}{M}\right)$$which contradicts Claim \ref{4.10.3}. Therefore:$$C\leq\max\left\{2\left(\frac{K}{\theta}\right)^2,\,\frac{1}{\theta}\left(\frac{2K^2}{1-2\theta K^2-\theta K}\right),\,\frac{1}{\theta}\left[\frac{4K^2}{1-\theta K(4K+1)}\right]\right\},$$that is, the ratio $C=\frac{a^2h}{\varepsilon}$ is bounded by a constant only depending on $K$ and $L$. We have proved Proposition \ref{h}.
\end{proof}

With Proposition \ref{h} at hand we can improve both Lemma \ref{entrada} and \ref{backward} under the synchronization assumption:

\begin{lema}\label{acl} Given $L>0$ there exists $K=K(\mathcal{K},L)>0$ such that if $\zeta,\tilde{\zeta}\in\mathcal{K}$ are $L$-synchronized with $a_{\zeta}=a_{\tilde{\zeta}}=a$, then we have:$$|\Delta x_i|\leq\frac{K\varepsilon}{i}\quad\mbox{for all $1\leq i \leq\min\big\{\lfloor a/2 \rfloor,N-b,\tilde{N}-b\big\}$, and}$$$$|\Delta x_i|\leq\frac{K\varepsilon}{a-i}\quad\mbox{for all $a\geq i \geq\max\big\{\lfloor a/2 \rfloor,N+b,\tilde{N}+b\big\}$.}$$
\end{lema}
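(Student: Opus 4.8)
The plan is to obtain Lemma \ref{acl} by feeding the estimate $|h|\le K\varepsilon/a^2$ of Proposition \ref{h} -- valid precisely because $\zeta$ and $\tilde\zeta$ are $L$-synchronized -- into the two bounds already established for $|\Delta x_i|$: Lemma \ref{entrada} on the ``left'' range $1\le i\le\min\{\lfloor a/2\rfloor,N-b,\tilde N-b\}$, and Lemma \ref{backward} on the ``right'' range $\max\{\lfloor a/2\rfloor,N+b,\tilde N+b\}\le i\le a$. All the constants involved depend only on $\mathcal{K}$ (for Lemma \ref{entrada}) or on $\mathcal{K}$ and $L$ (for Lemma \ref{backward} and Proposition \ref{h}), so the resulting constant is of the required form $K(\mathcal{K},L)$.

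First I would treat the left range. By Lemma \ref{entrada} there is $K_1=K_1(\mathcal{K})>0$ with $|\Delta x_i|\le K_1\big(|h|\cdot i+\varepsilon/i\big)$, and by Proposition \ref{h} there is $K_2=K_2(\mathcal{K},L)>0$ with $|h|\le K_2\varepsilon/a^2$. Since $i\le\lfloor a/2\rfloor\le a$ we have $i/a^2\le 1/a\le 1/i$, hence $|h|\cdot i\le K_2\varepsilon\,i/a^2\le K_2\varepsilon/i$, and therefore $|\Delta x_i|\le K_1(K_2+1)\varepsilon/i$, which is the first assertion. The right range is symmetric: writing $j=a-i$ and enlarging $K_1$ if necessary so that it also serves Lemma \ref{backward}, we get $|\Delta x_i|\le K_1\big(|h|\,j+\varepsilon/j\big)$; since $j=a-i\le a-\lfloor a/2\rfloor\le a$ we again have $j/a^2\le 1/a\le 1/j$, so $|h|\,j\le K_2\varepsilon/j$ and $|\Delta x_i|\le K_1(K_2+1)\varepsilon/(a-i)$. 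Taking $K$ to be the maximum of the two constants thus produced finishes the argument. (At the endpoint $i=a$ the bound $\varepsilon/(a-i)$ is interpreted as vacuous, exactly as in Lemma \ref{backward}, and one uses instead the defining inequality $|\Delta x_a|\le L\varepsilon$ of $L$-synchronization.)

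There is essentially no obstacle here: the substantive work is entirely contained in Proposition \ref{h} (and, further upstream, in Lemmas \ref{entrada}, \ref{backward}, \ref{aquelevelho} and Yoccoz's Lemma), and what remains is a one-line arithmetic manipulation of the inequality $i/a^2\le 1/i$ on the relevant ranges. The only point requiring a moment's attention is bookkeeping of the hypotheses: Lemma \ref{entrada} needs no synchronization while Lemma \ref{backward} and Proposition \ref{h} do, but since $L$-synchronization is assumed in Lemma \ref{acl}, all three results apply verbatim.
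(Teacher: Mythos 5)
Your proposal is correct and is essentially the paper's own route: the paper states Lemma \ref{acl} as an immediate consequence of substituting the synchronized bound $|h|\le K\varepsilon/a^2$ from Proposition \ref{h} into Lemma \ref{entrada} (left range) and Lemma \ref{backward} (right range), which is exactly the arithmetic you carry out, with the constants depending only on $\mathcal{K}$ and $L$ as required.
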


Moreover:

\begin{prop}\label{Dxi} For every $L>0$ there exists $K=K(\mathcal{K}, L)>0$ such that the following holds. If $\zeta$ and $\tilde{\zeta}$ are $L$-synchronized then
$$
|\Delta x_i|\le  K \varepsilon \cdot \frac{1}{i}\quad\mbox{for all $i\in\{0,1,...,a/2\}$.}
$$
and
$$
|\Delta x_i|\le  K \varepsilon \cdot  \frac{1}{a-i}\quad\mbox{for all $i\in\{a/2,...,a\}$.}
$$
\end{prop}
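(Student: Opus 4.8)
The plan is to show that Proposition \ref{Dxi} follows by combining the three preceding lemmas (Lemma \ref{entrada}, Lemma \ref{backward}, Lemma \ref{acl}) with the sharp bound on $|h|$ coming from Proposition \ref{h}. Recall that under the $L$-synchronization hypothesis, Proposition \ref{h} gives $|h|\le K\varepsilon/a^2$, and that Lemma \ref{acl} already yields exactly the two claimed estimates, but only on the restricted ranges $1\le i\le\min\{\lfloor a/2\rfloor,N-b,\tilde N-b\}$ and $\max\{\lfloor a/2\rfloor,N+b,\tilde N+b\}\le i\le a$. So the entire content of the Proposition, beyond what is already in Lemma \ref{acl}, is to cover the remaining indices, namely the bounded-size window of indices $i$ with $|i-N|\le b$ (equivalently, the points $x_i,\tilde x_i$ lying within the small accumulation region of size $\asymp 1/\sqrt a$ near the almost-fixed point $p$), together with the symmetric window near $\tilde N$, and to patch the estimate across $i=\lfloor a/2\rfloor$ if that value happens to fall inside such a window.

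First I would record that, by Lemma \ref{op} and Yoccoz's Lemma (Lemma \ref{doyoc}), we have $N\asymp a$, $\tilde N\asymp a$, and $b=b(\mathcal K)$ is a \emph{constant} independent of $a$; hence on the window $|i-N|\le b$ one has $i\asymp a\asymp a-i$ up to the constant $b$, so that the two target bounds $K\varepsilon/i$ (for $i\le a/2$) and $K\varepsilon/(a-i)$ (for $i\ge a/2$) are there both of size $\asymp K\varepsilon/a$. Thus it suffices to prove $|\Delta x_i|\le K\varepsilon/a$ for all $i$ with $\min\{N,\tilde N\}-b\le i\le \max\{N,\tilde N\}+b$. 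To get this, I would start from the endpoint $i_*=\min\{N,\tilde N\}-b$, where Lemma \ref{acl} already gives $|\Delta x_{i_*}|\le K\varepsilon/i_*\le K\varepsilon/a$ (using $i_*\asymp a$), and then iterate forward a bounded number $2b$ of steps. Each forward step is controlled exactly as in the proof of Lemma \ref{bruteforce}: writing $|\Delta x_{i+1}|\le |D\eta(y_i)|\,|\Delta x_i|+|H(\tilde x_i)|$, and using that on this window $|D\eta(y_i)|$ is uniformly bounded (indeed close to $1$, since $y_i$ is near $p$ and $D\eta(p)=1$), while $|H(\tilde x_i)|\le |h|+\varepsilon(\tilde x_i-p)^2\le K\varepsilon/a^2 + \varepsilon\cdot K/a = O(\varepsilon/a)$ by Proposition \ref{h} and the fact that $|\tilde x_i-p|\le K/\sqrt a$ on the accumulation region (Lemma \ref{aquele}, or directly Lemma \ref{aquelevelho}). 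Summing $2b=O(1)$ such steps multiplies by a bounded factor and adds $O(1)\cdot O(\varepsilon/a)$, so $|\Delta x_i|\le K\varepsilon/a$ throughout the window. Finally, combining this window estimate with Lemma \ref{acl} on the complementary ranges, and observing that the two pieces agree in order of magnitude where they overlap (near $i=a/2$), gives the full statement; if $\lfloor a/2\rfloor$ lies inside the window the bound $K\varepsilon/a$ there is exactly $\asymp K\varepsilon/i\asymp K\varepsilon/(a-i)$, so no inconsistency arises.

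I expect the main obstacle to be the bookkeeping of the intermediate window rather than any deep estimate: one must be careful that the crude forward iteration (Lemma \ref{bruteforce}-style) is only applied over a range whose length is the \emph{universal} constant $2b$, so that the factor $D^{2b}$ it introduces is absorbed into $K=K(\mathcal K,L)$ and does not depend on $a$; this is precisely where Yoccoz's Lemma is essential, since it is what makes $b$ (and $|a/2-N|$) bounded and guarantees $N,\tilde N\asymp a$. A secondary point requiring care is that on the window $|D\eta(y_i)|$ must be bounded \emph{above} uniformly; this is immediate from the $K_0$-control ($\|\eta\|_{C^3}\le K_0$ hence $\|D\eta\|_{C^0}\le K_0$), so in fact $D=\max D\eta\le K_0$ works globally, and only the additive term $|H(\tilde x_i)|=O(\varepsilon/a)$ genuinely uses synchronization through Proposition \ref{h}.
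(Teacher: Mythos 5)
Your reduction rests on the claim that the indices not already covered by Lemma \ref{acl} form a window of universally bounded length $2b$ around $N$ (and $\tilde N$), with $\lfloor a/2\rfloor$ only exceptionally inside it. That is not what Lemma \ref{acl} (or Lemma \ref{entrada}/\ref{backward}) gives: the first estimate stops at $\min\{\lfloor a/2\rfloor,N-b,\tilde N-b\}$ and the second starts at $\max\{\lfloor a/2\rfloor,N+b,\tilde N+b\}$, while Yoccoz's Lemma only pins $N$ down to $\delta_0 a\le N\le\delta_1 a$, so $|N-a/2|$ (and $|\tilde N-a/2|$) can be of order $a$. Hence the uncovered stretch --- e.g.\ all $i$ with $\min\{N,\tilde N\}+b<i\le\lfloor a/2\rfloor$ when $N,\tilde N<a/2$, or $\lfloor a/2\rfloor\le i<\min\{N,\tilde N\}-b$ when $N,\tilde N>a/2$ --- can have length up to $(\delta_1-\delta_0)a+2b$, not $O(1)$, and neither half of Lemma \ref{acl} applies there as stated. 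In the second scenario your proposed starting point $i_*=\min\{N,\tilde N\}-b$ does not even carry an initial bound, since the first part of Lemma \ref{acl} only reaches $\lfloor a/2\rfloor$.

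Because the number of intermediate steps can be comparable to $a$, the two quantitative shortcuts you allow yourself break down. A Lemma \ref{bruteforce}--type iteration with factor $D=\max D\eta\le K_0$ over $n\asymp a$ steps produces $D^{n}$, which cannot be absorbed into $K(\mathcal K,L)$; one must use that on the whole middle range $\{M\le i\le a-M\}$, $M=\theta a$, the orbit points lie within $K/M\asymp 1/a$ of $p$ (Lemma \ref{aquele}; this is $O(1/a)$, not the $O(1/\sqrt a)$ you invoke), so that $D\eta\le\alpha=1+K/a$ and, by Proposition \ref{h}, $|H|\le\beta=K\varepsilon/a^2$ there. Your per-step additive bound $|H|=O(\varepsilon/a)$ is likewise too weak: summed over $\asymp a$ steps it gives only $O(\varepsilon)$ instead of the required $O(\varepsilon/a)$. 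The paper's proof is precisely the iteration of $\Delta x_{i+1}\le\alpha\,\Delta x_i+\beta$ over $n\le(\delta_1-\delta_0)a+2b$ steps, starting where Lemma \ref{acl} does apply, with $\alpha^{n}\le e^{Kn/a}$ bounded and $\beta\sum_{j<n}\alpha^{j}=O(\varepsilon/a)$. Your outline has the right ingredients (derivative close to $1$ near $p$, $|H|$ small by synchronization via Proposition \ref{h}), but as written it only treats a bounded window and therefore does not cover the actual gap.
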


\begin{proof}[Proof of Proposition \ref{Dxi}] By Lemma \ref{acl} we only need to estimate $|\Delta x_i|$ for the intermediate iterates $\min\big\{\lfloor a/2 \rfloor,N-b,\tilde{N}-b\big\}<i<\big\{\lfloor a/2 \rfloor,N+b,\tilde{N}+b\big\}$. We will prove only the first part of the statement (the other being the same), that is, we will prove that:$$|\Delta x_i|\leq  K\varepsilon\cdot\frac{1}{i}\quad\mbox{for all $i\in\big\{\min\{\lfloor a/2 \rfloor,N-b,\tilde{N}-b\},...,a/2\big\}$.}$$

We use the same notation as in the proof of Proposition \ref{h}. By the choice of $\theta$ we know that $M\leq\min\big\{\lfloor a/2 \rfloor,N-b,\tilde{N}-b\big\}$ and $a-M\geq\max\big\{\lfloor a/2 \rfloor,N+b,\tilde{N}+b\big\}$.

Recall that $H:I_\eta\to[-\varepsilon,\varepsilon]\subset\R$ is defined as $H(x)=\eta(x)-\tilde{\eta}(x)$. By Proposition \ref{h} we have that $\big|H(x)\big|\leq\varepsilon\big[\frac{K}{a^2}+(x-p)^2\big]$ and then $\big|H(x)\big|\leq\frac{K\varepsilon}{a^2}$ whenever $x \in T$, since for $x \in T$ we have that $|x-p|\leq\frac{K}{M}\leq\frac{K}{a}$. Therefore, by consider $\alpha=1+\frac{K}{a}$ and $\beta=\frac{K\varepsilon}{a^2}$, we obtain that $\Delta x_{i+1}\leq\alpha\Delta x_i+\beta$ and then:$$\Delta x_{i+n}\leq\alpha^n\Delta x_i+\beta\left(\sum_{j=0}^{n-1}\alpha^j\right)\quad\mbox{for all $1 \leq n \leq (\delta_1-\delta_0)a+2b$.}$$

Note that $\sum_{j=0}^{n-1}\alpha^j=\frac{\alpha^n-1}{\alpha-1}=\frac{a}{K}(\alpha^n-1)$. Moreover, since $n<a$ we have that $\alpha^n=(\frac{K}{a}+1)^n \leq e^{\frac{Kn}{a}}$ is bounded. Therefore:$$\Delta x_{i+n}\leq\alpha^n\Delta x_i+\beta\frac{a}{K}(\alpha^n-1) \leq K\frac{\varepsilon}{i}\left[\alpha^n+\frac{i}{a}(\alpha^n-1)\right]\leq K\frac{\varepsilon}{i}\alpha^n\leq K\frac{\varepsilon}{i}\,.$$

Finally, from $i \geq M = \theta a$ and $n \leq (\delta_1-\delta_0)a+2b$ we get that $\frac{n}{i}$ is bounded and then $\Delta x_{i+n} \leq K\frac{\varepsilon}{i+n}$ as we wanted to prove.
\end{proof}

For $i\in\{1,...,a\}$ let
$$
\Delta_i=|\Delta x_i-\Delta x_{i-1}|.
$$

\begin{prop}\label{Di}  For every $L>0$ there exists $K=K(\mathcal{K}, L)>0$ such that the following holds. If $\zeta$ and $\tilde{\zeta}$ are $L$-synchronized then
$$
\Delta_i\le  K\left( \varepsilon \cdot \frac{ \log i}{i^2} +  \varepsilon^2
\cdot \frac{1}{i}\right)\quad\mbox{for all $i\le a/2$.}
$$
and
$$
\Delta_i\le  K\left( \varepsilon \cdot \frac{ \log (a-i)}{(a-i)^2} +
\varepsilon^2 \cdot \frac{1}{a-i}\right)\quad\mbox{for all $i\ge a/2$.}
$$
\end{prop}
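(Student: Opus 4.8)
The plan is to estimate the second difference $\Delta_i = |\Delta x_i - \Delta x_{i-1}|$ by differentiating the telescopic formula from the proof of Lemma \ref{entrada}, rather than just the formula itself. Write $\Delta x_{i+1} = \tilde\eta(\tilde x_i) - \eta(x_i) = \big(\eta(\tilde x_i)-\eta(x_i)\big) + H(\tilde x_i)$, so that
$$
\Delta x_{i+1} - \Delta x_i = \big(\eta(\tilde x_i)-\eta(x_i)\big) - \Delta x_i + H(\tilde x_i) = \big(D\eta(y_i)-1\big)\Delta x_i + H(\tilde x_i),
$$
where $y_i$ lies between $x_i$ and $\tilde x_i$. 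Since $D\eta(p)=1$ and $D\eta$ is $C^1$-controlled, one has $|D\eta(y_i)-1| \le K|y_i-p|$, and by Yoccoz's Lemma (Lemma \ref{doyoc}) together with Claim \ref{lacompdelmyk}-type reasoning, $|y_i-p|\asymp 1/i$ for $i\le a/2$. Combining this with the synchronization estimate $|\Delta x_i|\le K\varepsilon/i$ from Proposition \ref{Dxi} gives $|D\eta(y_i)-1|\cdot|\Delta x_i| \le K\varepsilon/i^2$. For the term $H(\tilde x_i)$, use Proposition \ref{h} (so $|h|\le K\varepsilon/a^2$) together with the bound \eqref{Hh}: $|H(\tilde x_i)| \le |h| + \varepsilon(\tilde x_i - p)^2 \le K\varepsilon/a^2 + K\varepsilon/i^2 \le K\varepsilon/i^2$. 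So a single step contributes $O(\varepsilon/i^2)$ — but this is not yet the claimed bound, because these errors accumulate over $\asymp i$ steps coming up from the region near $x_0$, and the accumulation is amplified by the product of the derivatives $D\eta(y_k)$ along the way.

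The key point is therefore to iterate the recursion $\Delta_{i+1} \le D\eta(y_i)\,\Delta_i + (\text{error}_i)$ — more precisely one should track $\Delta x_{i+1} - \Delta x_i$ directly by the telescopic expansion used in Lemma \ref{entrada}, but now applied to the \emph{differences} $\Delta_i$. Writing $\Delta x_i - \Delta x_{i-1}$ as a sum over $k$ of contributions $D\eta^{i-k-1}(y_k)\cdot[\text{second-order variation at step }k]$, the variation at step $k$ is itself of size $|D\eta(y_k)-1|\,|\Delta x_k| + |H'(\tilde x_k)\Delta x_k| + \ldots$, i.e. roughly $\varepsilon\cdot k^{-2}$ from the nonlinearity-times-$\Delta x_k$ term plus $\varepsilon^2 \cdot k^{-?}$ from the second-order Taylor remainder in $H$ (the term $\tfrac12 D^2H\cdot(\Delta x_k)^2$-type contribution, giving $\varepsilon\cdot(\varepsilon/k)^2 \asymp \varepsilon^3/k^2$, or more carefully the cross term $DH(\tilde x_k)\Delta x_k \asymp \varepsilon k^{-1}\cdot\varepsilon k^{-1}$). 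Using the bounded-distortion bounds for $D\eta^{i-k-1}(y_k)$ from Claims \ref{limsempre} and \ref{afwel} — in particular $D\eta^{i-k-1}(y_k)\le K$ always and $\le K k^2/i^2$ for $k$ small relative to $i$ — one sums $\sum_{k=1}^{i-1} D\eta^{i-k-1}(y_k)\big(\varepsilon k^{-2} + \varepsilon^2 k^{-1}\big)$. The first piece: for $k \gtrsim i$ one uses the trivial bound $D\eta^{i-k-1}(y_k)\le K$ and gets $\varepsilon\sum_{k\asymp i} k^{-2}\asymp \varepsilon/i^2$; for $k\ll i$ one uses $D\eta^{i-k-1}(y_k)\le Kk^2/i^2$ and gets $\frac{\varepsilon}{i^2}\sum_{k\le i}\frac{k^2}{k^2}\asymp \frac{\varepsilon}{i^2}\cdot i$ — wait, that is $\varepsilon/i$, too big; the correct accounting must use $|H(\tilde x_k)| + |D\eta(y_k)-1||\Delta x_k|$ which for small $k$ is dominated by $|h|\asymp\varepsilon/a^2$ rather than $\varepsilon/k^2$, and $\sum_{k\le i}(k^2/i^2)(\varepsilon/a^2) \le \varepsilon i/(3a^2)\le \varepsilon/i$ — still need care, and the logarithm in the statement signals that the honest sum is $\frac{\varepsilon}{i^2}\sum_{k=1}^{i}\frac{1}{k}\asymp \varepsilon\frac{\log i}{i^2}$, coming from balancing $D\eta^{i-k-1}(y_k)\asymp k^2/i^2$ against the step-error $\asymp \varepsilon/k^2$. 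The $\varepsilon^2/i$ term arises identically from the quadratic-in-$\Delta x$ contributions, where $\sum_k (k^2/i^2)(\varepsilon^2/k^2) \cdot(\text{something})$ balances out, or more simply from $\sum_{k\asymp i}K\cdot\varepsilon^2 k^{-1}\asymp \varepsilon^2/i$ using the trivial distortion bound near $k\asymp i$.

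The main obstacle will be the bookkeeping of the two regimes of $k$ (small $k$, where the distortion factor $D\eta^{i-k-1}(y_k)$ is genuinely small, of order $k^2/i^2$, and $k$ comparable to $i$, where it is only $O(1)$) and correctly identifying which of $|h|\asymp\varepsilon/a^2$, $\varepsilon(\tilde x_k-p)^2\asymp\varepsilon/k^2$, $|D\eta(y_k)-1||\Delta x_k|\asymp (1/k)(\varepsilon/k)=\varepsilon/k^2$, and the quadratic remainder $\asymp\varepsilon^2/k^2$ dominates in each regime — it is exactly the $\log i$ that emerges from $\sum_{k=1}^i k^{-1}$ after the distortion factor $k^2/i^2$ cancels the $k^{-2}$ from the step error. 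I would organize the proof as: (1) derive the one-step recursion for $\Delta_i$ using the mean value theorem on $\eta$ and Taylor expansion on $H$; (2) invoke Propositions \ref{h} and \ref{Dxi} to control $|\Delta x_k|$ and $|h|$; (3) apply the telescopic expansion as in Lemma \ref{entrada}, now for $\Delta_i$, and split the resulting sum at $k\asymp i/(4(C-1))$ exactly as in Claims \ref{limsempre}–\ref{afwel}; (4) sum each piece, using $\sum k^{-1}\asymp\log i$ for the first-order term and the trivial distortion bound on the tail $k\asymp i$ for the $\varepsilon^2$ term; (5) handle the symmetric range $i\ge a/2$ by the backward iteration argument of Lemma \ref{backward}, replacing $i$ by $a-i$ throughout.
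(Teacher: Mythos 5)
Your opening computation is, ironically, already a complete proof, and your reason for abandoning it is a misconception. Since $\Delta_{i+1}=|\Delta x_{i+1}-\Delta x_i|$ and $\Delta x_{i+1}=\tilde\eta(\tilde x_i)-\eta(x_i)=D\eta(y_i)\,\Delta x_i \pm H(\tilde x_i)$, your identity $\Delta x_{i+1}-\Delta x_i=(D\eta(y_i)-1)\Delta x_i\pm H(\tilde x_i)$ bounds $\Delta_{i+1}$ itself; there is nothing left to accumulate, because all accumulation is already packaged in Proposition \ref{Dxi} ($|\Delta x_i|\le K\varepsilon/\min\{i,a-i\}$) and Proposition \ref{h} ($|h|\le K\varepsilon/a^2$), which are prior results you may use. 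With $|D\eta(y_i)-1|\le K|y_i-p|\le K/\min\{i,a-i\}$ (from $D\eta(p)=1$, the $C^3$ control, and Lemma \ref{aquele} applied forwards and backwards) and $|H(\tilde x_i)|\le |h|+\varepsilon(\tilde x_i-p)^2\le K\varepsilon/\min\{i,a-i\}^2$, you get $\Delta_{i+1}\le K\varepsilon/\min\{i,a-i\}^2$, which (up to the index shift, absorbed in $K$) is even stronger than the stated bound — no $\log$, no $\varepsilon^2$ term — except at the edge indices $i=1$, $i=a$, where the paper's own iteration also gives nothing. The ``accumulation over $\asymp i$ steps amplified by products of derivatives'' that you fear concerns $\Delta x_i$, not $\Delta_i$, and was already handled in Proposition \ref{Dxi}.

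The plan you actually commit to — telescoping as in Lemma \ref{entrada} — is the paper's route, but as written it has a genuine gap exactly where the $\log$ should appear, and you noticed it yourself: with per-step errors of size $\varepsilon/k^2$ and transfer weights $D\eta^{i-k-1}(y_k)\asymp k^2/i^2$, the sum is $\sum_k (k^2/i^2)(\varepsilon/k^2)\asymp\varepsilon/i$, which is too big; and your two patches are incorrect (for small $k$ the step error is dominated by $\varepsilon(\tilde x_k-p)^2\asymp\varepsilon/k^2$, not by $|h|\asymp\varepsilon/a^2$; and ``balancing $k^2/i^2$ against $\varepsilon/k^2$'' gives $\varepsilon/i$, not $\varepsilon\log i/i^2$). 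What the paper does, and what your plan is missing, is to gain one extra power of $k$ in the per-step error by differencing: it estimates $|H(\tilde x_k)-H(\tilde x_{k-1})|\le |DH(\theta)|\,|\tilde I_k|\le K\varepsilon/k^3$ rather than $|H(\tilde x_k)|$ alone, and it compares $D\eta(\theta_k)$ with the average slope $|I_{k+1}|/|I_k|$ rather than with $1$, at cost $K(|I_k|+|\Delta x_k|)|\Delta x_k|\le K(\varepsilon/k^3+\varepsilon^2/k^2)$. This yields the recursion $\Delta_{k+1}\le \frac{|I_{k+1}|}{|I_k|}\Delta_k+K(\varepsilon/k^3+\varepsilon^2/k^2)$, whose multipliers telescope to $|I_i|/|I_{k+1}|\asymp k^2/i^2$ by Yoccoz's Lemma \ref{doyoc}, and then $\sum_k (k^2/i^2)(\varepsilon/k^3+\varepsilon^2/k^2)\asymp \varepsilon\log i/i^2+\varepsilon^2/i$. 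So either finish your first paragraph as it stands (the simplest fix), or, if you keep the telescoping, replace the crude per-step error $\varepsilon/k^2$ by the differenced one $\varepsilon/k^3+\varepsilon^2/k^2$; as it stands, the committed plan does not close.
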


\begin{proof}[Proof of Proposition \ref{Di}] The proof of the second part of this proposition can be obtained as the first part by working backward. See also the proof of Proposition \ref{Dxi}. We will only present the proof of the first part. Observe, for $i\ge 1$,
$$
\begin{aligned}
\Delta_{i+1}=&|[\tilde{\eta}(x_i+\Delta x_i)-\eta(x_i)]-
[\tilde{\eta}(x_{i-1}+\Delta x_{i-1})-\eta(x_{i-1})]|\\
=&|[\eta(x_i+\Delta x_i)-\eta(x_i) +\tilde{\eta}(x_i+\Delta
   x_i)-\eta(x_i+\Delta x_i)]-\\
&[\eta(x_{i-1}+\Delta x_{i-1})-\eta(x_{i-1})
+\tilde{\eta}(x_{i-1}+\Delta x_{i-1})-\eta(x_{i-1}+\Delta x_{i-1})]|\\
=&|[D\eta(\theta_i)\Delta x_i +H(x_i+\Delta x_i)]-\\
&[D\eta(\theta_{i-1})\Delta x_{i-1} +H(x_{i-1}+\Delta x_{i-1})]|\\
\le& |D\eta(\theta_i)\Delta x_i-D\eta(\theta_{i-1})\Delta x_{i-1}|+|DH(\theta)\tilde{I}_i| \\
\end{aligned}
$$
The intermediate point $\theta$ is in $\tilde{I}_i$. Hence, by using
(\ref{DH}), the Yoccoz Lemma \ref{doyoc}, and Lemma \ref{aquele} we have
\begin{equation}\label{DHest}
|DH(\theta)\tilde{I}_i|\le K \varepsilon \cdot \frac{1}{i^3}.
\end{equation}
The intermediate point $\theta_i$ is in 
$[x_i,x_i+\Delta x_i]$. Similarly, $\theta_{i-1}\in [x_{i-1}, x_{i-1}+\Delta x_{i-1}]$. This allows
for the following estimate.
$$
\begin{aligned}
|D\eta(\theta_i)\Delta x_i-D\eta(\theta_{i-1})\Delta x_{i-1}|\le 
&\frac{|I_{i+1}|}{|I_i|}\Delta_i+ |(D\eta(\theta_i)-\frac{|I_{i+1}|}{|I_i|}    )\Delta x_i |+\\
&|(D\eta(\theta_{i-1})-\frac{|I_{i+1}|}{|I_i|} )\Delta x_{i-1}|\\
\le & \frac{|I_{i+1}|}{|I_i|}\Delta_i+K(|I_i|+ |\Delta x_i |) |\Delta
x_i |+\\
&K(|I_i|+ |\Delta x_{i-1} |) |\Delta x_{i-1} |\\
\end{aligned}
$$
Use the Yoccoz Lemma \ref{doyoc} and Proposition \ref{Dxi} to obtain
\begin{equation}\label{Ddetaest}
|D\eta(\theta_i)\Delta x_i-D\eta(\theta_{i-1})\Delta x_{i-1}|\le
\frac{|I_{i+1}|}{|I_i|}\Delta_i+
K( \varepsilon \frac{1}{i^3}+\varepsilon^2 \frac{1}{i^2}).
\end{equation}
Combine (\ref{DHest}) and (\ref{Ddetaest}) to obtain
$$
\Delta_{i+1}\le \frac{|I_{i+1}|}{|I_i|}\Delta_i+K( \varepsilon \frac{1}{i^3}+\varepsilon^2 \frac{1}{i^2}).
$$
After iterating this recursive estimate and using the Yoccoz Lemma
\ref{doyoc} one gets
$$
\begin{aligned}
\Delta_i&\le K\sum_{k=1}^{i-1}  (\varepsilon \frac{1}{k^3}+\varepsilon^2
\frac{1}{k^2})\cdot \frac{|I_i|}{|I_{k+1}|}\\
&\le K  (\varepsilon \frac{1}{i^2} \sum_{k=1}^{i-1} \frac{1}{k}+\varepsilon^2
\frac{1}{i^4} \sum_{k=1}^{i-1} k^2)\\
&\le K( \varepsilon \cdot \frac{ \log i}{i^2}+\frac{\varepsilon^2}{i}).
\end{aligned}
$$
\end{proof}

\section{Composition}\label{comp}

In this section we will discuss composition of multiple diffeomorphisms.
Let $I=[a,b]$ be a compact interval in the real line, and let $\mathcal{D}=\text{Diff}_+^2\big([a,b]\big)$ be the space of orientation preserving $C^2$ diffeomorphisms of $I$, endowed with the $C^2$-metric. Let $X=C^0\big(I,\R\big)$ be the vector space of continuous functions from $[a,b]$ to the real line, and recall that $X$ is a Banach space when endowed with the sup norm. Consider the \emph{non-linearity} $N:\mathcal{D} \to X$ defined as:$$N\psi=\frac{D^2\psi}{D\psi}=D\log D\psi\,.$$

Note that $N$ is a homeomorphism, whose inverse is given by:$$\big(N^{-1}\phi\big)(x)=a+\left(\frac{b-a}{\int_a^b\exp\big(\int_a^s\phi(t)dt\big)ds}\right)\int_a^x\exp\left(\int_a^s\phi(t)dt\right)ds\,,$$for any $x\in[a,b]$ and any $\phi \in X$. To prove that $N^{-1}\phi\in\mathcal{D}$ note that $DN^{-1}\phi>0$, since $\frac{\partial}{\partial x}\big(\int_a^x\exp\big(\int_a^s\phi(t)dt\big)ds\big)=\exp\big(\int_a^x\phi(t)dt\big)>0$.

In general, if $f:I\to\R$ is a $C^2$ map and $x$ is a regular point of $f$, we define $Nf(x)=D^2f(x)/Df(x)$. The chain rule for the non-linearity is $N(f \circ g)=Nf\!\circ\! g\,Dg+Ng$. The kernel of $N$ is the group of affine transformations. In particular $N(A\circ f)=Nf$ whenever $A$ is affine. Note also that the non-linearity goes to infinity around any non-flat critical point. Elementary properties of non-linearity can be found in \cite{marco}. On bounded sets it is bi-Lipschitz. In particular,

\begin{lem}\label{N-1} Let $B$ be a bounded set in $X=C^0\big(I,\R\big)$. There exists $K=K(B)>0$ such that for any pair $\phi$, $\psi$ in $B$ we have:$$d_2(N^{-1}\phi,N^{-1}\psi) \leq Kd_{C^0}(\phi,\psi).$$
\end{lem}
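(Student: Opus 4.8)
The plan is to argue directly from the explicit formula for $N^{-1}$ stated just above. Write $I=[a,b]$ and, for $\phi\in X$, set $E_\phi(s)=\exp\big(\int_a^s\phi(t)\,dt\big)$; then $E_\phi$ is $C^1$ with $DE_\phi=\phi E_\phi$, and $\big(N^{-1}\phi\big)(x)=a+(b-a)\big(\int_a^x E_\phi\big)\big/\big(\int_a^b E_\phi\big)$, so that $D(N^{-1}\phi)=(b-a)E_\phi\big/\!\int_a^b E_\phi$ and, differentiating once more, $D^2(N^{-1}\phi)=(b-a)\,\phi E_\phi\big/\!\int_a^b E_\phi$. First I would record the a priori bounds coming from boundedness of $B$: fixing $M>0$ with $\|\phi\|_{C^0}\le M$ for all $\phi\in B$, one has $E_\phi(s)\in[e^{-M(b-a)},e^{M(b-a)}]$, hence every denominator $\int_a^b E_\phi$ lies in $[(b-a)e^{-M(b-a)},(b-a)e^{M(b-a)}]$ and is in particular bounded away from $0$ uniformly over $B$; and, using $|e^u-e^v|\le e^{\max\{u,v\}}|u-v|$, one gets $\|E_\phi-E_\psi\|_{C^0}\le(b-a)e^{M(b-a)}\|\phi-\psi\|_{C^0}$ for all $\phi,\psi\in B$.

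Next I would isolate the elementary estimate for differences of quotients: if $|P|,|\tilde P|\le c_1$ and $|Q|,|\tilde Q|\ge c_0>0$ then $|P/Q-\tilde P/\tilde Q|\le(c_1/c_0^2)(|P-\tilde P|+|Q-\tilde Q|)$, via the identity $P/Q-\tilde P/\tilde Q=\big((P-\tilde P)\tilde Q+\tilde P(\tilde Q-Q)\big)\big/(Q\tilde Q)$. The $C^0$ bound for $N^{-1}\phi-N^{-1}\psi$ then follows by applying this with $Q=\int_a^b E_\phi$, $\tilde Q=\int_a^b E_\psi$ and $P=\int_a^x E_\phi$, $\tilde P=\int_a^x E_\psi$, the numerator differences being controlled by $(b-a)\|E_\phi-E_\psi\|_{C^0}$ and hence by $\|\phi-\psi\|_{C^0}$; the $C^1$ bound follows identically with $P=E_\phi(x)$, $\tilde P=E_\psi(x)$ and the same denominators.

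The step I expect to require a little more care — the main (mild) obstacle — is the second-derivative estimate, since $N^{-1}$ gains two derivatives from a datum that is only $C^0$, so $D^2(N^{-1}\phi)$ involves the pointwise values $\phi(x)$, not just integrals of $\phi$. Here I would write $E_\phi\phi-E_\psi\psi=(E_\phi-E_\psi)\phi+E_\psi(\phi-\psi)$ to obtain $\|E_\phi\phi-E_\psi\psi\|_{C^0}\le M\|E_\phi-E_\psi\|_{C^0}+e^{M(b-a)}\|\phi-\psi\|_{C^0}\le K\|\phi-\psi\|_{C^0}$, and then apply the difference-of-quotients estimate once more with $P=\phi E_\phi$, $\tilde P=\psi E_\psi$ (bounded by $Me^{M(b-a)}$ on $B$) and the same denominators, giving $\|D^2(N^{-1}\phi)-D^2(N^{-1}\psi)\|_{C^0}\le K\|\phi-\psi\|_{C^0}$. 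Summing the three estimates and recalling that $d_2$ is the $C^2$ distance on $\mathcal{D}$, one concludes $d_2(N^{-1}\phi,N^{-1}\psi)\le K\,d_{C^0}(\phi,\psi)$ with $K=K(B)$, as claimed.
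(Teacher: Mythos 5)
Your proof is correct, and it takes a somewhat different route from the paper's. You work entirely from the explicit formula for $N^{-1}$, writing $E_\phi(s)=\exp\big(\int_a^s\phi\big)$ and controlling the $C^0$, $C^1$ and $C^2$ differences by elementary quotient estimates together with the uniform bounds $E_\phi\in[e^{-M(b-a)},e^{M(b-a)}]$; this makes the argument self-contained. The paper instead quotes Martens' Lemma 10.2 for the $C^0$ estimate, obtains the $C^1$ estimate by choosing a point $t_0$ where $Df(t_0)=Dg(t_0)$ (possible since both $N^{-1}\phi$ and $N^{-1}\psi$ are diffeomorphisms of $I$ onto itself, so their derivatives have the same mean) and integrating $\log Df-\log Dg=\int_{t_0}^{\cdot}(\phi-\psi)$, and then gets the $C^2$ estimate from the identity $D^2f=\phi\,Df$ — which is exactly the identity underlying your formula $D^2(N^{-1}\phi)=(b-a)\,\phi E_\phi/\int_a^b E_\phi$, so the core mechanism (the gain of two derivatives comes from multiplying the bounded first derivative by $\phi$) is the same in both. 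Your version buys independence from the external reference at the cost of slightly more bookkeeping; the paper's version is shorter because the $C^0$ step is outsourced. One cosmetic point: your stated quotient inequality $|P/Q-\tilde P/\tilde Q|\le (c_1/c_0^2)\big(|P-\tilde P|+|Q-\tilde Q|\big)$ should have the constant $\max\{1/c_0,\,c_1/c_0^2\}$ (the term $(P-\tilde P)\tilde Q/(Q\tilde Q)$ is only bounded by $|P-\tilde P|/c_0$ in general); this changes nothing, since the final constant is allowed to depend on $B$ and $I$.
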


\begin{proof}[Proof of Lemma \ref{N-1}] Use the inverse of the non-linearity to estimate the $C^0$ distance between $f=N^{-1}\phi$ and $g=N^{-1}\psi$, as in \cite[Lemma 10.2, page 579]{marco}. This gives $d_{C^0}(N^{-1}\phi,N^{-1}\psi) \leq Kd_{C^0}(\phi,\psi)$. Since both $f=N^{-1}\phi$ and $g=N^{-1}\psi$ belong to $\Diff_+^2(I)$ there exists $t_0\in I$ such that $Df(t_0)=Dg(t_0)$, and then $\log Df(t)-\log Dg(t)=\int_{t_0}^{t}\big(\phi-\psi\big)(s)\,ds$ for all $t \in I$. Therefore $d_{C^0}(\log Df,\log Dg)\leq|I|d_{C^0}(\phi,\psi)$, and since both $f$ and $g$ are $C^1$-bounded we get $d_{C^0}(Df,Dg) \leq Kd_{C^0}(\phi,\psi)$. Finally note that for all $t \in I$ we have:$$\big|(D^2f-D^2g)(t)\big|\leq\big|(\phi-\psi)(t)\big|\big|Df(t)\big|+\big|(Df-Dg)(t)\big|\big|\psi(t)\big|.$$
\end{proof}

As we said before, the non-linearity allows us to identify the set $\mathcal{D}$ of diffeomorphisms with the Banach space $X=C^0\big(I,\R\big)$ of continuous functions. This defines the {\it non-linearity} norm on $\mathcal{D}$: $|f|=|Nf|_{C^0}$.

The following Lemma says that composition of multiple diffeomorphisms on $C^1$-bounded sets is Lipschitz continuous in the non-linearity norm. This Lemma is an adaptation of the Sandwich-Lemma in \cite[Lemma 10.5, page 581]{marco}.

\begin{lem}\label{NLcomp} Given $M>0$ there exist $K(M)>0$ such that for $f_1,...,f_n,g_1,...,g_n$ in $\Diff_+^3\big([0,1]\big)$ satisfying:
\begin{itemize}
\item $\sum_{j=1}^{j=n}|Nf_j|_{C^0} \leq M$,
\item $\sum_{j=1}^{j=n}|Ng_j|_{C^0} \leq M$,
\item $\sum_{j=1}^{j=n}|DNf_j|_{C^0} \leq M$,
\item $\sum_{j=1}^{j=n}|DNg_j|_{C^0} \leq M$
\end{itemize}
we have:$$\big| 
N\big(\bigcirc_{j=1}^{j=n}f_j\big)-N\big(\bigcirc_{j=1}^{j=n}g_j\big)\big|_{C^0} \leq K\sum_{j=1}^{j=n}\big|Nf_j-Ng_j\big|_{C^0}.$$
In particular,
$$
d_{C^2}\big(\bigcirc_{j=1}^{j=n}f_j,\bigcirc_{j=1}^{j=n}g_j\big) \leq K\sum_{j=1}^{j=n} \big|Nf_j-Ng_j\big|_{C^0}.$$
\end{lem}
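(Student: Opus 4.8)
The plan is to reduce everything to a telescoping estimate for the non-linearity of a composition, following the structure of the Sandwich Lemma in \cite{marco}. The starting point is the iterated chain rule: writing $F_k = f_k \circ f_{k-1} \circ \cdots \circ f_1$ and $G_k = g_k \circ \cdots \circ g_1$, one has
$$N\big(\bigcirc_{j=1}^{n} f_j\big) = \sum_{j=1}^{n} \big(Nf_j \circ F_{j-1}\big)\, DF_{j-1},$$
and the analogous identity for the $g_j$'s. So the quantity to control is
$$\Big| \sum_{j=1}^{n} \big[(Nf_j\circ F_{j-1})\,DF_{j-1} - (Ng_j\circ G_{j-1})\,DG_{j-1}\big]\Big|_{C^0}.$$
First I would record the basic a priori bounds that follow from the hypotheses $\sum |Nf_j|_{C^0}\le M$ (and the same for $g_j$): since $\log DF_{j-1}$ is an integral of $\sum_{i<j} Nf_i\circ F_{i-1}\, DF_{i-1}$-type terms, one gets $|DF_{j-1}|_{C^0} \le e^{M}$ and $|DF_{j-1}|_{C^0}\ge e^{-M}$ uniformly in $j$ and $n$, and likewise for $G_{j-1}$ — here one uses that each $F_{j-1}$ maps $[0,1]$ onto $[0,1]$, so there is a point where $DF_{j-1}=1$. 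This is the standard trick that makes the constant independent of $n$.

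Next I would split each summand into three pieces using the triangle inequality:
$$(Nf_j\circ F_{j-1})DF_{j-1} - (Ng_j\circ G_{j-1})DG_{j-1}
= \big[(Nf_j - Ng_j)\circ F_{j-1}\big]DF_{j-1}
+ \big[(Ng_j\circ F_{j-1} - Ng_j\circ G_{j-1})\big]DF_{j-1}
+ (Ng_j\circ G_{j-1})\big(DF_{j-1}-DG_{j-1}\big).$$
The first piece is bounded by $e^{M}|Nf_j - Ng_j|_{C^0}$, which sums to the desired right-hand side. For the second piece I would use $|Ng_j\circ F_{j-1} - Ng_j\circ G_{j-1}|_{C^0} \le |DNg_j|_{C^0}\,|F_{j-1}-G_{j-1}|_{C^0}$ together with the hypothesis $\sum|DNg_j|_{C^0}\le M$. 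The third piece requires a bound on $|DF_{j-1}-DG_{j-1}|_{C^0}$, and both this and the quantity $|F_{j-1}-G_{j-1}|_{C^0}$ are handled by an \emph{induction on $j$}: writing $F_j - G_j = f_j\circ F_{j-1} - g_j\circ F_{j-1} + g_j\circ F_{j-1} - g_j\circ G_{j-1}$ and differentiating, one obtains recursive inequalities of the form
$$|F_j - G_j|_{C^0} \le |F_{j-1}-G_{j-1}|_{C^0} + C\big(|f_j - g_j|_{C^0} \text{-type terms}\big),$$
and similarly for the derivatives, where the ``$f_j - g_j$ terms'' are themselves controlled by $|Nf_j - Ng_j|_{C^0}$ via Lemma \ref{N-1} (applied on the bounded set of non-linearities of size $\le M$). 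Accumulating these recursions over $j$ and using the uniform bounds from the first step yields $|F_{j-1}-G_{j-1}|_{C^0} + |DF_{j-1}-DG_{j-1}|_{C^0} \le K\sum_{i=1}^{j-1}|Nf_i - Ng_i|_{C^0}$.

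Summing the three pieces over $j=1,\dots,n$ then gives, after interchanging the order of summation in the double sums (which costs only the uniform factor, not a factor of $n$, precisely because $\sum_j |DNg_j|_{C^0}\le M$ and $\sum_j|Ng_j|_{C^0}\le M$ are finite), the claimed bound $|N(\bigcirc f_j) - N(\bigcirc g_j)|_{C^0} \le K\sum_{j=1}^{n}|Nf_j - Ng_j|_{C^0}$. The final ``in particular'' statement about the $C^2$ distance follows from this together with Lemma \ref{N-1}: a $C^0$ bound on the difference of non-linearities of two $C^1$-bounded diffeomorphisms of $[0,1]$ (both maps fix the endpoints, so there is a common point where the derivatives agree) upgrades, exactly as in the proof of Lemma \ref{N-1}, to a $C^2$ bound on the maps themselves. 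The main obstacle is bookkeeping: keeping the constants uniform in $n$ throughout the nested inductions, which hinges entirely on systematically exploiting that the \emph{sums} $\sum_j |Nf_j|$, $\sum_j|DNf_j|$ are bounded (rather than each term being small) — this is the content of the Sandwich Lemma philosophy and is what prevents an exponential-in-$n$ blow-up.
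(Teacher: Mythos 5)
Your proposal is correct. Note that the paper itself gives no proof of Lemma \ref{NLcomp}: it only remarks that the lemma is an adaptation of the Sandwich Lemma of Martens \cite[Lemma 10.5]{marco}, and your argument is precisely the standard direct proof in that spirit — the iterated chain rule $N(F_n)=\sum_j (Nf_j\circ F_{j-1})\,DF_{j-1}$, the uniform distortion bound $e^{-M}\le DF_{j-1}\le e^{M}$ (which rests on the change of variables $\int_0^1|NF_{j-1}|\le\sum_i|Nf_i|_{C^0}\le M$ together with the existence of a point where $DF_{j-1}=1$), the three-term splitting, the recursive $C^0$ and $C^1$ estimates on $F_j-G_j$ fed by Lemma \ref{N-1}, and a final application of Lemma \ref{N-1} to $NF_n$, $NG_n$ (both bounded by $Me^{M}$) to get the $C^2$ statement. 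The decisive point, which you identify correctly, is that all constants stay uniform in $n$ because the hypotheses bound the \emph{sums} $\sum_j|Nf_j|_{C^0}$, $\sum_j|Ng_j|_{C^0}$, $\sum_j|DNg_j|_{C^0}$, so the double sums arising from the second and third pieces collapse to a single factor of $M$; your plan therefore supplies the details the paper delegates to the citation.
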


The branches of renormalizations are compositions of a homeomorphism and multiple diffeomorphisms. The composition of multiple diffeomorphisms can be controlled by Lemma \ref{NLcomp}. To control the effect of the first factor we need the following Lemma, a basic property of composition.

\begin{lem}\label{comp2} For every $L>0$ there exists $K>0$ such that the following holds.
Let $q, \tilde{q}:[-1,0]\to [0,1]$ be $C^3$ homeomorphisms with one critical point, $Dq(0)=D\tilde{q}(0)=0$. Let $f, \tilde{f}:[0,1]\to [0,1]$ be $C^3$
diffeomorphisms. If $|q|_{C^3}$, $|\tilde{q}|_{C^3}$, $|f|_{C^3}$, $|\tilde{f}|_{C^3}\le L$ then
$$
d_{C^2}(\tilde{f}\circ \tilde{q}, f\circ q)\le K  
d_{C^2}(\tilde{f}, f)+d_{C^2}( \tilde{q}, q).
$$
\end{lem}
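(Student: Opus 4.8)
The plan is to prove the estimate by the triangle inequality, inserting the intermediate map $f\circ\tilde q$ and writing
$$\tilde f\circ\tilde q-f\circ q=(\tilde f-f)\circ\tilde q+(f\circ\tilde q-f\circ q)\,.$$
It then suffices to bound the $C^2$ norm on $[-1,0]$ of each summand separately: the first by $K\,d_{C^2}(\tilde f,f)$ and the second by $K\,d_{C^2}(\tilde q,q)$, with $K=K(L)$. In both cases the only inputs are the chain rule, the uniform bounds $|q|_{C^3},|\tilde q|_{C^3},|f|_{C^3},|\tilde f|_{C^3}\le L$, and the fact that $q$ and $\tilde q$ map $[-1,0]$ into $[0,1]$, so that $f$, $\tilde f$ and their derivatives up to order three are controlled by $L$ at all the relevant points (in particular the endpoint critical points of $q,\tilde q$ cause no trouble, since $f\circ q$ and $\tilde f\circ\tilde q$ are genuinely $C^3$ maps on all of $[-1,0]$).

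For the first summand I would differentiate twice,
$$D^2\big[(\tilde f-f)\circ\tilde q\big]=\big[(D^2\tilde f-D^2f)\circ\tilde q\big](D\tilde q)^2+\big[(D\tilde f-Df)\circ\tilde q\big]D^2\tilde q\,,$$
and likewise at first order. Every factor $(D^j\tilde f-D^jf)\circ\tilde q$ with $j\le 2$ is bounded in sup norm by $d_{C^2}(\tilde f,f)$, while $\|D\tilde q\|_{C^0},\|D^2\tilde q\|_{C^0}\le L$; hence $\|(\tilde f-f)\circ\tilde q\|_{C^2}\le K(L)\,d_{C^2}(\tilde f,f)$.

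For the second summand I would expand $D^2(f\circ\phi)=\big[(D^2f)\circ\phi\big](D\phi)^2+\big[(Df)\circ\phi\big]D^2\phi$ for $\phi=\tilde q$ and $\phi=q$, subtract, and organize the difference of the second derivatives into the blocks
$$\big[(D^2f\!\circ\!\tilde q)-(D^2f\!\circ\! q)\big](D\tilde q)^2\,,\qquad (D^2f\!\circ\! q)\big[(D\tilde q)^2-(Dq)^2\big]\,,$$
$$\big[(Df\!\circ\!\tilde q)-(Df\!\circ\! q)\big]D^2\tilde q\,,\qquad (Df\!\circ\! q)\big[D^2\tilde q-D^2q\big]\,,$$
plus the analogous, easier terms at orders zero and one. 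The blocks of the form $(D^jf\circ\tilde q)-(D^jf\circ q)$ are estimated by the mean value theorem as $\|D^{j+1}f\|_{C^0}\,\|\tilde q-q\|_{C^0}\le L\,d_{C^2}(\tilde q,q)$ — this is the one place where the hypothesis $f\in C^3$ (rather than merely $C^2$) is used, namely to Lipschitz-control $D^2f$ — while the remaining blocks, which contain $(D\tilde q)^2-(Dq)^2$ or $D^2\tilde q-D^2q$, are bounded by $C(L)\,d_{C^2}(\tilde q,q)$ using the uniform bounds on $q$ and $\tilde q$. Summing gives $\|f\circ\tilde q-f\circ q\|_{C^2}\le K(L)\,d_{C^2}(\tilde q,q)$, and combined with the first estimate this yields the statement.

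I do not expect a genuine obstacle here: the argument is essentially routine bookkeeping with the chain rule. The only points requiring a little care are to keep track of exactly where the $C^3$ smoothness of $f$ is spent (the comparison of $D^2f\circ\tilde q$ with $D^2f\circ q$ in the second-order term), and to notice that no closeness hypothesis relating $q$ and $\tilde q$ is needed for the first summand — only the uniform $C^3$ bounds — so that the resulting constant $K$ depends on $L$ alone and is uniform over all admissible $q,\tilde q,f,\tilde f$.
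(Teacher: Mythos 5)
Your argument is correct. The paper in fact states Lemma \ref{comp2} without proof, introducing it as ``a basic property of composition'', so there is no paper proof to compare with; your chain-rule bookkeeping — splitting off $(\tilde f-f)\circ\tilde q$, then comparing $f\circ\tilde q$ with $f\circ q$ block by block, with the $C^3$ bound on $f$ spent exactly where you say, namely to Lipschitz-control $D^2f$ in the term $\big[(D^2f\circ\tilde q)-(D^2f\circ q)\big](D\tilde q)^2$ — is precisely the routine verification being taken for granted, and you are right that the endpoint critical point of $q,\tilde q$ plays no role in the estimate. One small remark: what you actually prove is $d_{C^2}(\tilde f\circ\tilde q,f\circ q)\le K\big(d_{C^2}(\tilde f,f)+d_{C^2}(\tilde q,q)\big)$, and this is the correct reading of the statement; the displayed inequality with no constant on the second term cannot hold literally, since already the block $(D^2f\circ q)\big[(D\tilde q)^2-(Dq)^2\big]$ forces a factor of order $L^2$ in front of $d_{C^2}(\tilde q,q)$, so the missing $K$ there is a typo rather than something your proof fails to achieve.
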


As before, fix $K_0>1$ and let $\mathcal{K}$ be the space of normalized $C^3$ critical commuting pairs which are $K_0$-controlled. Let $\zeta=(\eta,\xi)$ and $\tilde{\zeta}=(\tilde{\eta},\tilde{\xi})$ be two $C^3$ critical commuting pairs with negative Schwarzian that belong to $\mathcal{K}$ which are renormalizable with the same period $a\in\nt$. Denote by $\varepsilon>0$ the $C^2$ distance between $\zeta$ and $\tilde{\zeta}$, that is, $\varepsilon=d_2(\zeta,\tilde{\zeta})$. We may assume in the computations that $\varepsilon\in[0,1)$. We will only consider the special situation when
\begin{itemize}
\item[1)] $I_\eta=I_{\tilde{\eta}}$ and $I_\xi=I_{\tilde{\xi}}$,
\item[2)] $p=\tilde{p}$ where $D\eta(p)=D\tilde{\eta}(\tilde{p})=1$ (see Lemma \ref{op}).
\end{itemize}

For each $i\in\{1,...,a-1\}$ let $f_i\in\text{Diff}_+^3\big([0,1]\big)$ given by $f_i=A_{i+1}^{-1}\!\circ\! \eta\!\circ\! A_i$, where $A_i:[0,1] \to I_i$ is the unique orientation preserving affine diffeomorphism:$$A_i(x)=|I_i|x+x_i=
\big(\eta^{i-1}\big(\xi(0)\big)-\eta^{i}\big(\xi(0)\big)\big)x+\eta^{i}\big(\xi(0)\big)$$

Note that $\bigcirc_{i=1}^{a-1}f_i=A_{a}^{-1}\circ\eta^{a-1}\circ A_1$ in $\text{Diff}_+^3\big([0,1]\big)$.

\begin{lem}\label{NeDN} There exists $K(\mathcal{K})>1$ such that for any $\zeta$ in $\mathcal{K}$ renormalizable with period $a\in\nt$ we have:$$\sum_{i=1}^{a-1}\big|Nf_i(x)\big| \leq K\quad\mbox{and}\quad\sum_{i=1}^{a-1}\big|D(Nf_i)(x)\big| \leq K\quad\mbox{for all $x\in[0,1]$.}$$
\end{lem}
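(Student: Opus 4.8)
The plan is to exploit the invariance of the non-linearity operator under post-composition with affine maps, which renders the conjugating affine maps $A_i$, $A_{i+1}^{-1}$ essentially invisible, and then to use that the fundamental domains $I_i$ are pairwise disjoint subintervals of $[0,\xi(0)]$. First I would record the explicit formula for $Nf_i$: since $A_{i+1}^{-1}$ is affine, $N(A_{i+1}^{-1}\circ\eta\circ A_i)=N(\eta\circ A_i)$; and since $A_i$ is affine (so $NA_i\equiv 0$ and $DA_i\equiv|I_i|$), the chain rule $N(f\circ g)=(Nf\circ g)\,Dg+Ng$ gives
$$Nf_i(x)=|I_i|\,N\eta\big(A_i(x)\big),\qquad D(Nf_i)(x)=|I_i|^2\,D(N\eta)\big(A_i(x)\big),\qquad x\in[0,1].$$
Here $A_i$ maps $[0,1]$ onto $I_i=[x_i,x_{i-1}]$, and since $\zeta$ is renormalizable we have $\eta(x)<x$ on $I_\eta$, hence $0\le x_a<x_{a-1}<\cdots<x_1<x_0=\xi(0)$, so $A_i(x)\in I_i\subset\big[\eta^a(\xi(0)),\xi(0)\big]$ for every $i\in\{1,\dots,a-1\}$.

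Next I would bound $N\eta$ and $D(N\eta)$ on the interval $\big[\eta^a(\xi(0)),\xi(0)\big]$. There the $K_0$-control gives $\|\eta\|_{C^3}\le K_0$ and $D\eta\ge 1/K_0$, so $\eta$ has no critical point on that interval and $N\eta=D^2\eta/D\eta$ is bounded; likewise $D(N\eta)=S\eta+\tfrac12(N\eta)^2$ (the identity recalled in Remark \ref{rememap}) is bounded there, since $S\eta=D^3\eta/D\eta-\tfrac32(D^2\eta/D\eta)^2$ is controlled by the $C^3$-bound on $\eta$ together with the lower bound on $D\eta$. Call the resulting bound $B_0=B_0(K_0)>0$, so that $|Nf_i(x)|\le B_0|I_i|$ and $|D(Nf_i)(x)|\le B_0|I_i|^2$ for all $x\in[0,1]$.

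Finally I would sum over $i$, using that the intervals $I_1,\dots,I_{a-1}$ are pairwise disjoint and contained in $[0,\xi(0)]$, whence $\sum_{i=1}^{a-1}|I_i|\le|\xi(0)|\le K_0$ by the $K_0$-control. This yields
$$\sum_{i=1}^{a-1}\big|Nf_i(x)\big|\le B_0\sum_{i=1}^{a-1}|I_i|\le B_0K_0,\qquad \sum_{i=1}^{a-1}\big|D(Nf_i)(x)\big|\le B_0\sum_{i=1}^{a-1}|I_i|^2\le B_0\Big(\sum_{i=1}^{a-1}|I_i|\Big)^{2}\le B_0K_0^{2},$$
both independent of the period $a$. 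Since every constant depends only on $K_0$, the lemma follows with $K=K(\mathcal{K})$.

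There is no genuine obstacle here; the only point worth flagging is that one must resist summing the trivial pointwise bound $|Nf_i|\le B_0$ (which would produce a useless $aB_0$) and instead keep the geometric factor $|I_i|$ coming from the affine rescaling — it is precisely this factor, together with $\sum|I_i|\le\xi(0)$, that makes the estimates uniform in $a$. Note also that, in contrast with the proof of Lemma \ref{fromC2toC3}, Yoccoz's Lemma (Lemma \ref{doyoc}) is not needed at all: pairwise disjointness of the fundamental domains already suffices.
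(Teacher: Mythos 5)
Your proposal is correct and follows essentially the same route as the paper: the affine invariance and chain rule give $Nf_i(x)=|I_i|\,N\eta(A_i(x))$ and $D(Nf_i)(x)=|I_i|^2\,D(N\eta)(A_i(x))$, the $K_0$-control (as in Remark \ref{rememap}) bounds $N\eta$ and $D(N\eta)$ on $\big[\eta^a(\xi(0)),\xi(0)\big]$, and summing over the pairwise disjoint fundamental domains $I_i\subset I_\eta$ gives the uniform bound. The paper closes the second sum with $\sum|I_i|^2\le\max_i|I_i|\cdot\sum|I_i|\le|I_\eta|^2$ rather than $\big(\sum|I_i|\big)^2$, an immaterial difference, and indeed Yoccoz's Lemma is not used.
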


\begin{proof}[Proof of Lemma \ref{NeDN}] Note that $Nf_i(x)=N(\eta\circ A_i)(x)=N\eta\big(A_i(x)\big)|I_i|$ and that $D(Nf_i)(x)=D(N\eta)(A_i(x))|I_i|^2$ for all $x\in[0,1]$. Since $\zeta\in\mathcal{K}_a$ we know that $N\eta$ is $C^1$-bounded in $\big[\eta^{a}(\xi(0)),\xi(0)\big]$ (see Remark \ref{rememap} at the end of Section \ref{Seccont}) and then:$$\sum_{i=1}^{a-1}\big|Nf_i(x)\big| \leq K\sum_{i=1}^{a-1}|I_i|\leq K|I_{\eta}|\quad\mbox{and:}$$

$$\sum_{i=1}^{a-1}\big|D(Nf_i)(x)\big| \leq K\sum_{i=1}^{a-1}|I_i|^2\leq K|I_{\eta}|\sum_{i=1}^{a-1}|I_i| \leq K|I_{\eta}|^2.$$
\end{proof}

In the same way let $\tilde{A}_i:[0,1] \to \tilde{I}_i$ be the unique orientation preserving affine diffeomorphism, and define $g_i=\tilde{A}_{i+1}^{-1}\circ\tilde{\eta}\circ \tilde{A}_i\in\text{Diff}_+^3\big([0,1]\big)$.

\bigskip

The first factors of the renormalizations are controlled by

\begin{lem}\label{primestprop} There exists $K>0$ such that
$$
|A_{1}^{-1}\circ\xi|_{C^3}, | \tilde{A}_{1}^{-1}\circ\tilde{\xi}|_{C^3}\le K.
$$ and
$$d_{C^2}\big(A_{1}^{-1}\circ\xi,\tilde{A}_{1}^{-1}\circ\tilde{\xi}\big) \leq K\varepsilon.$$
\end{lem}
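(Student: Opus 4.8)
The plan is to reduce everything to the affine pseudo-metric $d_{2,\Aff}$ and then to a short algebraic manipulation, using only the $K_0$-control together with the standing normalization of the ``special situation''. First I would observe that, since $\zeta$ and $\tilde{\zeta}$ are normalized and $K_0$-controlled, the metrics $d_2$ and $d_{2,\Aff}$ are Lipschitz equivalent on such pairs (the remark at the end of Section \ref{Seccont}), so $d_{2,\Aff}(\zeta,\tilde{\zeta})\le K\varepsilon$. In the special situation one has $I_\xi=I_{\tilde{\xi}}=[-1,0]$, hence the rescalings $L_{\xi}=L_{\tilde{\xi}}$ are the identity, and therefore the very definition of $d_{2,\Aff}$ gives directly $\|\xi-\tilde{\xi}\|_{C^2([-1,0])}\le K\varepsilon$. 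Also $\xi(0)=\tilde{\xi}(0)=x_0$ since $I_\eta=I_{\tilde\eta}$, and by the commuting condition $x_1=\xi(-1)$, $\tilde{x}_1=\tilde{\xi}(-1)$.

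For the $C^3$ bound I would simply write $A_{1}^{-1}\circ\xi(x)=\big(\xi(x)-x_1\big)/|I_1|$. The $K_0$-control gives $\|\xi\|_{C^3([-1,0])}\le K_0$, $|x_1|\le\xi(0)\le K_0$ (because $x_1=\eta(x_0)\in I_\eta=[0,\xi(0)]$), and the two-sided bound $1/K_0\le|I_1|=\xi(0)-\eta(\xi(0))\le\xi(0)\le K_0$. Thus post-composing $\xi$ with the affine map $y\mapsto(y-x_1)/|I_1|$, whose derivative lies in $[1/K_0,K_0]$, changes the $C^3$ norm only by a factor depending on $K_0$, so $|A_{1}^{-1}\circ\xi|_{C^3}\le K$; the same argument applies verbatim to $\tilde{A}_{1}^{-1}\circ\tilde{\xi}$.

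For the $C^2$ estimate, set $a=|I_1|$, $\tilde a=|\tilde I_1|$, $b=x_1$, $\tilde b=\tilde{x}_1$, all of $a,\tilde a$ in $[1/K_0,K_0]$. Then $|b-\tilde b|=|\xi(-1)-\tilde{\xi}(-1)|\le\|\xi-\tilde{\xi}\|_{C^0}\le K\varepsilon$, and since $x_0=\tilde{x}_0$ we get $|a-\tilde a|=|(x_0-x_1)-(x_0-\tilde{x}_1)|=|b-\tilde b|\le K\varepsilon$. Now use the identity
\[
A_{1}^{-1}\circ\xi-\tilde{A}_{1}^{-1}\circ\tilde{\xi}
=\frac{(\xi-\tilde{\xi})-(b-\tilde b)}{a}+(\tilde{\xi}-\tilde b)\,\frac{\tilde a-a}{a\tilde a}.
\]
In the first term $1/a\le K_0$, while $\|\xi-\tilde{\xi}\|_{C^2}\le K\varepsilon$ and the constant $|b-\tilde b|\le K\varepsilon$; in the second term $\tilde{\xi}-\tilde b$ is $C^2$-bounded by the previous step, $1/(a\tilde a)\le K_0^2$, and $|\tilde a-a|\le K\varepsilon$. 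Since the $C^2$ norm is submultiplicative up to a universal factor, both summands have $C^2$ norm $\le K\varepsilon$, which yields $d_{C^2}\big(A_{1}^{-1}\circ\xi,\tilde{A}_{1}^{-1}\circ\tilde{\xi}\big)\le K\varepsilon$.

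There is essentially no hard point in this lemma: it is routine $C^2$-norm bookkeeping. The only things one must be careful about are the two-sided bound $1/K_0\le|I_1|\le K_0$ furnished by the $K_0$-control (so that dividing by $|I_1|$ is harmless), and the fact that under the standing normalization $\xi(0)=\tilde{\xi}(0)$, so that the affine maps $A_1$ and $\tilde A_1$ differ only through their left endpoints $x_1,\tilde{x}_1$, which are $O(\varepsilon)$-close because $x_1=\xi(-1)$ and $\tilde{x}_1=\tilde{\xi}(-1)$.
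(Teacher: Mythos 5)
Your proof is correct and follows essentially the same route as the paper: both reduce the statement to $\|\xi-\tilde{\xi}\|_{C^2([-1,0])}\le K\varepsilon$ together with $O(\varepsilon)$-control of the affine maps $A_1^{-1},\tilde{A}_1^{-1}$, i.e.\ of $|x_1-\tilde{x}_1|$ and of $\big||I_1|-|\tilde{I}_1|\big|$, with the $C^3$ bound coming from the $K_0$-control and $|I_1|\ge 1/K_0$. The only (harmless) differences are that you get $|x_1-\tilde{x}_1|\le K\varepsilon$ elementarily from the commuting relation $x_1=\xi(-1)$, $\tilde{x}_1=\tilde{\xi}(-1)$ rather than from Lemma \ref{entrada}, and that your explicit algebraic identity replaces the paper's appeal to a composition estimate in the spirit of Lemma \ref{comp2}.
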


\begin{proof}[Proof of Lemma \ref{primestprop}] The four maps $\xi:[-1,0] \to I_1$, $\tilde{\xi}:[-1,0] \to \tilde{I}_1$, $A_{1}^{-1}:[0,K]\to\R$ and $\tilde{A}_{1}^{-1}:[0,K]\to\R$ are $C^3$-bounded by some constant $M>1$ universal on $\mathcal{K}$.  Similar to Lemma \ref{comp2} we get

\begin{align*}
d_{C^2}\big(A_{1}^{-1}\circ\xi,\tilde{A}_{1}^{-1}\circ\tilde{\xi}\big) &\leq K\left(\big\|A_{1}^{-1}-\tilde{A}_{1}^{-1}\big\|_{C^2}+\big\|\xi-\tilde{\xi}\big\|_{C^2}\right)\\
&\leq K\left(\big\|A_{1}^{-1}-\tilde{A}_{1}^{-1}\big\|_{C^2}+\varepsilon\right).
\end{align*}

Observe,

\begin{align*}
\big|A_{1}^{-1}(x)-\tilde{A}_{1}^{-1}(x)\big|&=\big||I_1|^{-1}(x-x_1)-|\tilde{I}_1|^{-1}(x-\tilde{x}_1)\big|\\
&=\frac{\big|(x-x_1)(\tilde{x}_0-\tilde{x}_1)-(x-\tilde{x}_1)(x_0-x_1)\big|}{|I_1||\tilde{I}_1|}\\
&=\frac{\big|x(\tilde{x}_0-x_0)+x(x_1-\tilde{x}_1)+(x_0\tilde{x}_1-\tilde{x}_0x_1)\big|}{|I_1||\tilde{I}_1|}\\
&\leq K\left(\frac{\Delta x_0+\Delta x_1+|x_0\tilde{x}_1-\tilde{x}_0x_1|}{|I_1||\tilde{I}_1|}\right)\\
&\leq K\left(\frac{\Delta x_0+\Delta x_1+|x_0||\tilde{x}_1-x_1|+|x_1||x_0-\tilde{x}_0|}{|I_1||\tilde{I}_1|}\right)\\
&\leq K(\Delta x_0+\Delta x_1)/|I_1||\tilde{I}_1|\leq K\varepsilon,
\end{align*}
where we used Lemma \ref{entrada}.

On the other hand 
$$\big|(A_{1}^{-1})'-(\tilde{A}_{1}^{-1})'\big|=\big(|\tilde{I}_1|-|I_1|\big)/|I_1||\tilde{I}_1|
\leq(\Delta_0+\Delta_1)/|I_1||\tilde{I}_1|,$$ and we finish in the same way as before.
\end{proof}

\begin{lem}\label{Dnfi} There exists $K>0$ such that for $i\le a$
$$
|Nf_i-Ng_i|_{C^0}\le K\big( \varepsilon |I_i|+ \Delta_i+ 
|\Delta x_i| |I_i|  \big).
$$
\end{lem}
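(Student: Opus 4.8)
The plan is to reduce the estimate to the two identities already recorded in the proof of Lemma \ref{NeDN}: for $x\in[0,1]$ one has $Nf_i(x)=N\eta\big(A_i(x)\big)\,|I_i|$ and $Ng_i(x)=N\tilde{\eta}\big(\tilde{A}_i(x)\big)\,|\tilde{I}_i|$, since $A_{i+1},\tilde{A}_{i+1}$ are affine (so $NA_{i+1}=N\tilde A_{i+1}=0$) and $N(\psi\circ A)=\big(N\psi\circ A\big)DA$ when $A$ is affine, with $DA_i=|I_i|$. I would then split
$$
Nf_i(x)-Ng_i(x)=\Big(N\eta\big(A_i(x)\big)-N\tilde{\eta}\big(\tilde{A}_i(x)\big)\Big)|I_i|+N\tilde{\eta}\big(\tilde{A}_i(x)\big)\big(|I_i|-|\tilde{I}_i|\big)
$$
and bound the two summands separately, using throughout two elementary facts: $\big||I_i|-|\tilde{I}_i|\big|=\big|\Delta x_{i-1}-\Delta x_i\big|=\Delta_i$ (since $|I_i|=x_{i-1}-x_i$, $|\tilde I_i|=\tilde x_{i-1}-\tilde x_i$), and, since $A_i(x)-\tilde{A}_i(x)=(|I_i|-|\tilde{I}_i|)x-\Delta x_i$, the bound $\big|A_i(x)-\tilde{A}_i(x)\big|\le\Delta_i+|\Delta x_i|$ for $x\in[0,1]$.

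The analytic input is Remark \ref{rememap}: since $\zeta,\tilde{\zeta}\in\mathcal{K}_a$, both $N\eta$ and $N\tilde{\eta}$ are $C^1$-bounded on the common control interval $J=\big[\eta^a(\xi(0)),\xi(0)\big]\cap\big[\tilde{\eta}^a(\tilde{\xi}(0)),\tilde{\xi}(0)\big]$ with constant depending only on $\mathcal{K}$, and $\big|N\eta(y)-N\tilde{\eta}(y)\big|\le K\|\eta-\tilde{\eta}\|_{C^2}\le K\varepsilon$ for $y\in J$. Assuming that the evaluation points $A_i(x),\tilde{A}_i(x)$ lie in $J$, the second summand is at most $K\Delta_i$ by the uniform bound on $N\tilde{\eta}$; and inserting the intermediate term $N\tilde{\eta}\big(A_i(x)\big)$ and combining the uniform $\varepsilon$-closeness with the uniform Lipschitz constant of $N\tilde{\eta}$ gives $\big|N\eta(A_i(x))-N\tilde{\eta}(\tilde{A}_i(x))\big|\le K\varepsilon+K\big(\Delta_i+|\Delta x_i|\big)$. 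Multiplying by $|I_i|$ and absorbing $\Delta_i|I_i|$ into $\Delta_i$ via $|I_i|\le|I_\eta|\le K_0$ yields $|Nf_i(x)-Ng_i(x)|\le K\big(\varepsilon|I_i|+\Delta_i+|\Delta x_i||I_i|\big)$ for all $x\in[0,1]$, which is the claim.

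The one step that genuinely needs care — and which I expect to be the main (if minor) obstacle — is the \emph{localization}, i.e.\ checking that $A_i(x)\in I_i$ and $\tilde{A}_i(x)\in\tilde{I}_i$ both stay inside $J$, so that the nonlinearities are uniformly controlled there. For $i$ away from $a$ this is immediate from the $K_0$-control. For the remaining indices one invokes Yoccoz's Lemma \ref{doyoc}, which (the two pairs having the same period $a$) gives $|I_i|\asymp|\tilde{I}_i|\asymp 1/\min\{i,a-i\}^2$, hence $\Delta_i\lesssim|I_i|$; then a short case analysis based on the crude, unconditional bound $|Nf_i-Ng_i|\le |N\eta(A_i(x))|\,|I_i|+|N\tilde{\eta}(\tilde{A}_i(x))|\,|\tilde{I}_i|\le 2K|I_i|+K\Delta_i$ disposes of the indices where the clean computation is unavailable: if the displacement $\Delta_i+|\Delta x_i|$ is too large to keep the points in $J$, then either $|\Delta x_i||I_i|\gtrsim|I_i|$ or $\Delta_i\gtrsim 1$ (whence $|I_i|\gtrsim\Delta_i$), so in either case $2K|I_i|+K\Delta_i\le K'\big(\varepsilon|I_i|+\Delta_i+|\Delta x_i||I_i|\big)$. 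Everything else is just the chain rule for non-linearity together with Remark \ref{rememap}.
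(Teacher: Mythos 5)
Your argument is correct and is essentially the paper's proof: the same identities $Nf_i(x)=N\eta(A_i(x))\,|I_i|$, $Ng_i(x)=N\tilde{\eta}(\tilde{A}_i(x))\,|\tilde{I}_i|$, the same insertion of an intermediate nonlinearity term handled via Remark \ref{rememap} (uniform $C^1$ bound and $\varepsilon$-closeness of the nonlinearities), together with $|A_i(x)-\tilde{A}_i(x)|\le \Delta_i+|\Delta x_i|$ and $\big||I_i|-|\tilde{I}_i|\big|\le\Delta_i$; the localization point you isolate is indeed left implicit in the paper, and your dichotomy (either $|\Delta x_i|\ge 1/K_0$, in which case the crude bound $K(|I_i|+\Delta_i)$ is absorbed into the right-hand side, or the evaluation points stay in the controlled interval by the $K_0$-control) settles it correctly. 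Only note that the parenthetical in your last case should read $|I_i|\lesssim\Delta_i$ (because $|I_i|$ is uniformly bounded while $\Delta_i\gtrsim 1$), not $|I_i|\gtrsim\Delta_i$.
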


\begin{proof}[Proof of Lemma \ref{Dnfi}] Observe,$$|\tilde{A}_i x-A_i x| \le K\big(|\Delta x_i|+\Delta_i\big).$$
So,
$$
\begin{aligned}
|Nf_i(x)-Ng_i(x)|=&|Nf(A_i(x))|I_i|-Ng(\tilde{A}_i(x))|\tilde{I}_i||\\
\le &\big| Nf(A_i x) |I_i|-Ng(A_i x)|\tilde{I}_i| \big|+\\
&|DNg(\theta_i)| \cdot ( |\Delta x_i| +\Delta_i )\cdot  |\tilde{I}_i|\\
\le & K\big( \varepsilon |I_i|+ \Delta_i+( |\Delta x_i| +\Delta_i )( |I_i| +\Delta_i)\big)\\
\le &K\big( \varepsilon |I_i|+ \Delta_i+ 
|\Delta x_i| |I_i|  \big).
\end{aligned}
$$
\end{proof}

\begin{lem}\label{sumDnfi}  For every $L>0$ there exists $K=K(\mathcal{K}, L)>0$ such that the following holds. If $\zeta$ and $\tilde{\zeta}$ are $L$-synchronized then
$$
\sum_{i=1}^a \big| Nf_i-Ng_i \big|_{C^0}\le K \varepsilon.
$$
\end{lem}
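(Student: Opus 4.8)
The plan is to sum the pointwise estimate from Lemma \ref{Dnfi} over $i$, splitting the range $\{1,\dots,a\}$ at the midpoint $a/2$ and using the synchronization consequences collected in Section \ref{orbdef}. By Lemma \ref{Dnfi} we have
$$
\sum_{i=1}^a\big|Nf_i-Ng_i\big|_{C^0}\le K\sum_{i=1}^a\Big(\varepsilon|I_i|+\Delta_i+|\Delta x_i|\,|I_i|\Big),
$$
so it suffices to bound each of the three sums by $K\varepsilon$. The first one is immediate: $\sum_{i=1}^a|I_i|\le|I_\eta|\le K_0$ by the $K_0$-control, hence $\varepsilon\sum_i|I_i|\le K\varepsilon$.

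For the third sum, I would use the Yoccoz Lemma \ref{doyoc} ($|I_i|\asymp\min\{i,a-i\}^{-2}$) together with Proposition \ref{Dxi} ($|\Delta x_i|\le K\varepsilon/i$ for $i\le a/2$ and $|\Delta x_i|\le K\varepsilon/(a-i)$ for $i\ge a/2$). On the first half,
$$
\sum_{i=1}^{\lfloor a/2\rfloor}|\Delta x_i|\,|I_i|\le K\varepsilon\sum_{i=1}^{\lfloor a/2\rfloor}\frac{1}{i}\cdot\frac{1}{i^2}=K\varepsilon\sum_{i\ge1}\frac{1}{i^3}\le K\varepsilon,
$$
and the second half is handled identically after the substitution $j=a-i$. (One must be slightly careful near the endpoints $i=1$ and $i=a$, where $|\Delta x_i|\le K\varepsilon$ directly from $L$-synchronization and $|I_i|\le K_0$, contributing only $O(\varepsilon)$; near the middle $i\asymp a$ both $|\Delta x_i|$ and $|I_i|$ are comparably small, and the bound is even more favorable.)

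For the second sum, $\sum_i\Delta_i$, I would invoke Proposition \ref{Di}: for $i\le a/2$, $\Delta_i\le K(\varepsilon\log i/i^2+\varepsilon^2/i)$, and symmetrically for $i\ge a/2$ with $i$ replaced by $a-i$. Then on the first half
$$
\sum_{i=1}^{\lfloor a/2\rfloor}\Delta_i\le K\varepsilon\sum_{i\ge1}\frac{\log i}{i^2}+K\varepsilon^2\sum_{i=1}^{\lfloor a/2\rfloor}\frac{1}{i}\le K\varepsilon+K\varepsilon^2\log a,
$$
and likewise on the second half. Here the only genuine subtlety — and the step I expect to be the main obstacle — is the term $K\varepsilon^2\log a$: a priori this is not $O(\varepsilon)$ when $a$ is enormous relative to $1/\varepsilon$. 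The resolution is that $\varepsilon^2\log a\le K\varepsilon$ is automatic once $\varepsilon\le 1/(\log a)$, i.e. when $a$ is large one has extra room, while for the remaining range one must exploit that $\varepsilon\log a$ stays controlled because, by Yoccoz's Lemma applied to the fundamental domain around $p$, $\varepsilon$ cannot be too small compared to $1/a^2$ unless the pairs are already extremely close — more precisely, combining $|h|\le K\varepsilon/a^2$ (Proposition \ref{h}) with the harmonic-sum bound shows the $\varepsilon^2$-part of $\sum\Delta_i$ is in fact dominated by the $\varepsilon$-part after regrouping, since the relevant weights $|I_i|/|I_{k+1}|$ used in deriving Proposition \ref{Di} already absorb one power of the small scale. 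I would therefore re-examine the recursion in the proof of Proposition \ref{Di} to confirm that $\sum_{i}\varepsilon^2/i$ is genuinely of order $\varepsilon^2\log a=O(\varepsilon)$ under $\varepsilon<\varepsilon_0(\mathcal{K},L)$, choosing $\varepsilon_0$ small enough, and otherwise fall back on the crude bounds $\Delta_i\le K\varepsilon$, $|I_i|\le K_0$ valid for the finitely many (bounded-period) exceptional cases. Summing the three contributions yields $\sum_{i=1}^a|Nf_i-Ng_i|_{C^0}\le K\varepsilon$, which is the claim.
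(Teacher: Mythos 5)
Your treatment of the first and third sums (the $\varepsilon|I_i|$ and $|\Delta x_i|\,|I_i|$ terms from Lemma \ref{Dnfi}) is fine and matches what the paper needs. The genuine gap is exactly where you suspected it: the term $\sum_i\Delta_i$. Splitting at $a/2$ and summing the bound of Proposition \ref{Di} over the whole half-range produces $K\varepsilon^2\sum_{i\le a/2}1/i\asymp\varepsilon^2\log a$, and none of your proposed rescues closes this. There is no relation between $a$ and $\varepsilon$ in the hypotheses: two $L$-synchronized, $K_0$-controlled pairs at distance $\varepsilon$ can have period $a\ge e^{1/\varepsilon^2}$, in which case $\varepsilon^2\log a\ge 1\gg\varepsilon$. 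Your remark that ``$\varepsilon\le 1/\log a$ gives extra room when $a$ is large'' is backwards (large $a$ makes that condition harder, not easier); Proposition \ref{h} bounds $|h|$ from above by $K\varepsilon/a^2$ and gives no lower bound on $\varepsilon$ in terms of $a$; and re-examining the recursion behind Proposition \ref{Di} does not help, because the $\varepsilon^2/i$ term there comes from the essentially sharp input $(|I_i|+|\Delta x_i|)|\Delta x_i|\le K(\varepsilon/i^3+\varepsilon^2/i^2)$, so the harmonic divergence is intrinsic to summing the fine difference estimates over the full range. Falling back on $\Delta_i\le K\varepsilon$ for ``bounded-period exceptional cases'' also does not help, since the problematic regime is unbounded $a$.

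The paper's resolution is different and is the one missing ingredient: cut the sum at $a_\varepsilon=\lfloor 1/\varepsilon\rfloor$ rather than at $a/2$. For $i\le a_\varepsilon$ (and symmetrically $i\ge a-a_\varepsilon$) one sums Lemma \ref{Dnfi} together with Propositions \ref{Dxi} and \ref{Di}, and the harmonic piece is only $\varepsilon^2\sum_{i\le 1/\varepsilon}1/i\le K\varepsilon^2\log(1/\varepsilon)\le K\varepsilon$. In the middle range $a_\varepsilon\le i\le a-a_\varepsilon$ one does not use the difference estimates at all: by Yoccoz's Lemma (in the form of Lemma \ref{aquelevelho}) all the intervals $I_i$ and $\tilde I_i$ with such $i$ lie in a neighbourhood of $p$ of total length at most $K/a_\varepsilon\le K\varepsilon$, so the crude bound
\begin{equation*}
\big|Nf_i-Ng_i\big|_{C^0}\le \big|Nf\big|_{C^0}\,|I_i|+\big|Ng\big|_{C^0}\,|\tilde I_i|
\end{equation*}
sums to $K\big(|x_{a-a_\varepsilon}-x_{a_\varepsilon}|+|\tilde x_{a-a_\varepsilon}-\tilde x_{a_\varepsilon}|\big)\le K\varepsilon$. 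With this two-regime decomposition the $\varepsilon^2\log a$ term never appears, and the lemma follows; as written, your argument does not establish it.
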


\begin{proof}[Proof of Lemma \ref{sumDnfi}] Let $a_\varepsilon=\left\lfloor\frac{1}{\varepsilon}\right\rfloor$. Assume for a moment that $a\ge a_\varepsilon$. Then Lemma \ref{aquele} implies $|x_{a-a_\varepsilon}-x_{a_\varepsilon}| ,
|\tilde{x}_{a-a_\varepsilon}-\tilde{x}_{a_\varepsilon}|\le K \varepsilon$.
Hence,
\begin{equation}\label{center}
\begin{aligned}
\sum_{a_\varepsilon\le i \le a -a_\varepsilon} \big| Nf_i-Ng_i \big|_{C^0}&\le \sum_{a_\varepsilon\le i \le a -a_\varepsilon} \big| Nf_i\big|_{C^0}+\big|Ng_i \big|_{C^0}\\
&\le \sum_{a_\varepsilon\le i \le a -a_\varepsilon} \big|
  Nf\big|_{C^0}\cdot |I_i|+\big|Ng\big|_{C^0}\cdot |\tilde{I}_i|\\
&\le K\big( |x_{a-a_\varepsilon}-x_{a_\varepsilon}| +|\tilde{x}_{a-a_\varepsilon}-\tilde{x}_{a_\varepsilon}|\big)\\
&\le K \varepsilon.
\end{aligned}
\end{equation}
This estimates holds trivially when $a<a_\varepsilon$.

Observe,
$$
\begin{aligned}
\sum_{i=1}^a \big| Nf_i-Ng_i \big|_{C^0}=&
\sum_{i=1}^{a_\varepsilon} \big| Nf_i-Ng_i \big|_{C^0}+\sum_{i=a_0}^{a-a_\varepsilon} \big| Nf_i-Ng_i \big|_{C^0}+\\
&\sum_{i=a-a_\varepsilon}^a \big| Nf_i-Ng_i \big|_{C^0}.
\end{aligned}
$$
The middle term is estimated by (\ref{center}). The first (and third) term can be estimated by using Lemma \ref{Dnfi}, the Yoccoz Lemma \ref{doyoc}, the Propositions \ref{Dxi} and \ref{Di}. Namely,
$$
\begin{aligned}
\sum_{i=1}^{a_\varepsilon} \big| Nf_i-Ng_i \big|_{C^0}&\le 
K
\sum_{i=1}^{a_\varepsilon} \varepsilon |I_i|+ \Delta_i+ 
|\Delta x_i| |I_i|  \\
&\le  
K \sum_{i=1}^{a_\varepsilon} \varepsilon \frac{1}{i^2}+  
\varepsilon \cdot \frac{ \log i}{i^2} +  \varepsilon^2
\cdot \frac{1}{i}+
\varepsilon \cdot \frac{1}{i^3} \\
&\le  K\varepsilon+K \sum_{i=1}^{a_\varepsilon}   \varepsilon^2
\cdot \frac{1}{i}\\
&\le  K\varepsilon+K\varepsilon^2 \log \frac{1}{ \varepsilon}\\
&\le  K\varepsilon.
\end{aligned}
$$
The Lemma follows.
\end{proof}

The following Proposition holds for general critical commuting pairs with negative Schwarzian which are contained in the previously discussed set $\mathcal{K}$, that is, the set of normalized $C^3$ critical commuting pairs which are $K$-controlled.

\begin{prop}\label{lipC2C2amigos}  For every $L>0$ there exists $K=K(\mathcal{K}, L)>0$ such that the following holds. If $\zeta_0$ and $\zeta_1$ are $L$-synchronized then
$$
d_2\big(p\mathcal{R}(\zeta_0),p\mathcal{R}(\zeta_1)\big)\le K  d_2(\zeta_0,\zeta_1).
$$
\end{prop}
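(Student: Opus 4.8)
The plan is to reduce to the special situation of Section \ref{comp} and then estimate, one by one, the data that enters the $C^2$ distance of the two pre-renormalizations.

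First, up to conjugating $\zeta_1$ by the homothety $x\mapsto(\xi_0(0)/\xi_1(0))x$ --- which does not change $d_2$ (the metric is invariant under homothety conjugacy) and, since $|\xi_0(0)/\xi_1(0)-1|\le K\varepsilon$ by the $K_0$-control, alters the synchronization constant only by a bounded factor --- we may assume $I_{\eta_0}=I_{\eta_1}$ and $I_{\xi_0}=I_{\xi_1}=[-1,0]$; moreover $|p_0-p_1|\le K\varepsilon$, because $D\eta_0(p_0)=D\eta_1(p_1)=1$ and $D^2\eta_i(p_i)<-1/K$ by Lemma \ref{op}, so all the estimates of Sections \ref{orbdef} and \ref{comp}, stated there under $I_\eta=I_{\tilde\eta}$ and $p=\tilde p$, apply here up to an additive $O(\varepsilon)$ coming from the discrepancy $|p_0-p_1|$. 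Write $\varepsilon=d_2(\zeta_0,\zeta_1)$. Each $p\mathcal{R}(\zeta_i)=(\eta_i|_{[0,x_a^i]},\,\eta_i^a\circ\xi_i|_{[-1,0]})$ is already normalized ($|I_{\xi_i}|=1$), its $\xi$-endpoint $x_a^i$ lies in $[1/K_0,K_0]$ by the $K_0$-control, and it is $C^2$-bounded (by Koebe and the $K_0$-control), so by the equivalence $d_2\asymp d_{2,\Aff}$ on such pairs it is enough to bound: (i) $|x_a^0-x_a^1|$; (ii) the $C^2$-distance between $\eta_0|_{[0,x_a^0]}$ and $\eta_1|_{[0,x_a^1]}$ after affine rescaling of the domains to $[0,1]$; and (iii) $\|\eta_0^a\circ\xi_0-\eta_1^a\circ\xi_1\|_{C^2([-1,0])}$.

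For (i), $|x_a^0-x_a^1|=|\Delta x_a|\le L\varepsilon$ is precisely the synchronization hypothesis. For (ii), the rescaling factors $x_a^i$ are bounded away from $0$ and $\infty$ and differ by at most $|\Delta x_a|\le L\varepsilon$, so combining this with $\|\eta_i\|_{C^3}\le K_0$ and $d_2(\zeta_0,\zeta_1)=\varepsilon$ a direct three-term estimate (exactly as in the proof of Lemma \ref{primestprop}) gives a bound $K\varepsilon$. The bulk of the work is (iii), where I would use the decomposition $\eta_i^a\circ\xi_i=A_{a+1}^i\circ(\bigcirc_{j=1}^{a}f_j^i)\circ((A_1^i)^{-1}\circ\xi_i)$, with $A_k^i:[0,1]\to I_k(\zeta_i)$ the affine charts and $f_j^i=(A_{j+1}^i)^{-1}\circ\eta_i\circ A_j^i$, extended to $j=a$ by taking $A_{a+1}^i:[0,1]\to[x_{a+1}^i,x_a^i]$ and $f_a^i=(A_{a+1}^i)^{-1}\circ\eta_i\circ A_a^i$; here $|I_a(\zeta_i)|$ and $|[x_{a+1}^i,x_a^i]|$ both lie in $[1/K,K]$ by the $K_0$-control, and $\eta_i^a|_{I_1(\zeta_i)}$ is a diffeomorphism since the critical point $0$ is not contained in $I_1(\zeta_i)\cup\dots\cup I_a(\zeta_i)=[x_a^i,x_0^i]\subset(0,\infty)$. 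The first factor $(A_1^i)^{-1}\circ\xi_i$ is a $C^3$-bounded homeomorphism with a single critical point at $0$, and the two such factors are $K\varepsilon$-close in $C^2$ by Lemma \ref{primestprop}. The middle factor is a composition of $a$ diffeomorphisms satisfying, by Lemma \ref{NeDN} extended to $j=a$ (the extra term is handled as the others, using $|I_a|\asymp 1$ and the $C^1$-boundedness of $N\eta_i$ from Remark \ref{rememap}), the non-linearity hypotheses of the Sandwich-type Lemma \ref{NLcomp} with a uniform $M$; then Lemma \ref{NLcomp} together with Lemma \ref{sumDnfi} gives $d_{C^2}(\bigcirc_j g_j,\bigcirc_j f_j)\le K\sum_{j=1}^a|Nf_j-Ng_j|_{C^0}\le K\varepsilon$. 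The outer affine factors $A_{a+1}^i$ are bounded with bounded inverse and differ by at most $K(|\Delta x_a|+|\Delta x_{a+1}|)\le K\varepsilon$ (synchronization for $|\Delta x_a|$, a one-step mean-value estimate as in Lemma \ref{bruteforce} together with Proposition \ref{Dxi} for $|\Delta x_{a+1}|$). Gluing the three factors by Lemma \ref{comp2} (which absorbs the effect of the non-smooth first factor) yields $\|\eta_0^a\circ\xi_0-\eta_1^a\circ\xi_1\|_{C^2}\le K\varepsilon$.

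Combining (i), (ii) and (iii) gives $d_2(p\mathcal{R}(\zeta_0),p\mathcal{R}(\zeta_1))\le K\varepsilon=K\,d_2(\zeta_0,\zeta_1)$, which is the assertion (recall $d_2(p\mathcal{R}(\zeta_0),p\mathcal{R}(\zeta_1))=d_2(\mathcal{R}(\zeta_0),\mathcal{R}(\zeta_1))$). I do not expect any single estimate to be the real obstacle: the genuinely hard analytic input --- the synchronized bounds on $|\Delta x_i|$ and $\Delta_i$ in Propositions \ref{Dxi} and \ref{Di}, and hence Lemma \ref{sumDnfi} --- has already been established. The delicate point is rather to verify that \emph{every} factor appearing in the decomposition of the renormalized branches lies in a fixed $C^3$-bounded set, uniformly in the period $a$, so that the composition Lemmas \ref{NLcomp} and \ref{comp2} apply; this is precisely where negative Schwarzian, the $K_0$-control, and Yoccoz's Lemma \ref{doyoc} are used together (compare Lemma \ref{fromC2toC3} and Lemma \ref{NeDN}).
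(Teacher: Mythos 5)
Your overall architecture is the same as the paper's: decompose the long branch $\eta^a\circ\xi$ into the homeomorphic first factor (Lemma \ref{primestprop}), the chain of affinely rescaled diffeomorphisms $f_j$ controlled through the non-linearity (Lemmas \ref{NeDN}, \ref{NLcomp}, \ref{Dnfi}, \ref{sumDnfi}), and bounded affine outer factors, then glue with Lemma \ref{comp2}; the endpoint $x_a$ is handled by synchronization. That part is faithful to the paper's proof.

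The genuine gap is in your normalization step. You reduce only by a homothety, so that the domains agree, and then assert that because $|p_0-p_1|\le K\varepsilon$ (Lemma \ref{op}), the estimates of Sections \ref{orbdef} and \ref{comp} -- proved under the normalization $p=\tilde p$ -- ``apply up to an additive $O(\varepsilon)$''. This is exactly the point where the argument breaks. The inputs \eqref{Hh} and \eqref{DH} use in an essential way that $DH$ \emph{vanishes at the common point} $p$: with $p_0\neq p_1$ one only gets $|DH(x)|\le\varepsilon|x-p_0|+K|p_0-p_1|$, i.e.\ an error of size $\varepsilon$ \emph{at every point near} $p$, which is enormous compared to the relevant scales there ($|I_i|\asymp i^{-2}$, $|\Delta x_i|\lesssim\varepsilon/i$, $\Delta_i\lesssim\varepsilon\log i/i^2$). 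Feeding this degraded bound into the proofs of Lemma \ref{entrada} and Propositions \ref{Dxi} and \ref{Di} one loses the decay in $i$: the recursion for $\Delta_i$ then only yields bounds of the type $\varepsilon/i+\varepsilon^2 i$, and the crucial sum in Lemma \ref{sumDnfi} is no longer $O(\varepsilon)$ but at best $O(\varepsilon\log(1/\varepsilon))$ (and the $\varepsilon^2 i$ contribution alone already sums to order $1$ over $i\le 1/\varepsilon$). Since an $O(\varepsilon)$ offset between the almost-parabolic points can genuinely occur for pairs at $C^2$-distance $\varepsilon$, the ``additive $O(\varepsilon)$'' transfer is not a harmless bookkeeping remark; it destroys the Lipschitz estimate. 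The paper's proof avoids this by conjugating $\zeta_1$ with a non-affine diffeomorphism $h$ satisfying $d_{C^3}(h,\id)\le K\,d_2(\zeta_0,\zeta_1)$, chosen so that the conjugated pair has \emph{the same} domains and \emph{the same} point $p$ as $\zeta_0$, and so that $h=\id$ on $\Dom\big(p\mathcal{R}(\zeta_1)\big)$, whence $p\mathcal{R}(\zeta_1)$ is literally unchanged; then Sections \ref{orbdef} and \ref{comp} apply verbatim to $\zeta_0$ and the conjugated pair. To repair your proposal you would either have to insert this conjugation (and check the two properties it must have) or redo the estimates of Section \ref{orbdef} allowing $p_0\neq p_1$, which is not a routine modification.
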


\begin{proof}[Proof of Proposition \ref{lipC2C2amigos}] There exists $K=K(\mathcal{K})>0$ such that the following holds. There exists a diffeomorphism $h:\Dom(\zeta_1)\to \Dom(\zeta_0)$ such that $\zeta=\zeta_0$ and
$\tilde{\zeta}=h\circ\zeta_1\circ h^{-1}$ satisfy the normalizations 
\begin{itemize}
\item[1)] $I_\eta=I_{\tilde{\eta}}$ and $I_\xi=I_{\tilde{\xi}}$,
\item[2)] $p=\tilde{p}$ where $D\eta(p)=D\tilde{\eta}(\tilde{p})=1$,
\end{itemize}
needed to apply the results from section \S \ref{orbdef} and  \S \ref{comp}. We may 
construct the conjugation such that
$$
d_{C^3}(h, \id)\le K d_2(\zeta_0,\zeta_1)
$$
and 
$h|\Dom\big(p\mathcal{R}(\zeta_1)\big)=\id$. This last condition implies
$$
p\mathcal{R}(\zeta_1)=p\mathcal{R}(\tilde{\zeta}).
$$
In particular, it suffices to prove the Proposition for the pairs $\zeta$ and $\tilde{\zeta}$.

\bigskip

Let $p\mathcal{R}(\zeta)=(\eta', \xi')$ and $p\mathcal{R}(\tilde{\zeta})=(\tilde{\eta'}, \tilde{\xi'})$.
Because, $\xi'=\xi$ and $\tilde{\xi}'=\tilde{\xi}$ it suffices to estimate
the distance between $\eta'$ and $\tilde{\eta}'$.  

Let $I_{a+1}=[x_{a+1}, x_a]$ and $A:[0,1]\to I_{a+1}$ be the orientation preserving affine diffeomorphism. Let 
$$
F=A^{-1}\circ \eta',
$$ 
and similarly define
$G=\tilde{A}^{-1}\circ \tilde{\eta}'$. Now apply Lemma \ref{NLcomp} and Lemma \ref{sumDnfi} to obtain
$$
d_{C^2}(F,G)\le K\varepsilon,
$$
where $\varepsilon=d_2(\zeta,\tilde{\zeta})$. A similar argument as the proof of Lemma \ref{primestprop} one obtains
$
d_2(\eta', \tilde{\eta}')\le K\varepsilon.
$
This shows that prerenormalization is Lipschitz among synchronized pairs.
\end{proof}

\section{Order}\label{order}

Commuting pairs might have different domains. Any natural definition of {\it order}
between such systems has to include this difference of domains also. There are two cases:$$\mbox{case I: $\eta\circ \xi(0)>0$,}\quad\quad\quad\mbox{case II:  $\eta\circ \xi(0)<0$.}$$

\begin{defn}\label{ord} Let $\zeta_0=(\xi_0,\eta_0)$ and  
$\zeta_1=(\xi_1,\eta_1)$ be two commuting pairs and $t\ge 0$. If 
\begin{itemize}
\item[1)]  $
\zeta_0(x)+t\le \zeta_1(x),
$ for $x\in \Dom(\zeta_0)\cap \Dom(\zeta_1)$,
\item[2)] $\eta_0(0)\le \eta_1(0)$ and $\xi_0(0)\le \xi_1(0)$
\end{itemize}
we write
$$
\zeta_0\le_t \zeta_1.
$$
\end{defn}

\begin{lem}\label{orbitorder}  Let $\zeta_0=(\xi_0,\eta_0)$ and  
$\zeta_1=(\xi_1,\eta_1)$ be two commuting pairs. If 
$\zeta_0\le_t \zeta_1$ then

case I:
\begin{itemize}
\item[1)] $a_{\zeta_0}\le a_{\zeta_1}$,
\item[2)] for $x\in [\eta_1(0),0]$ and $k=0,1,\cdots, a_{\zeta_0}$
$$
\eta_0^k\circ \xi_0(x)+t\le \eta_1^k\circ \xi_1(x).
$$
\end{itemize}

case II:
\begin{itemize}
\item[1)] $a_{\zeta_0}\ge a_{\zeta_1}$,
\item[2)] for $x\in [0, \xi_0(0)]$ and $k=0,1,\cdots, a_{\zeta_1}$
$$
\xi_0^k\circ \eta_0(x)+t\le \xi_1^k\circ \eta_1(x).
$$
\end{itemize}
\end{lem}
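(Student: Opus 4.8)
The plan is to run an induction along the orbits. Recall that $\zeta_0\le_t\zeta_1$ means precisely that $\zeta_1$ dominates the $t$-shift of $\zeta_0$ pointwise on the common domain, together with the ordering of the two domains; since every $\eta_i$ and every $\xi_i$ is an orientation preserving homeomorphism, hence non-decreasing, such a pointwise domination is preserved under iteration, and the assertions of the lemma are simply the iterated form of Definition \ref{ord} read along the pieces of orbit that produce the renormalizations. I would carry out case I in full and obtain case II by interchanging the roles of the $\eta_i$ and the $\xi_i$ verbatim; in case II it is $\xi$ that gets iterated to build the renormalization, which is why the inequality between the periods comes out reversed. So assume $\eta_i\circ\xi_i(0)>0$ for $i=0,1$ and write $a_i=a_{\zeta_i}$.

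First I would record two elementary preliminaries. (a) Letting $x\to 0^-$ in $\zeta_0(x)+t\le\zeta_1(x)$ upgrades condition 2) of Definition \ref{ord} to $\xi_0(0)+t\le\xi_1(0)$ and $\eta_0(0)\le\eta_1(0)$, so that $[\eta_1(0),0]\cup[0,\xi_0(0)]\subseteq\Dom(\zeta_0)\cap\Dom(\zeta_1)$; this is the set on which condition 1) will be applied. (b) Since $\zeta_i$ is renormalizable, $\eta_i$ has no fixed point in $I_{\eta_i}$ and $\eta_i(0)<0$, so $\eta_i<\id$ on $I_{\eta_i}=[0,\xi_i(0)]$; hence, whenever $\eta_i^j(\xi_i(0))\ge 0$ for $j=0,\dots,m$, a one-line induction on $j$ (using that $\eta_i$ is increasing, $\eta_i<\id$ and $\eta_i(0)<0$) shows $\eta_i^j(\xi_i(0))\in[0,\xi_i(0)]$ for all such $j$. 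By the definition of $a_i$, the orbit of $\xi_i(0)$ is therefore contained in $[0,\xi_i(0)]$ for $0\le j\le a_i$, while $\eta_i^{a_i+1}(\xi_i(0))<0$.

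The core is a single induction. For $j=0,1,\dots$ I would show that $\eta_0^j(\xi_0(x))$ and $\eta_1^j(\xi_1(x))$ are well defined and $\eta_0^j(\xi_0(x))+t\le\eta_1^j(\xi_1(x))$. The base case $j=0$ is condition 1) of Definition \ref{ord} at $x\in[\eta_1(0),0]$, where both pairs agree with $\xi_i$. For the inductive step put $y_0=\eta_0^j(\xi_0(x))$, $y_1=\eta_1^j(\xi_1(x))$; as long as $j$ is below the relevant period, preliminary (b) places $y_0\in[0,\xi_0(0)]$ and $y_1\in[0,\xi_1(0)]$, hence $y_0\in\Dom(\zeta_0)\cap\Dom(\zeta_1)$. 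Condition 1) at $y_0$ gives $\eta_0(y_0)+t\le\eta_1(y_0)$ (for non-negative arguments $\zeta_i=\eta_i$), and $\eta_1(y_0)\le\eta_1(y_1)$ since $\eta_1$ is non-decreasing and $y_0\le y_1$ by the inductive hypothesis; chaining these yields the inequality at level $j+1$. To obtain conclusion 1) I would first run this induction only at $x=0$, where the bound $\eta_1^j(\xi_1(0))\in[0,\xi_1(0)]$ needs no prior knowledge of $a_1$: it follows from the inductive inequality $\eta_1^j(\xi_1(0))\ge\eta_0^j(\xi_0(0))+t\ge 0$ (valid because $\eta_0^j(\xi_0(0))\ge 0$ for $j\le a_0$) together with preliminary (b). The induction thus runs for $j=0,\dots,a_0$ and gives $\eta_1^j(\xi_1(0))\ge 0$ for every such $j$; comparing with $\eta_1^{a_1+1}(\xi_1(0))<0$ forces $a_1+1>a_0$, i.e. $a_0\le a_1$.

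With $a_0\le a_1$ in hand, the location bounds required in the inductive step are available for all $j\le a_0-1\le a_1-1$ and every $x\in[\eta_1(0),0]$: monotonicity of $\xi_i$ together with $\eta_1(0)\le x\le 0$ and the commuting relation $\xi_i(\eta_i(0))=\eta_i(\xi_i(0))>0$ (where the hypothesis of case I enters) place $\xi_0(x)\in[\eta_0(\xi_0(0)),\xi_0(0)]$ and $\xi_1(x)\in[\eta_1(\xi_1(0)),\xi_1(0)]$, and applying $\eta_i^j$ squeezes these between consecutive points of the orbit of $\xi_i(0)$, which lies in $[0,\xi_i(0)]$. So the induction runs for $j=0,\dots,a_0$, which is conclusion 2). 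The only real subtlety is this mild interdependence of the two conclusions, dealt with by doing $x=0$ first; I should also note that $y_0+t$, as opposed to $y_0$, never needs to lie in the domain of $\eta_1$, since $\eta_1$ is only ever evaluated at $y_0$ and at $y_1$ and the $+t$ is transported additively afterwards. Case II follows from the symmetric argument after exchanging $\eta_i\leftrightarrow\xi_i$ and replacing the sign condition $\eta_i\circ\xi_i(0)>0$ by $\eta_i\circ\xi_i(0)<0$; there the induction is carried along the orbit of $\eta_i(0)$ under $\xi_i$, and the inequality between the periods is reversed.
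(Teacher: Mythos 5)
Your proof is correct and takes essentially the same route as the paper: the same induction along the orbit, using the order condition $\zeta_0(y)+t\le\zeta_1(y)$ at the point $\eta_0^k\circ\xi_0(x)$ followed by monotonicity of $\eta_1$, with conclusion 1) read off from non-negativity of $\eta_1^{a_{\zeta_0}}\circ\xi_1(0)$. Your additional bookkeeping (running $x=0$ first to secure $a_{\zeta_0}\le a_{\zeta_1}$, and the squeeze placing the iterates in $[0,\xi_i(0)]$) merely makes explicit the domain and well-definedness details that the paper's terse argument leaves implicit.
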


The proof of Lemma \ref{orbitorder} is different for case I and case II. We will only present the proof in case I.

\begin{proof}[Proof of Lemma \ref{orbitorder}] As we said, we will only present the proof in case I. Let $x\in [0, \xi_0(0)]$. The order condition Definition \ref{ord}(1) gives the statement of the Lemma for $k=0$, 
 $\xi_0(x)+t\le \xi_1(x)$. Inductively property (2) follows. Namely,
$$
\begin{aligned}
\eta_0^{k+1}\circ \xi_0(x)+t&=\eta_0(\eta_0^k\circ \xi_0(x))+t
                            \le \eta_1(\eta_0^k\circ \xi_0(x))\\
                            &\le  \eta_1(\eta_1^k\circ \xi_1(x))\\
                            &= \eta_1^{k+1}\circ \xi_1(x). 
\end{aligned}
$$
In particular, 
$\eta_0^{a_{\zeta_0}}\circ \xi_0(x)\le \eta_1^{a_{\zeta_0}}\circ \xi_1(x)$. This implies,  $a_{\zeta_0}\le a_{\zeta_1}$.
\end{proof}

Pre-renormalization preserves order. Namely,

\begin{lem}\label{pRorder} If $\zeta_0\le_t \zeta_1$ and $a_{\zeta_0}=a_{\zeta_1}$, then $p\mathcal{R}(\zeta_0)\le_t p\mathcal{R}(\zeta_1)$.
\end{lem}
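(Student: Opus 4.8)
The plan is to verify directly the two defining conditions of the order relation in Definition \ref{ord} for the pair $\big(p\mathcal{R}(\zeta_0), p\mathcal{R}(\zeta_1)\big)$, with Lemma \ref{orbitorder} as the essential input. As in the proof of Lemma \ref{orbitorder}, I would only treat case I (where $\eta_i\circ\xi_i(0)>0$), case II being entirely symmetric. Write $a=a_{\zeta_0}=a_{\zeta_1}$ and, following Definition \ref{renop}, $p\mathcal{R}(\zeta_i)=(\eta_i',\xi_i')$ with $\eta_i'=\eta_i|_{[0,\,\eta_i^a(\xi_i(0))]}$ and $\xi_i'=\eta_i^a\circ\xi_i|_{I_{\xi_i}}$. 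Since $\eta_i^a(\xi_i(0))\ge 0$ by the definition of the period, these domains are genuine intervals, and $\Dom\big(p\mathcal{R}(\zeta_i)\big)=\big[\eta_i(0),\,\eta_i^a(\xi_i(0))\big]$.

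First I would check condition (2) of Definition \ref{ord}. Because $\eta_i'(0)=\eta_i(0)$, the inequality $\eta_0'(0)\le\eta_1'(0)$ is part of the hypothesis $\zeta_0\le_t\zeta_1$. For the second inequality note $\xi_i'(0)=\eta_i^a\big(\xi_i(0)\big)$, and evaluate the conclusion of Lemma \ref{orbitorder} (case I, second item) at $x=0$, $k=a$: this gives $\eta_0^a(\xi_0(0))+t\le\eta_1^a(\xi_1(0))$, so $\xi_0'(0)\le\xi_1'(0)$ since $t\ge 0$. In particular $\eta_0(0)\le\eta_1(0)\le 0$ and $0\le\eta_0^a(\xi_0(0))\le\eta_1^a(\xi_1(0))$, whence the common domain $\Dom\big(p\mathcal{R}(\zeta_0)\big)\cap\Dom\big(p\mathcal{R}(\zeta_1)\big)$ equals $\big[\eta_1(0),\,\eta_0^a(\xi_0(0))\big]$, an interval containing the origin.

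Then I would verify condition (1) by splitting this common domain at the origin. For $x\in\big[\eta_1(0),0\big]$ both piecewise maps act through their $\xi'$-branches, so the required inequality is $\eta_0^a\big(\xi_0(x)\big)+t\le\eta_1^a\big(\xi_1(x)\big)$, which is exactly the conclusion of Lemma \ref{orbitorder} (case I, second item) with $k=a$. For $x\in\big[0,\,\eta_0^a(\xi_0(0))\big]$ both maps act through their $\eta'$-branches, i.e.\ as $\eta_0$ and $\eta_1$; here $x\in[0,\xi_0(0)]=I_{\eta_0}$ and, since $x\le\eta_0^a(\xi_0(0))\le\eta_1^a(\xi_1(0))\le\xi_1(0)$, also $x\in I_{\eta_1}$, so $x\in\Dom(\zeta_0)\cap\Dom(\zeta_1)$ with $\zeta_0(x)=\eta_0(x)$, $\zeta_1(x)=\eta_1(x)$; condition (1) of $\zeta_0\le_t\zeta_1$ then gives $\eta_0(x)+t\le\eta_1(x)$. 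The two halves together yield condition (1) for $\big(p\mathcal{R}(\zeta_0),p\mathcal{R}(\zeta_1)\big)$.

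I do not expect a real obstacle: the statement comes out by unwinding Definitions \ref{renop} and \ref{ord} and quoting Lemma \ref{orbitorder}. The only delicate point is the bookkeeping of domains in the second step --- one has to use the monotonicity of the maps together with $t\ge 0$ to see that the comparison of the new critical values $\eta_i^a(\xi_i(0))$ forces the common domain of the two pre-renormalizations to be an interval straddling the origin, on which the $\xi'$-branch comparison is furnished by Lemma \ref{orbitorder} and the $\eta'$-branch comparison directly by the hypothesis.
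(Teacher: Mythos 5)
Your proposal is correct and follows essentially the same route as the paper's proof: the boundary comparison and the comparison of the $\xi'$-branches on $[\eta_1(0),0]$ are both obtained from Lemma \ref{orbitorder}(2) with $k=a$, exactly as the paper does. The only difference is that you also spell out the (trivial) $\eta'$-branch comparison on the positive part of the common domain, which the paper leaves implicit since those branches are restrictions of the original maps.
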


\begin{proof}[Proof of Lemma \ref{pRorder}] We will only present the proof in case I. Let 
$a=a_{\zeta_0}=a_{\zeta_1}$. Observe,
$\eta_{p\mathcal{R}(\zeta_0)}(0)=\eta_0(0)\le \eta_1(0)=\eta_{p\mathcal{R}(\zeta_1)}(0)$. Hence, the left side of the domains of the pre-renormalizations satisfy the order condition of Definition \ref{ord}(2). Consider the right side of the domains of the pre-renormalizations,
\begin{equation}\label{boundary}
\xi_{p\mathcal{R}(\zeta_0)}(0)+t=\eta_0^a\circ \xi_0(0)+t\le 
\eta_1^a\circ \xi_1(0)=\xi_{p\mathcal{R}(\zeta_1)}(0),
\end{equation}
where we used Lemma \ref{orbitorder}(2). This means that the right side of the domain of the pre-renormalizations also satisfy  the order condition of Definition \ref{ord}(2). 

According to Lemma \ref{orbitorder}(2) the estimate (\ref{boundary}) also hold 
for any $x\in [\eta_1(0),0]$, instead of $x=0$. This means that the pre-renormalization also satisfy the order condition of Definition \ref{ord}(1).
\end{proof}

The following Proposition will play a key role in the proof of 
the Synchronization-Lemma, section \S\ref{sync}.

\begin{prop}\label{orderandrot}  If $\zeta_0\le_t \zeta_1$ with $t>0$ then
$$
\rho_{\zeta_0}\ne \rho_{\zeta_1}.
$$
\end{prop}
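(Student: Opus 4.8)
The plan is to leverage the two structural facts already assembled in Section \ref{order}: that the order $\le_t$ is preserved under pre-renormalization among pairs with equal period (Lemma \ref{pRorder}), and that whenever $\zeta_0\le_t\zeta_1$ in case I one has the period inequality $a_{\zeta_0}\le a_{\zeta_1}$, with the symmetric statement in case II (Lemma \ref{orbitorder}). The rotation number is read off as the continued fraction $[\chi(\zeta),\chi(\mathcal{R}(\zeta)),\dots]$, i.e. from the sequence of periods $a_{\mathcal{R}^n(\zeta)}=\chi(\mathcal{R}^n(\zeta))$. So I would argue by contradiction: suppose $\zeta_0\le_t\zeta_1$ with $t>0$ but $\rho_{\zeta_0}=\rho_{\zeta_1}$. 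In particular both are infinitely renormalizable with the \emph{same} continued fraction expansion, so $a_{\mathcal{R}^n(\zeta_0)}=a_{\mathcal{R}^n(\zeta_1)}$ for every $n$.

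Next I would run the order down the renormalization tower. Since the first periods agree, Lemma \ref{pRorder} gives $p\mathcal{R}(\zeta_0)\le_t p\mathcal{R}(\zeta_1)$, hence $\mathcal{R}(\zeta_0)\le_{t'}\mathcal{R}(\zeta_1)$ for the rescaled displacement $t'=t/|I_{\xi_0}|$ (rescaling by a positive homothety preserves the order, only changing $t$ by the positive factor $1/|I_{\xi_0}|$ — this is exactly the normalization step in Definition \ref{renop}, and one checks Definition \ref{ord}(1)--(2) transform covariantly under $H_\lambda$). Iterating, for every $n$ we get $\mathcal{R}^n(\zeta_0)\le_{t_n}\mathcal{R}^n(\zeta_1)$ with $t_n>0$. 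Here a uniform lower bound on $t_n$ is not needed: mere positivity suffices, because at each level Lemma \ref{orbitorder} then forces a \emph{strict} inequality somewhere. Concretely, with $t_n>0$ the case-I (resp. case-II) dichotomy of Lemma \ref{orbitorder} gives $\eta_0^{a}\circ\xi_0(x)+t_n\le \eta_1^{a}\circ\xi_1(x)$ with $a=a_{\mathcal{R}^n(\zeta_0)}$; if $a_{\mathcal{R}^n(\zeta_0)}=a_{\mathcal{R}^n(\zeta_1)}$ then the strict gap $t_n>0$ at the boundary point $\xi_{p\mathcal{R}}(0)=\eta^a\circ\xi(0)$ (as in \eqref{boundary}) pushes the orbit of $\zeta_0$ strictly past $0$ one step before that of $\zeta_1$ does — one must check this forces $a_{\mathcal{R}^{n}(\zeta_0)}<a_{\mathcal{R}^{n}(\zeta_1)}$ at the \emph{next} renormalization level, or directly that strict order plus equal period at level $n$ is incompatible with equal period at level $n+1$.

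So the real mechanism is: $\zeta_0\le_t\zeta_1$ with $t>0$ and all periods equal is self-contradictory, because a strictly positive displacement at the boundary, propagated by Lemma \ref{pRorder}, must eventually cross an integer iterate threshold and change some period. The cleanest way to package this is probably: if all periods are equal, then by the rescaled iteration the displacements $t_n$ stay positive, but Lemma \ref{orbitorder}(2) at level $n$ gives, for $x=0$,
$$
\eta_{0,n}^{a_n}\circ\xi_{0,n}(0)+t_n\le \eta_{1,n}^{a_n}\circ\xi_{1,n}(0),
$$
and since both sides lie in a controlled interval while $a_n=\chi(\mathcal{R}^n(\zeta_i))\to\infty$ along a subsequence (the continued fraction entries are not all bounded, or if they are, one uses that $\mathcal{R}^n(\zeta_0)$ and $\mathcal{R}^n(\zeta_1)$ then converge to the attractor and the gap $t_n$ cannot persist without violating the period-monotonicity of Lemma \ref{orbitorder}), the persistent strict gap is impossible. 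I expect the main obstacle to be precisely this last point: showing that a strictly positive order gap, which by itself need not be uniformly bounded below after rescaling, nonetheless cannot coexist with \emph{all} periods being equal — i.e. converting "strict order, same period at level $n$" into "different period at some later level." The key lemma to invoke there is the sharp form of Lemma \ref{orbitorder}: in case I, equality $a_{\zeta_0}=a_{\zeta_1}$ together with $\eta_0^{a}\circ\xi_0(x)+t\le\eta_1^{a}\circ\xi_1(x)$ for \emph{all} $x$ and $t>0$ still leaves room only if $\eta_1^{a+1}\circ\xi_1(0)<0$ strictly before $\eta_0^{a+1}\circ\xi_0(0)$ would be, which after one more renormalization flips the corresponding period — giving the contradiction and completing the proof.
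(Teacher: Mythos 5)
There is a genuine gap, and it sits exactly where you flagged it. Your scheme rescales at every step, replacing the displacement $t$ by $t_n=t/|I_{\xi_0}|\cdots$, and then tries to recover a contradiction from the local claim that ``strict order plus equal period at level $n$ is incompatible with equal period at level $n+1$.'' That claim is false: two ordered pairs with a very small displacement can perfectly well have the same period at arbitrarily many consecutive levels (take $\zeta$ and a nearby member $\zeta_s$, $s>0$ small, of its standard family from Section \ref{Smonfam}; they are comparable with a small positive displacement, their rotation numbers are close, so arbitrarily many continued fraction entries coincide). So no single-level ``flip'' argument can work, and your fallback (splitting into bounded versus unbounded entries, or invoking convergence to the attractor) is not a proof either. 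After rescaling, mere positivity of $t_n$ carries no usable quantitative information, which is precisely why the contradiction slips away.

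The paper's proof avoids this by never rescaling. If all periods agree, Lemma \ref{pRorder} iterates at the level of \emph{pre}-renormalizations and preserves the \emph{same} $t$: one gets $(p\mathcal{R})^n(\zeta_0)\le_t (p\mathcal{R})^n(\zeta_1)$ for every $n$, with the original $t>0$. The decisive extra ingredient, absent from your proposal, is that the domains of the pre-renormalizations shrink: $\eta_{(p\mathcal{R})^n(\zeta_i)}(0)\to 0$. Then for large $n$,
$$
0>\eta_{(p\mathcal{R})^n(\zeta_{1})}(0)\ge \eta_{(p\mathcal{R})^n(\zeta_{0})}(0)+t\ge \tfrac12\,t>0,
$$
a contradiction. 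So the correct mechanism is global (a fixed gap versus vanishing domains), not a level-to-level period flip; to repair your argument you should drop the normalization step, keep the unrescaled order with constant $t$, and add the shrinking-domain fact.
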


\begin{proof}[Proof of Proposition \ref{orderandrot}] Assume $a_{\zeta_0}(n)=a_{\zeta_1}(n)$ for $n\ge 0$. 
Apply Lemma \ref{pRorder}, 
$$
(p\mathcal{R})^n(\zeta_0) \le_t (p\mathcal{R})^n(\zeta_1).
$$
Note, $\eta_{(p\mathcal{R})^n(\zeta_{0,1})}(0)\to 0$. Hence,
$$
0>\eta_{(p\mathcal{R})^n(\zeta_{1})}(0)\ge \eta_{(p\mathcal{R})^n(\zeta_{0})}(0)+t\ge \frac12 t>0
$$
for $n$ large enough. Contradiction.
\end{proof}

\section{Synchronization}\label{sync}

\noindent
{\bf Synchronization-Lemma.} For any given $K_0>1$ there exists $L=L(K_0)>1$ such that the following holds. {\it Let $\zeta_0$ and $\zeta_1$ be two $C^3$ critical commuting pairs which are $K_0$-controlled, both $\zeta_0$ and $\zeta_1$ have negative Schwarzian, $\rho(\zeta_0)=\rho(\zeta_1)\in[0,1]\!\setminus\!\Q$ and $d_2(\zeta_0,\zeta_1)<\varepsilon_0$. Then $\zeta_0$ and $\zeta_1$ are $L$-synchronized.}

\bigskip

The hypothesis $d_2(\zeta_0,\zeta_1)<\varepsilon_0$ will not be mentioned in the proof presented below, but it is needed in order to be allowed to apply the estimates obtained in Sections \ref{orbdef} to \ref{order} (see in particular the proof of Claim \ref{lacompdelmyk}, during the proof of Lemma \ref{entrada}).

\begin{proof}[Proof of the Synchronization-Lemma] We will only present the proof in case I. Let $a=a_{\zeta_0}=a_{\zeta_1}$. Choose $a_0\ge 1$ such that Lemma \ref{temamigo} applies. The Synchronization Lemma follows from Lemma \ref{bruteforce} when $a\le a_0$. We will assume $a\ge a_0$.

\bigskip

We may assume that $x^1_a\ge x^0_a$. There exists $K=K(K_0)>0$
such that the following holds: there exists a diffeomorphism $h:\Dom(\zeta_1)\to \Dom(\zeta_0)$ such that $\zeta=\zeta_0$ and
$\tilde{\zeta}=h\circ\zeta_1\circ h^{-1}$ satisfy the normalizations 
$$
x_1(\zeta)=x_1(\tilde{\zeta}).
$$
We may 
construct the conjugation such that
$$
d_{C^3}(h, \id)\le K d_2(\zeta_0,\zeta_1)
$$
and 
$h|\Dom\big(p\mathcal{R}(\zeta_1)\big)=\id$. This last condition implies
$$
x_a(\zeta_1)=x_a(\tilde{\zeta}).
$$
In particular, it suffices to prove synchronization for the pairs
$\zeta$ and $\tilde{\zeta}$. Let $\varepsilon=
d_2(\zeta,\tilde{\zeta})\le  K d_2(\zeta_0,\zeta_1)$.

Apply Lemma \ref{temamigo} to obtain a commuting pair $\zeta_{t_0}$ in the standard family of $\zeta$ such that 
$$
\Delta x_a(\zeta_{t_0},\tilde{\zeta})=0.
$$
From Lemma \ref{temamigo} we get
\begin{equation}\label{t0}
0\le t_0\le K \varepsilon.
\end{equation}
Note, if $t_0>0$ is much larger than $\varepsilon\ge d_{C^0}(\zeta,\tilde{\zeta})$  then
$\xi_{t_0}(x)>\tilde{\xi}(x)$. This would imply
$x_a(\zeta_{t_0})>x_a(\tilde{\zeta})$ because $x_1(\zeta)=x_1(\tilde{\zeta})$. Assume that
\begin{equation}\label{notsync}
\tilde{x}_a=x_a+L\varepsilon,
\end{equation}
where just as before $x_i=\eta^{i}(\xi(0))$ and $\tilde{x}_i=\tilde{\eta}^{i}(\tilde{\xi}(0))$ for $i\in\{0,...,a\}$. Note also that the assumption $x_{a}^{1} \geq x_{a}^{0}$ implies that $\tilde{x}_a \geq x_a$.

We have to show that $L$ is uniformly bounded.

From (\ref{notsync}) and Corollary \ref{MFprimcor} we get for every 
$x\in [\eta_{p\mathcal{R}(\zeta_{t_0})}(0),0]$
\begin{equation}\label{zetat0zeta}
\begin{aligned}
 p\mathcal{R}(\zeta_{t_0})(x)- p\mathcal{R}(\zeta)(x) &\ge \frac{1}{K}  
\big(p\mathcal{R}(\zeta_{t_0})(0)- p\mathcal{R}(\zeta)(0)\big)\\
&=\frac{1}{K}  \big(
 p\mathcal{R}(\tilde{\zeta})(0)- p\mathcal{R}(\zeta)(0)\big)\\
&=\frac{1}{K} (\tilde{x}_a-x_a)=\frac{1}{K} L\varepsilon.
\end{aligned}
\end{equation}
From Proposition \ref{lipC2C2amigos} we get  for every 
$x\in [\eta_{p\mathcal{R}(\zeta_{t_0})}(0),0]$
\begin{equation}\label{zetat0zetatilde}
\begin{aligned}
\big|p\mathcal{R}(\zeta_{t_0})(x)- p\mathcal{R}(\tilde{\zeta})(x)\big| &\le K 
d_2(\zeta_{t_0}, \tilde{\zeta})\\
&\le K d_2(\zeta, \tilde{\zeta}) +K\varepsilon\\
&\le K\varepsilon,
\end{aligned}
\end{equation}
where we also used (\ref{t0}). Combine (\ref{zetat0zeta})
and (\ref{zetat0zetatilde}) to get for every $x\in [\eta_{p\mathcal{R}(\zeta_{t_0})}(0),0]$
\begin{equation}\label{dpRzeta10}
p\mathcal{R}(\tilde{\zeta})(x)\ge p\mathcal{R}(\zeta)(x)+\frac{1}{K} L\varepsilon -K\varepsilon.
\end{equation}
As a matter of fact (\ref{dpRzeta10}) holds for $x\in [-1,0]$. This
follows from the following. Let $x\in[-1,\eta_{p\mathcal{R}(\zeta_{t_0})}(0)]$. 
Observe, according to (\ref{t0}),$$\big|[-1,\eta_{p\mathcal{R}(\zeta_{t_0})}(0)]\big|=t_0\le K\varepsilon.$$ 
This implies
$$
\begin{aligned}
p\mathcal{R}(\tilde{\zeta})(x)&\ge p\mathcal{R}(\zeta)(\eta_{p\mathcal{R}(\zeta_{t_0})}(0))
+\frac{1}{K} L\varepsilon -K\varepsilon
-\max\{Dp\mathcal{R}(\tilde{\zeta})\} t_0\\
&\ge p\mathcal{R}(\zeta)(x)+\frac{1}{K} L\varepsilon -K\varepsilon.
\end{aligned}
$$
Hence, for $x\in [-1,0]$ we have
\begin{equation}\label{dpRzeta10total}
p\mathcal{R}(\tilde{\zeta})(x)\ge p\mathcal{R}(\zeta)(x)+\frac{1}{K} L\varepsilon -K\varepsilon.
\end{equation}
So, when $L\ge 2K^2$ then for the relevant $x<0$
\begin{equation}\label{dpR2}
(p\mathcal{R})^2(\tilde{\zeta})(x)> (p\mathcal{R})^2(\zeta)(x).
\end{equation}
The last part of the proof will show that similar estimates hold for 
relevant positive points. The goal is to prove 
$\big(p\mathcal{R}\big)^2(\tilde{\zeta})\ge_t \big(p\mathcal{R}\big)^2(\zeta)$ for some positive $t$. The branches  on the left side of the second pre-renormalizations, according to (\ref{dpR2}), 
satisfy the order condition of Definition \ref{ord}(1).
The right side of the domains of the second pre-renormalizations do satisfy the order condition of Definition \ref{ord}(2). Namely, 
$$
\Dom\big((p\mathcal{R})^2(\zeta)\big)\cap \{x\ge 0\}=[0, x_a]\subset
[0, \tilde{x}_a]=
 \Dom\big((p\mathcal{R})^2(\tilde{\zeta})\big)\cap \{x\ge 0\}.
$$
Left is to describe the branches on the right and the domains on the left. Let $x\in \Dom\big((p\mathcal{R})^2(\zeta)\big)\cap \{x\ge 0\}=[0, x_a]$ and for $k\ge 1$ define
$$
z_k(x)=\big(p\mathcal{R}(\zeta)\big)^k(x),
$$
and similarly, $
\tilde{z}_k(x)=\big(p\mathcal{R}(\tilde{\zeta})\big)^k(x),
$
Observe,
$$
|z_1(x)-\tilde{z}_1(x)|=|p\mathcal{R}(\zeta)(x)- p\mathcal{R}(\tilde{\zeta})(x)|=|\eta(x)-\tilde{\eta}(x)|\le \varepsilon.
$$
Hence, applying (\ref{dpRzeta10total}),
$$
\begin{aligned}
\tilde{z}_2(x)&=p\mathcal{R}(\tilde{\zeta})(\tilde{z}_1)\\
&\ge p\mathcal{R}(\zeta)(\tilde{z}_1)
+\frac{1}{K} L\varepsilon -K\varepsilon\\
&\ge z_2(x) -\max\big(Dp\mathcal{R}(\zeta)\big)\cdot |z_1(x)-\tilde{z}_1(x)|
+\frac{1}{K} L\varepsilon -K\varepsilon\\
&\ge z_2(x) 
+\frac{1}{K} L\varepsilon -K\varepsilon> z_2(x) ,
\end{aligned}
$$
when $L\ge 2K^2$. Let $b=a_{p\mathcal{R}(\zeta)}=a_{p\mathcal{R}(\tilde{\zeta})}$. By repeatedly applying 
(\ref{dpRzeta10total}) with $L\ge 2K^2$ we obtain
\begin{equation}\label{rightbranches}
\big(p\mathcal{R}\big)^2(\tilde{\zeta})(x)=\tilde{z}_b(x)>\big(p\mathcal{R}\big)^2(\zeta)(x)=z_b(x).
\end{equation}
In particular,
\begin{equation}\label{leftboundary}
\begin{aligned}
\Dom\big((p\mathcal{R})^2(\tilde{\zeta})\big)\cap \{x\le 0\}&=[\tilde{z}_b,0]\subset [z_b,0]\\
&=\Dom\big((p\mathcal{R})^2(\zeta)\big)\cap \{x\le 0\}.
\end{aligned}
\end{equation}

The estimates (\ref{rightbranches}) and (\ref{leftboundary}) finish the proof of:$$\big(p\mathcal{R}\big)^2(\tilde{\zeta})\ge_t \big(p\mathcal{R}\big)^2(\zeta),$$for some  $t>0$. However, this contradicts Proposition \ref{orderandrot} because $\big(p\mathcal{R}\big)^2(\tilde{\zeta})$ and $\big(p\mathcal{R}\big)^2(\zeta)$ have the same rotation number. This contradiction establishes the synchronization with $L\le 2K^2$. 
\end{proof}

\section{Lipschitz Estimate}\label{lip}

In this section we prove Lemma \ref{main}.

\begin{proof}[Proof of Lemma \ref{main}] According to the Synchronization Lemma from section \S\ref{sync} we know that  for $L=L(\mathcal{K})$, the pairs $\zeta_0$ and $\zeta_1$ are $L$-synchronized. Now the Lipschitz estimate for renormalization of synchronized pairs, Proposition \ref{lipC2C2amigos}, imply a Lipschitz estimate for prerenormalization along topological classes. The fact that the maps are synchronized imply that the domains of the prerenormalizations are also close. This means that the normalizations will not effect the Lipschitz property. 
\end{proof}

\section{The attractor of renormalization}\label{melhoratese}

This section is devoted to the following result:

\begin{theorem}\label{compacto} There exists a $C^{\omega}$-compact set $\mathcal{K}$ of real analytic critical commuting pairs with the following property: for any $r \geq 3$ there exists a constant $\lambda=\lambda(r) \in (0,1)$ such that given a $C^r$ critical circle map $f$ with irrational rotation number there exist $C>0$ and a sequence $\{f_n\}_{n\in\nt}$ contained in $\mathcal{K}$ such that:$$d_{r-1}\big(\mathcal{R}^n(f),f_n\big)\leq C\lambda^n\quad\mbox{for all $n\in\nt$,}$$and such that the pair $f_n$ has the same rotation number as the pair $\mathcal{R}^n(f)$ for all $n\in\nt$. Here $d_{r-1}$ denotes the $C^{r-1}$ distance in the space of $C^{r-1}$ critical commuting pairs.
\end{theorem}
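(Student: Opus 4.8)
The plan is to combine three inputs: the real \emph{a priori} bounds (Theorem \ref{realB}), which confine the deep renormalizations of $f$ to a fixed real–geometric class; the existence of a $C^{\omega}$–compact attractor $\mathcal{K}$ for the \emph{analytic} renormalization operator together with its exponential contraction on topological classes (Theorem \ref{uniform}); and, for each level $n$, an analytic approximation of a fixed deep renormalization of $f$ whose accuracy is allowed to improve exponentially with $n$. The point of the last ingredient is that although a very accurate analytic approximation of a merely $C^{r}$ pair has a small modulus of holomorphy, renormalization restores a \emph{definite} complex geometry after a number of steps that grows only like the logarithm of the reciprocal of the accuracy, hence only linearly in $n$.

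\emph{Setup.} I would take $\mathcal{K}$ to be the maximal compact invariant set of the analytic renormalization operator; its $C^{\omega}$–compactness and $K_0$–control follow from the real bounds, the complex \emph{a priori} bounds of de Faria--de Melo, and Theorem \ref{uniform}, and every irrational number occurs as the rotation number of some pair in $\mathcal{K}$. By Theorem \ref{realB} there is $n_0=n_0(f)$ with $\mathcal{R}^n(f)$ $K_0$–controlled for $n\ge n_0$; discarding finitely many further levels (using the real–bounds machinery of \cite{dfdm1}) one may also assume $\mathcal{R}^{n_0}(f)$ lies in a $C^{r-1}$–small neighbourhood of the class of $K_0$–controlled pairs with negative Schwarzian. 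For $n<n_0$ one picks $f_n\in\mathcal{K}$ with $\rho(f_n)=G^n(\rho(f))$ and absorbs the bounded error $d_{r-1}(\mathcal{R}^n(f),f_n)$ into $C$, so all the content is at levels $n\ge n_0$.

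\emph{The level-$n$ approximant.} Fix $n\ge n_0$ and a scale $\delta_n>0$ to be chosen. I would construct a real-analytic critical pair $\widehat{f}^{(n)}$ with $\rho(\widehat{f}^{(n)})=\rho(f)$ and $d_{r-1}\big(\mathcal{R}^{n_0}(f),\widehat{f}^{(n)}\big)<\delta_n$ by first approximating the two branches of $\mathcal{R}^{n_0}(f)$ in the $C^{r-1}$ norm by real-analytic maps (possible since, $f$ being $C^r$, these branches are $C^{r-1}$ with Lipschitz top derivative, so quantitative polynomial approximation applies), and then correcting inside a finite-dimensional analytic standard family transverse to topological classes (as in Section \ref{Smonfam}) so as to restore the commuting relations and tune the rotation number exactly to $\rho(f)$; by transversality this correction costs only $O(\delta_n)$, and for small $\delta_n$ the approximant inherits $K_0'$–control and (for $r\ge 4$, where $C^{r-1}$–closeness controls the Schwarzian) negative Schwarzian. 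The cost is that the modulus of holomorphy $\rho_n$ of $\widehat{f}^{(n)}$ satisfies only $\rho_n\gtrsim\delta_n^{\beta}$ for a fixed $\beta>0$ coming from the approximation rate. Now $\mathcal{R}^{n_0}(f)$ and $\widehat{f}^{(n)}$ are $K_0'$–controlled with negative Schwarzian and the \emph{same} rotation number, so iterating the Lipschitz estimate of Lemma \ref{main} (its $C^{r-1}$ version: for $r=3$ this is Lemma \ref{main} itself, and for $r\ge 4$ one upgrades it using the uniform $C^{r-1}$ \emph{a priori} bounds) gives
$$
d_{r-1}\big(\mathcal{R}^n(f),\ \mathcal{R}^{\,n-n_0}(\widehat{f}^{(n)})\big)\ \le\ L^{\,n-n_0}\,\delta_n .
$$
On the other hand the complex \emph{a priori} bounds force $\mathcal{R}^{m}(\widehat{f}^{(n)})$ to extend to a definite complex neighbourhood — hence, by Theorem \ref{uniform}, to lie within an error exponentially small in $m$ of $\mathcal{K}$ — as soon as $m\ge m_1(\rho_n)$, where one expects $m_1(\rho_n)\lesssim\log(1/\rho_n)\lesssim\beta\log(1/\delta_n)$.

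\emph{Balancing and conclusion.} I would set $\delta_n=\mu^n$ with $\mu\in(0,1)$ subject to two constraints: $\mu<1/L$, so that $L^{\,n-n_0}\delta_n\le C\lambda^n$ with $\lambda:=L\mu<1$; and $m_1(\mu^n)\le n-n_0$ for all large $n$, which holds because $m_1(\mu^n)\lesssim\beta\,n\log(1/\mu)$ is linear in $n$ provided $\mu$ is not taken too small — so the admissible window for $\mu$ is a nonempty interval (enlarging $n_0$ if necessary). Then $f_n:=\mathcal{R}^{\,n-n_0}(\widehat{f}^{(n)})$ — replaced, if one insists on landing exactly in $\mathcal{K}$, by the nearby genuine point of $\mathcal{K}$ from Theorem \ref{uniform}, at the cost of one more exponentially small term — lies in $\mathcal{K}$, has $\rho(f_n)=G^n(\rho(f))=\rho(\mathcal{R}^n(f))$, and satisfies $d_{r-1}(\mathcal{R}^n(f),f_n)\le C\lambda^n$. \textbf{The main obstacle} is precisely the feasibility of this balancing: it requires a genuinely quantitative version of the complex \emph{a priori} bounds, with explicit control of how the number $m_1$ of renormalizations needed to recover a definite modulus of holomorphy depends on the initial modulus (one needs $m_1=O(\log(1/\rho))$, not merely finiteness), and this dependence must be compatible with the expansion constant $L$ of the $C^{r-1}$ Lipschitz estimate. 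This quantitative complex control — supplied in the analytic category by the Schwarzian-derivative estimates that the $C^4$ hypothesis makes available (via Theorem \ref{realB}) — plays here the role that the quasiconformal holonomy of the hybrid lamination plays in the unimodal argument of de Melo--Pinto; a secondary difficulty, and the reason $\lambda$ is permitted to depend on $r$, is extracting the $C^{r-1}$ (rather than $C^2$) Lipschitz control for $r>3$ from the $C^2$ estimate together with the higher-order \emph{a priori} bounds.
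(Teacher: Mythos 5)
Your proposal has a genuine gap --- in fact several, and you flag the central one yourself. The balancing step needs a \emph{quantitative} complex bounds statement: that a real analytic pair whose modulus of holomorphy is only $\rho_n\gtrsim\delta_n^{\beta}$ recovers a definite complex geometry (and hence, via Theorem \ref{uniform}, lands exponentially close to $\mathcal{K}$) after $m_1=O(\log(1/\rho_n))$ renormalizations. Nothing in this paper or in the results it quotes gives that logarithmic dependence; without it the admissible window for $\mu$ cannot be shown to be nonempty, so the whole scheme is conditional. Second, you iterate a ``$C^{r-1}$ version'' of Lemma \ref{main}, but the key lemma is only a $d_2$ Lipschitz estimate; no $d_3$ (let alone $d_{r-1}$) Lipschitz estimate is proved anywhere here, and upgrading it is not a formality, since the proof runs through non-linearities and Lemma \ref{NLcomp}, which control exactly two derivatives. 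Third, invoking Lemma \ref{main} forces you to assume that $\mathcal{R}^{n_0}(f)$ is $K$-controlled with negative Schwarzian; by Theorem \ref{realB} and \cite[Theorem A.4]{dfdm1} these hypotheses require $f\in C^4$, whereas the statement you are proving includes $r=3$. Finally, the construction of the analytic commuting approximant $\widehat{f}^{(n)}$ (restoring the commuting and criticality conditions and tuning the rotation number with only an $O(\delta_n)$ correction) is itself essentially the content of \cite{guamelo}, not a routine consequence of polynomial approximation.

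The paper's proof is much softer and never runs the approximants through renormalization. It takes the sequence $\{f_n\}\subset\mathcal{K}$ already constructed in \cite[Theorem D]{guamelo}, which shadows $\mathcal{R}^n(f)$ exponentially fast in the $C^0$ metric with the same rotation numbers; it takes the (non-commuting) Epstein approximations $(\eta_n,\xi_n)$ of de Faria--de Melo (Theorem \ref{ApC2}), which are exponentially $C^{r-1}$-close to $\mathcal{R}^n(f)$; by the triangle inequality $d_0\big(f_n,(\eta_n,\xi_n)\big)$ is exponentially small, and this $C^0$ estimate is upgraded to $C^{r-1}$ by the interpolation inequality of Proposition \ref{holder}, which applies because both $f_n$ (Remark \ref{tg}) and $(\eta_n,\xi_n)$ (Proposition \ref{epscomp}) extend holomorphically with uniform bounds on a definite complex domain; one more triangle inequality gives the claim. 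To salvage your route you would essentially have to redo the construction of \cite{guamelo} and in addition supply the missing quantitative complex bounds and a higher-order Lipschitz estimate, all of which the paper's argument deliberately avoids.
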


We will apply Theorem \ref{compacto} in the next section (Section \ref{final}, during the proof of Theorem \ref{expconv}) with $r=4$.

The compact set $\mathcal{K}$ and the approximations $\{f_n\}_{n\in\nt}$ given by Theorem \ref{compacto} were constructed by two of the authors in \cite{guamelo}, but the exponential convergence was only proved for the $C^0$-metric \cite[Theorem D, Section 4, page 15]{guamelo}. In this section we will show that the same estimate actually holds for the $C^{r-1}$-metric, whenever $f$ is $C^r$. For that purposes we will use the following fact from complex analysis:

\begin{prop}\label{holder} Let $I$ be a compact interval in the real line with non-empty interior, and let $U$ be an open set in the complex plane containing $I$. Fix $M>0$ and consider the family:$$\mathcal{F}=\big\{f:U\to\C\mbox{ holomorphic: }\|f\|_{C^0(U)} \leq M\big\}.$$

Then for any $r\in\nt$ and any $\alpha\in(0,1)$ there exists $L(r,\alpha,M)>0$ such that:$$\|f\|_{C^r(I)} \leq L\big(\|f\|_{C^0(I)}\big)^\alpha\quad\mbox{for all $f\in\mathcal{F}$,}$$where:$$\|f\|_{C^r(I)}=\sup_{\substack{z\in I\\n\in\{0,1,...,r\}}}\big\{\big|f^{(n)}(z)\big|\big\}.$$
\end{prop}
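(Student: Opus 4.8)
The plan is to first show that $f$ is uniformly small on a \emph{fixed} complex neighbourhood of $I$ --- precisely, that its sup-norm there is at most a constant times $\big(\|f\|_{C^0(I)}\big)^{\alpha}$ --- and then to read off the $C^r$ bound on $I$ from Cauchy's integral formula. The Cauchy step is routine; the real content is the propagation of smallness from the real segment $I$ into $\C$, which I will obtain from the two-constants theorem (a Hadamard-type interpolation of sup-norms via harmonic measure).

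\emph{Set-up.} Since $I$ is compact and $U$ open with $I\subset U$, fix $d_0>0$ with $\{z:\dist(z,I)\le d_0\}\subset U$ (any $d_0>0$ if $U=\C$). Let $V=\{z:\dist(z,I)<d_0/2\}$, a bounded convex domain with smooth boundary and $\overline V\subset U$, and put $\Omega=V\setminus I$; removing the segment $I$ from the connected open set $V$ leaves $\Omega$ connected. Write $\delta=\|f\|_{C^0(I)}$. If $\delta=0$ then $f$ vanishes on the non-degenerate interval $I$, hence on the connected component of $U$ containing $I$ by the identity theorem, and the conclusion is trivial; so assume $0<\delta\le\|f\|_{C^0(U)}\le M$.

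\emph{Main step: propagation of smallness.} Let $\omega\colon\Omega\to(0,1)$ be the harmonic measure of $I$ with respect to $\Omega$, i.e.\ the solution of the Dirichlet problem on $\Omega$ with boundary data $1$ on $I$ (the two sides of the slit) and $0$ on $\partial V$. Applying the two-constants theorem to the subharmonic function $\log|f|$ on $\Omega$, using $|f|\le\delta$ on $I$ and $|f|\le M$ on $\partial V$, yields
\[
|f(z)|\le\delta^{\,\omega(z)}\,M^{\,1-\omega(z)}\qquad\text{for all }z\in\Omega .
\]
Because $\delta\le M$, the right-hand side is non-increasing in $\omega(z)$, so it suffices to bound $\omega$ from below near $I$. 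Every point of $I$ is a regular boundary point of $\Omega$: interior points of $I$ have a half-plane on each side, and --- this is the delicate point --- the two endpoints are regular as well: a naive picture suggests $\omega\approx\tfrac12$ at an endpoint $a$, which would ruin the argument for $\alpha>\tfrac12$, but in fact the slit accounts for the \emph{whole} local boundary of $\Omega$ near $a$, so after $w=\sqrt{z-a}$ the slit plane becomes a half-plane whose entire boundary is the image of the slit and $\omega\to1$ there too (equivalently, slit endpoints satisfy Wiener's criterion). Hence $\omega$ extends continuously to $\overline\Omega=\overline V$ with value $1$ on $I$; being continuous on the compact set $\overline V$ and equal to $1$ on the compact set $I\subset V$, for the given $\alpha\in(0,1)$ there is $\rho=\rho(\alpha)\in(0,\min\{d_0/2,2\})$ with $\omega\ge\alpha$ on $W:=\{z:\dist(z,I)\le\rho\}$. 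Consequently $\|f\|_{C^0(W)}\le\delta^{\alpha}M^{1-\alpha}$: on $W\setminus I$ this is the displayed inequality combined with $\omega\ge\alpha$, and on $I$ it holds since $\delta\le\delta^{\alpha}M^{1-\alpha}$.

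\emph{Conclusion.} For $x\in I$ and $0\le n\le r$ the circle $\{|\zeta-x|=\rho/2\}$ lies in $W$, so Cauchy's formula gives $|f^{(n)}(x)|\le n!\,(2/\rho)^{n}\,\|f\|_{C^0(W)}\le r!\,(2/\rho)^{r}M^{1-\alpha}\cdot\delta^{\alpha}$. Taking the supremum over $x\in I$ and $n\in\{0,\dots,r\}$ gives $\|f\|_{C^r(I)}\le L\,\big(\|f\|_{C^0(I)}\big)^{\alpha}$ with $L=L(r,\alpha,M)=r!\,(2/\rho)^{r}M^{1-\alpha}$, where $\rho$ depends only on $\alpha$ and the fixed data $U,I$. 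Everything here is soft apart from the endpoint regularity of $\omega$ flagged above, which is the only step that needs genuine care.
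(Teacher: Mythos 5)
Your proposal is correct and follows essentially the same route as the paper's own proof: the function $h$ the paper obtains from the Dirichlet problem on $V\setminus I$ is exactly your harmonic measure $\omega$, the maximum-principle bound $|f|\le\varepsilon^{h}$ is your two-constants inequality (the paper normalizes to $M=1$ and rescales at the end, while you carry $M^{1-\omega}$ along), and both proofs finish with Cauchy estimates on a neighbourhood of $I$ where $h\ge\alpha$. The only difference of substance is that you justify the continuity of $\omega$ up to $I$ (in particular regularity at the slit endpoints), a point the paper takes for granted when it asserts the existence of a continuous $h$ with $h\equiv 1$ on $I$.
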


\begin{remark}\label{tg} Let us mention that both components of each approximation $f_n$ constructed in \cite{guamelo} have holomorphic extensions satisfying the conditions of Proposition \ref{holder} (uniformly bounded on a definite domain), see \cite[Section 7]{guamelo} for the construction.
\end{remark}

In this section we explain how Theorem \ref{compacto} follows by combining \cite[Appendix A]{dfdm1} and \cite[Theorem D]{guamelo} with Proposition \ref{holder}. We postpone the proof of Proposition \ref{holder} until Appendix \ref{apA}. In the remainder of this section we assume, to simplify the exposition, that the criticality of the critical point is $3$, that is, $d=1$ in Condition \eqref{pc} in Definition \ref{critpair}.

\begin{definition}\label{eps} Let $I=[0,a]$ and let $g:I \to g(I)=J$ be a real analytic orientation preserving homeomorphism with a cubic critical point at $0$. We say that $g$ is an \emph{Epstein} map if there exist a topological disk $U \supset I$, an open interval $L \supset J$ and a holomorphic three-fold branched covering map $G:U\to\C_L$ such that $G|_{I}=g$ (as usual, $\C_L$ denotes the open set $\C\!\setminus\!(\R\!\setminus\! L)$).
\end{definition}

For any $\beta \in (0,1)$ denote by $\mathcal{E}_{\beta}$ the set of Epstein maps $g:I=[0,a] \to g(I)=J$ satisfying the following properties:

\begin{enumerate}
\item $\beta\leq|I|/|J|\leq\beta^{-1}$.
\item $\dist(I,J)\leq\beta^{-1}|J|$, where $\dist$ denotes the standard distance between compact sets in the real line.
\item $g'(a)>\beta$.
\item The length of each component of $L\!\setminus\!J$ is at least $\beta|J|$ and at most $\beta^{-1}|J|$.
\end{enumerate}

In order to apply Proposition \ref{holder} we will need the following fact:

\begin{prop}\label{epscomp} For any $\beta\in(0,1)$ there exist a Jordan domain $U_{\beta}\supset[0,1]$ and a positive constant $M_{\beta}$ such that for any $g\in\mathcal{E}_{\beta}$, with normalization $I=[0,1]$, the holomorphic extension $G$ given by Definition \ref{eps} is well-defined in $U_{\beta}$ and satisfies $\big|G(z)\big| \leq M_{\beta}$ for all $z \in U_{\beta}$.
\end{prop}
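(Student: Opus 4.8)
The plan is to produce the domain $U_\beta$ and the bound $M_\beta$ by a normal-families / compactness argument built on the Epstein structure and the four constraints defining $\mathcal{E}_\beta$. First I would normalize: for $g \in \mathcal{E}_\beta$ with $I=[0,1]$, condition (1) gives $\beta \le |J| \le \beta^{-1}$, and then conditions (2) and (4) pin down $J$ and $L$ inside a fixed compact part of the real line (say inside $[-C_\beta, C_\beta]$) up to the remaining freedom in the left endpoint of $J$; one can further normalize, e.g. by requiring $0 \in \partial J$ or by tracking $J$ as a point in a compact subset of the space of intervals. The point is that, after normalization, the data $(I,J,L)$ ranges over a compact family, and the three-fold branched covering $G: U \to \C_L$ with $G|_I = g$ is determined (via the uniformization of the branched cover, i.e. $G = \text{cube-type map} \circ (\text{Riemann map})$) by this compact set of data together with the location of the critical point $0 \in I$ and the critical value $g(0) \in J$. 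So the relevant moduli live in a compact space.

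Next I would fix the domain. Since $G : U \to \C_L$ is a branched covering and $\C_L$ contains a fixed round disk $D_\beta$ around a point of $I$ of radius $\gtrsim \beta$ (because $\operatorname{dist}(I, \partial \C_L) = \operatorname{dist}(I, \R \setminus L)$ is bounded below in terms of $\beta$ by conditions (2) and (4)), the pullback of a slightly smaller disk under the relevant branch of $G^{-1}$ contains a definite neighborhood of $I$ in $U$; equivalently, by the Koebe one-quarter theorem (Theorem \ref{oneq}) applied to suitable inverse branches of the covering away from the critical point, together with the non-degeneracy $g'(1) > \beta$ from condition (3), one gets a Jordan domain $U_\beta \supset [0,1]$, depending only on $\beta$, on which every $G$ (for $g \in \mathcal{E}_\beta$) is defined and univalent-up-to-the-cubic-branching. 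Concretely I would take $U_\beta$ to be a fixed small ellipse or lens around $[0,1]$ whose image under every $G$ stays inside a fixed compact subset of $\C_L \subset \C$; this is where one has to be slightly careful near the critical point $0$, since there $G$ is not injective, but the cubic model map is explicit and carries bounded sets to bounded sets, so the bound survives.

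The bound $M_\beta$ then follows: $G(U_\beta)$ is contained in a fixed compact subset of $\C$ (it is contained in the $G$-image of a definite neighborhood of $[0,1]$, and the constraints force $J$, hence $G([0,1]) = J$, into a fixed compact interval, while the cubic-branched-cover structure plus the bounded geometry of $L$ keep $G$ from escaping to infinity on $U_\beta$). Taking the supremum over $z \in U_\beta$ and over $g \in \mathcal{E}_\beta$ — legitimate because the whole family of $(G, U_\beta)$ is parametrized by a compact space and $G$ depends continuously on its defining data — yields a finite $M_\beta$. The main obstacle, and the step I would spend the most care on, is the behavior at the critical point: one must check that the branched-covering normalization can be chosen continuously across the whole family $\mathcal{E}_\beta$ (so that the compactness argument applies) and that the definite neighborhood $U_\beta$ can be taken to contain a genuine neighborhood of $0$ despite the failure of injectivity there. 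Everything else is a routine application of Koebe distortion/one-quarter (Theorems \ref{koebehol} and \ref{oneq}) and the elementary geometry encoded in conditions (1)–(4).
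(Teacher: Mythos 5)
Your overall strategy is the same in spirit as the paper's (decompose the Epstein map as a cubic model composed with a univalent map, use the compactness of the interval data $(J,L)$, and use Koebe-type estimates to produce a definite domain and bound), but there is a genuine gap at the central step. You claim that the branched covering $G:U\to\C_L$ is \emph{determined} by the compact data $(J,L)$ together with the critical point and critical value, so that ``the relevant moduli live in a compact space.'' That is false: two maps in $\mathcal{E}_{\beta}$ with the same $I$, $J$, $L$ and critical value can be very different, because the conformal position of the domain $U$ --- equivalently the univalent factor $h$ in the decomposition $G=Q_c\circ h$ (your ``Riemann map'') --- is an additional, infinite-dimensional parameter which conditions (1)--(4) do not directly control. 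The entire content of the proposition is a uniform control of this factor: one needs a lower bound, uniform over $\mathcal{E}_{\beta}$, for $\big|(h^{-1})'\big|$ along the segment $[0,b]$ (where $b^{3}=|J|$), since it is this bound that, via Koebe's one-quarter theorem, produces a neighborhood of $[0,1]$ of definite size contained in $U$; and one needs the corresponding upper bound to keep the images bounded. Your sketch attributes this to ``Koebe $1/4$ plus $g'(1)>\beta$,'' but $g'(1)>\beta$ is a \emph{lower} bound on $g'$ at a single endpoint, and Koebe $1/4$ applied to inverse branches requires exactly the missing pointwise bound on $\big|(G^{-1})'\big|$ along $J$; nothing in the proposal produces it. In the paper this is Claim \ref{interno}, and it is proved by a real argument: Carath\'eodory convergence of the slit domains $\Omega(h_n)$, normality of $\{h_n^{-1}\}$ obtained from Koebe distortion (Theorem \ref{koebehol}) seeded by the endpoint estimate $\big|(h_n^{-1})'(b_n)\big|=3b_n^{2}/g_n'(1)\leq 3b_n^{2}/\beta$ --- this is where condition (3) actually enters --- and univalence of the limits to exclude degenerating derivatives. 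Without this (or an alternative, e.g. chaining Koebe distortion for $h^{-1}$ along the $\varepsilon$-tube of $[0,b]$ in $\Omega$ and using $\int_0^b\big|(h^{-1})'\big|\geq 1$), the existence of your fixed ``lens'' $U_{\beta}$ is assumed rather than proved, and the final supremum over a ``compact parameter space'' has no basis.

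A smaller but real slip: the definite disks must be taken in the \emph{target}, around points of $J\subset L$, where condition (4) gives space at least $\beta|J|\geq\beta^{2}$ on both sides; your justification via $\dist(I,\R\setminus L)$ being bounded below is incorrect --- conditions (2) and (4) give no such bound, and nothing in Definition \ref{eps} prevents $I$ from meeting $\R\setminus L$. The concluding boundedness step is fine in outline (once $U_{\beta}$ sits inside $h^{-1}(V_{\beta})$ for a fixed bounded $V_{\beta}$ and the translation constants $c$ are bounded, $G=Q_c\circ h$ is uniformly bounded on $U_{\beta}$), but it again presupposes the derivative bound that is the missing heart of the proof.
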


\begin{proof}[Proof of Proposition \ref{epscomp}] Note that it is enough to prove the result for any sequence in $\mathcal{E}_{\beta}$. Let $g_n:I=[0,1]\to J_n\subset L_n$ be a normalized sequence in the class $\mathcal{E}_{\beta}$. By Definition \ref{eps} these maps extend to triple branched coverings $g_n:U_n\to\C_{L_n}$, where $U_n$ is a topological disk. Therefore, each $g_n$ can be decomposed as $g_n=Q_{c_n}\!\circ h_n$, where $Q_{c_n}(z)=z^3+c_n$ and $h_n:[0,1]\to[0,b_n]$ with $h_n\big([0,1]\big)=[0,b_n]$ is a univalent map $h_n:U_n\to\Omega(h_n)$ onto the complex plane with six slits, which triply covers $\C_{L_n}$ under $Q_{c_n}$. With this notation $J_n=[c_n,c_n+b_n^3]$, in particular $b_n^3=|J_n|\leq|I|/\beta$ and then the sequence of positive numbers $\big\{b_n=h_n(1)\big\}$ is bounded. Moreover, since the length of each component of $L_n\!\setminus\!J_n$ is at least $\beta|J_n|\geq\beta^2|I|$, there exists $\varepsilon>0$ such that $\overline{B(w,\varepsilon)}\subset\Omega(h_n)$ for all $w\in[0,b_n]$ and for all $n\in\nt$.

\begin{claim}\label{interno} There exists $\lambda>0$ such that $\big|\big(h_n^{-1}\big)'(w)\big|>\lambda$ for all $w\in[0,b_n]$ and for all $n\in\nt$.
\end{claim}

\begin{proof}[Proof of Claim \ref{interno}] We claim first that, by passing to a subsequence if necessary, the sequence of marked domains $(\Omega(h_n),0)$ converges to some marked domain $(\Omega,0)$ in the Carath\'eodory topology (for the definition of the Carath\'eodory topology, see the book of McMullen \cite[Chapter 5]{mclivro1}).

Indeed, note that $\dist\big([0,1],J_n\big)\leq|J_n|/\beta\leq|I|/\beta^2$, and then the intervals $J_n$ accumulate on an interval $J$. Moreover, since $|J_n|\geq\beta|I|$, the interval $J$ has non-empty interior. Since the length of each component of $L_n\!\setminus\!J_n$ is at least $\beta|J_n|\geq\beta^2|I|$, the intervals $L_n$ accumulate on an open interval $L$ that contains $J$. Moreover, the length of $L$ is finite since the length of each component of $L_n\!\setminus\!J_n$ is at most $\beta^{-1}|J_n|\leq\beta^{-2}|I|$. Since $c_n$ is the left boundary point of $J_n$, and $\dist\big([0,1],J_n\big)\leq|I|/\beta^2$, the sequence $\{c_n\}$ is bounded, and then (by passing to a subsequence if necessary) the sequence $(\Omega(h_n),0)$ converges to some $(\Omega,0)$ in the Carath\'eodory topology.

Secondly, we claim that the sequence of biholomorphisms $\big\{h_n^{-1}:\Omega(h_n) \to U_n\big\}$ is normal in $\Omega$ (note that any compact subset of $\Omega$ is eventually contained in $\Omega(h_n)$, and then $h_n$ is well-defined on it, again see \cite[Chapter 5]{mclivro1} for more on the Carath\'eodory topology). Indeed, from $Q_{c_n}=g_n \circ h_n^{-1}$ we get:$$\big(h_n^{-1}\big)'(b_n)=\frac{Q_{c_n}'(b_n)}{g_n'(1)}=\frac{3b_n^2}{g_n'(1)}\,,$$and then $\big|\big(h_n^{-1}\big)'(b_n)\big|<(3/\beta)|b_n|^2$ is bounded (since $b_n$ is bounded, as we already have seen). As we said before, since the length of each component of $L_n\!\setminus\!J_n$ is at least $\beta|J_n|\geq\beta^2|I|$, the points $b_n$ stay away from the boundary of $\Omega$, that is, $\inf_{n\in\nt}\big\{d\big([0,b_n],\partial\Omega\big)\big\}\geq\varepsilon>0$. Koebe Distortion Theorem (Theorem \ref{koebehol}) implies then that the family $\{h_n^{-1}\}$ is normal in $\Omega$. Since $b_n$ is bounded from above, any limit function is non-constant and therefore univalent. In particular it has no critical points, and this completes the proof of Claim \ref{interno}.
\end{proof}

With Claim \ref{interno} at hand and Koebe's one-quarter theorem (Theorem \ref{oneq}) we obtain:$$B\big(h_n^{-1}(w),\lambda\varepsilon/4\big)\subset\big(h_n^{-1}\big)\big(B(w,\varepsilon)\big)$$for all $w\in[0,b_n]$ and for all $n\in\nt$. Let $b=\sup_{n\in\nt}\{b_n\}$, and consider the two bounded Jordan domains:$$U_{\beta}=\bigcup_{z\in[0,1]}B(z,\lambda\varepsilon/4) \quad\mbox{and}\quad V_{\beta}=\bigcup_{w\in[0,b]}B(w,\varepsilon)\,.$$

For each $n\in\nt$ we have seen that $h_n$ is well-defined in $U_{\beta}$ and satisfies $h_n(U_{\beta})\subset V_{\beta}$. Since the sequence $\{c_n\}$ is bounded (as we pointed out before, in the proof of Claim \ref{interno}), each $g_n$ is (well-defined and) uniformly bounded in $U_{\beta}$. This completes the proof of Proposition \ref{epscomp}.
\end{proof}

\subsection{Proof of Theorem \ref{compacto}} The proof of Theorem \ref{compacto} given below relies on the following result of de Faria and de Melo \cite[Theorem A.6, Appendix A, page 382]{dfdm1}:

\begin{theorem}\label{ApC2} There exist $\beta\in(0,1)$ and $\lambda\in(0,1)$ with the following property: given any $C^r$ critical circle map $f$ with irrational rotation number, $r \geq 3$, there exists $C>0$ such that for each $n\in\nt$ there exist $\eta_n$ and $\xi_n$ in $\mathcal{E}_{\beta}$ such that $\xi_n$ has the same domain as $\tilde{f}^{q_{n}}$, $\eta_n$ has the same domain as $\tilde{f}^{q_{n+1}}$ and moreover:$$\big\|\xi_n-\tilde{f}^{q_n}\big\|_{C^{r-1}([-1,0])} \leq C\lambda^n\quad\mbox{and}\quad\big\|\eta_n-\tilde{f}^{q_{n+1}}\big\|_{C^{r-1}([0,|I_n|/|I_{n+1}|])} \leq C\lambda^n\,.$$
\end{theorem}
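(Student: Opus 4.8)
The plan is to follow the classical strategy of de Faria--de Melo: using the real bounds one writes each rescaled return map as a ``cubic'' germ followed by a long chain of uniformly controlled diffeomorphisms; one then replaces the germ by a pure cube and the chain by a single M\"obius map, and bounds the resulting $C^{r-1}$-error, the exponential rate coming from the exponential decay $|I_n|\le C\lambda_0^{\,n}$ (itself a consequence of the real bounds: $I_n$ is subdivided into at least two atoms, $I_{n+2}$ being one of them, and two adjacent atoms of $\mathcal{P}_n$ are comparable by Theorem \ref{realbounds}, so $|I_{n+2}|\le\lambda_0|I_n|$ for a universal $\lambda_0\in(0,1)$). More precisely, by Theorems \ref{realbounds} and \ref{realB} there is $n_0=n_0(f)$ so that for $n\ge n_0$ the pair $\mathcal{R}^n(f)$ is $K_0$-controlled and adjacent atoms of $\mathcal{P}_n$ are $K_0$-comparable. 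Write $\xi=\tilde f^{q_n}$ on its rescaled domain $[-1,0]$ and factor $\xi=D_n\circ C_n$, where $C_n$ is the one application of $\tilde f$ carrying the critical point $0$ and $D_n=\tilde f^{\,q_n-1}$ is the composition of the $q_n-1$ branches of $\tilde f$ along the non-critical atoms $\tilde f^{j}(\tilde I_{n+1})$, $1\le j\le q_n-1$. By Koebe's distortion principle (Theorem \ref{koebedistint}) and the $K_0$-control, $C_n$ and each branch of $D_n$ have uniformly bounded $C^{r-1}$-norms, and $D_n$ has uniformly bounded distortion and definite Koebe space.

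For the critical germ, near $0$ one has $\tilde f(t)=A_0\big(\phi(\mu t)^3\big)$ with $A_0$ affine, $\phi\in C^r$, $\phi(0)=0$, and $\mu\asymp|I_n|$. Writing $\phi(s)=s\,\phi_1(s)$ with $\phi_1\in C^{r-1}$ one gets $C_n(t)=A_0\big((\mu t)^3\phi_1(\mu t)^3\big)$, which is $C^{r-1}([-1,0])$-close, at rate $O(\mu)=O(\lambda_0^{\,n})$, to the pure cube $\widehat C_n(t)=A_0\big(\phi_1(0)^3(\mu t)^3\big)$ (indeed $\|\phi_1(\mu\cdot)-\phi_1(0)\|_{C^{r-1}([-1,0])}=O(\mu)$, since the top-order term is $O(\mu^{r-1})$ and all lower ones are $O(\mu)$). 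This is the only source of the loss of one derivative, forced by $\phi_1=\phi/s\in C^{r-1}$; the map $\widehat C_n$ is trivially Epstein. For the diffeomorphic chain, the claim is that $D_n$ is $C^{r-1}$-exponentially close to a M\"obius map $M_n$ with the same domain. By the chain rule for the Schwarzian, $SD_n(x)=\sum_{j=1}^{q_n-1}S\tilde f(\tilde f^{j}(x))\,(D\tilde f^{j}(x))^2$; using bounded distortion and $\sum_j|\tilde f^{j}(\tilde I_{n+1})|^2\le\big(\max_j|\tilde f^{j}(\tilde I_{n+1})|\big)\sum_j|\tilde f^{j}(\tilde I_{n+1})|$, together with the fact that the largest atom involved has size $\lesssim|I_{n-2}|\lesssim\lambda_0^{\,n}$, one gets $\|SD_n\|_{C^{k}}\lesssim\lambda_0^{\,n}$ for $0\le k\le r-3$ (the factor $|I_{n+1}|^{-2}$ produced by rescaling the Schwarzian is absorbed by the numerator). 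Since M\"obius maps are precisely the maps with vanishing Schwarzian and $D_n$ is $C^{r-1}$-bounded, integrating the Schwarzian ODE (which gains three derivatives) yields a M\"obius $M_n$, on the domain of $D_n$, with $\|D_n-M_n\|_{C^{r-1}}\lesssim\|SD_n\|_{C^{\max(r-4,0)}}\lesssim\lambda_0^{\,n}$; the definite Koebe space of $D_n$ guarantees that $M_n$ extends to a M\"obius map on a definite complex neighbourhood of its domain.

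To assemble, put $\xi_n=M_n\circ\widehat C_n$. A cube composed with a M\"obius map is a three-fold branched covering, so $\xi_n$ is an Epstein map (Definition \ref{eps}) with the same domain $[-1,0]$ as $\tilde f^{q_n}$. Composition being Lipschitz on $C^{r-1}$-bounded sets (the non-linearity version of this is Lemma \ref{NLcomp}), the two estimates above give $\|\xi_n-\tilde f^{q_n}\|_{C^{r-1}([-1,0])}\lesssim\lambda_0^{\,n}$. Finally, $\xi_n$ is $C^0$-close to $\tilde f^{q_n}$, which has uniformly bounded geometry by the real bounds, and its M\"obius part has definite complex room; hence $\xi_n\in\mathcal{E}_\beta$ for a universal $\beta$. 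Running the same construction with $\tilde f^{q_{n+1}}$ in place of $\tilde f^{q_n}$ produces $\eta_n\in\mathcal{E}_\beta$ on $[0,|I_n|/|I_{n+1}|]$, and one may take $\lambda=\lambda_0$. This is exactly the content of \cite[Appendix A]{dfdm1}.

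The heart of the matter is the Schwarzian estimate for $D_n$: $S\tilde f$ blows up like $(\mathrm{dist}\text{ to }0)^{-2}$ at the critical point, and although the branches of $D_n$ have domains disjoint from $0$, some atoms $\tilde f^{j}(\tilde I_{n+1})$ come arbitrarily close to $0$ as $n\to\infty$, so the terms $S\tilde f(\tilde f^{j}(x))\,(D\tilde f^{j}(x))^2$ must be controlled using the non-flatness of the critical point together with the distortion estimates accompanying the real bounds (the ``$C^3$-bounded orbits'' package of \cite{dfdm1}). Obtaining \emph{exponential} decay of these sums --- rather than mere decay to zero --- is precisely what makes the $C^3$ real bounds (hence, in the applications, the $C^4$ hypothesis of Theorem \ref{compacto} for $r=4$) and the bound $|I_n|\le C\lambda_0^{\,n}$ indispensable.
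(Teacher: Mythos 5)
This theorem is not proved in the present paper at all: it is quoted verbatim from de Faria--de Melo \cite[Theorem A.6, Appendix A]{dfdm1}, so your attempt has to stand on its own. Your treatment of the critical germ (replacing $\phi$ by its linearization at the critical point, with error $O(|I_{n+1}|)$) and your derivation of $|I_n|\le C\lambda_0^n$ from the real bounds are fine. The fatal step is the claim that the diffeomorphic factor $D_n=\tilde f^{\,q_n-1}|_{\tilde f(\tilde I_{n+1})}$ is close to a \emph{M\"obius} map because $\|SD_n\|\lesssim\lambda_0^n$. That Schwarzian estimate is false: in the sum $SD_n(x)=\sum_j S\tilde f(\tilde f^j(x))\,(D\tilde f^j(x))^2$ you implicitly pull $S\tilde f$ out as a bounded factor, but $S\tilde f(y)\asymp -\,|y-c|^{-2}$ near the critical point, and the orbit of $f(I_{n+1})$ under $D_n$ does pass close to $c$. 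Concretely, the atom $A=f^{q_{n-1}}(I_{n+1})$ is visited (since $1\le q_{n-1}\le q_n-1$); it lies just outside $I_{n-1}$, at distance exactly $|I_{n-1}|$ from $c$, and it is adjacent in $\mathcal{P}_n$ to the first fundamental domain $f^{q_{n-1}}(I_n)$ of $f^{q_n}|_{I_{n-1}}$, whose length is comparable to $|I_{n-1}|$ by the real bounds and Yoccoz's Lemma. Hence this single term contributes $\sup_A|S\tilde f|\cdot|A|^2\asymp 1$, with a definite negative sign, to the scale-invariant Schwarzian $\sup|SD_n|\cdot|\tilde f(\tilde I_{n+1})|^2$, which therefore stays bounded away from $0$ as $n\to\infty$. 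So $D_n$, affinely renormalized to a diffeomorphism of $[0,1]$, does not converge to a map with vanishing Schwarzian, and $\|D_n-M_n\|_{C^{3}}$ cannot tend to zero for any M\"obius $M_n$; in particular your estimate fails already for $r=4$, which is the only case this paper uses. A global sanity check: if $\tilde f^{q_n}$ were asymptotic to M\"obius$\circ$cube$\circ$affine, every limit of $\{\mathcal{R}^n(f)\}$ would be a degree-$(2d+1)$ rational map; but for golden-mean rotation number the limit is the renormalization fixed point, whose $\eta$-component satisfies $\eta=\Lambda\circ\eta\circ\Lambda^{-1}$ for a nontrivial linear $\Lambda$, which for a rational map forces $\eta(0)=0$, contradicting $\eta(0)=-1$.

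This is exactly why Definition \ref{eps} allows the extension $G$ to be an arbitrary three-fold branched covering onto a slit plane $\C_L$, rather than a rational map of degree three: the outer diffeomorphism of a limiting Epstein map is the boundary restriction of a univalent map onto $\C_L$, and its nonvanishing Schwarzian records precisely the close returns of the orbit to the critical point that your estimate discards. The argument of \cite[Appendix A]{dfdm1} does not approximate $D_n$ by a M\"obius map; it controls the full return map by distortion estimates whose limiting objects are genuine Epstein maps, and the exponential rate is extracted by a different mechanism. A secondary, more repairable, gap: you invoke ``composition is Lipschitz on $C^{r-1}$-bounded sets'' to assemble the two approximations, but after the domain of $\xi$ is normalized to $[-1,0]$ the factor $D_n$ acts on an interval of length $\asymp|I_{n+1}|^{2d}$, so its $k$-th derivatives there are of order $|I_{n+1}|^{-2dk}$ and it is not $C^{r-1}$-bounded in the required sense; the error committed in $C_n$ must be measured relative to the scale of its image before being pushed through $D_n$. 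The first gap, however, is structural: no argument that replaces the diffeomorphic part of the return map by a M\"obius transformation can prove this theorem.
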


Unfortunately the pair $(\eta_n,\xi_n)$ given by Theorem \ref{ApC2} is \emph{not} a \emph{commuting} pair in general. In particular we have no information on the behaviour of the renormalization operator acting on these pairs.

\begin{proof}[Proof of Theorem \ref{compacto}] By \cite[Theorem D]{guamelo} there exists a $C^{\omega}$-compact set $\mathcal{K}$ of real analytic critical commuting pairs and a constant $\lambda_0\in(0,1)$ with the following property: given a $C^r$ critical circle map $f$, $r \geq 3$, with any irrational rotation number there exist $C_0>0$ and a sequence $\{f_n\}_{n\in\nt}$ contained in $\mathcal{K}$ such that:$$d_{0}\big(\mathcal{R}^n(f),f_n\big)\leq C_0\lambda_0^n\quad\mbox{for all $n\in\nt$,}$$and such that the pair $f_n$ has the same rotation number as the pair $\mathcal{R}^n(f)$ for all $n\in\nt$. From Theorem \ref{ApC2} we obtain $C_1>0$, $\lambda_1\in(0,1)$ and a sequence $\big\{(\eta_n,\xi_n)\big\}_{n\in\nt}$ such that:$$d_{r-1}\big(\mathcal{R}^n(f),(\eta_n,\xi_n)\big)\leq C_1\lambda_1^n\quad\mbox{for all $n\in\nt$.}$$

By considering $C_2=2\max\{C_0,C_1\}>0$ and $\lambda_2=\max\{\lambda_0,\lambda_1\}\in(0,1)$ we get that:$$d_{0}\big(f_n,(\eta_n,\xi_n)\big)\leq C_2\lambda_2^n\quad\mbox{for all $n\in\nt$.}$$

By Proposition \ref{holder}, Remark \ref{tg} and Proposition \ref{epscomp} the $C^{r-1}$-metric is H\"older equivalent to the $C^0$-metric for the sequences $\{f_n\}$ and $\big\{(\eta_n,\xi_n)\big\}$. More precisely:$$d_{r-1}\big(f_n,(\eta_n,\xi_n)\big)\leq C_3\lambda_3^n\quad\mbox{for all $n\in\nt$,}$$where $C_3=L\,C_2^{\alpha}>0$ and $\lambda_3=\lambda_2^{\alpha}\in(0,1)$, and the constants $L>0$ and $\alpha\in(0,1)$ are given by Proposition \ref{holder}. With this at hand we finally obtain, by the triangle inequality, that:$$d_{r-1}\big(\mathcal{R}^n(f),f_n\big)\leq C_4\lambda_4^n\quad\mbox{for all $n\in\nt$,}$$where $C_4=2\max\{C_1,C_3\}>0$ and $\lambda_4=\max\{\lambda_1,\lambda_3\}\in(0,1)$.
\end{proof}

\section{Exponential Convergence}\label{final}

In this section we prove the uniform exponential convergence of renormalization in the $C^4$ category (more precisely, we will prove that Theorem \ref{compacto} and Lemma \ref{main} combined with Theorem \ref{uniform} imply Theorem \ref{expconv}).

\begin{proof}[Proof of Theorem \ref{expconv}] Let $f$ and $g$ be two $C^4$ critical circle maps with the same irrational rotation number $\rho(f)=\rho(g)=[a_0,a_1,...]$ and with the same order at their critical points. By Theorem \ref{compacto} there exist a $C^{\omega}$-compact set $\mathcal{K}_0$ of real analytic critical commuting pairs, two constants $\lambda_0\in(0,1)$ and $C_0>1$ and two sequences $\{f_m\}_{m\in\nt}$ and $\{g_m\}_{m\in\nt}$ contained in $\mathcal{K}_0$ such that for all $m\in\nt$ we have $\rho(f_m)=\rho(g_m)=[a_m,a_{m+1},...]$ and:
\begin{equation}\label{perto}
d_3\big(\mathcal{R}^m(f),f_m\big)\leq C_0\,\lambda_0^m\quad\mbox{and}\quad d_3\big(\mathcal{R}^m(g),g_m\big)\leq C_0\,\lambda_0^m\,.
\end{equation}

From \cite[Theorem A.4, page 379]{dfdm1} we know that there exists $n_0\in\nt$ such that both critical commuting pairs $\mathcal{R}^n(f)$ and $\mathcal{R}^n(g)$ have negative Schwarzian bounded away from zero for all $n \geq n_0$. From Theorem \ref{compacto} we also know that the closure for the $C^3$-metric of the orbit $\big\{\mathcal{R}^n(f)\big\}_{n \geq n_0}$ is a $C^3$-compact set that we denote by $\mathcal{K}_f$ (the $\omega$-limit for the $C^3$-metric under renormalization of $f$ is contained in $\mathcal{K}_0$, which is $C^3$-compact). Let $\mathcal{K}_g$ be the corresponding compact set for $g$, that is, $\mathcal{K}_g$ is the closure for the $C^3$-metric of the orbit $\big\{\mathcal{R}^n(g)\big\}_{n \geq n_0}$. By compactness, there exists $\beta\in(0,1)$ with the following property: any $C^3$ critical commuting pair $\zeta$ such that $d_{C^3}(\zeta,\mathcal{K}_f\cup\mathcal{K}_g)<\beta$ has negative Schwarzian.

From the real bounds (Theorem \ref{realB}) there exist a universal constant $K_0>1$ and $n_1=n_1(f,g)\in\nt$, with $n_1>n_0$, such that the critical commuting pairs $\mathcal{R}^n(f)$ and $\mathcal{R}^n(g)$ are $K_0$-controlled for any $n \geq n_1$.

Let $n_2=n_2(2K_0)\in\nt$ be given by Theorem \ref{realBcomSneg}, and let $K=B^{n_2}(2K_0)$ be given by Lemma \ref{fromC2toC3} (here, the power $n_2$ denotes iteration). Let $\varepsilon_0(K)\in(0,1)$ and $L(K)>1$ be given by Lemma \ref{main}.

Fix $\delta\in(0,1)$ such that $\delta>\frac{\log L}{\log L-\log\lambda_0}$. Let $\lambda_2=L^{1-\delta}\lambda_0^{\delta}$, and note that $\lambda_2\in(0,1)$ since $\delta\log\lambda_0+(1-\delta)\log L<0$. For each $n\in\nt$ let $m=m(n)\in\nt$ given by $m=\lfloor\delta n\rfloor$, and fix $m_0\in\nt$ such that:$$m_0>\max\left\{\frac{n_2\log L+\log C_0+\log(1/\varepsilon_0)}{\log(1/\lambda_0)},\frac{\log C_0+\log(1/\beta)}{\log(1/\lambda_0)},n_1\right\}.$$

From \eqref{perto} we see that $d_2\big(\mathcal{R}^m(f),f_m\big)<\varepsilon_0$ and $d_3\big(\mathcal{R}^m(f),f_m\big)<\beta$ for all $m>m_0$, and also that $d_2\big(\mathcal{R}^m(g),g_m\big)<\varepsilon_0$ and $d_3\big(\mathcal{R}^m(g),g_m\big)<\beta$ for all $m>m_0$. In particular both critical commuting pairs $f_m$ and $g_m$ are $2K_0$-controlled for all $m \geq m_0$ and, moreover, both $f_m$ and $g_m$ have negative Schwarzian for all $m \geq m_0$ (and then the pairs $\mathcal{R}^j(f_m)$ and $\mathcal{R}^j(g_m)$ have negative Schwarzian for all $m \geq m_0$ and all $j\in\nt$, see Remark \ref{chainsch}).

By Theorem \ref{realBcomSneg} the critical commuting pair $\mathcal{R}^{j}(f_m)$ is $K_0$-controlled for all $m>m_0$ and $j \geq n_2$. For $j\in\{0,1,...,n_2\}$ we combine Lemma \ref{main} with \eqref{perto} to obtain that for all $m>m_0$:$$d_2\big(\mathcal{R}^{j}(f_m),\mathcal{R}^{j+m}(f)\big)\leq L^{j}d_2\big(f_m,\mathcal{R}^{m}(f)\big)\leq L^jC_0\lambda_0^{m}\leq L^{n_2}C_0\lambda_0^{m}<\varepsilon_0\,.$$

In particular, $\mathcal{R}^{j}(f_m)$ is $C^2$-bounded by $2K_0$ for all $m>m_0$ and $j\in\{0,1,...,n_2\}$, and therefore
$\mathcal{R}^{j}(f_m)$ is $K$-controlled for all $m>m_0$ and $j\in\{0,1,...,n_2\}$ by Lemma \ref{fromC2toC3}. This allows us to combine Lemma \ref{main} with \eqref{perto} in order to obtain:
\begin{align}\label{final1}
d_2\big(\mathcal{R}^n(f),\mathcal{R}^{n-m}(f_m)\big)&\leq L^{n-m}\cdot 	d_2\big(\mathcal{R}^m(f),f_m\big)\leq C_0\,L^{n-m}\lambda_{0}^{m}\leq\left(\frac{LC_0}{\lambda_0}\right)\lambda_2^n
\end{align}
for all $n\in\nt$ such that $m=\lfloor\delta n\rfloor>m_0$, since $L^{n-m-1}\lambda_0^{m+1}\leq(L^{1-\delta}\lambda_0^{\delta})^{n}=\lambda_2^n$. Let $C_3=LC_0/\lambda_0$. Replacing $f$ with $g$ we also get:
\begin{equation}\label{final2}
d_2\big(\mathcal{R}^n(g),\mathcal{R}^{n-m}(g_m)\big) \leq C_3\,\lambda_2^n\quad\mbox{for all $n\in\nt$ such that $m=\lfloor\delta n\rfloor>m_0$.}
\end{equation}

Since $f_m$ and $g_m$ are real analytic and have the same combinatorics for each $m\in\nt$, we know by Theorem \ref{uniform} that there exist constants $C_1>1$ and $\lambda_1 \in (0,1)$ (both uniform in $\mathcal{K}_0$) such that:
\begin{equation}\label{final3}
d_2\big(\mathcal{R}^{n-m}(f_m),\mathcal{R}^{n-m}(g_m)\big)\leq C_1\lambda_1^{n-m}\leq C_1(\lambda_1^{1-\delta})^{n}
\end{equation}
for all $n\in\nt$ such that $m=\lfloor\delta n\rfloor>m_0$. Finally we define $\lambda=\max\{\lambda_1^{1-\delta},\lambda_2\}=\max\{\lambda_1^{1-\delta},L^{1-\delta}\lambda_0^{\delta}\}\in(0,1)$ and $C=C_1+2C_3=C_1+2LC_0/\lambda_0>1$. Combining \eqref{final1}, \eqref{final2} and \eqref{final3} we get:$$d_2\big(\mathcal{R}^n(f),\mathcal{R}^n(g)\big) \leq C\lambda^n\quad\mbox{for all $n\in\nt$ such that $m=\lfloor\delta n\rfloor>m_0$.}$$
\end{proof}

\section{Rigidity}\label{finalmesmo}

As we said in the introduction, the fact that Theorem \ref{expconv} implies Theorem \ref{rigidity} follows from well-known results by de Faria-de Melo \cite[First Main Theorem, page 341]{dfdm1} and Khanin-Teplinsky \cite[Theorem 2, page 198]{khaninteplinsky}. In this final section we just give more precise references.

Let $f$ and $g$ be two $C^4$ circle homeomorphisms with the same irrational rotation number and with a unique critical point of the same odd type. Let $h$ be the unique topological conjugacy between $f$ and $g$ that maps the critical point of $f$ to the critical point of $g$. Let $\{\mathcal{P}^f_n\}_{n \geq 1}$ and $\{\mathcal{P}^g_n\}_{n \geq 1}$ be the corresponding sequences of dynamical partitions (see Section \ref{S:realbounds} of this paper), and note that the homeomorphism $h$ identifies those partitions.

In \cite[Section 3]{khaninteplinsky}, Khanin and Teplinsky proved that Theorem \ref{expconv} implies the existence of two constants $\hat C>0$ and $\hat\lambda \in (0,1)$ such that if $I_f,J_f$ are adjacent atoms in $\mathcal{P}^f_n$, or they are contained in the same atom of $\mathcal{P}^f_{n-1}$, and if $I_g=h(I_f),J_g=h(J_f)$ are the corresponding atoms in $\mathcal{P}^g_n$, we have that:
\begin{equation}\label{coherencekt}
\left|\log\frac{|I_g|}{|I_f|}-\log\frac{|J_g|}{|J_f|}\right|\leq\hat C\hat\lambda^n\quad\mbox{for all $n \geq 1$.}
\end{equation}

Combining these estimates with the real bounds, it is not difficult to prove the first two conclusions of Theorem \ref{rigidity}. See \cite[Proposition 1, page 199]{khaninteplinsky} for Conclusion \eqref{Aitem1}, and \cite[Remark 5, page 213]{khaninteplinsky} for Conclusion \eqref{Aitem2}.

To prove Conclusion \eqref{Aitem3} of Theorem \ref{rigidity}, however, it is not enough to have \eqref{coherencekt} for the dynamical partitions (indeed, note that \eqref{coherencekt} holds for \'Avila's examples \cite{avila} already mentioned in the introduction).

In \cite[Section 4]{dfdm1}, de Faria and de Melo constructed suitable partitions $\{\mathcal{Q}^f_n\}_{n \geq 1}$ and $\{\mathcal{Q}^g_n\}_{n \geq 1}$ (the so-called \emph{fine grids}, see \cite[Sections 4.2 and 4.3]{dfdm1}) such that for a full Lebesgue measure set of rotation numbers (see \cite[Section 4.4]{dfdm1} for its definition) Theorem \ref{expconv} implies the existence of two constants $\tilde C>0$ and $\tilde\lambda \in (0,1)$ such that:
\begin{equation}\label{coherencedfdm}
\left|\frac{|I_f|}{|J_f|}-\frac{|I_g|}{|J_g|}\right| \leq \tilde C\tilde\lambda^n\quad\mbox{for all $n \geq 1$.}
\end{equation}for each pair of adjacent atoms $I_f,J_f$ that belong to $\mathcal{Q}^f_n$. Estimate \eqref{coherencedfdm} is enough to prove that the derivative of the conjugacy $h$ is H\"older continuous on the whole circle (see \cite[Proposition 4.3\,(b), page 356]{dfdm1}), which is precisely Conclusion \eqref{Aitem3} of Theorem \ref{rigidity}.

\appendix

\section{Proof of Proposition \ref{holder}}\label{apA}

In this appendix we prove Proposition \ref{holder}. In the proof we follow the exposition of Lyubich in \cite[Lemma 11.5]{lyubich}:

\begin{proof}[Proof of Proposition \ref{holder}] Let $V$ be a bounded Jordan domain containing the interval $I$, and compactly contained in $U$ (as usual, a \emph{Jordan domain} is an open, connected and simply connected set of the complex plane, whose boundary is a Jordan curve). Consider a continuous function $h:\overline{V}\to[0,1]$ satisfying:
\begin{itemize}
\item $h$ is harmonic and positive in the annulus $V\!\setminus\! I$,
\item $h \equiv 0$ on $\partial V$ and $h \equiv 1$ on $I$.
\end{itemize}

Recall that the existence of such a function $h$ is a particular case of the \emph{Dirichlet's problem}.

To begin with the proof suppose first that $M=1$, and let $f:U\to\C$ be a holomorphic function such that $\big|f(z)\big| \leq 1$ for all $z \in U$. Let $\varepsilon=\|f\|_{C^0(I)} \leq 1$, and note that:
\begin{equation}\label{desborde}
\log|f| \leq h\log\varepsilon
\end{equation}
on $\partial(V\!\setminus\! I)=I\cup\partial V$. Since $f$ is holomorphic, $\log|f|$ is harmonic where $f \neq 0$ and subharmonic in the whole domain $V$, and since $h$ is harmonic in $V\!\setminus\! I$ we get from the maximum principle that inequality \eqref{desborde} also holds inside the annulus $V\!\setminus\! I$, that is, $\big|f(z)\big|\leq\varepsilon^{h(z)}$ for all $z \in V$.

Given $\alpha\in(0,1)$ let $W=\big\{z \in V:h(z)\in(\alpha,1]\big\}$, and note that $W$ is a Jordan domain containing $I$, compactly contained in $V$, and such that $h(z)=\alpha$ for all $z \in \partial W$. Since $\varepsilon\in[0,1]$ we have $\big|f(z)\big|\leq\varepsilon^{h(z)}\leq\varepsilon^{\alpha}$ for all $z \in W$, that is:$$\|f\|_{C^0(W)} \leq \big(\|f\|_{C^0(I)}\big)^\alpha.$$

The next step is just the standard application of Cauchy's integral formulas: let $\rho\in(0,1)$ such that $\overline{B(z,\rho)} \subset W$ for all $z \in I$. Then for any $z \in I$ and any $n\in\{0,1,...,r\}$ we have:
\begin{align}
\big|f^{(n)}(z)\big|&=\left|\frac{n!}{2\pi i}\int_{\partial B(z,\rho)}\frac{f(w)}{(w-z)^{n+1}}\,dw\right|=\frac{n!}{2\pi}\left|\int_{0}^{2\pi}\frac{f(z+\rho e^{i\theta})}{(\rho e^{i\theta})^{n+1}}i\rho e^{i\theta}d\theta\right|\notag\\
&\leq\frac{n!}{2\pi}\frac{1}{\rho^n}\int_{0}^{2\pi}\left|f(z+\rho e^{i\theta})\right|d\theta\leq\left(\frac{n!}{\rho^n}\right)\left(\sup_{w\in\partial B(z,\rho)}\left\{\big|f(w)\big|\right\}\right).\notag
\end{align}

Defining $L_1=r!/\rho^r$ we obtain:$$\|f\|_{C^r(I)} \leq L_1\|f\|_{C^0(W)} \leq L_1\big(\|f\|_{C^0(I)}\big)^\alpha.$$

Therefore Proposition \ref{holder} is true for the case $M=1$. For the general case note that for any $f\in\mathcal{F}$ we have $\|f\|_{C^r(I)}=M\|f/M\|_{C^r(I)} \leq ML_1\big(\|f/M\|_{C^0(I)}\big)^{\alpha}=M^{1-\alpha}L_1\big(\|f\|_{C^0(I)}\big)^{\alpha}$, and therefore is enough to consider $L=M^{1-\alpha}L_1$.
\end{proof}

\end{document}